\pgfplotsset{compat=1.15}
\newcommand{\col}[1]{{#1}}
\title[Cubic fractional Schr\"odinger equation]{Local well-posedness for  cubic fractional Schr\"odinger equations with derivatives on the right-hand side}
\newcommand{\tell}{\mathfrak{l}}
\newcommand{\tn}{\mathfrak{n}}
\newcommand{\tm}{\mathfrak{m}}
\newcommand{\N}{{\mathbb N}}
\renewcommand{\S}{{\mathbb S}}
\newtheorem{theorem}{Theorem}
\newtheorem{lemma}[theorem]{Lemma}
\newtheorem{proposition}[theorem]{Proposition}
\theoremstyle{definition}
\theoremstyle{remark}
\newcommand\supp{{\rm supp\,}}
\newcommand{\bc}{\mathfrak{c}}
\newcommand{\ind}{{\mathbbm{1}} }
\newcommand{\R}{\mathbb{R}}
\newcommand{\C}{\mathbb{C}}
\newcommand{\Z}{\mathbb{Z}}
\newcommand{\brac}[1]{\left (#1 \right )}
\newcommand{\norm}[1]{\left \|#1 \right \|}
\newcommand{\abs}[1]{\left\lvert #1 \right \rvert}
\renewcommand{\vec}[1]{{\bf #1}}
\renewcommand{\i}{{\rm \bf i}}
\newcommand{\barint}{
\rule[.036in]{.12in}{.009in}\kern-.16in \displaystyle\int }
\newcommand{\barcal}{\text{$ \rule[.036in]{.11in}{.007in}\kern-.128in\int $}}
\def\mvint_#1{\mathchoice
          {\mathop{\vrule width 6pt height 3 pt depth -2.5pt
                  \kern -8pt \intop}\nolimits_{\kern -3pt #1}}%
%%%% P.S., 01/03/2001
% old definition had ...\nolimits_{#1}}
% \kern -3pt makes nicer distances between the integral sign
% and the domain of integration
%%%%
          {\mathop{\vrule width 5pt height 3 pt depth -2.6pt
                  \kern -6pt \intop}\nolimits_{#1}}%
          {\mathop{\vrule width 5pt height 3 pt depth -2.6pt
                  \kern -6pt \intop}\nolimits_{#1}}%
          {\mathop{\vrule width 5pt height 3 pt depth -2.6pt
                  \kern -6pt \intop}\nolimits_{#1}}}
\numberwithin{theorem}{section} \numberwithin{equation}{section}
\newcommand{\aleq}{\lesssim}
\newcommand{\ageq}{\succsim}
\newcommand{\aeq}{\approx}
\newcommand{\Ds}[1]{|\nabla|^{#1}}
\newcommand{\Dels}[1]{(-\Delta)^{#1}}
\def\avint{\,\ThisStyle{\ensurestackMath{%
			\stackinset{c}{.2\LMpt}{c}{.5\LMpt}{\SavedStyle-}{\SavedStyle\phantom{\int}}}%
		\setbox0=\hbox{$\SavedStyle\int\,$}\kern-\wd0}\int}
\let\latexchi\chi
\renewcommand\chi{\@ifnextchar_\sub@chi\latexchi}
\newcommand{\sub@chi}[2]{% #1 is _, #2 is the subscript
  \@ifnextchar^{\subsup@chi{#2}}{\latexchi^{}_{#2}}%
}
\newcommand{\subsup@chi}[3]{% #1 is the subscript, #2 is ^, #3 is the superscript
  \latexchi_{#1}^{#3}%
}
\newcommand{\eps}{\varepsilon}
\author{Ahmed Dughayshim}
\email[Ahmed Dughayshim]{aha80@pitt.edu}
\author{Silvino Reyes Farina}
\email[Silvino Reyes Farina]{sir25@pitt.edu}
\author{Armin Schikorra}
\email[Armin Schikorra]{armin@pitt.edu}
\address{Department of Mathematics,
University of Pittsburgh,
301 Thackeray Hall,
Pittsburgh, PA 15260, USA}
\begin{document}
\begin{abstract}
For $s \in (\frac{1}{2},1]$ we investigate well-posedness of the equation
\[
\brac{ \i \partial_t + \Dels{s} } u = \brac{\Ds{1-2s} |u|^2}\ \Ds{2s-1} u
\]
under small initial data $\|u(0)\|_{\dot{H}^{\frac{n-2s}{2}}(\R^n)} \ll 1$.

This equation is a model equation for for $s$-Schr\"odinger map equation
\[
\partial_t \vec{\psi} = \vec{\psi} \wedge \Dels{s} \vec{\psi}: \quad \psi: \R^n \times \R \to \S^{2},
\]

\end{abstract}
\maketitle
\tableofcontents

\section{Introduction}
Let $s \in (\frac{1}{2},1]$. In this work we are interested in the short-time well-posedness of the equation
\begin{equation}\label{eq:mainpdeR}
\brac{ \i \partial_t + \Dels{s} } u = \brac{\Ds{-(2s-1)} |u|^2}\ \Ds{2s-1} u : \quad \text{in $\R^n \times \R$}
\end{equation}
for maps $u: \R^n \times \R \to \C$, subject to initial data in $\dot{H}^{\frac{n-2s}{2}}(\R^n)$.

Our main result is the following short-term wellposedness result

\begin{theorem}\label{th:main}
Fix $s \in (\frac{1}{2},1)$ and let $n \geq 4$. There exists small $\eps > 0$ such that the following holds:

Assume $u_0 \in \dot{H}^{\frac{n{-2s}}{2}}(\R^n)$ such that
\[
\|u_0\|_{\dot{H}^{\frac{n{-2s}}{2}}(\R^n)} \leq \eps 
\]
Then there exists a solution $u \in L^\infty_t ((-1,1),\dot{H}^{\frac{n-2s}{2}}(\R^n))$ such that
\begin{equation}\label{eq:mainpde}
\brac{ \i \partial_t + \Dels{s} } u = \brac{\Ds{-(2s-1)} |u|^2}\ \Ds{2s-1} u \quad \text{in }\R^d \times (-1,1)
\end{equation}
with
\[
 \sup_{t} \|u(t)\|_{\dot{H}^{\frac{n-2s}{2}}(\R^n)} \leq C \|u_0\|_{\dot{H}^{\frac{n{-2s}}{2}}(\R^n)}.
\]
where $C$ depends only on the dimension $n$ and order of differentiability $s$.
\end{theorem}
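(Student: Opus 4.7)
The plan is to prove \cref{th:main} by Banach's fixed-point theorem applied to the Duhamel operator
\[
\Phi(u)(t) := e^{\i t \Dels{s}} u_0 - \i \int_0^t e^{\i(t-\tau)\Dels{s}}\, N(u)(\tau)\, d\tau, \qquad N(u) := \bigl(\Ds{-(2s-1)} |u|^2\bigr)\, \Ds{2s-1} u,
\]
in a small ball of a resolution space $X \hookrightarrow L^\infty_t\bigl((-1,1); \dot H^{(n-2s)/2}_x\bigr)$. The Sobolev exponent $(n-2s)/2$ is the scaling-critical regularity for \eqref{eq:mainpdeR}: under $u_\lambda(x,t):=\lambda^{s} u(\lambda x, \lambda^{2s} t)$ both the equation and the $\dot H^{(n-2s)/2}$ norm are invariant. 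Because every estimate is scale-invariant, the a priori bound $\|u\|_X \leq C \|u_0\|_{\dot H^{(n-2s)/2}}$ follows immediately from the fixed-point identity once a contraction on a ball of radius $\sim \eps$ is established.

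To construct $X$ we use Strichartz estimates for the fractional Schr\"odinger propagator $e^{\i t \Dels{s}}$. For $s \in (\tfrac12,1)$, stationary phase applied to the symbol $e^{\i t|\xi|^{2s}}\chi_k$ yields the frequency-localized dispersive estimate
\[
\|e^{\i t\Dels{s}} P_k f\|_{L^\infty_x} \lesssim |t|^{-n/2}\, 2^{n(1-s)k}\, \|P_k f\|_{L^1_x},
\]
and a $TT^*$ argument combined with the Christ--Kiselev lemma then produces a family of Strichartz estimates carrying a frequency-dependent loss of $2^{n(1-s)k}$. Accordingly we define $X$ by combining $L^\infty_t \dot H^{(n-2s)/2}_x$ with a finite family of frequency-localized Strichartz norms $\|P_k u\|_{L^{p_j}_t L^{q_j}_x}$ weighted by $2^{k\sigma_j}$, whose exponents are chosen to match the scaling of $\dot H^{(n-2s)/2}$ and to be strong enough to close the nonlinear estimate below. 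This yields the linear bounds
\[
\|e^{\i t\Dels{s}} u_0\|_X \lesssim \|u_0\|_{\dot H^{(n-2s)/2}}, \qquad \Big\|\int_0^t e^{\i(t-\tau)\Dels{s}} F\, d\tau\Big\|_X \lesssim \|F\|_Y,
\]
for an appropriate dual Strichartz space $Y$.

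The heart of the argument is the trilinear nonlinear bound
\[
\|N(u) - N(v)\|_Y \lesssim \bigl(\|u\|_X + \|v\|_X\bigr)^2 \|u - v\|_X,
\]
derived by Littlewood--Paley decomposing the three copies of $u$ and applying a Bony paraproduct splitting to $|u|^2 = u\bar u$. Three frequency regimes arise, according to whether the output's high frequency is carried by the factor $|u|^2$, by $\Ds{2s-1} u$, or shared between them. When $|u|^2$ sits at high frequency $2^{k_H}$ and $\Ds{2s-1} u$ at low frequency $2^{k_L}\ll 2^{k_H}$, the smoothing $\Ds{-(2s-1)}$ produces a gain of $2^{-(2s-1)k_H}$ while $\Ds{2s-1}$ only contributes a loss of $2^{(2s-1)k_L}$; H\"older in space-time combined with the Strichartz norms in $X$ then closes this case, and the balanced regime, where the two factors share a common frequency scale, is handled analogously with the derivatives cancelling on that scale.

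The main obstacle is the remaining low-high regime, in which $|u|^2$ lives at low frequency $2^{k_L}$ while $\Ds{2s-1} u$ lives at the output scale $2^{k_H}\gg 2^{k_L}$: the gain from $\Ds{-(2s-1)}$ is wasted at the low scale while the loss from $\Ds{2s-1}$ hits the high scale, producing the formal deficit $2^{(2s-1)(k_H-k_L)}$. To close the estimate we appeal to a paradifferential/commutator identity of Kato--Ponce type that transfers the excess derivative from $P_{k_H} u$ onto the low-frequency factor, yielding an extra gain of $2^{-(k_H-k_L)}$ per derivative moved across the Littlewood--Paley projector. Combined with the high-frequency Strichartz smoothing and Bernstein's inequality applied to the low-frequency factor, this produces a summable trilinear bound. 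The analogous estimate for $u-v$ follows by multilinearity, and Banach's fixed-point theorem in the ball $\{u\in X:\|u\|_X\le 2C\eps\}$ then yields the unique small solution with the claimed a priori bound.
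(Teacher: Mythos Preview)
Your overall architecture (Duhamel + contraction in a scaling-critical resolution space) is right, and your frequency trichotomy for the trilinear estimate is the correct decomposition. But the low--high case, which you correctly identify as the obstacle, cannot be closed by the mechanism you propose, and this is a genuine gap.

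In that regime the nonlinearity is $\bigl(\Ds{-(2s-1)} P_{\le k_L}|u|^2\bigr)\cdot \Ds{2s-1} P_{k_H} u$ with $k_H\gg k_L$. There is no commutator or Kato--Ponce structure here to exploit: the operator $\Ds{2s-1}$ acts on a \emph{single} factor $u$, not on a product, so there is no Leibniz expansion that would let you ``transfer'' derivatives onto the low-frequency factor. Kato--Ponce estimates distribute $\Ds{\alpha}(fg)$; they say nothing about $f\cdot \Ds{\alpha}g$. Likewise, the paraproduct commutator $[P_{k_H},F]$ gains a factor $2^{-k_H}\|\nabla F\|_{L^\infty}$, but here the output projection already commutes with the low-frequency multiplication up to harmless terms, and this does not touch the $\Ds{2s-1}$ on $u$. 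So you are genuinely stuck with a $2^{(2s-1)(k_H-k_L)}$ deficit, and no amount of Bernstein or Strichartz smoothing recovers it: the best Strichartz gain at frequency $2^{k_H}$ is governed by the dispersive decay $|t|^{-n/2}$, which after scaling never yields more than Sobolev-type smoothing and falls short of the full $2s-1$ derivatives needed on the high-frequency input. This is exactly why the paper says that ``simple Strichartz estimates do not suffice'' for $s\in(\tfrac12,1)$.

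The paper closes this case by a different mechanism: local smoothing and maximal function estimates in the spirit of Ionescu--Kenig. One introduces, alongside the $X^{s,b}$-type component $X_k$, directional spaces $Y_k^e$ built on $L^1_e L^2_{e^\perp,t}$ for $e\in\Sset^{n-1}$, and sets $Z_k=X_k+\sum_e Y_k^e$. The key estimates are the local smoothing bound $\|\vartheta_e \Delta_k f\|_{L^\infty_e L^2_{e^\perp,t}}\aleq 2^{-k(2s-1)/2}\|f\|_{Z_k}$ (\Cref{la:locsmooth}) and the maximal bound $\|\Delta_k f\|_{L^2_e L^\infty_{e^\perp,t}}\aleq 2^{k(n-1)/2}\|f\|_{Z_k}$ (\Cref{la:locmaxest}). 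In the low--high piece one places the output in $Y_k^e$ (gaining $2^{-k_H(2s-1)/2}$ from its definition), applies H\"older as $L^1_e L^\infty_{e^\perp,t}\times L^\infty_e L^2_{e^\perp,t}$, uses local smoothing on the high-frequency $\Ds{2s-1}u$ factor (another $2^{-k_H(2s-1)/2}$), and the maximal estimate on $\Ds{-(2s-1)}|u|^2$. The two gains of $(2s-1)/2$ exactly cancel the $2s-1$ derivative loss. Proving these smoothing/maximal bounds for the $Z_k$ spaces requires a nontrivial analysis of the multiplier $N_e(\xi',\tau)=((-\tau)^{1/s}-|\xi'|^2)^{1/2}$ on frequency cones, which is the technical heart of the paper; your proposal does not engage with any of this machinery.
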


\Cref{eq:mainpdeR} is of interest as the model equation for the $s$-Schr\"odinger maps equation
\begin{equation}\label{eq:sschroedinger}
\partial_t \vec{\psi} = \vec{\psi} \wedge \Dels{s} \vec{\psi}: \quad \psi: \R^n \times \R \to \S^{2}.
\end{equation}
For $s=1$ \eqref{eq:sschroedinger} is the Schr\"odinger maps equation, see e.g. \cite{BIKTAnnals} and references within. Indeed, for the case $s=1$ \Cref{th:main} follows from the arguments in e.g. \cite{IKCMP,BIK07}.

For $s=\frac{1}{2}$ \eqref{eq:sschroedinger}  is the halfwave map equation whose analysis has been initiated in \cite{KS18,LS2018},  with several results since, e.g. \cite{Berntson_2020,SWZ21,GL18,Berntson_2020,LenzmannSok20,KK21,GerardLenzmann,ESRS,Silvino24,SS02}. For results especially for well-posedness we refer to \cite{KS18,KK21,Liu2021,Liu2023,Marsden24,FS24,GerardLenzmann} and references within. In this halfwave map case, $s=\frac{1}{2}$, taking another time derivative, the halfwave map equation can be reduced to an equation reminiscent of the wave map equation, see \cite{T98,Tao01I,Tao01II}, and this is the basis several well-posedness results mentioned above (in particular those in higher dimension). 

For $s \in (1/2,1)$, \eqref{eq:sschroedinger} can instead be written as a Schr\"odinger system, that -- under certain choices of frames -- becomes similar to \eqref{eq:mainpdeR}. We will discuss the details and implications for $s$-Schr\"odinger maps in a future work. Here, we focus on the model equation \eqref{eq:mainpdeR} whose well-posedness we believe to be interesting in its own right.

An analogous result for \Cref{th:main} holds also for the case $s=\frac{1}{2}$ and follows from standard Strichartz estimates -- observe that there are no derivatives on the right-hand side if $s =\frac{1}{2}$ -- this was used for well-posedness for the halfwave maps e.g. \cite{KS18,KK21,FS24}.

Indeed, for $s \in (1/2,1)$, the derivatives of the right-hand side of the equation \eqref{eq:mainpde} mean that ``simple'' Strichartz estimates do not suffice to treat this PDE. Instead we need to consider estimates on frequency cones to obtain local smoothing estimates. These arguments are strongly inspired from the techniques developed for the Schr\"odinger map equation \cite{IKCMP,BIK07,BIKTAnnals}. For $s=1$ this was used in particular in \cite{IKDIE,IKCMP} and our works follows many of their ideas, extending them to the case $\frac{1}{2}<s<1$.

{\bf Outline.} The paper is organized as follows: In \Cref{s:prelims} we introduce our notation and discuss the resolution spaces $Z_k$ used in our argument. In \eqref{eq:weirdNdef} we also introduce the multiplier $N$ that on frequency cones in direction $e \in \S^{n-1}$ is crucial for the local smoothening estimates. In \Cref{s:resspaceest} we discuss the basic resolution space estimates, in particular the aforementioned smoothening and maximal estimates. In \Cref{s:linearest} we discuss the homogeneous and inhomogeneous linear estimates of the operator $\i \partial_t +\Dels{s}$ with respect to our resolution spaces. In \Cref{s:trilinearest} we then estimate the right-hand side of \eqref{eq:mainpdeR}. Lastly, in \Cref{s:existence} we put things together to obtain \Cref{th:main}. There we also discuss the other properties regarding well-posedness that follow from our arguments, see \Cref{th:mainshort}.

\subsection*{Acknowledgement} Discussions with S. Herr and E. Eyeson are gratefully acknowledged. This project was partially funded by NSF Career DMS-2044898. A.S. is an Alexander-von-Humboldt Fellow.

\section{Notation, Conventions, Definition of spaces and Preliminaries}\label{s:prelims}
Generally, we will use Greek letters $\xi,\tau$ for phase space, and Roman letters $x,t$ for physical space.

Throughout this work we assume, unless otherwise stated, that $n \geq 4$ and $s \in (\frac{1}{2},1)$. Constants, like $C$ can change from line to line and generally may depend on dimension and $s$. We use the notation $A \aleq B$ if there is a multiplicative, nonnegative constant $C$ such that $A \leq C B$, and $A \aeq B$ if $A \aleq B$ and $A \ageq B$.

We will write the Fourier transform as 
\[
 \mathcal{F}_{\R^k} f(\zeta) = \int_{\R^k} e^{-\i \zeta \cdot x} f(\zeta) d\zeta
\]
and it's inverse (up to a multiplicative constant which we will ignore for the sake of readability)
\[
 \mathcal{F}_{\R^k}^{-1} f(\zeta) = \int_{\R^k} e^{+\i \zeta \cdot x} f(\zeta) d\zeta
\]
When the space is clear we will drop the subscript. Another abuse of notation is that we will ``define'' the Fourier symbol of the operator $\i \partial_t + \Dels{s}$ as
 \[
   \mathcal{F} \brac{(\i \partial_t + \Dels{s})u}(\xi,\tau) =  -(\tau +  |\xi|^{2s})\mathcal{F} u(\xi,\tau)
\]
the multiplicative constant that should be on the right-hand side plays no role in the analysis.

% 
% We will denote the Fourier transform 
% \[
%  \mathcal{F}_{\R^k}g(\zeta) := \int_{\R^k} e^{-\i  \zeta \cdot x} g(x) dx
% \]
% and (with an abuse of notation we drop the $2\pi$ in the phase)
% \[
%  \mathcal{F}^{-1}_{\R^k}g(x) := \int_{\R^k} e^{\i  \zeta \cdot x} g(\zeta) d\zeta.
% \]
Sometimes, for simplicity, we will denote the Fourier transform as follows
\[
 f(\hat{\xi},t) = \mathcal{F}_{\R^{n}} f(\cdot,t) (\xi)
\]
\[
 f(\hat{\xi},\hat{\tau}) = \mathcal{F}_{\R^n \times \R} f(\cdot,\cdot) (\xi,\tau)
\]
\[
 f(x,\hat{\tau}) = \mathcal{F}_{\R} f(x,\cdot) (\tau).
\]

\subsection*{Frequency projections}
Let $\eta$ be the typical Littlewood-Paley bump function, $\eta \in C_c^\infty((-2,2),[0,1])$, $\eta \equiv 1$ in $[-1,1]$, $\eta(r) = \eta(|r|)$. Also let $\varphi(r) := \eta(r/) - \eta(r/2)$ and observe that we have 
\[
 1 = \eta(r) + \sum_{k=1}^\infty \varphi(r/2^{k}) \quad \forall r \in [0,\infty).
\]
For vectors $\xi \in \R^n$ we will tacitly mean 
\[
\eta(\xi) \equiv \eta(|\xi|), \quad \varphi(\xi) := \varphi(|\xi|).
\]
We use the notation 
$
\eta_{\leq k}(r) = \eta(r/2^{k})
$,
and 
$$
\eta_{[k_{1},k_{2}]}(r) = \sum_{m=k_{1}}^{k_{2}} \varphi(r/2^{m})
$$

We also use $\eta^+$ to restrict to $r > 0$, e.g. for $k_1,k_2 \in \Z$

\[ \eta^{+}_{[k_{1},k_{2}]}(r) = \ind_{[0,\infty)}(r) \eta_{[k_{1},k_{2}]}(r) \]

For $f: \R^{n+1} \to \R$, we denote the frequency projections by
\[
 \Delta_{k} f(x,t) := \mathcal{F}^{-1}_{\R^{n+1}}(\varphi(\xi/2^k) \mathcal{F}_{\R^{n+1}}f(\xi,\tau))  \quad k \in \Z,
\] 
and for $j \in \N \cup \{0\}$ we denote the modulation 
\[
 Q_j f(x,t) :=  \begin{cases}
                   \mathcal{F}_{\R^{n+1}}^{-1} \brac{\eta({\tau {+} |\xi|^{2s}})\,  \mathcal{F}_{\R^{n+1}} f(\xi,\tau)}   \quad &j = 0, \\
                   \mathcal{F}_{\R^{n+1}}^{-1} \brac{\varphi\brac{{\tau {+} |\xi|^{2s}}/2^j}\,  \mathcal{F}_{\R^{n+1}} f(\xi,\tau)} \quad & j \geq 1.
                   \end{cases}
\]
We also set  
\[
 \Delta_{\leq k}  := \sum_{\ell =-\infty}^{k} \Delta_{\ell} 
\]
and similarly for $Q_{\leq 2^j}$.

We will need one more decomposition: fix an even function
\begin{equation}\label{IntegFunc}
     \chi \in C_c^\infty((-2,2),[0,1]), \quad \chi \equiv 1 \text{ in } [-2/3,2/3]
\end{equation}

such that
\[
 \sum_{\tell \in \Z} \chi(x-\tell) = 1 \quad \forall x \in \R.
\]
We set 
\[
 \chi_{k;\tell}(\xi) := \chi( (\xi^1 -\tell)/2^k)\, \chi( (\xi^2 -\tell)/2^k)\, \ldots \chi( (\xi^n -\tell)/2^k)
\]
Then we have 
\[
 \sum_{\tell \in 2^k \Z^n} \chi_{k;\tell}(\xi) = 1 \quad \forall \xi \in \R^n, k \in \Z.
\]
Set 
\begin{equation}\label{eq:Pktell}
 P_{k,\tell} f:= \mathcal{F}^{-1}\brac{\chi_{k,\tell}\mathcal{F} f}
\end{equation}
Thus, for any $k \in \Z$
\begin{equation}\label{eq:weirdfinedecomp}
 f = \sum_{\tell \in 2^k \Z^n} P_{k,\tell} f.
\end{equation}
Observe that $P_{k,\tell}$ is the only projection operator in this work that is \emph{not} rotation invariant.

\subsection{Resolution spaces}
Throughout this paper we fix the constant
\[
 \bc{} \in (0,1)
\]
and 
\[
 \mathscr{E} \subset \S^{n-1},
\]
a finite, but suitably dense set of vectors in $\S^{n-1}$
so that we have the conclusion of the following ``geometric observation'' Lemma.
\begin{lemma}\label{la:geometricobserv}
For any $\bc{} \in (0,1)$ there exists a finite subset $\mathscr{E} \subset \S^{n-1}$ such that the following holds:

 \item For any $\xi \in \R^n$ there exists $e \in \mathscr{E}$ such that 
 \[
  \abs{\xi-\abs{\xi}e} \leq \sqrt{2-2\bc{}} |\xi|.
 \]
and in particular
 \[
  \langle \xi,e\rangle \geq \bc{} |\xi|.
 \]

 Moreover there exists a decomposition of unity on the $\S^{n-1}$-sphere denoted by $(\vartheta_e)_{e \in \mathscr{E}}$ with 
 \[
  \supp \vartheta_e \subset \{\xi \in \S^{n-1}: \quad \langle \xi, e\rangle \geq \bc\}
 \]
and 
 \[
  \sum_{e \in \mathscr{E}} \vartheta_e(\xi/|\xi|) = 1 \quad \forall \xi \in \R^n \setminus \{0\}
 \]

 We will denote 
 \[
  \vartheta_{\xi \cdot e \ageq |\xi|} := \vartheta_{e} (\xi/|\xi|)
 \]
and with some abuse of notation will also use this notation for $e \in \S^{n-1}$ for $e \not \in \mathscr{E}$ as simply a cutoff function on the cone.

Observe that $\vartheta_{\xi \cdot e \ageq |\xi|} \not \in C^\infty$ since it has a singularity at $0$.
\end{lemma}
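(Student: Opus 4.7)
The plan is to exploit the compactness of $\S^{n-1}$ and a direct algebraic identity. First, I would observe the equivalence
\[
 |\xi - |\xi|e|^2 = 2|\xi|^2\left(1 - \left\langle \tfrac{\xi}{|\xi|}, e\right\rangle\right),
\]
so that the estimate $|\xi-|\xi|e| \leq \sqrt{2-2\bc}\,|\xi|$ is the same as $\langle \xi, e\rangle \geq \bc|\xi|$. Thus both conditions in the lemma reduce to producing a covering of $\S^{n-1}$ by spherical caps of the form $C_e := \{v \in \S^{n-1}: \langle v,e\rangle > \bc\}$, and a smooth partition of unity subordinate to it.

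Next, I would fix some $\bc' \in (\bc,1)$ and consider the open cover $\{U_e\}_{e \in \S^{n-1}}$ of $\S^{n-1}$ given by $U_e := \{v \in \S^{n-1}: \langle v,e\rangle > \bc'\}$. Since $e \in U_e$ for each $e$, this is indeed a cover, and by compactness of $\S^{n-1}$ we may extract a finite subcover indexed by a finite set $\mathscr{E} \subset \S^{n-1}$. For any $\xi \in \R^n \setminus \{0\}$, some $e \in \mathscr{E}$ satisfies $\xi/|\xi| \in U_e$, which gives $\langle \xi,e\rangle > \bc'|\xi| \geq \bc|\xi|$, yielding the first claim.

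For the partition of unity, I would invoke the standard smooth partition of unity construction on the compact manifold $\S^{n-1}$ subordinate to $\{U_e\}_{e\in\mathscr{E}}$, obtaining smooth functions $\vartheta_e \in C^\infty(\S^{n-1})$ with $\supp \vartheta_e \subset U_e \subset \{v: \langle v,e\rangle \geq \bc\}$ and $\sum_{e\in\mathscr{E}} \vartheta_e \equiv 1$ on $\S^{n-1}$. Composing with $\xi \mapsto \xi/|\xi|$ then produces the desired identity for $\xi \in \R^n \setminus \{0\}$, and the final remark on the singularity at $0$ follows because $\xi \mapsto \xi/|\xi|$ itself has no continuous extension to the origin (unless $\vartheta_e$ were constant in a neighborhood of some point, which is generically not the case).

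There is no real obstacle here beyond the soft topological ingredients; the only mild point to track is the strict-versus-nonstrict inequality, which is handled by picking the auxiliary threshold $\bc'$ strictly between $\bc$ and $1$ so that the supports of $\vartheta_e$ lie comfortably inside the closed cap $\{v:\langle v,e\rangle \geq \bc\}$.
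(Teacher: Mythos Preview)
Your proof is correct and is exactly the standard compactness-plus-partition-of-unity argument one would expect. The paper itself does not supply a proof for this lemma; it is stated as an elementary ``geometric observation'' and left to the reader, so there is nothing further to compare.
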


For $f: \R^{n+1} \to \C$ and $k \in \Z$ we define the $X_k$-spaces
\begin{equation}\label{eq:Xdef}
 \|f\|_{X_k} := \begin{cases}
                                       \sum_{j=0}^\infty 2^{\frac{j}{2}} \|Q_{j} f\|_{L^{2}_{t,x}}  \quad &\text{if $\supp \hat{f} \subset \{(\xi,\tau): 2^{k} \leq |\xi| \leq 2^{k+1} \}$ } \\
                                       \infty \quad \text{otherwise}.
                                      \end{cases}
\end{equation}

Also $k \in \Z$ and $e \in \mathscr{E}$ we define the $Y_{k}^{e}$ on maps whose frequency support in the cone
\[
 \|f\|_{Y_{k}^{e}} = \begin{cases}
                                      2^{-k\frac{2s-1}{2}} \|(\i \partial_t + \Dels{s}+\i) f \|_{L^1_e L^2_{t,e^\perp}} \quad &\text{if }\supp \hat{f} \subset \{(\xi,\tau): 2^{k} \leq |\xi| \leq 2^{k+1} \}\\
                                      & \text{and }\supp \hat{f} \subset \{\xi: \xi \cdot e > 0 \text{ and } \xi \cdot e \geq \bc{}  2^{k}\} \\
                                       \infty \quad \text{otherwise}
                                      \end{cases}
\]

Then we define the space
\[
 Z_{k} := X_{k}^{\frac{1}{2}} + \sum_{e \in \mathscr{E}} Y_{k}^e
\]
equipped with the norm
\[
 \|f\|_{Z_{k}} = \inf_{f = f_1 + \sum_{e} f_{e}} \brac{\|f_1\|_{X_{k}^{\frac{1}{2}}} + \sum_{e \in \mathscr{E}}\|f_e\|_{Y_{k}^e}}
\]

For $\sigma > 0$ the main space resolution space $F^\sigma$ is induced by the norm
\begin{equation}\label{eq:Fsigma}
\|u\|_{F^\sigma} := \brac{\sum_{k = -\infty}^\infty 2^{2k\sigma} \|\Delta_{k} u\|_{Z_k}^{2} }^{\frac{1}{2}},
\end{equation}
and the space for the right-hand side is induced by the norm
\begin{equation}\label{eq:Nsigma}
\|F\|_{N^\sigma} := \brac{\sum_{k=-\infty}^\infty \brac{2^{k\sigma} \|(\i \partial_t +\Dels{s} + \i)^{{-1}} \Delta_{k} F\|_{Z_k}}^{{2}} }^{\frac{1}{2}}
\end{equation}

\subsection{Frequency estimates on the cone}
Since there are derivatives on the right-hand side of \eqref{eq:mainpdeR}, the estimates for the proof of \Cref{th:main} rely on the local smoothening property which holds for maps whose frequency is restricted to cones $\langle \xi, e\rangle \geq \bc{} |\xi|$, for a given direction $e \in \mathscr{E}$. The direction $e$ serves as the ``new time'', and the ``new space'' directions are the old time $t$ and the directions $e^\perp$. To obtain these estimates we need to obtain a new equation in the new space time and this is related to the quantity $N(\xi_{e}',\tau)$ below. The local smoothening estimates will be proved in \Cref{la:locsmooth}.

Fix $e \in \S^{n-1}$ and let $\xi \in \R^n$, $\tau \in \R$.
We write $\xi = \xi_{e,1} e + \xi_e'$ in the following way:
\begin{equation}\label{eq:xieande1decomp}
 \xi_{e}' := \xi - \langle \xi,e\rangle e, \quad \text{and} \quad  \xi_{e,1} = \langle \xi,e\rangle
\end{equation}
We also introduce for $\zeta' \in e^\perp \subset \R^n$, $\tau < 0$ such that $\brac{-\tau}^{\frac{1}{s}} >|\zeta'|^{2} $
\begin{equation}\label{eq:weirdNdef}
 N_e(\zeta',\tau) := \brac{\brac{-\tau}^{\frac{1}{s}} -|\zeta'|^{2}}^{\frac{1}{2}}
\end{equation}

When $e = (1,0,\ldots,0) \in \R^n$ we will, with a slight abuse of notation, identify for $\xi = (\xi_1,\xi') \in \R^n$ the vectors $\xi' \hat{=} \xi_{e}'\in \R^{n-1}$, and of course we have $\xi_1 \hat{=} \xi_{e,1} \in \R$.

Observe that if $s=1$
\[
-(|\xi|^2 + \tau) = \brac{N_e(\xi_{e}',\tau)+\xi_{e,1}}  \brac{N_e(\xi_{e}',\tau)-\xi_{e,1}},
\]
and if $\xi_{e,1} \geq \bc |\xi|$ the term $\brac{N_e(\xi_{e}',\tau)+\xi_{e,1}} $ is elliptic, whereas the $\brac{N_e(\xi_{e}',\tau)-\xi_{e,1}}$ corresponds to a Schr\"odinger term where the new time is $\xi_{e,1}$. 

The main role of $N_e(\xi',\tau)$ from \eqref{eq:weirdNdef} is to suitably emulate a similar property for $s <1$. This is captured in the following elementary

\begin{lemma}\label{la:Nproperties}
Let $s \in (0,1]$, $C_1 \geq 1$, $\tilde{c}> 0$. There exist $c',c'_2 > 0$ such that $ 2^{-c'} \ll \min\{ 2^{-\tilde{c}},C^{-1}_{1} \}$  and the following holds for any $k \in \Z$, $e \in \S^{n-1}$, $\xi \in \R^n$, $\tau \in \R$.

If
\begin{itemize}
\item $(C_1)^{-1} 2^k \leq |\xi| \leq C_1 2^k$, and
\item 
% \ahmed{No absolute value, result is different if there is absolute value. In particular, part (3) of the Lemma changes.} Yes, of course. That was a typo in haste.
$\xi_{e,1}\geq 2^{k-\tilde{c}}$, and
\item $|\tau+|\xi|^{2s}| \leq 2^{2sk-c'}$
\end{itemize}
then
\begin{enumerate}
 \item \label{it:Nprops:1} We have 
 \begin{equation}\label{P3} 
  \tau + | \xi' |^{2s} \aeq-2^{2sk}
 \end{equation}
and in particular $\tau < 0$, $\brac{-\tau}^{\frac{1}{s}} >|\xi_{e}'|^{2}$, i.e. $N_e(\xi_e',\tau)$ is well defined
 \item \label{it:Nprops:2} $N(\xi_{e}',\tau) \aeq_{C_1,\tilde{c},c'} 2^k$, and
 \item \label{it:Nprops:3} we also have 
 \begin{equation}\label{eq:Xi1Nrel}
 |\xi_{e,1} - N(\xi_{e}',\tau)| \aeq_{C_1,\tilde{c}} 2^{-k(2s-1)} |\tau + |\xi|^{2s}|.
\end{equation}
\end{enumerate}

\end{lemma}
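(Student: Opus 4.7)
\textbf{Proof plan for \Cref{la:Nproperties}.} The strategy is purely algebraic: decompose $|\xi|^2 = \xi_{e,1}^2 + |\xi_e'|^2$, pick $c'$ large enough that every error of size $2^{2sk-c'}$ can be absorbed, and then apply the mean value theorem (MVT) twice, once to $r \mapsto r^s$ and once to $r \mapsto r^{1/s}$.

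First I would prove \eqref{it:Nprops:1}. Write $\tau + |\xi_e'|^{2s} = -(|\xi|^{2s} - |\xi_e'|^{2s}) + (\tau + |\xi|^{2s})$. Since the second summand is controlled by $2^{2sk - c'}$, it suffices to show $|\xi|^{2s} - |\xi_e'|^{2s} \aeq 2^{2sk}$. The upper bound is immediate from $|\xi|^{2s} \leq C_1^{2s} 2^{2sk}$. For the lower bound, apply MVT to $r \mapsto r^s$ on the interval $[|\xi_e'|^2, |\xi|^2]$: since $r^{s-1}$ is monotone decreasing for $s \leq 1$,
\[
|\xi|^{2s} - |\xi_e'|^{2s} \;\geq\; s\, |\xi|^{2(s-1)} \bigl(|\xi|^2 - |\xi_e'|^2\bigr) \;=\; s\,|\xi|^{2(s-1)}\,\xi_{e,1}^2 \;\geq\; s\, C_1^{2(s-1)}\, 2^{2sk - 2\tilde{c}}.
\]
Choosing $2^{-c'} \ll \min\{2^{-2\tilde c}, C_1^{-2s}\}$ gives $\tau + |\xi_e'|^{2s} \aeq -2^{2sk}$, which in turn implies $\tau<0$, $(-\tau)^{1/s} > |\xi_e'|^2$, and hence $N_e(\xi_e',\tau)$ is well-defined.

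For \eqref{it:Nprops:2}, the hypothesis on $|\tau + |\xi|^{2s}|$ gives $-\tau = |\xi|^{2s}(1 + O(2^{-c'}))$, so raising to the $1/s$ and expanding, $(-\tau)^{1/s} = |\xi|^2 + O(|\xi|^2\, 2^{-c'})$. Substituting into the definition,
\[
N_e(\xi_e',\tau)^2 \;=\; (-\tau)^{1/s} - |\xi_e'|^2 \;=\; \xi_{e,1}^2 + O\!\bigl(C_1^2\, 2^{2k - c'}\bigr),
\]
and since $\xi_{e,1}^2 \geq 2^{2k - 2\tilde c}$ dominates the error for $c'$ large, $N_e(\xi_e',\tau)^2 \aeq 2^{2k}$.

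Finally, for \eqref{it:Nprops:3}, observe the clean identity
\[
\xi_{e,1}^2 - N_e(\xi_e',\tau)^2 \;=\; \xi_{e,1}^2 + |\xi_e'|^2 - (-\tau)^{1/s} \;=\; |\xi|^2 - (-\tau)^{1/s}.
\]
Apply MVT now to $r \mapsto r^{1/s}$ on the interval with endpoints $|\xi|^{2s}$ and $-\tau$; both endpoints are $\aeq 2^{2sk}$ by \eqref{P3} and the hypothesis, so the derivative $\frac{1}{s} r^{1/s - 1}$ is $\aeq 2^{2k(1-s)}$ throughout, giving
\[
\bigl|\,|\xi|^2 - (-\tau)^{1/s}\,\bigr| \;\aeq\; 2^{2k(1-s)}\,\bigl|\tau + |\xi|^{2s}\bigr|.
\]
Dividing by $\xi_{e,1} + N_e(\xi_e',\tau) \aeq 2^k$ (both summands are positive and comparable to $2^k$ by \eqref{it:Nprops:2} and the hypothesis) yields \eqref{eq:Xi1Nrel}. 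The main obstacle is simply bookkeeping the smallness condition on $c'$ so that all relative errors beat the geometric lower bounds coming from $\tilde c$ and $C_1$; no delicate estimate is required.
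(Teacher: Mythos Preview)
Your proof is correct and follows the same overall strategy as the paper—elementary mean-value arguments on power functions—so the differences are tactical rather than structural. For \eqref{it:Nprops:1} you use MVT on $r\mapsto r^s$ directly, while the paper rescales and invokes $(1+a)^s-1\aeq a^s$; these are interchangeable. For \eqref{it:Nprops:2} your route is slightly cleaner: you expand $(-\tau)^{1/s}=|\xi|^2+O(2^{2k-c'})$ and read off $N^2=\xi_{e,1}^2+O(2^{2k-c'})$, whereas the paper writes $N^2=(a+b)^{1/s}-b^{1/s}$ with $a=-\tau-|\xi_e'|^{2s}$, $b=|\xi_e'|^{2s}$ and bounds this via the integral remainder $\frac{a}{s}\int_0^1(ta+b)^{1/s-1}\,dt$. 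For \eqref{it:Nprops:3} the two arguments are dual: you use the identity $\xi_{e,1}^2-N^2=|\xi|^2-(-\tau)^{1/s}$ and apply MVT to $r\mapsto r^{1/s}$, while the paper uses $\tau+|\xi|^{2s}=(|\xi_e'|^2+\xi_{e,1}^2)^s-(|\xi_e'|^2+N^2)^s$ and applies MVT to $t\mapsto(|\xi_e'|^2+t)^s$; both land on the same factor $2^{-k(2s-1)}$ after dividing by $\xi_{e,1}+N\aeq 2^k$.
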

\begin{proof}
For simplicity of notation we assume w.l.o.g. $e = (1,0,\ldots,0)$.

We begin by proving \eqref{it:Nprops:1}. First we claim that we have 
\begin{equation}\label{P1}
\begin{split}
0 \leq | \xi |^{2s} - | \xi' |^{2s}  \aeq2^{2sk}.
\end{split}
\end{equation}
Since $|\xi_1| \aeq 2^k$ we need to establish equivalently,
\[
 \brac{| \xi' |^{2} + |\xi_1|^2}^s  - | \xi' |^{2s}  \aeq|\xi_1|^{2s}
\]
Dividing by $|\xi'|^{2s}$ and taking $a = \frac{|\xi_1|}{|\xi'|} \ageq 1$ we see that \eqref{P1} follows from the following elementary fact which holds for any $s \in (0,1]$,
\[
 \brac{1 + a}^s - 1 \aeq_s a^s \quad \forall a \ageq 1.
\]

Next, using \eqref{P1} and the assumptions made on $(\xi,\tau)$, we have
\begin{equation*}
\begin{split}
& 2^{2sk-c'} \geq  \tau + | \xi |^{2s}  
\\
& = \tau + | \xi' |^{2s} + \left( | \xi |^{2s} - | \xi' |^{2s} \right)
\\
& \geq  \tau + | \xi' |^{2s} + C2^{2sk}
\end{split}
\end{equation*}
And thus for $c'$ suitably large, $ -\tau - | \xi' |^{2s} \ageq 2^{2sk}$.  We also have
$$
-\tau -| \xi' |^{2s} =-\tau - |\xi|^{2s}  + |\xi|^{2s} - |\xi'|^{2s} \aleq  2^{2sk}.
$$
We thus conclude \eqref{P3}. Observe that \eqref{P3} is impossible if $\tau \geq 0$, so we must have $\tau < 0$, and again since $-\tau - | \xi' |^{2s} > 0$ we must have $(-\tau)^{\frac{1}{s}} > |\xi'|^{2s}$, thus $N(\xi',\tau)$ is well-defined. This establishes \eqref{it:Nprops:1}.

We proceed to proving \eqref{it:Nprops:2}. Notice that by definition of $N(\xi',\tau)$ we have by putting $ a = \left( -\tau - | \xi' |^{2s} \right)$ and $ b = | \xi' |^{2s}$, using also that $s \leq 1$,
\begin{equation*}
\begin{split}
&N(\xi',\tau)^{2} = - \tau^{1/s} - | \xi' |^{2} 
\\
&= ( a+ b)^{1/s} - b^{1/s} 
\\
& = \frac{a}{s} \int_{0}^{1} ( ta + b)^{\frac{1}{s} -1} dt\\
& \overset{s \leq 1}{\geq} \frac{a^{\frac{1}{s}}}{s} \int_{0}^{1} t^{\frac{1}{s} -1} dt\\
& =c_{s} a^{\frac{1}{s}}
\\
& \aeq(- \tau - | \xi' |^{2s})^{1/s}\\
&\overset{\eqref{P3}}{\aeq} 2^k.
\end{split}
\end{equation*}
The other bound is even simpler
\begin{equation*}
N(\xi',\tau)^{2} = ( - \tau - | \xi' |^{2s} + | \xi' |^{2s})^{1/s} - | \xi' |^{2}\overset{\eqref{P3}}{\aleq}  (2^{2sk} + | \xi'|^{2s})^{1/s} - | \xi'|^{2} \aleq 2^{2k}.
\end{equation*}
This proves \eqref{it:Nprops:2}. 

We proceed to proving \eqref{it:Nprops:3}. We have
\begin{equation}\label{P4}
\begin{split}
& \tau + | \xi |^{2s}
\\
& = \left( | \xi' |^{2} + \xi_1^{2} \right)^{s}-\left( | \xi' |^{2} +N(\xi',\tau)^{2} \right)^{s}.
\end{split}
\end{equation}
Using the mean value theorem for the function $ g(t)= ( | \xi' |^{2}+ t)^{s}$ we find for some $r \in (\xi_1^2,N(\xi',\tau)^2)$, i.e. in view of \eqref{it:Nprops:2} and the assumption on $\xi_1 \aeq 2^k$ for some $r \aeq 2^{2k}$,
\begin{equation*}
\begin{split}
&\tau + | \xi |^{2s}  
\\
=& s (|\xi'|^2+r)^{s-1} \, \brac{\xi_{1} - N(\xi',\tau)} \brac{\xi_{1} + N(\xi',\tau)}\\
\end{split}
\end{equation*}
Since $|\xi'| \aleq 2^k$, $\xi_1 \aeq 2^k$, $N(\xi',\tau) \aeq 2^k$ this readily implies 
 $$
 | \tau + | \xi |^{2s} | \approx_{s} 2^{k(2s-1)} | \xi_{1} - N(\xi',\tau) |
 $$
This establishes \eqref{it:Nprops:3}.
\end{proof}

Next, we use \Cref{la:Nproperties} above, to write, on the cone, the Schr\"odinger operator $\i \partial_t + \Dels{s}$ (here represented by $\frac{1}{ \tau + | \xi |^{2s} +i}$ in terms of the ``new time direction'' $e$ (here represented by $\frac{1}{\xi_{e,1} - N(\xi_{e}',\tau)) + c}$).

\begin{lemma}\label{Nprop2}
Let $s \in (1/2,1]$. 

Fix $C_{1} \geq 1, \tilde{c} > 0$. There exists $c' >0$ such that  $ 2^{-c'} \ll \min\{ 2^{-\tilde{c}},C_{1}^{-1} \}$ and the following holds: For $k\in \mathbb{Z}$, $e \in \S^{n-1}$, and $ s \in (1/2,1]$ we have 
\begin{equation}\label{NNN}
\begin{split}
& \ind_{| \xi_{e}' | \leq C_{1} 2^{k}} \eta^{+}_{[k-\tilde{c},k+\tilde{c}]}(\xi_{e,1}) \frac{ \eta_{ \leq 2sk-c'}(\tau + | \xi |^{2s})}{ \tau + | \xi |^{2s} +\i} 
\\
& = \col{\ind_{\tau + |\xi_{e}'|^{2s} \leq -2^{2s(k-c')}}}\ind_{| \xi_{e}' | \leq C_{1} 2^{k}} \eta^{+}_{[k-\tilde{c},k+\tilde{c}]}(N(\xi_{e}',\tau)) \frac{ \eta_{ \leq k-c'}(\xi_{e,1} - N(\xi_{e}',\tau))}{ K(\xi_{e}',\tau) ( \xi_{e,1}- N(\xi_{e}',\tau) + \i 2^{-k\frac{2s-1}{2}})} + E(\xi,\tau)
\end{split}
\end{equation}
Where $ | E |\aleq  2^{-2sk} + | \tau + | \xi |^{2s} |^{-2}$ and $E$ is supported in the set
$$
S := \{(\xi,\tau) \in \R^{n+1} : | \tau + | \xi |^{2s} |\aleq  2^{2sk} \text{ and } \xi_{e,1} \aeq2^{k} \},
$$
$ K (\xi_{e}',\tau)  $ is given by 
\begin{equation}\label{eq:Kdef}
K(\xi_{e}',\tau) = 2s \left( N(\xi_{e}',\tau)^{2} + | \xi_{e}' |^{2} \right)^{s-1} N(\xi_{e}',\tau)
\end{equation}
and satisfies 
\begin{equation}\label{eq:Kdefest}
| K (\xi_{e}',\tau) | \aeq2^{k(2s-1)}
\end{equation}
\end{lemma}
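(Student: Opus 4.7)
The plan is to treat \eqref{NNN} as a direct algebraic identity obtained from a first–order expansion of $\tau + |\xi|^{2s}$ around the surface $\{\xi_{e,1} = N(\xi_e',\tau)\}$, with all remaining discrepancies between the two cutoffs and between the two regularization constants $\i$ and $\i K 2^{-k(2s-1)/2}$ absorbed into the error term $E$. \Cref{la:Nproperties} supplies every pointwise estimate I need.

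First I invoke \Cref{la:Nproperties} (with a slightly enlarged $C_1$ to absorb the factor $\ind_{|\xi_e'|\leq C_1 2^k}$) on the support of $\eta^+_{[k-\tilde c, k+\tilde c]}(\xi_{e,1})\,\eta_{\leq 2sk-c'}(\tau+|\xi|^{2s})$. This gives $N(\xi_e',\tau)$ well defined with $N \aeq 2^k$, the size $|K(\xi_e',\tau)| \aeq 2^{k(2s-1)}$ directly from \eqref{eq:Kdef}, and $|\xi_{e,1}-N| \aeq 2^{-k(2s-1)}|\tau+|\xi|^{2s}| \leq 2^{k-c'}$. Choosing $c'$ large enough, the indicators $\ind_{\tau+|\xi'_e|^{2s} \leq -2^{2s(k-c')}}$ and $\eta_{\leq k-c'}(\xi_{e,1}-N)$ are identically $1$ on this support, and $\eta^+_{[k-\tilde c, k+\tilde c]}(N)$ agrees with $\eta^+_{[k-\tilde c, k+\tilde c]}(\xi_{e,1})$ except in a thin transition layer of width $\aleq 2^{k-c'}$; the contribution of this layer is recorded in $E$.

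Second, the defining relation $(|\xi'|^2 + N^2)^s = -\tau$ together with the fundamental theorem of calculus applied to $u \mapsto (|\xi'|^2+u)^s$ between $u = N^2$ and $u = \xi_{e,1}^2$ yields the exact factorization
\[
\tau + |\xi|^{2s} = (\xi_{e,1}-N)\,G, \qquad G := (\xi_{e,1}+N)\int_0^1 s\bigl(|\xi'|^2 + N^2 + v(\xi_{e,1}^2-N^2)\bigr)^{s-1}\,dv,
\]
which satisfies $G|_{\xi_{e,1}=N} = K$. The bound $|\xi'|^2 + u \aeq 2^{2k}$ throughout the integration range gives $|G-K| \aleq 2^{2k(s-1)}|\xi_{e,1}-N|$, hence $G = K(1+O(2^{-c'}))$ on the support of interest.

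Finally I compute directly
\[
\frac{1}{\tau+|\xi|^{2s}+\i} - \frac{1}{K(\xi_{e,1}-N) + \i K 2^{-k(2s-1)/2}} = \frac{(K-G)(\xi_{e,1}-N) + \i(K 2^{-k(2s-1)/2}-1)}{(\tau+|\xi|^{2s}+\i)\bigl(K(\xi_{e,1}-N) + \i K 2^{-k(2s-1)/2}\bigr)}
\]
and bound numerator and denominator to reach $|E| \aleq 2^{-2sk} + |\tau+|\xi|^{2s}|^{-2}$ on the set $S$. The main obstacle lies here: a case distinction on the size of $|\xi_{e,1}-N|$ relative to $2^{-k(2s-1)/2}$ is needed, because the two summands in the bound on $|E|$ correspond to two regimes. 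When $|\xi_{e,1}-N| \gtrsim 2^{-k(2s-1)/2}$ the real part of the denominator dominates and gives the $|\tau+|\xi|^{2s}|^{-2}$ factor; in the complementary regime the imaginary regularization $\i K 2^{-k(2s-1)/2}$ saves us and delivers the $2^{-2sk}$ bound. Adding the cutoff-transition contributions, which are controlled by the smoothness of $\eta$ and fit into the same final bound, closes the estimate.
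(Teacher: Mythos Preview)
Your plan—factor $\tau+|\xi|^{2s}=(\xi_{e,1}-N)\,G$ via the fundamental theorem of calculus and compare the two resolvents directly, with cutoff transitions absorbed into $E$—is exactly the paper's approach; the paper merely splits your combined resolvent difference into a piece $E_1$ coming from $G\neq K$ and a piece $E_2$ coming from the imaginary shift, and treats the cutoff swap as a third piece $E_3$.

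The step that fails as written is the final numerical bound, and the failure traces to a typo in the lemma statement rather than to your strategy. The imaginary term in your numerator is $K\,2^{-k(2s-1)/2}-1$; since $|K|\aeq 2^{k(2s-1)}$ this has size $\aeq 2^{k(2s-1)/2}$, not $O(1)$, and neither of your two regimes then delivers the target $2^{-2sk}+|\tau+|\xi|^{2s}|^{-2}$. Concretely, at $|\tau+|\xi|^{2s}|\aeq 2^{k(2s-1)/4}$ the two fractions differ (already in their real parts) by $\aeq 2^{-k(2s-1)/4}$, while the claimed bound allows only $\aeq 2^{-k(2s-1)/2}$. The regularization should be $\i\,2^{-k(2s-1)}$—this is what the paper's own proof uses and what appears in the application in \Cref{la:cmpla2.4}. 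With that constant one has $K\,2^{-k(2s-1)}-1=O(1)$, the natural case-splitting threshold becomes $|\xi_{e,1}-N|\sim 2^{-k(2s-1)}$ (equivalently $|\tau+|\xi|^{2s}|\sim 1$), the product of the two denominators is $\aeq(|\tau+|\xi|^{2s}|+1)^2$ throughout, and your argument closes exactly as sketched.
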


\begin{proof}
The statement is invariant under rotation, so we assume $e = (1,0,\ldots,0)$.

We take $c'$ from \Cref{la:Nproperties}, possibly larger so that \Cref{la:Nproperties}\eqref{it:Nprops:1} implies $\tau + |\xi_{e}'|^{2s} \leq -2^{2s(k-c')}$ in the support of the left-hand side of \eqref{NNN}.

From \Cref{la:Nproperties} we find that the support of the left hand side of \eqref{NNN} is contained in the set
\begin{equation}\label{eq:defSp}
 S':=  \{ (\xi',\tau) \in \R^{n-1}\times \R: | \xi' | \aleq  2^{k} \text{ and } | N(\xi',\tau) | \aeq2^{k} \}
\end{equation}
Therefore, 
\begin{equation*}
\begin{split}
&\ind_{| \xi' | \aleq 2^{k}}(\xi',\tau)\, \eta^{+}_{[k-\tilde{c},k+\tilde{c}]}(\xi_1) \frac{ \eta_{ \leq 2sk-c'}(\tau + | \xi |^{2s})}{ \tau + | \xi |^{2s} +\i} \\
& = \col{\ind_{\tau + |\xi'|^{2s} \leq -2^{2s(k-c')}}}\ind_{S'}(\xi',\tau)\, \eta^{+}_{[k-\tilde{c},k+\tilde{c}]}(\xi_1) \frac{ \eta_{ \leq 2sk-c}(\tau + | \xi |^{2s})}{ \tau + | \xi |^{2s} +\i} 
\end{split}
\end{equation*}
Next, we need to decompose $ \frac{1}{ \tau + | \xi |^{2s} + \i}$. To that end, we use \eqref{P4} to obtain
\begin{equation}\label{P5}
\begin{split}
& \tau + | \xi |^{2s}
\\
=& \left( | \xi_1^{2} + | \xi' |^{2} \right)^{s} - ( N(\xi',\tau)^{2} + | \xi' |^{2})^{s}
\\
% &= \left( \frac{ \left( | \xi_1^{2} + | \xi' |^{2} \right)^{s} - ( N(\xi',\tau)^{2} + | \xi' |^{2})^{s}}{\xi_1 -N(\xi',\tau)} - K(\xi',\tau) \right)(\xi_1 - N(\xi',\tau))\\
% &\quad + K(\xi',\tau)\,(\xi_1 -N(\xi',\tau))\\
% % 
& = L(\xi,\tau)\, ( \xi_1 - N(\xi',\tau) )+ K(\xi',\tau) (\xi_1 -N(\xi',\tau)).
\end{split}
\end{equation}
Here we defined $K$ as in \eqref{eq:Kdef} and
$$
L(\xi,\tau) =  \frac{ \left( | \xi' |^{2}+ \xi_1^{2}\right)^{s} - (| \xi' |^{2}+N(\xi',\tau)^{2} )^{s}  }{\xi_1-N(\xi',\tau)} - K(\xi',\tau)
$$
Observe that \eqref{eq:Kdefest} follows from \Cref{la:Nproperties}\eqref{it:Nprops:2} and the fact that $|\xi'| \aleq 2^k$.

We need to estimate $L(\xi,\tau)$, and we claim 
\begin{equation}\label{eq:Lest3.15}
| L(\xi,\tau) | \aeq2^{k(2s-2)} | \xi_1 -N(\xi',\tau) |
\end{equation}
To prove the above estimate, apply the second order Taylor approximation to the function $g(t) = ( t^{2} + | \xi' |^{2})^{s}$ to get 
$$
g(\xi_1) = g(N(\xi',\tau)) + g'(N(\xi',\tau))( \xi_1 - N) + \frac{1}{2} g''(r) (\xi_1 -N(\xi',\tau))^{2}
$$
For some point $ r$ between $ \xi_1,N(\xi',\tau)$ -- in particular we have $r \aeq 2^k$. Using the definition of $ K(\xi',\tau), L(\xi,\tau)$ we obtain 
\[
\begin{split}
& | L(\xi,\tau) | \aeq|\xi_1 -N(\xi',\tau)| | (r^{2} + | \xi' |^{2} )^{s-2}\left( r^{2} + | \xi' |^{2} + 2(s-1) r^{2}   \right) 
\\
& \overset{s>\frac{1}{2}}{\aeq}| \xi_1 -N(\xi',\tau) |\, 2^{2k(s-1)}.
\end{split}
\]
This implies \eqref{eq:Lest3.15}.

\begin{equation*}
\begin{split}
\frac{1}{ \tau + | \xi |^{2s} + \i}
& = \frac{1}{L(\xi,\tau)\, \brac{\xi_1 - N(\xi',\tau)} + K(\xi',\tau) \brac{\xi_1 -N(\xi',\tau)}+\i}
\\
& = \frac{1}{ K(\xi',\tau) \brac{ \xi_1 - N(\xi',\tau)+ \frac{\i}{2^{k(2s-1)}} } } + E_{1} + E_{2}
\end{split}
\end{equation*}
where (for better readability we now drop the arguments for $L$ and $K$)
$$
E_{1} = \frac{1}{ L\, \brac{\xi_1 - N} + K \brac{\xi_1 -N}+\i} - \frac{1}{K \left( \xi_1 - N + \frac{i}{K}\right)}
$$
and
$$
E_{2} =  \frac{1}{ K \left( \xi_1 - N+ \frac{i}{K} \right)} - \frac{1}{ K \left( \xi_1 - N+ \frac{\i}{2^{k(2s-1)}}\right)}.
$$
We show $E_{i}$ satisfy the claimed decay estimate. More precisely we show
\begin{equation}\label{eq:E12est}
| E_{i} |\aleq  \left( | \tau + | \xi |^{2s} | +1\right)^{-2} + 2^{-2sk}, \quad i=1,2
\end{equation}
We start with $E_{1}$, which we can write as
\begin{equation*}
\begin{split}
E_{1} =& \frac{ -(\xi_1 -N)\, L }{ \left( L (\xi_1 -N) + K (\xi_1 -N) + \i \right)\, \left( K ( \xi_1 -N) + \i \right)}\\
\overset{\eqref{P5}}{=}&\frac{1}{K} \frac{ -(\xi_1 -N)\, L }{ \left( \tau+|\xi|^{2s} + \i \right)\, \left( ( \xi_1 -N) + \frac{\i}{K} \right)}\\
\end{split}
\end{equation*}
Therefore, using \eqref{eq:Kdefest}, and \eqref{eq:Lest3.15}
\begin{equation*}
\begin{split}
&| E_{1} | \aeq 2^{-k(2s-1)} \frac{1}{ | \tau + | \xi |^{2s} |+1} \frac{1}{|\xi_1-N| + C2^{-k(2s-1)}}
\, 2^{k(2s-2)} \abs{\xi_{1} - N }^2  
\end{split}
\end{equation*}
 Using \Cref{la:Nproperties},
\begin{equation}\label{P7}
\begin{split}
&| E_{1} | \aeq \frac{1}{ | \tau + | \xi |^{2s} |+1} \frac{1}{\abs{\tau-|\xi|^{2s}} + 1}
\, 2^{-k2s} \abs{\tau-|\xi|^{2s}}^2  \aleq 2^{-2sk}.
\end{split}
\end{equation}
This proves \eqref{eq:E12est} for $E_{1}$.  
% \begin{equation}\label{P7}
% 
% \begin{split}
% &| E_{1} | \aleq\frac{1}{ \abs{ \tau + | \xi |^{2s} }+1} \left(   \frac{2^{k(2s-2)} \abs{\xi_{1} - N }^2}{ 2^{k(2s-1)} + 1}  \right) 
% \end{split}
% % \begin{split}
% % &| E_{1} |  \aeq \left(\frac{1}{ | \tau + | \xi |^{2s} |+1}\right)^{2} . 2^{k(2s-2)} | 2^{-k(2s-1)} | \tau + | \xi |^{2s} |^{2}
% % \\
% % &\aleq  2^{-2sk}
% % \end{split}
% \end{equation}

Next, we prove \eqref{eq:E12est} for $E_{2}$. We write 
\begin{equation*}
\begin{split}
E_{2} 
% & = \frac{1}{ K \left( \xi_1 - N+ \frac{i}{K} \right)} - \frac{1}{ K \brac \xi_1 - N+ \frac{i}{2^{k(2s-1)}}}
% \\
& = \frac{1}{K }\, \frac{ \i \brac{ \frac{1}{K} - \frac{1}{2^{k(2s-1)}}}}{ (( \xi_1 - N) + \frac{i}{K}) \brac{ ( \xi_1 -N) + \frac{\i}{2^{k(2s-1)}} }}
\end{split}
\end{equation*}
Therefore, using \Cref{la:Nproperties} we obtain 
\begin{equation*}
\begin{split}
& | E_{2} |\aleq  2^{-k(2s-1)} \left( \frac{ 2^{-k(2s-1)}}{ \big[| \xi_1 -N | + 2^{-k(2s-1)}\big]^{2} } \right)
\\
&\aleq  \frac{1}{ (2^{k(2s-1)} | \xi_1 -N | + 1)^{2}}
\\
& \aeq( | \tau + | \xi |^{2s} | + 1 )^{-2}
\end{split}
\end{equation*}
Which proves the \eqref{eq:E12est} for $E_{2}$. 

Let us recapitulate: So far we proved the following for any point $ ( \xi,\tau) $, with the indicated constraints, we have
\begin{equation*}
\begin{split}
& \frac{ \eta^{+}_{[k-\tilde{c},k+\tilde{c}]}(\xi_1) \eta_{ \leq 2sk-c'}(\tau +| \xi |^{2s})}{\tau+| \xi |^{2s}+\i} 
\\
& =  \col{\ind_{\tau + |\xi'|^{2s} \leq -2^{2s(k-c')}}}\ind_{S'}(\xi',\tau) \frac{ \eta^{+}_{[k-\tilde{c},k+\tilde{c}]}(\xi_1) \eta_{\leq 2sk-c}(\tau +| \xi |^{2s})}{K( \xi_1 - N + \frac{\i}{2^{k(2s-1)}})} + E 
\end{split}
\end{equation*}
Where $E$ satisfies the estimate 
$$
| E |\aleq  \ind_{ \xi_1 \aeq2^{k}}(\xi_1) \brac{ (\tau + | \xi |^{2s} +1)^{-2} + 2^{-2sk}}.
$$

We now write
$$
\frac{ \eta^{+}_{[k-\tilde{c},k+\tilde{c}]}(\xi_1) \eta_{\leq 2sk-c'}(\tau +| \xi |^{2s})}{K( \xi_1 - N + \frac{\i}{2^{k(2s-1)}})} = \col{\ind_{\tau + |\xi'|^{2s} \leq -2^{2s(k-c')}}}\frac{ \eta^{+}_{[k-\tilde{c},k+\tilde{c}]}(N) \eta_{\leq k-c'}(\xi_{1} -N)}{K( \xi_1 - N + \frac{\i}{2^{k(2s-1)}})}
+E_{3}.
$$
Where 
$$
E_{3} = \col{\ind_{\tau + |\xi'|^{2s} \leq -2^{2s(k-c')}}}\ind_{S'}(\xi',\tau) \frac{ \eta^{+}_{[k-\tilde{c},k+\tilde{c}]}(\xi_1)  \eta_{\leq 2sk-c'}(\tau +| \xi |^{2s}) - \eta^{+}_{[k-\tilde{c},k+\tilde{c}]}(N)\, \eta_{\leq k-c'}(\xi_1-N)}{K( \xi_1 - N + \frac{\i}{2^{k(2s-1)}})}
$$
We can clearly see that the support of $E_{3} $ is contained in the set 
$$
S= \{ \xi_1 \aeq2^{k} \text{ and } | \tau + | \xi |^{2s} | \aleq 2^{2sk}\} 
$$
Indeed, since we operate in the set $S'$, see \eqref{eq:defSp}, we know that $N \aeq 2^k$. if $\xi_1 \gg 2^k$ then $|\xi_1 -N| \gg 2^k$ and thus $E_3 = 0$. If $\xi_1 \ll 2^k$ then $|\xi_1-N| \aeq 2^k$, so taking $c'$ suitably large we again see that $\eta_{\leq k-c'}(\xi_1-N) = 0$, and thus $E =0$

Thus, for any $(\xi,\tau) \in \supp E_3$ we have $\xi_1 \aeq 2^k$. But then it is also clear that for any $(\xi',\tau) \in S$ and $\xi_1 \aeq 2^k$ we have $|\tau + |\xi|^{2s}| \aleq 2^{2sk}$. Thus, as claimed, $\supp E_3 \subset S$.
% 
% since $ N \geq  2^{k-\tilde{c}}$ and we have that $ |  \xi_{1,e} -N| \leq 2^{k-c'}$ and since $ 2^{-c'} \ll 2^{-\tilde{c}}$ then if the second term in the numerator is non-zero, then we have $ \xi_1 \aeq2^{k}$. The first term already has the cutoff function $ \eta^{+}_{k- \tilde{c},k+\tilde{c}}(\xi_1) $, and so we can conclude that $E_{3}$ is supported in $ \{ \xi_1 \aeq2^{k} \}$. Further, since support of $E_{3}$ is contained in $S'$, then by  \eqref{P4} we obtain $ | \tau + | \xi |^{2s} |\aleq  2^{2sk}$. Thus, $E_{3}$ is supported in the set $S$.

Next, we estimate $E_{3}$ as before (using in particular the Lipschitz estimate of $\eta$) 
\begin{equation*}
\begin{split}
| E_{3} | 
& \aleq \frac{ \abs{\eta^{+}_{[k-\tilde{c},k+\tilde{c}]}(\xi_1)  - \eta^{+}_{[k-\tilde{c},k+\tilde{c]}}(N)}\eta_{\leq 2sk-c'}(\tau +| \xi |^{2s})}{\abs{K}\, \abs{\xi_1 - N} + 2^{-k(2s-1)}}
\\
&  + \frac{ \eta^{+}_{[k-\tilde{c},k+\tilde{c}]}(\xi_1)  \abs{\eta_{\leq 2sk-c'}(\tau +| \xi |^{2s}) - \eta_{\leq k-c'}(\xi_{1,e}-N)} }{\abs{K}\, \abs{\xi_1 - N} + 2^{-k(2s-1)}}
\\
& 
\overset{\eqref{eq:Kdefest}}{\aleq} \frac{2^{-k} |\xi_1-N|}{2^{k(2s-1)} |\xi_1-N|+ 2^{-k(2s-1)}}\\
&\quad +\frac{|\eta_{ \leq 2sk-c'}(\tau + | \xi |^{2s}) - \eta_{\leq k -c'}(\xi_1 - N)|}{2^{k(2s-1)} |\xi_1-N| + 2^{-k(2s-1)}} 
\\
&\aleq \frac{2^{-k} |\xi_1-N|}{2^{k(2s-1)} |\xi_1-N|+ 2^{-k(2s-1)}}\\
& +\frac{\abs{\frac{\tau + | \xi |^{2s}}{2^{2sk-c'}} - \frac{\xi_1 - N}{2^{k -c'}}}}{2^{k(2s-1)} |\xi_1-N| + 2^{-k(2s-1)}} 
\\
&\aleq  2^{-2sk} + 2^{-2sk} \frac{\abs{\tau + | \xi |^{2s}}}{2^{k(2s-1)} |\xi_1-N| + 2^{-k(2s-1)}} \\
&\aleq 2^{-2sk},
\end{split}
\end{equation*}
where in the last step we used again \Cref{la:Nproperties} \eqref{it:Nprops:3}.

This estimates $E_3$ and we can conclude.
\end{proof}

\section{Estimates between resolution spaces}\label{s:resspaceest}
First we observe that by the usual convolution argument we have the following \Cref{la:dsbetaest} and \Cref{multiplier}. Cf. \cite[(3.8)]{BIK07}.

\begin{lemma}\label{la:dsbetaest}
For any $\ell \in \Z$, $p,q \in [1,\infty]$, $e \in \S^{n-1}$, $\beta \in \R$ we have 
\[
 \|\Ds{\beta} \Delta_{\ell} f\|_{L^p_eL^q_{e^\perp t}} \aeq 2^{\ell \beta} \sum_{\ell': 2^{\ell'} \aeq 2^\ell} \|\Delta_{\ell'} f\|_{L^p_eL^q_{e^\perp t}}
\]
\end{lemma}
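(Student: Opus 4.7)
The plan is to realize $\Ds{\beta}\Delta_{\ell}$ as a spatial convolution operator whose kernel has $L^1$-norm comparable to $2^{\ell\beta}$, and then to apply a mixed-norm Young's inequality. The Fourier multiplier of $\Ds{\beta}\Delta_\ell$ is $m_\ell(\xi):=|\xi|^\beta\varphi(\xi/2^\ell)$, a smooth function compactly supported in $|\xi|\aeq 2^\ell$. Writing $K_\ell:=\iFT m_\ell$, the substitution $\xi=2^\ell\eta$ gives $K_\ell(x)=2^{\ell(n+\beta)}K_0(2^\ell x)$ for the fixed Schwartz function $K_0:=\iFT(|\cdot|^\beta\varphi)$, hence $\|K_\ell\|_{L^1(\R^n)}\aeq 2^{\ell\beta}$.

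For each fixed $t\in\R$ we have $(\Ds{\beta}\Delta_\ell f)(\cdot,t)=K_\ell * f(\cdot,t)$ with convolution in the spatial variable only. I would then establish the mixed-norm Young bound $\|K_\ell * g\|_{L^p_e L^q_{e^\perp}}\leq \|K_\ell\|_{L^1(\R^n)}\,\|g\|_{L^p_e L^q_{e^\perp}}$ by decomposing $x=x_1 e+x'$ with $x_1\in\R$, $x'\in e^\perp$ and iterating the one-dimensional Young inequality (first in $x'$, then in $x_1$), moving norms past integrals via Minkowski's integral inequality. Because $K_\ell$ is $t$-independent, the additional $L^q_t$-norm slots in without modification, producing the same bound with $L^q_{e^\perp t}$ in place of $L^q_{e^\perp}$. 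Combined with the finite-overlap identity $\Delta_\ell f=\sum_{|\ell-\ell'|\leq 1}\Delta_\ell\Delta_{\ell'}f$ this yields
\[
\|\Ds{\beta}\Delta_\ell f\|_{L^p_e L^q_{e^\perp t}}\aleq 2^{\ell\beta}\sum_{\ell':\,2^{\ell'}\aeq 2^\ell}\|\Delta_{\ell'} f\|_{L^p_e L^q_{e^\perp t}}.
\]

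For the reverse direction I would apply the same scheme to the multiplier $|\xi|^{-\beta}\tilde\varphi(\xi/2^{\ell'})$, with $\tilde\varphi$ a smooth bump equal to $1$ on the support of $\varphi$. Its kernel has $L^1$-norm $\aleq 2^{-\ell\beta}$ by the identical scaling argument, and convolving it against $\Ds{\beta}\Delta_{\ell'}f$ recovers $\Delta_{\ell'}f$; summing over the $O(1)$ many $\ell'\aeq \ell$ closes the two-sided comparison. The only mildly delicate point is the mixed-norm Young inequality for the non-product kernel $K_\ell$, which is a routine consequence of Minkowski's integral inequality and is not a genuine obstacle.
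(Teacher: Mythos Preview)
Your argument is correct and is precisely the ``usual convolution argument'' the paper invokes without further detail (cf.\ the sentence preceding \Cref{la:dsbetaest} and \Cref{multiplier}). The paper gives no proof beyond that remark, so your write-up is in fact more explicit than the paper's own treatment.
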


\begin{lemma}\label{multiplier}
Let $ m \in L^{\infty}(\mathbb{R}^{n})$ with $ \mathcal{F}_{\R^{n}}^{-1}(m) \in L^{1}(\R^{n})$. Then for $ f \in Z_{k}$ we have that $ \mathcal{F}^{-1}_{\R^{n+1}}(m \hat{f}) \in Z_{k}$ with the estimate 
$$
\| \mathcal{F}^{-1}_{\R^{n+1}}(m \hat{f}) \|_{X_{k}}\aleq  \| \mathcal{F}^{-1}_{\R^{n}} ( m ) \|_{L^{1}(\R^{n})} \| f \|_{X_{k}}
$$
$$
\| \mathcal{F}^{-1}_{\R^{n+1}}(m \hat{f}) \|_{Y_{k}^e}\aleq  \| \mathcal{F}^{-1}_{\R^{n}} ( m ) \|_{L^{1}(\R^{n})} \| f \|_{Y_{k}^e}
$$
and thus
$$
\| \mathcal{F}^{-1}_{\R^{n+1}}(m \hat{f}) \|_{Z_{k}}\aleq  \| \mathcal{F}^{-1}_{\R^{n}} ( m ) \|_{L^{1}(\R^{n})} \| f \|_{Z_{k}}
$$
\end{lemma}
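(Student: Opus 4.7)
The key observation is that $m = m(\xi)$ is a pure spatial multiplier — it does not depend on the time-frequency $\tau$ — so if we set $K := \mathcal{F}^{-1}_{\R^n}(m) \in L^1(\R^n)$, then the operator $T_m f := \mathcal{F}^{-1}_{\R^{n+1}}(m \hat f)$ is simply convolution with $K$ in the spatial variables, with $t$ acting as a parameter:
\[
 T_m f(x,t) = \int_{\R^n} K(x-y)\, f(y,t)\, dy.
\]
From this representation the plan has three pieces.

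\emph{Step 1: Stability of the support conditions.} Since $m$ depends only on $\xi$, it commutes with $Q_j$ and with the Fourier multiplier $\i\partial_t + \Dels{s}+\i$, and the spatial Fourier support of $T_m f$ is contained in that of $f$. Therefore the annular support $\{2^k\le|\xi|\le 2^{k+1}\}$ and the cone support $\{\xi\cdot e\ge \bc 2^k\}$ required to make the $X_k$ resp.\ $Y_k^e$ norms finite are preserved under $T_m$.

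\emph{Step 2: Bound on $X_k$.} Using $Q_j T_m f = T_m Q_j f$, Young's convolution inequality in $x$ applied fiberwise in $t$ gives
\[
 \|Q_j T_m f\|_{L^2_{t,x}} \le \|K\|_{L^1(\R^n)} \|Q_j f\|_{L^2_{t,x}},
\]
and the $X_k$ bound follows after multiplying by $2^{j/2}$ and summing in $j\ge 0$.

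\emph{Step 3: Bound on $Y_k^e$.} Set $g := (\i\partial_t+\Dels{s}+\i) f$, so that $(\i\partial_t+\Dels{s}+\i) T_m f = T_m g$. Split coordinates as $x = x_1 e + x'$ with $x' \in e^\perp$; then
\[
 T_m g(x_1 e + x',t) = \int_{\R} \int_{e^\perp} K(x_1-y_1,\, x'-y')\, g(y_1 e + y',t)\, dy'\, dy_1.
\]
For fixed $y_1$, apply Young's inequality $L^1_{e^\perp} * L^2_{t,e^\perp} \to L^2_{t,e^\perp}$ in the $(x',t)$-variables to obtain
\[
 \left\| \int_{e^\perp} K(x_1-y_1,\, \cdot - y')\, g(y_1 e + y',\cdot)\, dy' \right\|_{L^2_{t,e^\perp}} \le \|K(x_1-y_1,\cdot)\|_{L^1_{e^\perp}} \|g(y_1 e+\cdot,\cdot)\|_{L^2_{t,e^\perp}}.
\]
Integrate in $y_1$ via Minkowski, then take $L^1$ in $x_1$ and apply Young's inequality $L^1 * L^1 \to L^1$ to conclude
\[
 \|T_m g\|_{L^1_e L^2_{t,e^\perp}} \le \|K\|_{L^1(\R^n)} \|g\|_{L^1_e L^2_{t,e^\perp}}.
\]
Multiplying by $2^{-k(2s-1)/2}$ yields the $Y_k^e$ estimate.

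\emph{Step 4: Bound on $Z_k$.} Given any admissible decomposition $f = f_1 + \sum_e f_e$, apply $T_m$ to each component; Steps 1--3 show $T_m f_1$ (resp.\ $T_m f_e$) is an admissible contribution to $X_k^{1/2}$ (resp.\ $Y_k^e$) with norm controlled by $\|K\|_{L^1}$ times that of $f_1$ (resp.\ $f_e$). Taking the infimum over decompositions gives the $Z_k$ inequality.

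The only nontrivial step is Step 3, where care is required to split the $L^1_eL^2_{t,e^\perp}$ norm into a nested convolution along $e$ and $e^\perp$; everything else is routine once one exploits that $m$ is purely spatial.
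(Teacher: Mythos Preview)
Your proof is correct and is precisely the ``usual convolution argument'' the paper invokes (with a citation to \cite[(3.8)]{BIK07}) in lieu of a written proof. The only part requiring any care is your Step~3, where you correctly split the spatial convolution along $e$ and $e^\perp$ and apply Young's inequality twice to handle the mixed $L^1_e L^2_{t,e^\perp}$ norm; the paper offers no further detail beyond this.
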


The following lemma establishes the basic embedding properties between $X_k$ and $Y_k^e$. \eqref{eq:YkisXkforkneg} in particular says that the spaces $Y_k^e$ are not any better than $X_k$ if $k \leq 0$.

\begin{lemma}\label{la:IKDIE:3.1}
For any $j \geq 0$, $k \in \Z$
\[
 \|Q_j f\|_{X_k} \aeq {2^{j\frac{1}{2}}} \|Q_j f\|_{L^2} \aleq \|f\|_{Z_{k}},
\]
more precisely we have for any $f \in Y^e_{k}$, $k \in \Z$, $e \in \S^{n-1}$
\begin{equation}\label{eq:QjXkvsY2k}
 \|Q_j f\|_{X_k} \aleq \min\{2^{k s} 2^{-\frac{1}{2}j}, 1 \} \|f\|_{Y_{k}^e},
\end{equation}
and thus in particular
\begin{equation}\label{eq:DIE:311}
 \norm{\sum_{2^j \ageq 2^{k 2s}} Q_j f}_{X_k} \aleq \|f\|_{Y^e_{k}}.
\end{equation}
and for any $k \in \Z$
\begin{equation}\label{eq:YkisXkforkneg}
\norm{f}_{X_k} \aleq 2^{ks} \|f\|_{Y_k^e}.
\end{equation}

\end{lemma}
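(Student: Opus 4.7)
The plan is to reduce everything to the single technical estimate \eqref{eq:QjXkvsY2k}; parts \eqref{eq:DIE:311} and \eqref{eq:YkisXkforkneg} then follow by elementary geometric summation, and the first displayed line of the lemma follows from the definition of $X_k$ combined with the decomposition defining $\|\cdot\|_{Z_k}$.

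For the first identity $\|Q_j f\|_{X_k} \aeq 2^{j/2}\|Q_j f\|_{L^2}$, I would observe that $Q_j f$ has modulation support essentially in $\{|\tau+|\xi|^{2s}|\aeq 2^j\}$, so in the defining sum \eqref{eq:Xdef} only the terms $Q_{j'}Q_j f$ with $j'\aeq j$ contribute. The inequality $\|Q_j f\|_{X_k}\aleq\|f\|_{Z_k}$ is then immediate from writing $f=f_1+\sum_e f_e$, using the estimate just proved on $f_1\in X_k^{1/2}$, and applying \eqref{eq:QjXkvsY2k} to each $f_e\in Y_k^e$.

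The main step is \eqref{eq:QjXkvsY2k}. Let $F:=(\i\partial_t+\Dels{s}+\i)f$, so that $\|F\|_{L^1_e L^2_{t,e^\perp}}=2^{k(2s-1)/2}\|f\|_{Y_k^e}$, and fix coordinates so $e=(1,0,\dots,0)$. By Plancherel and the identity $\hat f=(-\tau-|\xi|^{2s}+\i)^{-1}\hat F$, together with $|{-}\tau{-}|\xi|^{2s}{+}\i|\aeq 2^j+1$ on $\supp Q_j$,
\[
\|Q_j f\|_{L^2}^2\aleq (2^j+1)^{-2}\int \ind_{\supp q_j}|\hat F(\xi,\tau)|^2\,d\xi d\tau.
\]
The key geometric observation is that on $\{|\xi|\aeq 2^k\}\cap\{\xi_1\aeq 2^k\}$, the map $\xi_1\mapsto(\xi_1^2+|\xi'|^2)^s$ has derivative $\aeq 2^{k(2s-1)}$, so for each fixed $(\xi',\tau)$ the $\xi_1$-section of $\supp q_j$ is an interval of length $\aleq L:=\min\{2^k,2^{j-k(2s-1)}\}$. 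Cauchy--Schwarz in $\xi_1$, Plancherel in $(\xi',\tau)$, and the trivial bound $\|\hat F\|_{L^\infty_{\xi_1}L^2_{\xi',\tau}}\leq\|F\|_{L^1_{x_1}L^2_{x',t}}$ then give
\[
\|Q_j f\|_{L^2}\aleq (2^j+1)^{-1}L^{1/2}\|F\|_{L^1_e L^2_{t,e^\perp}}=(2^j+1)^{-1}\min\{2^{ks},2^{j/2}\}\|f\|_{Y_k^e},
\]
since $L^{1/2}\cdot 2^{k(2s-1)/2}=\min\{2^{ks},2^{j/2}\}$. Multiplying by $2^{j/2}$ and splitting $j=0$ vs.\ $j\geq 1$, the factor $2^{j/2}(2^j+1)^{-1}$ collapses to $\min\{1,2^{-j/2}\}$, producing exactly the claimed bound $\min\{2^{ks-j/2},1\}\|f\|_{Y_k^e}$.

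For \eqref{eq:DIE:311}, since the modulations $Q_j$ are essentially orthogonal, $\|\sum_{2^j\ageq 2^{2ks}}Q_j f\|_{X_k}\aeq\sum_{j\geq 2ks}2^{j/2}\|Q_j f\|_{L^2}$, and applying \eqref{eq:QjXkvsY2k} (in the regime where the minimum equals $2^{ks-j/2}\leq 1$) gives a geometric series with ratio $2^{-1/2}$ and first term $\aeq 1$, hence the bound $\|f\|_{Y_k^e}$. For \eqref{eq:YkisXkforkneg}, summing \eqref{eq:QjXkvsY2k} over all $j\geq 0$ yields at most $(2ks)_+\|f\|_{Y_k^e}+\|f\|_{Y_k^e}\aleq 2^{ks}\|f\|_{Y_k^e}$, using the trivial bound $1+x\aleq 2^x$ for $x\geq 0$. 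The main obstacle is the $L$-length computation inside \eqref{eq:QjXkvsY2k}; once that is granted, the remaining steps are just summation and bookkeeping on the definitions.
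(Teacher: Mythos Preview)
Your proof of the main estimate \eqref{eq:QjXkvsY2k} is essentially the paper's argument: the paper packages the $\xi_1$-section length computation as a separate lemma (\Cref{la:setestimate}, equation \eqref{eq:claimasdkljhfs}), while you compute it inline, but the mechanism---use $|\partial_{\xi_1}|\xi|^{2s}|\aeq 2^{k(2s-1)}$ on the cone to bound the section length, then pass from $L^2_{\xi_1}$ to $L^\infty_{\xi_1}$ and finally to $L^1_{x_1}$ via Minkowski/Plancherel---is identical. (The phrase ``Cauchy--Schwarz in $\xi_1$'' is a slight misnomer; it is really the H\"older/sup step $\int_I|\hat F|^2\,d\xi_1\leq L\sup_{\xi_1}|\hat F|^2$.)

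There is, however, a genuine slip in your derivation of \eqref{eq:YkisXkforkneg}. Your intermediate bound $\sum_{j\geq 0}\min\{2^{ks-j/2},1\}\aleq (2ks)_+ + 1$ is correct as stated, but for $k\leq 0$ this gives only $\aleq 1$, which does \emph{not} imply the claimed $\aleq 2^{ks}$ (since then $2^{ks}\leq 1$). The inequality ``$1+x\aleq 2^x$'' does not rescue this: you would need $1\aleq 2^{ks}$ with $ks\leq 0$, which fails. The paper's fix is simpler than your two-range split: just discard the second alternative in the minimum and sum
\[
\sum_{j\geq 0}\min\{2^{ks-j/2},1\}\leq \sum_{j\geq 0}2^{ks-j/2}\aleq 2^{ks},
\]
valid for all $k\in\Z$. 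This is precisely the regime ($k\leq 0$) where \eqref{eq:YkisXkforkneg} is actually used later (see the opening line of the proof of \Cref{la:cmpla2.4}), so the error matters.
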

\begin{proof}
If $f \in X_k$ the estimate 
\[
 \|Q_j f\|_{X_k} \aleq \|f\|_{X_k}
\]
is obvious. 

Assume now that $f \in Y_{k}^e$, in particular assume  \[\supp \hat{f} \subset \left \{(\xi,\tau): |\xi| \aeq {2^k} \right \} \cap \left \{\xi \cdot e \geq \bc{} 2^k\right \} \]

Write as in \eqref{eq:xieande1decomp} $\xi = \xi_{e,1} e + \xi_e'$, where $\xi_{e}' \in e^\perp$, and for fixed $\xi_{e}'$ and $\tau$ we recall the estimate \eqref{eq:claimasdkljhfs}.
\[
\begin{split}
{2^{j\frac{1}{2}}} \|Q_j f\|_{L^2(\R^{n+1})} \aeq& {2^{j\frac{1}{2}}} \|\varphi((\tau+\vert \xi \vert^{2s})/2^{j}) \hat{f}\|_{L^2(\R^{n+1})}\\
\aleq& \frac{{2^{j\frac{1}{2}}}}{1+{2^j}}\, \|   \varphi((\tau+\vert \xi \vert^{2s})/2^{j})  \mathcal{F}\brac{\brac{ \i \partial_t + \Dels{s} + \i}f}\|_{L^2(\R^{n+1} )}\\
\overset{\eqref{eq:claimasdkljhfs}}{\aleq}& \frac{{2^{j\frac{1}{2}}}}{1+{2^j}}\,  \brac{\min\{{2^k},{2^{k(1-2s)}} {2^j}\}}^{\frac{1}{2}} \| \mathcal{F}_{e,e^\perp,t}\brac{\brac{ \i \partial_t + \Dels{s} + \i}f}\|_{L^2_{e^\perp,t} L^\infty_e}\\
\aleq& \frac{{2^{j\frac{1}{2}}}}{1+{2^j}}\,  \brac{\min\{{2^k},{2^{k(1-2s)}} {2^j}\}}^{\frac{1}{2}} \| \mathcal{F}_{\col{e^\perp,t}}\brac{\brac{\i \partial_t + \Dels{s} + \i}f}\|_{L^2_{e^\perp,t} \col{L^1_e}}\\
\overset{\text{Minkowski}}{\aleq}& \frac{{2^{j\frac{1}{2}}}}{1+{2^j}}\,  \brac{\min\{{2^k},{2^{k(1-2s)}} {2^j}\}}^{\frac{1}{2}} \| \mathcal{F}_{e^\perp,t}\brac{\brac{\i \partial_t + \Dels{s} + " \i}f}\|_{\col{L^1_eL^2_{e^\perp,t} }}\\
\overset{\text{Plancherel}}{\aleq}& \frac{{2^{j\frac{1}{2}}}}{1+{2^j}}\,  \brac{\min\{{2^k},{2^{k(1-2s)}} {2^j}\}}^{\frac{1}{2}} \| {\brac{\i \partial_t + \Dels{s} + \i}f}\|_{L^1_eL^2_{e^\perp,t} }\\
\aleq& \frac{{2^{j\frac{1}{2}}}}{1+{2^j}}\,  \brac{\min\{{2^k},{2^{k(1-2s)}} {2^j}\}}^{\frac{1}{2}} {2^k}^{\frac{2s-1}{2}} \| f\|_{Y^e_{k}}\\
\end{split}
\]
That is we have shown
\begin{equation}\label{eq:DIE311ess}
\begin{split}
{2^{j\frac{1}{2}}} \|Q_j f\|_{L^2(\R^{n+1})} \aleq & \frac{{2^{j\frac{1}{2}}}}{1+{2^j}}\,  \brac{\min\{{2^k},{2^{k(1-2s)}} {2^j}\}}^{\frac{1}{2}} {2^k}^{\frac{2s-1}{2}} \| f\|_{Y^e_{k}}\\
=&\frac{{2^{j\frac{1}{2}}}}{1+{2^j}}\,  \min\{2^{k s}, 2^{\frac{1}{2} j} \}   \| f\|_{Y^e_{k}}\\
\aleq & \min\{2^{k s} 2^{-\frac{1}{2}j}, 1 \}   \| f\|_{Y^e_{k}}\\
\end{split}
\end{equation}
This is \eqref{eq:QjXkvsY2k}.

\eqref{eq:DIE311ess} shows in particular that for any $f \in Y_{k}^e$,
\[
 \|Q_j f\|_{X_k} \aleq \|f\|_{Y_{k}^e}.
\]
\eqref{eq:DIE311ess} also shows 
\[
\sum_{j \ageq 2sk} {2^{j\frac{1}{2}}} \|Q_j f\|_{L^2(\R^{n+1})} \aleq \| f\|_{Y^e_{k}}
\]
which implies \eqref{eq:DIE:311}

Lastly, for \eqref{eq:YkisXkforkneg} observe that by \eqref{eq:QjXkvsY2k}
\[
 \|f\|_{X_k} \aleq \sum_{j = 0}^\infty \|Q_j f\|_{X_k} \aleq \sum_{j = 0}^\infty 2^{k s} 2^{-\frac{1}{2}j}\|f\|_{Y_{k}^e} \aleq 2^{ks} \|f\|_{Y_{k}^e}.
\]
We can conclude.
\end{proof}

\begin{lemma}\label{la:simplessss}
Let $E: \R^{n+1}\to \R$ be a smooth function with the pointwise bound $ | E(\xi,\tau) | \aleq \min\{ 1, 2^{-2sk} | \tau +| \xi |^{2s} | \} $. Then for $e \in \S^{n-1}$ and $f \in Y^{e}_{k}$ we have the estimate
$$
\Vert \mathcal{F}^{-1}_{\R^{n+1}} ( E \hat{f} ) \Vert_{X_{k}} \aleq \Vert f \Vert_{Y^{e}_{k}}
$$
\end{lemma}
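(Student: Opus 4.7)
The plan is to decompose $g := \mathcal{F}^{-1}(E\hat f)$ along the modulation scales $Q_j$ and estimate each piece via the pointwise hypothesis on $E$ combined with the already-established embedding \Cref{la:IKDIE:3.1}. Since $\hat f$ is supported in $\{2^k \leq |\xi| \leq 2^{k+1}\}$ (as $f \in Y^e_k$) and $E$ is a Fourier multiplier in $(\xi,\tau)$, the same spatial-frequency support is inherited by $\hat g$, so controlling $\|g\|_{X_k}$ reduces to controlling $\sum_{j \geq 0} 2^{j/2}\|Q_j g\|_{L^2}$.

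Writing $Q_j g = \mathcal{F}^{-1}\brac{\eta_j(\tau+|\xi|^{2s})\, E(\xi,\tau)\, \hat f(\xi,\tau)}$ where $\eta_j$ denotes the modulation cutoff at scale $2^j$, one has $|\tau+|\xi|^{2s}| \aleq 2^j$ on $\supp \eta_j$, so the hypothesis on $E$ supplies the pointwise bound $|E| \aleq \min\{1, 2^{j-2sk}\}$. By Plancherel this yields
\[
\|Q_j g\|_{L^2} \aleq \min\{1, 2^{j-2sk}\}\, \|Q_j f\|_{L^2}.
\]
Combining with \eqref{eq:QjXkvsY2k}, namely $2^{j/2}\|Q_j f\|_{L^2} \aleq \min\{2^{ks}2^{-j/2},\, 1\}\, \|f\|_{Y^e_k}$, one arrives at
\[
2^{j/2}\|Q_j g\|_{L^2} \aleq \min\{1, 2^{j-2sk}\}\, \min\{2^{ks-j/2},\, 1\}\, \|f\|_{Y^e_k}.
\]

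The proof then reduces to verifying $\sum_{j\geq 0}\min\{1, 2^{j-2sk}\}\min\{2^{ks-j/2}, 1\} \aleq 1$. This is an elementary two-piece geometric sum split at $j \approx 2sk$: for $0 \leq j \leq 2sk$ the summand equals $2^{j-2sk}$ and is a geometric series with ratio $2$ capped at $1$; for $j \geq 2sk$ the summand is $2^{ks-j/2}$, whose tail sums to $2^{ks}\cdot 2^{-sk} \aeq 1$. When $k < 0$ the first range is empty and the remaining tail contributes only $2^{ks} \leq 1$. In either regime the sum is $\aleq 1$, giving $\|g\|_{X_k} \aleq \|f\|_{Y^e_k}$.

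There is no serious conceptual obstacle here; the scheme is a textbook modulation-by-modulation decomposition leveraging the already-proven embedding \Cref{la:IKDIE:3.1}. The only minor bookkeeping point is the $j=0$ term, where $Q_0$ is defined via $\eta$ rather than $\varphi$, so that $\supp \eta_0 \subset \{|\tau+|\xi|^{2s}| \aleq 1\}$, giving $|E| \aleq \min\{1, 2^{-2sk}\}$, which is exactly the $j=0$ instance of the general bound and fits into the sum above without modification.
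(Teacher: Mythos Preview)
Your argument is correct and, in fact, slightly more economical than the paper's. The paper first uses \eqref{eq:DIE:311} to reduce to the low-modulation region $|\tau+|\xi|^{2s}| \aleq 2^{2sk}$, then on each $Q_j$ piece goes back to the definition of the $Y^e_k$-norm: it writes $(\tau+|\xi|^{2s})\hat f$ as the Fourier transform of $(\i\partial_t+\Dels{s}+\i)f$, passes to $L^1_e L^2_{e^\perp,t}$ via Minkowski, and invokes the measure estimate \Cref{la:setestimate} to control the size of the $\xi_{e,1}$-support, arriving at $2^{j/2}\|Q_j(E\hat f)\|_{L^2}\aleq 2^{j-2sk}\|f\|_{Y^e_k}$ and summing over $0\leq j\leq 2sk$. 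You bypass this by recognizing that the entire $L^1_e L^2$-to-$L^2$ passage is precisely what was already packaged in \eqref{eq:QjXkvsY2k}; quoting that bound and combining it with the pointwise multiplier estimate $|E|\aleq\min\{1,2^{j-2sk}\}$ on $\supp Q_j$ gives the result in one stroke, with the $\min$ automatically handling both the low- and high-modulation ranges without a preliminary reduction. The two proofs are the same in spirit (both rest on \Cref{la:setestimate}), but yours is the cleaner bookkeeping.
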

\begin{proof}
Write $ \xi = \xi_{e,1} e+ \xi_{e}$, and
notice that we may assume $ f $ is supported in the set $$\{\xi_{e,1} \approx 2^{k} \text{ and } | \tau + | \xi |^{2s} | \leq 2^{2sk} \}.  $$ Indeed, this follows from \eqref{eq:DIE:311} and the assumption that $ |E| \aleq 1$. So assume $f$ is supported in the indicated set, then we estimate (when $j=0$ we have to replace $\varphi$ by $\eta$)

\begin{equation*}
\begin{split}
& 2^{\frac{j}{2}} \Vert \varphi(( \tau+ | \xi |^{2s})/2^{j}) E \hat{f} \Vert_{L^{2}(\R^{n+1})}
\\
& \aleq 2^{\frac{j}{2}} 2^{-2sk} \Vert \varphi(( \tau+ | \xi |^{2s})/2^{j}) ( \tau +| \xi |^{2s} ) \hat{f} \Vert_{L^{2}(\R^{n+1})}
\\
& \aleq 2^{\frac{j}{2}} 2^{-2sk} \Vert \varphi((\tau+| \xi |^{2s})/2^{j}) \mathcal{F}_{\R^{n+1}} \big[ ( \i \partial_{t} +(-\Delta^{s})+\i)f\big] \Vert_{L^{2}_{\xi,\tau}}
\\
& = 2^{\frac{j}{2}}2^{-2sk} \Vert \varphi((\tau+| \xi |^{2s})/2^{j}) \int_{\R} e^{\i x_{{e},1} \xi_{{e},1}} \mathcal{F}_{{e}^{\perp},t}  \big[ ( \i \partial_{t} +(-\Delta^{s})+\i)f\big] \Vert_{L^{2}_{\xi,\tau}}
\\
& \aleq 2^{\frac{j}{2}} 2^{-2sk} \Vert \mathcal{F}_{{e}^{\perp},t}
\big[ ( \i \partial_{t} +(-\Delta^{s})+\i)f\big] \Vert_{L^{1}_{x_{{e},1}}L^{2}_{\xi'_{{e}},t}}
\\
& \times | \{ \xi_{{e},1} \approx 2^{k} :  | \tau + | \xi |^{2s} | \aleq 2^{j} \}|^{1/2}.
\end{split}
\end{equation*}
And using \Cref{la:setestimate} we obtain
\begin{equation*}
\begin{split}
& 2^{\frac{j}{2}} \Vert \varphi(( \tau+ | \xi |^{2s})/2^{j}) E \hat{f} \Vert_{L^{2}(\R^{n+1})}
\\
& \aleq 2^{j} 2^{-2sk}  2^{-k\frac{2s-1}{2}}  \Vert \mathcal{F}_{\tilde{e}^{\perp},t}
\big[ ( \i \partial_{t} +(-\Delta^{s})+\i)f\big] \Vert_{L^{1}_{x_{{e},1}}L^{2}_{\xi'_{{e}},t}}
\\
& = 2^{j} 2^{-2sk} \Vert f \Vert_{Y^{{e}}_{k}}.
\end{split}
\end{equation*}
Summing over $0 \leq j \leq 2^{2sk}$ gives the result.
\end{proof}

The next lemma are the $L^\infty L^2$-estimates with respect to the resolution space $Z_k$.
\begin{lemma}\label{la:IKDIE:3.33}
Let $n \geq 3$. For $f \in X_k$ we have
\begin{equation}\label{eq:LinftyL2Xk}
 \|f\|_{L^\infty_t L^2_x} \aleq \|f\|_{X_{k}}.
\end{equation}
Moreover if for some $e \in \S^{n-1}$ we have $f \in Y^e_{k}$ then
\begin{equation}\label{eq:LinftyL2Yk}
 \|f\|_{L^\infty_t L^2_x} \aleq \|f\|_{Y_{k}^e}.
\end{equation}
In particular we have 
\begin{equation}\label{eq:LinftyL2Zk}
 \|f\|_{L^\infty_t L^2_x} \aleq \|f\|_{Z_{k}}.
\end{equation}
and thus for any $\sigma \in \R$ 
\begin{equation}\label{eq:LinftyL2F}
 \|f\|_{L^\infty_t \dot{H}^{\sigma}_x} \aleq \|f\|_{F^{\sigma}}
\end{equation}

\end{lemma}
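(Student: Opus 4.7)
The statement bundles four embeddings -- \eqref{eq:LinftyL2Xk} for $X_k$, \eqref{eq:LinftyL2Yk} for $Y_k^e$, \eqref{eq:LinftyL2Zk} for $Z_k$, and \eqref{eq:LinftyL2F} for $F^\sigma$ -- and I would tackle them in that order, with the $Y_k^e$ bound carrying essentially all the work. For \eqref{eq:LinftyL2Xk} I would use the modulation decomposition $f = \sum_{j \geq 0} Q_j f$: each $\widehat{Q_j f}$ is supported in a $\tau$-slab of width $\aeq 2^j$, so inverting the $\tau$-Fourier transform at a fixed $t$ and applying Cauchy--Schwarz across that slab yields the standard Bernstein-in-time bound $\|Q_j f\|_{L^\infty_t L^2_x} \aleq 2^{j/2}\|Q_j f\|_{L^2_{t,x}}$. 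Summing over $j$ reconstructs exactly the $X_k$-norm.

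For \eqref{eq:LinftyL2Yk}, reduce by rotation invariance to $e = (1, 0, \ldots, 0)$ and set $g := (\i \partial_t + \Dels{s} + \i) f$, so that $\|g\|_{L^1_{x_1} L^2_{t,x'}} = 2^{k(2s-1)/2}\|f\|_{Y_k^e}$. The damping term $+\i$ produces a convergent Duhamel representation
\[
f(t) \;=\; -\i \int_{-\infty}^{t} e^{-(t-s)}\, U(t-s)\, g(s)\, ds, \qquad U(\tau) u_0 := \mathcal{F}^{-1}\!\left[e^{-\i \tau |\xi|^{2s}}\hat u_0\right].
\]
For fixed $t$, using $U(t-s) = U(t) U(-s)$ and $e^{-(t-s)} = e^{-t} e^{s}$ rewrites this as $-\i\, e^{-t}\, U(t)\, T^{*}\!\left[e^{s} g(s)\,\ind_{s \leq t}\right]$, where $T^{*} h := \int U(-s) h(s)\, ds$ is the adjoint of the dispersive propagator $T \colon u_0 \mapsto U(\cdot) u_0$ restricted to cone-supported data at frequency $2^k$. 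The central ingredient is the local-smoothening / dispersive estimate
\[
\|T u_0\|_{L^\infty_{x_1} L^2_{x',t}} \;\aleq\; 2^{-k(2s-1)/2}\,\|u_0\|_{L^2_x},
\]
which I would prove by Plancherel in $(x', t)$ and the change of variables $(\xi_1, \xi') \leftrightarrow (\xi', \tau)$ on the characteristic surface $\tau = -|\xi|^{2s}$; the Jacobian $|\partial\tau / \partial\xi_1| = 2s|\xi|^{2s-2}\xi_1 \aeq 2^{k(2s-1)}$ on the cone produces exactly the claimed power. Dualizing gives the $T^{*}$-bound $L^1_{x_1} L^2_{s,x'} \to L^2_x$ with the same constant. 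Applied to $h(s) = e^{s} g(s)\ind_{s \leq t}$, together with $\|h\|_{L^1_{x_1} L^2_{s,x'}} \leq e^{t}\|g\|_{L^1_{x_1}L^2_{s,x'}}$ (since $e^{s} \leq e^{t}$ on the support), the two exponentials cancel against the outer $e^{-t} U(t)$ (unitarity of $U(t)$ on $L^2_x$), giving $\|f(\cdot, t)\|_{L^2_x} \aleq 2^{-k(2s-1)/2}\|g\|_{L^1_{x_1}L^2_{t,x'}} = \|f\|_{Y_k^e}$ uniformly in $t$.

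The remaining pieces are bookkeeping. For \eqref{eq:LinftyL2Zk}, take the infimum over any decomposition $f = f_1 + \sum_e f_e$ entering the definition of $\|\cdot\|_{Z_k}$ and apply the previous two estimates to each summand. For \eqref{eq:LinftyL2F}, use the standard Littlewood--Paley square-function characterization $\|f\|_{L^\infty_t \dot H^\sigma}^2 \aleq \sum_k 2^{2k\sigma}\|\Delta_k f\|_{L^\infty_t L^2_x}^2$ and plug \eqref{eq:LinftyL2Zk} into each summand, recognizing the $F^\sigma$-norm on the right.

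I expect the dispersive estimate for $T$ to be the only serious obstacle. A direct Duhamel + Young's inequality gives only $\|f\|_{L^\infty_t L^2_x} \aleq \|g\|_{L^2_{t,x}}$, and Bernstein in $\xi_1$ on the cone then introduces a spurious factor $2^{k/2}$, leaving an overall loss of $2^{ks}$; this confirms that one cannot avoid exploiting the oscillation of $U(t-s)$. A secondary delicate point is keeping track of the damping factors $e^{\pm t}$ so that no extraneous growth appears, but the cancellation is built into the $TT^{*}$ structure once the truncation in $s$ is absorbed.
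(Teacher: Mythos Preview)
Your argument is correct, and for the $X_k$ bound \eqref{eq:LinftyL2Xk} and the bookkeeping steps \eqref{eq:LinftyL2Zk}--\eqref{eq:LinftyL2F} it matches the paper exactly. For the substantive estimate \eqref{eq:LinftyL2Yk}, however, you take a genuinely different route.

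The paper stays entirely on the Fourier side: it writes $\hat f(\xi,\tau)=(\tau+|\xi|^{2s}+\i)^{-1}\hat g(\xi,\tau)$, inverts only the $\tau$-transform, pulls the $x_1$-integral outside by Minkowski, and then performs the change of variables $\xi_1\mapsto \mu=|\xi|^{2s}$ (which contributes exactly the Jacobian $\aeq 2^{-k(2s-1)}$). What remains is a one-dimensional convolution in $\tau$ against the Calder\'on--Zygmund kernel $(\tau+\i)^{-1}$, whose $L^2$-boundedness closes the estimate. No separate dispersive lemma is invoked.

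Your approach instead packages the same Jacobian as the free Kato smoothing estimate $\|U(\cdot)u_0\|_{L^\infty_{x_1}L^2_{x',t}}\aleq 2^{-k(2s-1)/2}\|u_0\|_{L^2}$ on cone-supported data, and then obtains \eqref{eq:LinftyL2Yk} by Duhamel plus duality. The damping factor $e^{-(t-s)}$ is used cleverly: it makes the truncated adjoint input $e^s g\,\ind_{s\le t}$ grow by exactly $e^t$, cancelling the outer $e^{-t}$ uniformly in $t$. This is a clean $T^*$-argument; note that the cone support of $g$ (inherited from $f\in Y_k^e$) is what lets you apply the dualized smoothing bound to $T^*h$ rather than only to $PT^*h$. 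Conceptually your proof exhibits \eqref{eq:LinftyL2Yk} as the dual statement to Kato smoothing, which is illuminating and reusable; the paper's proof is more self-contained and avoids appealing to a smoothing estimate that, in the paper's logical order, is only established afterwards in \Cref{la:locsmooth}. Either argument is fine here.
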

\begin{proof}
\eqref{eq:LinftyL2Zk} is a consequence of \eqref{eq:LinftyL2Xk} and \eqref{eq:LinftyL2Yk}. \eqref{eq:LinftyL2F} follows from squaring \eqref{eq:LinftyL2Zk} and writing $f = \sum_{k} \Delta_k f$. So we only need to prove \eqref{eq:LinftyL2Xk} and \eqref{eq:LinftyL2Yk}.

First, we \underline{prove \eqref{eq:LinftyL2Xk}}. Let $f \in X_k$. We have
\[
\|Q_j f\|_{L^\infty_t L^2_x} \aleq \|\mathcal{F} (Q_j f)\|_{L^1_\tau L^2_\xi} \aleq 2^{\frac{j}{2}} \|Q_j f\|_{L^2(\R^{n+1})}.
\]
In particular 
\[
\|f\|_{L^\infty_t L^2_x} \aleq \sum_{j=0}^\infty 2^{\frac{j}{2}} \|Q_j f\|_{L^2(\R^{n+1})} = \|f\|_{X_k}.
\]
This implies \eqref{eq:LinftyL2Xk}.

Now we are going to \underline{prove \eqref{eq:LinftyL2Yk}}.
Let $f \in Y_{k}^e$, where w.l.o.g. $e = (1,0,\ldots)$.

We write
\[
 \hat{f}(\xi,\tau) = \frac{1}{\tau + |\xi|^{2s} + \i} \mathcal{F}\brac{\brac{\partial_t + \Dels{s} + \i} f}
\]
That is, for $g := \brac{\partial_t + \Dels{s} + \i}f$,
\[
\begin{split}
 f(\hat{\xi},t) =& \int_{\tau \in \R} \frac{1}{\tau + |\xi|^{2s} + \i} \mathcal{F}\brac{g}(\xi,\tau) e^{\i \tau t}d\tau\\
 =& \int_{\tau \in \R} \frac{1}{\tau + |\xi|^{2s} + \i} \int e^{-\i x_1 \xi_1} g(x_1,\hat{\xi'},\hat{\tau}) e^{\i \tau t} dx_1 d\tau \\
 =& \int  e^{-\i x_1 \xi_1}  \int_{\tau \in \R} \frac{1}{\tau + |\xi|^{2s} + \i} g(x_1,\hat{\xi'},\hat{\tau}) e^{\i \tau t} d\tau \, dx_1 \\
 \end{split}
\]

That is
\[
\begin{split}
 \abs{f(\hat{\xi},t)}  \leq & \int_{\R}  \abs{\int_{\tau \in \R} \frac{1}{\tau + |\xi|^{2s} + \i} g(x_1,\hat{\xi'},\hat{\tau}) e^{\i \tau t} d\tau} dx_1 \\
 \end{split}
\]

Next, for fixed $\xi' \in \R^{n-1} \setminus \{0\}$ the map 
\[\begin{split}
{\mu}:& (0,\infty) \to (|\xi'|^{2s},\infty)\\
{\mu}(\xi_1) :=& |\xi|^{2s} 
\end{split}
\]
is a homeomorphism, and we have
\[
 d{\mu} = 2s |\xi|^{2s-2} \xi_1 d\xi_1
\]
In our case the assumption that $f \in Y_{k}^e$ that $|\xi'| \aleq {2^k}$ and $|\xi_1| \aeq {2^k}$ implies for any $\xi$ in with $\hat{f}(\xi,\tau) \neq 0$ we surely have (here we use $s \leq 1$)
\[
 d\xi_1 = \frac{1}{2s} |\xi|^{2-2s} \frac{1}{\xi_1}d{\mu} \leq {2^{k(1-2s)}} d{\mu}
\]
Then by Minkowski (still $\xi' \in \R^{n-1} \setminus \{0\}$ is fixed)
\[
\begin{split}
 \brac{\int_{{0}}^\infty |f(\hat{\xi},t)|^2 d\xi_1}^{\frac{1}{2}} 
 \leq& \int_{\R}   \brac{\int_{\R} \abs{\int_{\tau \in \R} \frac{1}{\tau + |\xi|^{2s} + \i} g(x_1,\hat{\xi'},\hat{\tau}) e^{\i \tau t} d\tau}^2 d\xi_1}^{\frac{1}{2}} dx_1  \\
 \aleq&{2^k}^{-\frac{2s-1}{2}} \int_{\R}   \brac{\int_{\R} \abs{\int_{\tau \in \R} \frac{1}{\tau + {\mu} + \i} g(x_1,\hat{\xi'},\hat{\tau}) e^{\i \tau t} d\tau}^2 d{\mu}}^{\frac{1}{2}} dx_1  \\
 \end{split}
\]
If we now set for fixed $x_1$, $\xi'$, $t$,
\[
 h(\tau) \equiv h_{x_1,\xi',t}(\tau) := g(x_1,\hat{\xi'},\hat{\tau}) e^{\i \tau t}
\]
and 
\[
 k(\tau) := \frac{1}{\tau+\i}
\]
we see that the interior two integrals become 
\[
 \brac{\int_{\R} \abs{\int_{\tau \in \R} \frac{1}{\tau + {\mu} + \i} g(x_1,\hat{\xi'},\hat{\tau}) e^{\i \tau t} d\tau}^2 d{\mu}}^{\frac{1}{2}} = \|k \ast h\|_{L^2(\R)}
\]
Since $k$ is a Calderon-Zygmund operator we have 
\[
\begin{split}
 \|k \ast h\|_{L^2(\R)} \aleq_k \|h\|_{L^2(\R)}
 \end{split}
\]

so that we arrive at
\[
\begin{split}
 &\brac{\int_{\R} \abs{\int_{\tau \in \R} \frac{1}{\tau + {\mu} + \i} g(x_1,\hat{\xi'},\hat{\tau}) e^{\i \tau t} d\tau}^2 d{2^j}}^{\frac{1}{2}}\\
 \aleq &\brac{\int_{\R} \abs{g(x_1,\hat{\xi'},\hat{\tau}) e^{\i \tau t}}^2 d\tau}^{\frac{1}{2}}\\
 \aleq&\brac{\int_{\R} \abs{g(x_1,\hat{\xi'},\hat{\tau})}^2 d\tau}^{\frac{1}{2}}
 \end{split}
\]
Thus we have shown
\[
\begin{split}
 \brac{\int_{{0}}^\infty |f(\hat{\xi},t)|^2 d\xi_1}^{\frac{1}{2}} 
 \leq& \int_{\R}   \brac{\int_{\R} \abs{\int_{\tau \in \R} \frac{1}{\tau + |\xi|^{2s} + \i} g(x_1,\hat{\xi'},\hat{\tau}) e^{\i \tau t} d\tau}^2 d\xi_1}^{\frac{1}{2}} dx_1  \\
 \aleq&{2^k}^{-\frac{2s-1}{2}} \int_{\R} \brac{\int_{\R} \abs{g(x_1,\hat{\xi'},\hat{\tau})}^2 d\tau}^{\frac{1}{2}} dx_1  \\
 \end{split}
\]
Essentially the same argument (noting that ${2^j}$ above is also a homeomorphism on $({-}\infty,0) \to (|\xi'|^{2s},\infty)$) leads to
\[
\begin{split}
 \brac{\int_{{-\infty}}^{0} |f(\hat{\xi},t)|^2 d\xi_1}^{\frac{1}{2}} 
 \aleq&{2^k}^{-\frac{2s-1}{2}} \int_{\R} \brac{\int_{\R} \abs{g(x_1,\hat{\xi'},\hat{\tau})}^2 d\tau}^{\frac{1}{2}} dx_1  \\
 \end{split}
\]
That is, we have for any $\xi' \in \R^{n-1} \setminus \{0\}$ and any $t \in \R$
\[
\begin{split}
 \brac{\int_{\R} |f(\hat{\xi},t)|^2 d\xi_1}^{\frac{1}{2}} 
 \aleq&{2^k}^{-\frac{2s-1}{2}} \int_{\R} \brac{\int_{\R} \abs{g(x_1,\hat{\xi'},\hat{\tau})}^2 d\tau}^{\frac{1}{2}} dx_1  \\
 \end{split}
\]
Applying once more Minkowski inequality we have
\[
\begin{split}
 \sup_{t \in \R}\| f(\hat{\xi},t)\|_{L^2(\R^n)}
 \aleq&{2^k}^{-\frac{2s-1}{2}} \int_{\R} \brac{\int_{\R^{n-1}}\int_{\R} \abs{g(x_1,\hat{\xi'},\hat{\tau})}^2 d\tau d\xi'}^{\frac{1}{2}} dx_1  \\
 \end{split}
\]
By Plancherel we thus have shown
\[
\begin{split}
 \| f\|_{L^\infty_t L^2_x(\R^n)}
 \aleq&{2^k}^{-\frac{2s-1}{2}} \|g\|_{L^1_e L^2_{e^\perp,t}}   \\
 =&{2^k}^{-\frac{2s-1}{2}} \|(\i \partial_t +\Dels{s} +\i) f\|_{L^1_e L^2_{t,e^\perp}}   \\
 =&\|f\|_{Y^e_{k}}\\
 \end{split}
\]
We can conclude.
\end{proof}

In preparation for the Local Smoothing and Maximal estimate we need the following. The following is the $s < 1$-analogue of \cite[Lemma 2.4]{IKCMP}  but using the spaces from \cite{IKDIE}.
\begin{lemma}\label{la:cmpla2.4}
Let $e \in \S^{n-1} $,  then for any $f \in Y_k^e$  there exists $h: \R \times e^\perp \times \R \to \C$ and $g$ such that
\begin{equation}\label{eq:cmpla24dec}
\begin{split}
  \hat{f}(\xi,\tau)= &  \hat{g}(\xi,\tau)\\
  &+\col{\ind_{\tau + |\xi_{e}'|^{2s} \leq -2^{2s(k-c')}}} \eta^{+}_{[k-\tilde{c},k+\tilde{c}]}(N(\xi_{e}',\tau)) \frac{  \eta_{ \leq k-c'}(\xi_{e,1}-N(\xi_{e}',\tau))}{(\xi_{e,1}-N(\xi_{e}',\tau)+\i/2^{k(2s-1)})}
\\
& \quad \times \int_{\mathbb{R}} e^{-\i y\xi_{e,1} } h(y_{1},\xi_{e}',\tau) dy,
\end{split}
\end{equation}
where we recall \eqref{eq:weirdNdef}.
Moreover $h$ and $g$ have the estimates
\[
 \|g\|_{X_k} \aleq \|f\|_{Y_k^e}
\]
and 
\[
 \|h\|_{L^1 (\R; L^2(e^\perp \times \R)) } \aleq 2^{-k\frac{2s-1}{2}} \|f\|_{Y_k^e}
\]
And $h $ is supported in the set 
\begin{equation}\label{eq:cmpla24decsupport}
\mathbb{R} \times \{(\xi_{e}',\tau) \in e^\perp \times \R:\  N(\xi_{e}',\tau) \aeq2^{k} , | \xi_{e}' | \leq C_{1} 2^{k} \}\end{equation}
\end{lemma}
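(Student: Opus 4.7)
The plan is to split $\hat f$ by modulation and then apply \Cref{Nprop2} to the low-modulation piece. Fix $c', \tilde c, C_1$ as in \Cref{Nprop2}, chosen so that the cone assumption $\xi_{e,1}\geq \bc 2^k$ of $Y^e_k$ implies $\xi_{e,1}\in[2^{k-\tilde c},2^{k+\tilde c}]$ and $|\xi_e'|\leq C_1 2^k$ on $\supp\hat f$. Decompose
\[
\hat f=\bigl(1-\eta_{\leq 2sk-c'}(\tau+|\xi|^{2s})\bigr)\hat f+\eta_{\leq 2sk-c'}(\tau+|\xi|^{2s})\hat f.
\]
The first (high-modulation) summand equals $\mathcal F\bigl(\sum_{j> 2sk-c'}Q_j f\bigr)$, whose $X_k$-norm is controlled by $\|f\|_{Y^e_k}$ via \eqref{eq:DIE:311}. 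This will be part of $g$.

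For the low-modulation summand, I rewrite it as
\[
\frac{\eta_{\leq 2sk-c'}(\tau+|\xi|^{2s})}{\tau+|\xi|^{2s}+\i}\,\mathcal F\bigl[(\i\partial_t+\Dels{s}+\i)f\bigr],
\]
insert the cost-free cutoff $\ind_{|\xi_e'|\leq C_1 2^k}\eta^+_{[k-\tilde c,k+\tilde c]}(\xi_{e,1})$, and apply \Cref{Nprop2} to replace the symbol by the main term of \eqref{NNN} plus an error $E$. Using
\[
\mathcal F\bigl[(\i\partial_t+\Dels{s}+\i)f\bigr](\xi,\tau)=\int_{\R} e^{-\i y_1\xi_{e,1}}\,\mathcal F_{e^\perp,t}\bigl[(\i\partial_t+\Dels{s}+\i)f\bigr](y_1,\xi_e',\tau)\,dy_1,
\]
I define
\[
h(y_1,\xi_e',\tau):=\frac{\ind_{N(\xi_e',\tau)\aeq 2^k}\,\ind_{|\xi_e'|\leq C_1 2^k}}{K(\xi_e',\tau)}\,\mathcal F_{e^\perp,t}\bigl[(\i\partial_t+\Dels{s}+\i)f\bigr](y_1,\xi_e',\tau),
\]
absorbing the factor $K^{-1}$ and the support restrictions of the main term (these are naturally present through $\eta^+_{[k-\tilde c,k+\tilde c]}(N)$ and $\ind_{|\xi_e'|\leq C_1 2^k}$). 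The main-term contribution then matches the second summand of \eqref{eq:cmpla24dec} verbatim, and the support property \eqref{eq:cmpla24decsupport} holds by construction. Combining $|K|\aeq 2^{k(2s-1)}$ from \eqref{eq:Kdefest} with Plancherel in the $(e^\perp,t)$ variables gives
\[
\|h\|_{L^1_{y_1}L^2_{\xi_e',\tau}}\aleq 2^{-k(2s-1)}\|(\i\partial_t+\Dels{s}+\i)f\|_{L^1_{y_1}L^2_{e^\perp,t}}=2^{-k\frac{2s-1}{2}}\|f\|_{Y^e_k},
\]
as required.

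Finally, the error term $E\,\mathcal F[(\i\partial_t+\Dels{s}+\i)f]=E(\tau+|\xi|^{2s}+\i)\hat f$ must be absorbed into $g$. On the support of $E$, where $|\tau+|\xi|^{2s}|\aleq 2^{2sk}$ and $\xi_{e,1}\aeq 2^k$, the multiplier $E(\tau+|\xi|^{2s}+\i)$ is of size $O(2^{-j}+2^{-2sk}2^j)$ on the modulation band $|\tau+|\xi|^{2s}|\aeq 2^j$ (and $O(1)$ for $j=0$). A band-by-band $L^2$ estimate via \eqref{eq:QjXkvsY2k} of \Cref{la:IKDIE:3.1}, followed by summing the resulting geometric series over $0\leq j\aleq 2sk$, then gives $\|\mathcal F^{-1}(E(\tau+|\xi|^{2s}+\i)\hat f)\|_{X_k}\aleq\|f\|_{Y^e_k}$, completing the construction of $g$. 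I expect this last step to be the main technical obstacle: the bound on $E(\tau+|\xi|^{2s}+\i)$ does not verbatim fit the hypothesis of \Cref{la:simplessss} when $|\tau+|\xi|^{2s}|$ is small, so one must reproduce a modulation-by-modulation argument in the spirit of that lemma's proof.
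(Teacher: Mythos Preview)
Your approach matches the paper's essentially verbatim: split off the high-modulation piece via \eqref{eq:DIE:311}, apply \Cref{Nprop2} to the low-modulation remainder, define $h$ as $K^{-1}$ times $\mathcal F_{e^\perp,t}[(\i\partial_t+\Dels{s}+\i)f]$, and absorb the $E$-error into $g$ by a modulation-by-modulation $L^2$ computation. The paper carries out that last computation directly using the measure bound \eqref{eq:claimasdkljhfs} (exactly ``in the spirit'' of the proof of \Cref{la:simplessss}, as you anticipate), rather than quoting \eqref{eq:QjXkvsY2k}; your variant via \eqref{eq:QjXkvsY2k} also works for $k\gtrsim 1$ and gives the same geometric sum.

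One point you should add: the error bound $|E(\tau+|\xi|^{2s}+\i)|\lesssim 2^{-2sk}2^j+2^{-j}$ is only $O(1)$ at $j=0$ when $k\gtrsim 0$, so your sum fails for $k$ small (or negative). The paper handles this by disposing of the case $k\leq 100$ at the outset, setting $g:=f$ and $h:=0$ and invoking \eqref{eq:YkisXkforkneg}, which gives $\|f\|_{X_k}\aleq 2^{ks}\|f\|_{Y^e_k}\aleq \|f\|_{Y^e_k}$.
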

\begin{proof}
If $k \leq 100$ and $f \in Y_k^e$ we set $g:=f$, $h=0$ and obtain the result by \eqref{eq:YkisXkforkneg}. So, from now on, we assume $k \geq 100$. 

By \eqref{eq:DIE:311} we may assume $ \hat{f}$ is supported in $ \{ | \tau + | \xi |^{2s} | \leq 2^{2sk-c'}\}$ where $ 2^{-c'} \ll 1$. Also, since $ f \in Y^{e}_{k}$ then $\hat{f}$ is supported in $ \{ \xi_{e,1} \aeq2^{k} \text{ and } | \xi | \leq C_{1} 2^{k} \}$. In particular, $ | \xi_{e}' | \leq C_{1} 2^{k}$. Therefore, we have equality 
$$
\hat{f}(\xi,\tau) = \eta_{| \xi_{e}' | \leq C_{1} 2^{k}} \eta^{+}_{[k-\tilde{c},k+\tilde{c}]}(\xi_{e,1}) \eta_{\leq 2sk-c'} ( \tau+| \xi |^{2s}) \hat{f}(\xi,\tau)
$$
We define $h: \R^{n+1} \to \C$ by the following
\begin{equation}\label{rr}
 \hat{f}(\xi,\tau)
 = \eta_{| \xi_{e}' | \leq C_{1} 2^{k}} \frac{\eta^{+}_{[k-\tilde{c},k+\tilde{c}]}(\xi_{e,1}) \eta_{\leq 2sk-c'} ( \tau+| \xi |^{2s})}{\tau+ | \xi |^{2s} +\i} \mathcal{F}_{\R^{n+1}} (h)(\xi,\tau)
\end{equation}
Using \Cref{Nprop2} we may write this as
\begin{equation}\label{rr3}
\begin{split}
 \hat{f}(\xi,\tau)
& = \col{\ind_{\tau + |\xi_{e}'|^{2s} \leq -2^{2s(k-c')}}}\eta_{| \xi_{e}' | \leq C_{1}2^{k}} \eta^{+}_{[k-\tilde{c},k+\tilde{c}]}(N(\xi_{e}',\tau)) \frac{ \eta_{ \leq k-c'}(\xi_{e,1} - N(\xi_{e}',\tau))}{ K(\xi_{e}',\tau) ( \xi_{e,1}- N(\xi_{e}',\tau) + \i 2^{-k\frac{2s-1}{2}})} \mathcal{F}_{\R^{n+1}}(h)
\\
&+ E(\xi,\tau)\mathcal{F}_{\R^{n+1}}(h)(\xi,\tau),
\end{split}
\end{equation}
where $ |  E |\aleq  2^{-2sk} + | \tau + | \xi |^{2} |^{-2}$.

We show that the last term in \eqref{rr3} can be absorbed into the $g$ term of \eqref{eq:cmpla2.4:goaldec}. More precisely, we are going to show
\begin{equation}\label{rr6}
\| \mathcal{F}^{-1}_{\mathbb{R}^{n+1}}( E\, \mathcal{F}_{\R^{n+1}}(h) \|_{X_{k}}
\aleq  \| f \|_{Y^{e}_{k}}.
\end{equation}
Indeed, by Plancherel we have
\begin{equation}\label{eq:cmpla2.4:goaldec}
\begin{split}
& 2^{\frac{j}{2}} \| Q_{j} \left( \mathcal{F}^{-1}_{\R^{n+1}}( E \mathcal{F}_{\R^{n+1}}(h)) \right) \|_{L^{2}(\R^{n+1})}
\\
& = 2^{\frac{j}{2}} \| \varphi((\tau+| \xi |^{2s})/2^{j}) E(\xi,\tau) \hat{h}(\xi,\tau) \|_{L^{2}}
\\
& = 2^{\frac{j}{2}}  \| \varphi((\tau+| \xi |^{2s})/2^{j}) E(\xi,\tau) \int_{\mathbb{R}} e^{-\i\xi_{e,1}y} \mathcal{F}_{x'_{e},t}(h) (ye+\xi_{e}',\tau) dy \|_{L^{2}(\R^{n+1})}
\\
& \leq 2^{\frac{j}{2}}\int_{\mathbb{R}} \| \varphi(\tau+| \xi |^{2s}/2^{j}) E(\xi,\tau) e^{-\i\xi_{e,1}y} \mathcal{F}_{x'_{e},t}(h)(ye+\xi_{e}',\tau) \|_{L^2(\R^{n+1})} dy
\\
&\aleq 2^{\frac{j}{2}} (2^{-2sk}+ 2^{-2j}) \int_{\mathbb{R}} \left( \| \mathcal{F}_{x'_{e},t}(h(ye+\cdot,\cdot)) \|_{L^{2}_{\xi_{e}',\tau}} \big[ \int_{\mathbb{R}} \ind_{| \tau+| \xi |^{2s}| \leq 2^{j}, \xi_{e,1} \aeq2^{k} } d\xi_{e,1}\big]^{1/2} \right)
\\
& \leq 2^{\frac{j}{2}} (2^{-2sk}+ 2^{-2j}) \int_{\mathbb{R}}  \| \mathcal{F}_{x'_{e},t}(h(ye + \cdot,\cdot) \|_{L^{2}_{\xi_{e}',\tau}} dy  | \{ \xi_{e,1} \aeq2^{k} : | \tau + | \xi |^{2s} | \leq 2^{j} \} |^{1/2}
\end{split}
\end{equation}
With the help of \eqref{eq:claimasdkljhfs} we find
\begin{equation}\label{rr4}
\begin{split}
&  2^{\frac{j}{2}} \| Q_{j} \left( \mathcal{F}^{-1}_{\R^{n+1}}( E \mathcal{F}_{\R^{n+1}}(h)) \right) \|_{L^{2}(\R^{n+1})}
\\
&\aleq  2^{\frac{j}{2}}( 2^{-2sk}+2^{-2j}) \min \{ 2^{\frac{k}{2}},2^{k\frac{1-2s}{2}}\, 2^{\frac{j}{2}}\} \| \mathcal{F}_{x'_{e},t}( h)(ye+\xi_{e}',\tau) \|_{L^{1}_{y},L^{2}_{\xi_{e}',\tau}}.
\end{split}
\end{equation}

Lastly, notice by Plancherel
\begin{equation}\label{rrM}
\begin{split}
& \| \mathcal{F}_{x'_{e},t}( h)(ye+\xi_{e}',\tau) \|_{L^{1}_{y},L^{2}_{\xi_{e}',\tau}}
\\
& = \| ( \i \partial_{t} + ( -\Delta^{s})+\i) f \|_{L^{1}_{e},L^{2}_{e^{\perp},t}} = 2^{k\frac{2s-1}{2}} \| f \|_{Y^{e}_{k}}.
\end{split}
\end{equation}
Plugging \eqref{rrM} into \eqref{rr4} we obtained 
\begin{equation*}
\begin{split}
& 2^{\frac{j}{2}} \| Q_{j} \left( \mathcal{F}^{-1}_{\R^{n+1}}( E \mathcal{F}_{\R^{n+1}}(h)) \right) \|_{L^{2}(\R^{n+1})}
\\
&\aleq  2^{\frac{j}{2}} ( 2^{-2sk}+2^{-2j}) \min\{ 2^{\frac{k}{2}},2^{k\frac{1-2s}{2}}2^{\frac{j}{2}}\} 2^{k\frac{2s-1}{2}} \| f \|_{Y^{e}_{k}}
\end{split}
\end{equation*}
Summing over $ j \in [0, 2sk+C]$ and using
\begin{equation}
\begin{split}
& \sum_{0 \leq j \leq 2sk+C} 2^{\frac{j}{2}} ( 2^{-2sk}+2^{-2j} ) \min\{ 2^{\frac{k}{2}},2^{k\frac{1-2s}{2}}2^{\frac{j}{2}}\} 2^{k\frac{2s-1}{2}}
\\
&\aleq  1
\end{split}
\end{equation}
This proves \eqref{rr6} and we arrive at
\begin{equation*}
\begin{split}
 \hat{f}(\xi,\tau) &= \hat{g}(\xi,\tau)\\
 &+\col{\ind_{\tau + |\xi_{e}'|^{2s} \leq -2^{2s(k-c')}}}\eta_{| \xi_{e}' | \leq C_{1} 2^{k}}\eta^{+}_{[k-\tilde{c},k+\tilde{c}]}(N(\xi_{e}',\tau))\times\\
 & \quad \times  \frac{ \eta_{ \leq k-c'}(\xi_{e,1} - N(\xi_{e}',\tau))}{ K(\xi_{e}',\tau) ( \xi_{e,1}- N(\xi_{e}',\tau) + \i 2^{-k\frac{2s-1}{2}})} \mathcal{F}_{\R^{n+1}}(h)(\xi,\tau)
\\
\end{split}
\end{equation*}
where $ \| g \|_{X_{k}}\aleq  \| f \|_{Y^{e}_{k}}$.

Next, write the Fourier transform of $h$ as
$$
\mathcal{F}_{\R^{n+1}}(h)(\xi,\tau) = \int_{\mathbb{R}} e^{-\i\xi_{e,1}y} \mathcal{F}_{x'_{e},t}(h)(ye+\xi_{e}',\tau) d \xi_{e,1}
$$
then define $h': \R \times e^\perp \times \R \to \C$ as
$$
h'(y,\xi_{e}',\tau) = \frac{1}{K(\xi_{e}',\tau)}  \mathcal{F}_{x'_{e},t}(h)(ye+\xi_{e}',\tau), \quad y \in \R,\, \xi_{e}' \in e^\perp,\, \tau \in \R
$$
Then we have succeeded obtaining \eqref{eq:cmpla24dec} (with $h'$ instead of $h$). Moreover, using \eqref{rrM} and \eqref{eq:Kdefest} we readily find
$$
\| h' \|_{L^{1}_{y}L^{2}_{\xi_{e}',\tau}}\aleq  2^{-k\frac{2s-1}{2}} \| f \|_{Y^{e}_{k}}.
$$
Lastly, \eqref{eq:cmpla24decsupport} holds because of the cuttoff functions.

\end{proof}

\subsection{Smoothing and Maximal estimates}

\begin{lemma}[Smoothing Estimate]\label{la:locsmooth}
Let $n \geq 2$, and $k \in \Z$.

Assume $f \in Z_k$ and $e \in \S^{n-1}$, then we have
$$
 \| \mathcal{F}_{\R^{n+1}}^{-1}\brac{\mathcal{F}_{\R^{n+1}}({\Tilde{f}}) \vartheta_{\xi \cdot e \ageq |\xi|} }\|_{L^\infty_e L^2_{e^\perp, t}} \aleq 2^{-k\frac{2s-1}{2}} \|f\|_{Z_k},
$$
where $ \Tilde{f} \in \{ f , \bar{f} \}$.
\end{lemma}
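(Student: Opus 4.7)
The plan is to use the infimum definition of $Z_k$: write $f = f_1 + \sum_{e' \in \mathscr{E}} f_{e'}$ with $\|f_1\|_{X_k^{1/2}} + \sum_{e'} \|f_{e'}\|_{Y_k^{e'}} \leq 2\|f\|_{Z_k}$, and estimate each piece separately. The case $\tilde{f} = \bar{f}$ reduces to $\tilde{f} = f$ via $\xi \mapsto -\xi$, which preserves all $Z_k$-norms and swaps $\vartheta_{\xi \cdot e \ageq |\xi|}$ with $\vartheta_{\xi \cdot (-e) \ageq |\xi|}$, so we may focus on $\tilde{f} = f$.

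For the $X_k^{1/2}$-piece, decompose $f_1 = \sum_{j \geq 0} Q_j f_1$ by modulation. Plancherel in the transverse variables $(x_{e^\perp},t)$, followed by Cauchy--Schwarz in $\xi_{e,1}$ against the support restriction, gives
\[
\|Q_j f_1\|_{L^\infty_e L^2_{e^\perp,t}} \aleq \bigl|\{\xi_{e,1}: |\xi| \aeq 2^k,\, |\tau+|\xi|^{2s}|\aleq 2^j\}\bigr|^{1/2}\, \|Q_j f_1\|_{L^2}.
\]
The same slice-volume bound $\aleq \min\{2^k,\, 2^{k(1-2s)} 2^j\}$ already used in \eqref{eq:claimasdkljhfs} and in Lemma \ref{la:simplessss}, combined with $s>1/2$, gives $\min\{2^{k/2},\, 2^{k(1-2s)/2} 2^{j/2}\}\cdot 2^{-j/2} \aleq 2^{-k(2s-1)/2}$ uniformly in $j$. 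Summing against the $X_k$-weights $2^{j/2}$ then yields $\aleq 2^{-k(2s-1)/2}\|f_1\|_{X_k^{1/2}}$.

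For each $Y_k^{e'}$-piece, apply Lemma \ref{la:cmpla2.4} to split $\widehat{f_{e'}} = \widehat{g} + \widehat{h''}$, with $\|g\|_{X_k} \aleq \|f_{e'}\|_{Y_k^{e'}}$ so that $g$ is absorbed into the previous step. In the central case $e = e'$ (w.l.o.g.\ $e = e_1$), Plancherel in $(x_{e^\perp},t)$ converts the $h''$-part to
\[
\mathcal{F}_{x_{e^\perp},t}h''(x_{e,1},\xi_e',\tau) = \int_{\R} \rho(x_{e,1}-y,\xi_e',\tau)\, h'(y,\xi_e',\tau)\, \dif y,
\]
where $\rho(\cdot,\xi_e',\tau) := \mathcal{F}^{-1}_{\xi_{e,1}}\bigl[\eta_{\leq k-c'}(\xi_{e,1}-N)/(\xi_{e,1}-N+\i 2^{-k(2s-1)})\bigr]$. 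Writing $\rho$ as a convolution of an $L^1_{x_{e,1}}$-bump (the inverse Fourier transform of $\eta_{\leq k-c'}$) with the translated Cauchy kernel (whose $L^\infty$-norm is $O(1)$ uniformly in $N$ and the damping $\epsilon = 2^{-k(2s-1)}$) gives $\|\rho\|_{L^\infty_{x_{e,1}}} \aleq 1$. Minkowski's integral inequality in $y$, together with Lemma \ref{la:cmpla2.4}'s bound $\|h'\|_{L^1_y L^2_{\xi_e',\tau}} \aleq 2^{-k(2s-1)/2}\|f_{e'}\|_{Y_k^{e'}}$, closes the estimate.

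The main technical obstacle is the case $e \neq e'$ for the $Y_k^{e'}$-piece, where the $e'$-resolvent structure from Lemma \ref{la:cmpla2.4} is not aligned with the $e$-direction. The key geometric observation is that the cone cutoff $\vartheta_{\xi \cdot e \ageq |\xi|}$ still forces $\xi_{e,1} \aeq 2^k$ on $\supp \widehat{h''}$, while the on-shell constraint $|\xi_{e',1} - N(\xi_{e'}',\tau)| \aleq 2^{-k(2s-1)}$ localizes the $\xi_{e,1}$-slice at fixed $(\xi_{e^\perp},\tau)$ to a set of small measure (its size being controlled by the angle between $e$ and $e'$, for which only the generic $e \cdot e' \aeq 1$ direction contributes meaningfully). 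The same Cauchy--Schwarz-in-$\xi_{e,1}$ argument as in the $X_k^{1/2}$-step, applied to $\widehat{h''}$ with its $L^2$-mass controlled via the resolvent weight and Lemma \ref{la:cmpla2.4}, then produces the bound, and summing over all decomposition pieces yields $\aleq 2^{-k(2s-1)/2}\|f\|_{Z_k}$.
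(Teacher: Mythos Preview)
Your reduction from $\tilde f=\bar f$ to $\tilde f=f$, your $X_k$-argument (Plancherel in $(e^\perp,t)$, Cauchy--Schwarz in $\xi_{e,1}$, and the slice bound \eqref{eq:claimasdkljhfs}), and your $Y_k^{e'}$-argument in the aligned case $e=e'$ are all correct and match the paper's proof. In particular your kernel observation $\|\rho(\cdot,\xi_e',\tau)\|_{L^\infty}\aleq 1$ is exactly the ``boundedness of the Fourier transform of $1/\xi_{e,1}$'' step the paper invokes after \eqref{red4}.

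The gap is in the $e\neq e'$ case. Two concrete problems. First, the support coming out of Lemma~\ref{la:cmpla2.4} is $|\xi_{e',1}-N|\aleq 2^{k-c'}$, not $\aleq 2^{-k(2s-1)}$; the near-shell region you quote is only where the resolvent weight is largest, not the full support. Second, you only have $h\in L^1_y L^2_{\xi_{e'}',\tau}$, so controlling $\|\widehat{h''}\|_{L^2_{\xi,\tau}}$ forces you through $\|H\|_{L^\infty_{\xi_{e',1}}L^2}$ and a dyadic decomposition in $|\xi_{e',1}-N|\aeq 2^m$. Each dyadic shell then contributes $\aeq\|h\|_{L^1_y L^2}$ (the $2^{m/2}$ from the slice and the $2^{-m/2}$ from the resolvent cancel), and summing over the $O(k)$ shells produces a logarithmic loss in $k$ that the lemma does not allow. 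The parenthetical about the angle between $e$ and $e'$ and ``$e\cdot e'\aeq 1$'' does not repair this: the slice measure is governed by the modulation via Lemma~\ref{la:setestimate}, not by the angle.

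What the paper does instead is uniform in $\tilde e$ and avoids the loss. After passing via Lemma~\ref{multiplier} to the smooth cutoff $\eta^+_{[k-c,k+c]}(\xi\cdot e)$, it replaces $\xi_{\tilde e,1}$ by $N(\xi_{\tilde e}',\tau)$ \emph{inside that cutoff}, i.e.\ $\eta^+_{[k-c,k+c]}((\xi_{\tilde e,1}\tilde e+\xi_{\tilde e}')\cdot e)\to \eta^+_{[k-c,k+c]}((N\tilde e+\xi_{\tilde e}')\cdot e)$; the error is $O(2^{-2sk}|\tau+|\xi|^{2s}|)$ and lands in $X_k$ by Lemma~\ref{la:simplessss}. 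Now the cutoff is a function of $(\xi_{\tilde e}',\tau)$ only, so the $\xi_{\tilde e,1}$-integral factors out exactly as in your $e=e'$ argument. The remaining oscillatory integral over $(\xi_{\tilde e}',\tau)$ with phase $x_{\tilde e}'\cdot\xi_{\tilde e}'+x_{\tilde e,1}N+t\tau$ is converted, via the change of variable $\tau=-(\mu^2+|\xi_{\tilde e}'|^2)^s$, into $\int_{\R^n}e^{i(x\cdot v-t|v|^{2s})}\,\cdot\,dv$, after which the estimate is the classical local smoothing bound \eqref{red8} for the free propagator in the $e$-direction. This replacement-plus-change-of-variables is the missing idea in your $e\neq e'$ sketch.
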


\begin{proof}

Notice that if we prove the lemma for $f$, then it also follows for $\bar{f}$, since we have 
\begin{equation}\label{eq:Fbarsmoothing}
 \| \mathcal{F}_{\R^{n+1}}^{-1}\brac{ \mathcal{F}_{\R^{n+1}}({\bar{f}}) \vartheta_{\xi \cdot e \ageq |\xi|}}\|_{L^\infty_e L^2_{e^\perp, t}} = \| \mathcal{F}_{\R^{n+1}}^{-1}\brac{ \mathcal{F}_{\R^{n+1}}({f}) \vartheta_{\xi \cdot (-e) \ageq |\xi|}}\|_{L^\infty_{-e} L^2_{(-e)^\perp, t}}.
\end{equation}
Indeed,
\[
 \mathcal{F} \bar{f}(\xi,\tau) \vartheta_{\xi \cdot e \ageq |\xi|} = \overline{\mathcal{F} f(-\xi,-\tau)
 \vartheta_{(-\xi) \cdot (-e) \ageq |\xi|}}
\]
so
\[
\mathcal{F}^{-1}\brac{ \mathcal{F} \bar{f}(\xi,\tau) \vartheta_{\xi \cdot e \ageq |\xi|}}(x,t) = \overline{\mathcal{F}^{-1}\brac{\mathcal{F} \brac{f(\xi,\tau)
 \vartheta_{(\xi) \cdot (-e) \ageq |-\xi|}}}(x,t)}
\]
and thus we have \eqref{eq:Fbarsmoothing}.

Therefore, we need to prove for any $f \in Z_{k}$, and any $e \in \S^{n-1}$ 
\begin{equation}\label{Reduction1}
 \| \mathcal{F}_{\R^{n+1}}^{-1}\brac{\hat{f} \vartheta_{\xi \cdot e \ageq |\xi|} }\|_{L^\infty_e L^2_{e^\perp, t}} \aleq 2^{-k\frac{2s-1}{2}} \|f\|_{Z_k}.
 \end{equation}
Assume \underline{first $f \in X_{k}$}, then we want to show
\begin{equation}\label{LocsmoothXpart}
\begin{split}
\Vert \mathcal{F}_{\R^{n+1}}^{-1} \left( \hat{f} \vartheta_{\xi \cdot e \ageq |\xi|} \right) \Vert_{L^{\infty}_{e}L^{2}_{e^{\perp},t}} \aleq 2^{-k\frac{2s-1}{2}} \Vert f \Vert_{X_{k}}.
\end{split}
\end{equation}
Summing over $j$, \eqref{LocsmoothXpart} follows once we show for any $j \in \Z_{+}$
\begin{equation}\label{ComponentsXpart}
\Vert \mathcal{F}_{\R^{n+1}}^{-1} \left( \mathcal{F}_{\R^{n+1}}(Q_{j}f) \vartheta_{\xi \cdot e \ageq |\xi|}\right) \Vert_{L^{\infty}_{e}L^{2}_{e^{\perp},t}} \aleq 2^{-k\frac{2s-1}{2}} 2^{\frac{j}{2}} \Vert Q_{j}f \Vert_{L^{2}(\R^{n+1})}.
\end{equation}
% where by a slight abuse of notation we replaced $\hat{f}$ with $\hat{f} \vartheta_{\xi \cdot e \ageq |\xi|}$.

As in \eqref{eq:xieande1decomp}, we write $ \xi = \xi_{e,1} e+ \xi'_{e}$, and $x = x_{e,1}e+x'_{e}$. Then the left hand side of \eqref{ComponentsXpart} is equal to (if $j=0$ then $\varphi$ becomes $\eta$)
\begin{equation*}
\begin{split}
& \norm{ \int_{\R^{n+1}} e^{\i \xi \cdot x+ i  t \tau} \vartheta_{\xi \cdot e \ageq |\xi|} \varphi((\tau+| \xi |^{2s})/2^{j}) \hat{f}(\xi,\tau) d\xi d\tau }_{L^{\infty}_{e}L^{2}_{e^{\perp},t} }
\\
& =\norm{\int_{\R} e^{\i \xi_{e,1} x_{e,1}} \vartheta_{\xi \cdot e \ageq |\xi|} \varphi((\tau+| \xi |^{2s})/2^{j}) \hat{f}(\xi,\tau) d\xi_{e,1}  }_{L^{\infty}_{x_{e,1}}L^{2}_{\xi'_{e},\tau} },
\end{split}
\end{equation*}
where we used Plancherel. Notice that the inner integral is supported in the set $ \{ \xi_{e,1} \approx 2^{k} : | \tau+| \xi |^{2s} | \aleq 2^{j}, |\xi| \aeq 2^k \}$ which by \Cref{la:setestimate} has its $\mathcal{L}^1$-measure controlled by $2^{-k(2s-1)}2^j$. Therefore, by H\"{o}lder inequality we obtain
\begin{equation*}
\begin{split}
& \norm{\int_{\R} e^{\i \xi_{e,1} x_{e,1}} \vartheta_{\xi \cdot e \ageq |\xi|} \varphi((\tau+| \xi |^{2s})/2^{j}) \hat{f}(\xi,\tau) d\xi_{e,1}  }_{L^{\infty}_{x_{e,1}}L^{2}_{\xi'_{e},\tau} }
\\
& \aleq 2^{-k\frac{2s-1}{2}} 2^{\frac{j}{2}} \Vert Q_{j}f \Vert_{L^{2}(\R^{n+1})}.
\end{split}
\end{equation*}
Summing over $j \in \Z_{+}$ gives us \eqref{ComponentsXpart} and thus \eqref{LocsmoothXpart} is established.

Next, \underline{consider the $Y^{\tilde{e}}_{k}$-estimate} for some $ \tilde{e} \in \mathscr{E}$.

Observe that for suitably large $c > 0$
\[
 \vartheta_{\xi \cdot e \ageq |\xi|}  \eta^{+}_{[k-c,k+c]}( (\xi \cdot e)) \in C_c^\infty(\R^n)
\]
and by scaling we thus have
\[
 \|\mathcal{F} \brac{\vartheta_{\xi \cdot e \ageq |\xi|}  \eta^{+}_{[k-c,k+c]}( (\xi \cdot e)) }\|_{L^1(\R^n)} \aleq 1.
\]
Applying \Cref{multiplier}, it hence suffices to prove for any $f \in Y^{\tilde{e}}_k$,
\begin{equation}\label{LocsYpart}
\begin{split}
&  \| \mathcal{F}_{\R^{n+1}}^{-1}\brac{\hat{f} \eta^{+}_{[k-c,k+c]}( (\xi \cdot e)) }\|_{L^\infty_e L^2_{e^\perp, t}} \aleq 2^{-k\frac{2s-1}{2}} \Vert f \Vert_{Y^{\tilde{e}}_{k}}.
\end{split}
\end{equation}
Write $ \xi = \xi_{\tilde{e},1}\tilde{e}+ \xi'_{\tilde{e}}$, then carrying out the Fourier transform of the left hand side of \eqref{LocsYpart}  we obtain
\begin{equation*}
\begin{split}
& \| \mathcal{F}_{\R^{n+1}}^{-1}\brac{\hat{f} \eta^{+}_{[k-c,k+c]}( (\xi \cdot e)) }\|_{L^\infty_e L^2_{e^\perp, t}}
\\
& = \Vert \int_{\R^{n+1}} e^{\i \xi \cdot x + i t\tau} \hat{f}(\xi,\tau) \eta_{[k-c,k+c]}^{+}((\xi_{\tilde{e},1}\tilde{e}+\xi'_{\tilde{e}})\cdot e) d \xi d \tau \Vert_{L^{\infty}_{e}L^{2}_{e^{\perp},t}}
\end{split}
\end{equation*}
We would like to replace $ \xi_{\tilde{e},1} \tilde{ e}  +\xi'_{\tilde{e}} $ with $ N(\xi_{\tilde{e}},\tau) \tilde{e}+\xi'_{\tilde{e}}$. This can be done because the error term will be in the space $X_{k}$. Indeed, notice that by \eqref{eq:DIE:311}, using \eqref{LocsmoothXpart}, we may assume that $ \hat{f}$ is supported in $ \{ | \tau + | \xi |^{2s} | \leq 2^{2sk-c'} \text{ and } \xi_{\tilde{e},1} \approx 2^{k} \}$ for $ c' \gg 1$.

By Lemma \ref{la:Nproperties}, for any $ (\xi,\tau) \in {\rm supp}(\hat{f})$ the following holds
\begin{equation*}
\begin{split}
&| \eta^{+}_{[k-c,k+c]}(( \xi_{\tilde{e},1}\tilde{e}+\xi'_{\tilde{e}})\cdot e) - \eta^{+}_{[k-c,k+c]}((N(\xi'_{\tilde{e}},\tau)\tilde{e}+\xi'_{\tilde{e}})\cdot e)| \\
& \aleq \min\{1,2^{-k} | N(\xi'_{\tilde{e}},\tau) - \xi_{\tilde{e},1} | \}
\\
& \aleq \min\{1,2^{-2sk} | \tau + | \xi |^{2s} |\}
\end{split}
\end{equation*}

Thus, we can write
$$
\hat{f}(\xi,\tau) \eta^{+}_{[k-c,k+c]}(\xi \cdot e) = \hat{f}(\xi,\tau) \eta^{+}_{[k-c,k+c]}((N(\xi'_{\tilde{e}},\tau) \tilde{e}+\xi'_{\tilde{e}})\cdot e) + \hat{f}(\xi,\tau) E(\xi,\tau),
$$
with $ |E(\xi,\tau) | \aleq 2^{-2sk} | \tau + | \xi |^{2s} |$. Then using \Cref{la:simplessss} we obtain
$$
\Vert \mathcal{F}^{-1}_{\R^{n+1}}(E \hat{f}) \Vert_{X_{k}} \aleq \Vert f \Vert_{Y^{\tilde{e}}_{k}}.
$$
Then in light of \eqref{LocsmoothXpart}, to obtain \eqref{LocsYpart} it suffices to show
\begin{equation}\label{Reduc2}
\begin{split}
& \norm{\int_{\R^{n+1}} e^{\i \xi \cdot x + i t\tau} \hat{f}(\xi,\tau) \eta_{[k-c,k+c]}^{+}((N(\xi'_{\tilde{e}},\tau) \tilde{e}+\xi'_{\tilde{e}})\cdot e) d \xi d \tau }_{L^{\infty}_{e}L^{2}_{e^{\perp},t}}
\\
& \aleq 2^{-k\frac{2s-1}{2}} \norm{f }_{Y^{\tilde{e}}_{k}}
\end{split}
\end{equation}
where we implicitly assume that the support of $f$ is restricted to the $\tau$ and $\xi_{\tilde{e}}'$ such that $N(\xi'_{\tilde{e}},\tau)$ is well-defined.

To prove \eqref{Reduc2} we use Lemma \ref{la:cmpla2.4} to reduce matters to proving
\begin{equation}\label{Red3}
\begin{split}
& \Big \Vert \int_{\R^{n+1}} e^{\i \xi \cdot x+ \i t \tau}  \frac{ \eta_{\leq k -c'}(\xi_{\tilde{e},1}-N(\xi'_{\tilde{e}},\tau))}{\xi_{\tilde{e},1}-N(\xi'_{\tilde{e}},\tau) + \i/2^{k(2s-1)}} \eta_{[k-c,k+c]}^{+}(N(\xi'_{\tilde{e}},\tau))
\\
& \times  \eta_{[k-c,k+c]}^{+} ( ( N(\xi'_{\tilde{e}},\tau) \tilde{e}+\xi'_{\tilde{e}})\cdot e) \int_{\R} e^{\i \xi_{\tilde{e},1} y} h(y,\xi'_{\tilde{e}},\tau) dy d \xi d\tau \Big \Vert_{L^{\infty}_{e}L^{2}_{e^{\perp},t}}
\\
& \aleq \Vert h \Vert_{L^{1}_{y}L^{2}_{\xi'_{\tilde{e}},\tau}}
\end{split}
\end{equation}
Applying Minkowski inequality, we see that \eqref{Red3} follows once we prove
\begin{equation}\label{red4}
\begin{split}
&  \Big \Vert \int_{\R^{n+1}} e^{\i \xi \cdot x+ \i t \tau}  \frac{ \eta_{\leq k -c'}(\xi_{\tilde{e},1}-N(\xi'_{\tilde{e}},\tau))}{\xi_{\tilde{e},1}-N(\xi'_{\tilde{e}},\tau) + \i/2^{k(2s-1)}}
\\
& \times \eta_{[k-\tilde{c},k+\tilde{c}]}^{+}(N(\xi'_{\tilde{e}},\tau)) h(\xi'_{\tilde{e}},\tau) \eta^{+}_{[k-c,k+c]} (( N(\xi'_{\tilde{e}},\tau)\tilde{e}+\xi'_{\tilde{e}})\cdot e) d \xi d\tau \Big \Vert_{L^{\infty}_{e}L^{2}_{e^{\perp},t}}
\\
& \aleq \norm{h }_{L^{2}_{\xi'_{\tilde{e}},\tau}},
\end{split}
\end{equation}
for any $h \in L^{2}(\tilde{e}^{\perp} \times \R)$ and supported in $  \{ | \xi'_{\tilde{e}}| \aleq 2^{k} \text{ and } N(\xi'_{\tilde{e}},\tau) \approx 2^{k} \}$. To that end, we write
\[
\begin{split}
& \Vert \int_{\R^{n+1}} e^{\i \xi \cdot x+ \i t \tau}  \frac{ \eta_{\leq k -c'}(\xi_{\tilde{e},1}-N(\xi'_{\tilde{e}},\tau))}{\xi_{\tilde{e},1}-N(\xi'_{\tilde{e}},\tau) + \i/2^{k(2s-1)}}
\\
& \times  h(\xi'_{\tilde{e}},\tau)\eta_{[k-\tilde{c},k+\tilde{c}]}^{+}(N(\xi'_{\tilde{e}},\tau)) \eta^{+}_{[k-c,k+c]} (( N(\xi'_{\tilde{e}},\tau)\tilde{e}+\xi'_{\tilde{e}})\cdot e) d \xi d\tau \Vert_{L^{\infty}_{e}L^{2}_{e^{\perp},t}}
\\
& = \Vert \int_{\tilde{e}^{\perp} \times \R} e^{\i \xi'_{\tilde{e}} \cdot x'_{\tilde{e}}+ \i t \tau+\i N(\xi'_{\tilde{e}},\tau) x_{\tilde{e},1}}  h(\xi'_{\tilde{e}},\tau)\eta_{[k-\tilde{c},k+\tilde{c}]}^{+}(N(\xi'_{\tilde{e}},\tau)) \eta^{+}_{[k-c,k+c]} (( N(\xi'_{\tilde{e}},\tau)\tilde{e}+\xi'_{\tilde{e}})\cdot e)
\\
& \times \int_{\R} e^{ \i x_{\tilde{e},1}(\xi_{\tilde{e},1}-N(\xi'_{\tilde{e}},\tau))}  \frac{ \eta_{\leq k -c'}(\xi_{\tilde{e},1}-N(\xi'_{\tilde{e}},\tau))}{\xi_{\tilde{e},1}-N(\xi'_{\tilde{e}},\tau) + \i/2^{k(2s-1)}} d \xi_{\tilde{e},1} d\xi'_{\tilde{e}} d \tau \Vert_{L^{\infty}_{e} L^{2}_{e^{\perp},t}}
\\
& \aleq \Vert \int_{\tilde{e}^{\perp}\times \R} e^{\i \xi'_{\tilde{e}} \cdot x'_{\tilde{e}}+ \i t \tau+\i N(\xi'_{\tilde{e}},\tau) x_{\tilde{e},1}} h(\xi'_{\tilde{e}},\tau)\eta_{[k-\tilde{c},k+\tilde{c}]}^{+}(N(\xi'_{\tilde{e}},\tau)) \eta^{+}_{[k-c,k+c]} (( N(\xi'_{\tilde{e}},\tau)\tilde{e}+\xi'_{\tilde{e}})\cdot e) d \xi'_{\tilde{e}} d \tau \Vert_{L^{\infty}_{e} L^{2}_{e^{\perp},t}}.
\end{split}
\]
Where in the last inequality we used boundedness of the Fourier transform of the kernel $\frac{1}{\xi_{\tilde{e},1}}$. Therefore, we see that \eqref{red4} follows once we prove
\begin{equation}\label{red5}
\begin{split}
& \Vert\int_{\tilde{e}^{\perp}\times \R}e^{\i \xi'_{\tilde{e}} \cdot x'_{\tilde{e}}+ \i t \tau+\i N(\xi'_{\tilde{e}},\tau) x_{\tilde{e},1}} h(\xi'_{\tilde{e}},\tau)\eta_{[k-\tilde{c},k+\tilde{c}]}^{+}(N(\xi'_{\tilde{e}},\tau)) \eta^{+}_{[k-c,k+c]} (( N(\xi'_{\tilde{e}},\tau)\tilde{e}+\xi'_{\tilde{e}})\cdot e) d \xi'_{\tilde{e}} d \tau \Vert_{L^{\infty}_{e} L^{2}_{e^{\perp},t}}
\\
& \aleq \Vert h \Vert_{L^{2}_{\xi'_{\tilde{e}},\tau}}
\end{split}
\end{equation}
for any $ h \in L^{2}( \tilde{e}^{\perp} \times \R)$ supported in $\{ | \xi'_{\tilde{e}} | \aleq 2^{k} \text{ and } N(\xi'_{\tilde{e}},\tau) \approx 2^{k} \} $. To that end, introduce the change of variable $ \tau = - ( \mu^{2} + | \xi'_{\tilde{e}}|^{2})^{s}$, which is a valid change of variable because $\mu = N(\xi'_{\tilde{e}},\tau) \approx 2^{k}$. Therefore, after this change of variable for \eqref{red5} we need to show
\begin{equation}\label{red6}
\begin{split}
&  \Vert\int_{\tilde{e}^{\perp}\times \R}e^{\i \xi'_{\tilde{e}} \cdot x'_{\tilde{e}}+\i N x_{\tilde{e},1}- \i t ( \mu^{2} + | \xi'_{\tilde{e}}|^{2})^{s}} h(\xi'_{\tilde{e}},-( \mu^{2} + | \xi'_{\tilde{e}} |^{2})^{s}) \mu ( \mu^{2}+| \xi'_{\tilde{e}}|^{2})^{s-1}
\\
& \times \eta_{[k-\tilde{c},k+\tilde{c}]}^{+}(\mu) \eta^{+}_{[k-c,k+c]} (( \mu \tilde{e}+\xi'_{\tilde{e}})\cdot e) d \xi'_{\tilde{e}} d \mu \Vert_{L^{\infty}_{e} L^{2}_{e^{\perp},t}}
\\
& \aleq \Vert h \Vert_{L^{2}_{\xi'_{\tilde{e}},\tau}}
\end{split}
\end{equation}
If we define $ \tilde{h}(\xi'_{\tilde{e}},\mu) = h(\xi'_{\tilde{e}},-( \mu^{2} + | \xi'_{\tilde{e}} |^{2})^{s}) \mu ( \mu^{2}+| \xi'_{\tilde{e}}|^{2})^{s-1}$, then notice that
$$
\Vert \tilde{h} \Vert_{L^{2}_{\xi'_{\tilde{e}},\mu}} \approx 2^{k\frac{2s-1}{2}} \Vert h \Vert_{L^{2}_{\xi'_{\tilde{e}},\tau}}.
$$
Therefore, \eqref{red6} follows, once we prove
\begin{equation}\label{red7}
\begin{split}
& \Vert\int_{\tilde{e}^{\perp}\times \R}e^{\i \xi'_{\tilde{e}} \cdot x'_{\tilde{e}}+\i N(\xi'_{\tilde{e}},\tau) x_{\tilde{e},1}- it ( \mu^{2} + | \xi'_{\tilde{e}}|^{2})^{s}} h(\xi'_{\tilde{e}},\mu) \eta^{+}_{[k-c,k+c]} (( \mu \tilde{e}+\xi'_{\tilde{e}})\cdot e) d \xi'_{\tilde{e}} d \mu \Vert_{L^{\infty}_{e} L^{2}_{e^{\perp},t}}
\\
& \aleq  2^{-k\frac{2s-1}{2}} \Vert h \Vert_{L^{2}_{\xi'_{\tilde{e}},\mu}},
\end{split}
\end{equation}
for any $ h \in L^{2}(\tilde{e}^{\perp} \times \R)$ supported in $ \{ | (\xi'_{\tilde{e}},\mu) | \aleq 2^{k}\} $. Notice that we can identify the interior integral with an integral over $ \R^{n}$. Indeed, write any vector $ v \in \R^{n}$ as $ v = v_{\tilde{e},1} \tilde{e} + v'_{\tilde{e}}$. Then define $ g(v) = h(v'_{\tilde{e}},v_{\tilde{e},1})$. Then clearly, $ g$ is supported in $ | v | \aleq 2^{k}$ and $ \Vert g \Vert_{L^{2}_{v}} = \Vert h \Vert_{L^{2}_{\xi'_{\tilde{e}},\mu}}$. Further, by Fubini we have for any $ x \in \R^{n}$
\begin{equation*}
\begin{split}
& \int_{\R^{n}}e^{\i v \cdot x- it| v |^{2s}} g(v)  \eta^{+}_{[k-c,k+c]} (v \cdot e) d v
\\
& = \int_{\tilde{e}^{\perp} \times \R}e^{\i v'_{\tilde{e}} \cdot x'_{\tilde{e}}+iv_{\tilde{e},1} x_{\tilde{e},1}- \i t ( v_{\tilde{e},1}^{2} + | v'_{\tilde{e}}|^{2})^{s}} h(v'_{\tilde{e}},v_{\tilde{e},1}) \eta^{+}_{[k-c,k+c]} (( v_{\tilde{e},1} \tilde{e}+v'_{\tilde{e}})\cdot e) d v'_{\tilde{e}} d v_{\tilde{e},1}
\\
& = \int_{\tilde{e}^{\perp}\times \R}e^{\i \xi'_{\tilde{e}} \cdot x'_{\tilde{e}}+\i N(\xi'_{\tilde{e}},\tau) x_{\tilde{e},1}- \i t ( \mu^{2} + | \xi'_{\tilde{e}}|^{2})^{s}} h(\xi'_{\tilde{e}},\mu) \eta^{+}_{[k-c,k+c]} (( \mu \tilde{e}+\xi'_{\tilde{e}})\cdot e) d \xi'_{\tilde{e}} d \mu
\end{split}
\end{equation*}
Therefore, \eqref{red7} follows if we prove
\begin{equation}\label{red8}
\begin{split}
& \Vert\int_{\R^{n}}  e^{\i \xi \cdot x - \i t | \xi |^{2s}} h(\xi) \eta^{+}_{[k-c,k+c]} (\xi \cdot e) d \xi \Vert_{L^{\infty}_{e} L^{2}_{e^{\perp},t}}
\\
& \aleq 2^{-k\frac{2s-1}{2}}\Vert h \Vert_{L^{2}_{\xi}},
\end{split}
\end{equation}
for any $ h \in L^{2}(\R^{n})$ supported in $ | \xi | \aleq 2^{k}$. To that end, write $ \xi = \xi_{e,1} e + \xi'_{e}$ and $ x= x_{e,1}e+x'_{e}$ then using Plancherel we have
\begin{equation*}
\begin{split}
& \Vert\int_{\R^{n}}  e^{\i \xi \cdot x - \i t | \xi |^{2s}} h(\xi) \eta^{+}_{[k-c,k+c]} (\xi \cdot e) d \xi \Vert_{L^{\infty}_{e} L^{2}_{e^{\perp},t}}
\\
& = \Vert\int_{\R^{n}}  e^{\i \xi_{e,1}  x_{e,1} - \i t | \xi |^{2s}} h(\xi) \eta^{+}_{[k-c,k+c]} (\xi_{e,1}) d \xi_{e,1} \Vert_{L^{\infty}_{x_{e,1}}L^{2}_{\xi'_{e},t}}.
\end{split}
\end{equation*}
Fix $ x_{e,1}, \xi'_{e}$ then by duality we can find a Schwartz function $ \psi : \R \to \R $ so that $ \Vert \psi \Vert_{L^{2}} \leq 1$ and
\begin{equation*}
\begin{split}
& \Vert  \int_{\R} e^{\i \xi_{e,1} x_{e,1}-\i t | \xi |^{2s}} h(\xi) d\xi_{e,1} \eta^{+}_{[k-c,k+c]}(\xi_{e,1}) d\xi_{e,1} \Vert_{L^{2}_{t}}
\\
& \aleq |  \int_{\R} \psi(t)  \int_{\R} e^{\i \xi_{e,1} x_{e,1}-\i t | \xi |^{2s}} h(\xi) d\xi_{e,1} \eta^{+}_{[k-c,k+c]}(\xi_{e,1}) dt| .
\end{split}
\end{equation*}
By Fubini's theorem we can write the right hand side of the above as
\begin{equation}\label{Lastess}
\begin{split}
&  \abs{\int_{\R} e^{\i \xi_{e,1} x_{e,1}} h(\xi) d\xi_{e,1} \eta^{+}_{[k-c,k+c]}(\xi_{e,1})\hat{\psi}(| \xi |^{2s})d\xi_{e,1}}
\\
& \aleq \left( \int_{\R} | h(\xi) |^{2} d\xi_{e,1} \right)^{1/2} \left( \hat{\psi}(| \xi |^{2s}) \ind_{\xi_{e,1} \approx 2^{k}} d\xi_{e,1} \right)^{1/2}
\\
& \aleq 2^{-k(2s-1)/2} \left( \int_{\R} | h(\xi) |^{2} d\xi_{e,1} \right)^{1/2} \left(  \int_{\R} | \hat{\psi}(r)  |^{2} dr \right)^{1/2},
\end{split}
\end{equation}
where in the last step, for fixed $\xi_{e}'$ we used the change of variable $\xi_{e,1} \mapsto r:= | \xi|^{2s}$. Therefore, using \eqref{Lastess} we obtain for any fixed $x_{e,1}$
\begin{equation*}
\begin{split}
& \norm{\int_{\R^{n}}  e^{\i \xi \cdot x - \i t | \xi |^{2s}} h(\xi) \eta^{+}_{[k-c,k+c]} (\xi \cdot e) d \xi }_{ L^{2}_{\xi'_{e},t}}
\\
& \aleq 2^{-k\frac{2s-1}{2}} \Vert h \Vert_{L^{2}_{\xi}}.
\end{split}
\end{equation*}
This implies \eqref{red8} and thus \eqref{LocsYpart} is proven.

\end{proof}

We also need (a localized version) of the maximal estimate:

\begin{lemma}[Localized maximal estimate]\label{la:locmaxest}
Let $n \geq 3$, $e \in \S^{n-1}$, and $k \in Z$.

We have for $f \in X_k$
\begin{equation}\label{eq:max:Xkest}
  \|f\|_{L^2_e L^\infty_{e^\perp,t}} \aleq 2^{k\frac{n-1}{2}} \|f\|_{X_k}
\end{equation}
and for any $\tilde{e} \in \S^{n-1}$ and $f \in Y^{\tilde{e}}_k$
  \begin{equation}\label{eq:max:Ykest}
  \|f\|_{L^2_e L^\infty_{e^\perp,t}} \aleq 2^{k\frac{n-1}{2}} \|f\|_{Y^{\tilde{e}}_k}
 \end{equation}
In particular we have for any $f \in Z_k$
  \begin{equation}\label{eq:la:globmax}
  \|f\|_{L^2_e L^\infty_{e^\perp,t}} \aleq 2^{k\frac{n-1}{2}} \|f\|_{Z_k}
 \end{equation}

More generally let $k_1 \in \Z$ with $2^{k_1} \aleq 2^k$ with  and recall the definition in \eqref{eq:Pktell}. Then for any $f \in Z_k$
\begin{equation}\label{eq:max:Xkest:loc}
  \brac{\sum_{\tn \in 2^{k_1} \Z^n} \|P_{k_1,\tn}f\|_{L^{2}_e L^{\infty}_{e^\perp,t}}^2}^{\frac{1}{2}} \aleq
 2^{
 k\frac{(n-1)}{2}}\, 2^{-(k-k_1)\frac{n-2}{2}
 }\|f\|_{X_k}.
\end{equation}
and for any $e' \in \S^{n-1}$ and $f \in Y^{e}_k$ we have for any
\begin{equation}\label{eq:max:Ykest:loc}
  \brac{\sum_{\tn \in 2^{k_1} \Z^n} \|P_{k_1,\tn}f\|_{L^{2}_{e'} L^{\infty}_{e'^\perp,t}}^2}^{\frac{1}{2}} \aleq
 2^{
 k\frac{(n-1)}{2}}\, 2^{-(k-k_1)\frac{n-2}{2}
 } (1+|k-k_1|)\|f\|_{Y_k^e}.
\end{equation}

In particular we have for any $f \in Z_k$
\begin{equation}\label{eq:BIK:3.1:f}
 \brac{\sum_{\tn \in 2^{k_1} \Z^n} \|P_{k_1,\tn}f\|_{L^{2}_e L^{\infty}_{e^\perp,t}}^2}^{\frac{1}{2}} \aleq
 2^{
 k\frac{(n-1)}{2}}\, 2^{-(k-k_1)\frac{n-2}{2}
 }\, (1+|k-k_1|)\|f\|_{Z_k}.
 \end{equation}
All the above estimates also hold when replacing $f$ with the complex conjugated $\bar{f}$ on the left-hand side.
\end{lemma}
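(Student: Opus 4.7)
The plan is to reduce all four inequalities to a classical Kenig--Ponce--Vega type maximal estimate for the propagator $U(t) := e^{-it(-\Delta)^s}$ acting on frequency-localized data, and then to sharpen it via wave-packet/Bernstein ideas for the cube-localized versions. A few reductions are free: since $|\bar f| = |f|$, the bounds for $\bar f$ follow at once from those for $f$; the $Z_k$-estimate \eqref{eq:la:globmax} is immediate from \eqref{eq:max:Xkest} and \eqref{eq:max:Ykest} together with the infimum definition of $\|\cdot\|_{Z_k}$; and \eqref{eq:BIK:3.1:f} follows analogously from the local versions \eqref{eq:max:Xkest:loc} and \eqref{eq:max:Ykest:loc}.

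For \eqref{eq:max:Xkest}, I would decompose $f = \sum_{j\geq 0} Q_j f$ and, for each modulation piece, use the change of variable $\mu := \tau + |\xi|^{2s}$ to write
\begin{equation*}
Q_j f(x,t) = \int e^{it\mu}\,\varphi(\mu/2^j)\,(U(t)\phi_\mu)(x)\,d\mu, \qquad \widehat{\phi_\mu}(\xi) := \hat f(\xi,\mu - |\xi|^{2s}).
\end{equation*}
Applying Minkowski in $L^2_e L^\infty_{e^\perp,t}$ and Cauchy--Schwarz in $\mu$ over the window of size $2^j$, matters reduce to the frequency-$2^k$ maximal bound
\begin{equation*}
\|U(t)u_0\|_{L^2_e L^\infty_{e^\perp,t}} \aleq 2^{k(n-1)/2}\|u_0\|_{L^2}, \qquad \supp \hat u_0 \subset \{|\xi|\aeq 2^k\},
\end{equation*}
which is standard via $TT^{\ast}$ and the dispersive decay of the kernel of $U(t)$ concentrated along the group-velocity ridge. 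Summing in $j$ then absorbs the weights $2^{j/2}$ of the $X_k$-norm. For \eqref{eq:max:Ykest}, I would invoke Lemma \ref{la:cmpla2.4} to split $f = g + f_{\mathrm{cone}}$ with $\|g\|_{X_k} \aleq \|f\|_{Y_k^{\tilde e}}$, handle $g$ by the previous step, and reduce $f_{\mathrm{cone}}$ to the same maximal bound after the change of variables $\tau \mapsto \mu = N(\xi'_{\tilde e},\tau)$, following exactly the reduction performed in the proof of Lemma \ref{la:locsmooth}.

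For the localized statements \eqref{eq:max:Xkest:loc} and \eqref{eq:max:Ykest:loc}, the gain $2^{-(k-k_1)(n-2)/2}$ reflects the fact that data with Fourier support in a cube of side $2^{k_1}$ is, up to Schwartz tails, a wave-packet travelling in a single direction with transverse spread $2^{-k_1}$. The corresponding per-cube maximal estimate is effectively one-dimensional along the velocity, giving a bound of the form
\begin{equation*}
\|U(t) P_{k_1,\tn} f\|_{L^2_{e'}L^\infty_{e'^\perp,t}} \aleq 2^{k_1(n-2)/2}\, 2^{k/2}\, \|P_{k_1,\tn} f\|_{L^2}
\end{equation*}
instead of the crude $2^{k(n-1)/2}$. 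Combined with the almost-orthogonality of the cube projections $P_{k_1,\tn}$ in $L^2$, this delivers the square-sum appearing on the left-hand side of \eqref{eq:max:Xkest:loc}. The $(1+|k-k_1|)$ loss in \eqref{eq:max:Ykest:loc} comes from the fact that the change of variables used to reduce the $Y_k^{\tilde e}$-part to the $X_k$-part does not commute with the sharp cube cutoff $\chi_{k_1,\tn}$, producing a boundary term which is summed only logarithmically.

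The main obstacle I expect is exactly this refined per-cube maximal estimate: one has to isolate the group-velocity direction inside each cube (constant up to errors $\aeq 2^{k(2s-2)+k_1}$), build a matching wave-packet decomposition, and preserve the gain $2^{-(k-k_1)(n-2)/2}$ after the square-sum over $\tn$ without accumulating extra powers of $k-k_1$. Everything else is bookkeeping in modulations, Minkowski and Cauchy--Schwarz over the $j$-scales, and the change-of-variables machinery already set up in Lemmas \ref{Nprop2} and \ref{la:cmpla2.4}.
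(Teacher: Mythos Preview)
Your plan is essentially the paper's approach: reduce to the maximal bound for $e^{-it(-\Delta)^s}$ on cube-localized data (this is the content of \Cref{th:L2eLinftyeperpvsL2est} in the appendix, proved by $TT^*$ and stationary phase), handle $X_k$ by the $\mu=\tau+|\xi|^{2s}$ substitution on each $Q_j$ piece, and handle $Y_k^e$ through the representation formula of \Cref{la:cmpla2.4}.

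Two points where your sketch diverges from what the paper actually does. First, the $(1+|k-k_1|)$ loss does not come from a boundary term in the change of variables. It comes from splitting the modulation projection $Q'_{\leq k-c}$ (where $Q'$ acts via $\eta((\xi_{e,1}-N)/2^{k-c})$) into $Q'_{\leq k_1-c}$ plus an intermediate piece $Q'_{[k_1-c,k-c]}$; the latter is placed in $X_k$ by summing \eqref{eq:QjXkvsY2k} over the $\approx |k-k_1|$ modulation scales $j\in[2sk_1,2sk]$, each contributing $O(1)$. Second, and more substantively, the obstacle you flag in the last paragraph---that $P_{k_1,\tn}$ does not commute with the $L^1_{x_{e,1}}$ integral in the representation formula---is resolved not by wave-packet tail control but by a multiplier swap: on the support of $Q'_{\leq k_1-c}f$ one has $|\xi_{e,1}-N(\xi'_e,\tau)|\ll 2^{k_1}$, so the cube cutoff $\chi_{k_1,\tn}(\xi)$ can be replaced by a cutoff $\pi_{k_1,\tn}(\xi'_e,N(\xi'_e,\tau))$ depending only on $(\xi'_e,\tau)$. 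This new multiplier passes freely through the $\xi_{e,1}$-integral, and its finite overlap in $\tn$ (\Cref{simplestuff}) gives the $\ell^2$-sum over cubes directly from Plancherel in $(\xi'_e,\tau)$. Once this swap is made, the reduction to \Cref{th:L2eLinftyeperpvsL2est} goes through exactly as in the $X_k$ case.
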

\begin{proof}
Clearly \eqref{eq:BIK:3.1:f} follows from \eqref{eq:max:Ykest:loc} and \eqref{eq:max:Xkest:loc}. Moreover, taking $k_1 := k$ and using \eqref{eq:weirdfinedecomp} we see that \eqref{eq:max:Ykest:loc} and \eqref{eq:max:Xkest:loc} implies \eqref{eq:max:Ykest} and \eqref{eq:max:Xkest}.

For \eqref{eq:max:Xkest}, \eqref{eq:max:Ykest}, \eqref{eq:la:globmax} it is clear that we can replace $f$ with $\bar{f}$ on the left-hand side, since the norms on the left-hand side do not change.

For \eqref{eq:max:Xkest:loc}, \eqref{eq:max:Ykest:loc}, and \eqref{eq:BIK:3.1:f} we observe 
\begin{equation*}
\begin{split}
& \|  P_{k_{1},\tn}  \bar{f}  \|_{L_{e'}L^{\infty}_{e'^{\perp},t}}
\\
& = \| \int_{\mathbb{R}^{n+1}} e^{ \i x\cdot \xi + i t.\tau} \mathcal{F}_{\R^{n+1}}(\bar{f})(\xi,\tau) \chi_{k_{1},\tn}(\xi) d\xi d\tau \|_{L^{2}_{e'},L^{\infty}_{e'^{\perp},t}}
\\
& = \| \int_{\mathbb{R}^{n+1}} e^{ \i x\cdot \xi + i t.\tau} \overline{\mathcal{F}_{\R^{n+1}}(f)(-\xi,-\tau)} \chi_{k_{1},\tn}(\xi) d\xi d\tau \|_{L^{2}_{e'},L^{\infty}_{e'^{\perp},t}}
\\
& = \| \overline{\int_{\mathbb{R}^{n+1}} e^{ -\i x\cdot \xi - i t.\tau} {\mathcal{F}_{\R^{n+1}}(f)(-\xi,-\tau)} \chi_{k_{1},\tn}(\xi) d\xi d\tau }\|_{L^{2}_{e'},L^{\infty}_{e'^{\perp},t}}
\\
& = \| \int_{\mathbb{R}^{n+1}} e^{ \i x\cdot \xi + i t.\tau} {\mathcal{F}_{\R^{n+1}}(f)(\xi,\tau)} \chi_{k_{1},-\tn}(\xi) d\xi d\tau \|_{L^{2}_{e'},L^{\infty}_{e'^{\perp},t}}
\end{split}
\end{equation*}
Therefore, summing over $\tn \in 2^{k_{1}}\mathbb{Z}^{n}$ gives us
$$
\left( \sum_{\tn \in 2^{k_{1}}\mathbb{Z}^{n}} \|  P_{k_{1},\tn}  \bar{f}  \|^{2}_{L^{2}_{e'}L^{\infty}_{e'^{\perp},t}} \right)^{1/2} = \left( \sum_{\tn \in 2^{k_{1}}\mathbb{Z}^{n}} \|  P_{k_{1},\tn}  f  \|^{2}_{L^{2}_{e'}L^{\infty}_{e'^{\perp},t}} \right)^{1/2}
$$
Hence, we only need to prove the result for $f$ and it readily follows for $\bar{f}$.

So we need to prove only \eqref{eq:max:Xkest:loc} and \eqref{eq:max:Ykest:loc}, which we do below.

\end{proof}

\begin{proof}[Proof of  \eqref{eq:locmaxfctXkgoal}]
\underline{Proof of \eqref{eq:max:Xkest:loc}}: Let $f \in X_k$. For any $j \geq 0$ we are going to prove
\begin{equation}\label{eq:locmaxfctXkgoal}
 \|P_{k_1,\tn}Q_j f\|_{L^{2}_e L^{\infty}_{e^\perp,t}} \aleq 2^{ k\frac{(n-1)}{2}}\, 2^{-(k-k_1)\frac{n-2}{2}
 }\, 2^{\frac{j}{2}}\|f\|_{L^2(\R^{n+1})}.
\end{equation}
Once we have \eqref{eq:locmaxfctXkgoal}, in view of \eqref{eq:weirdfinedecomp}, we also have 
\[
 \|P_{k_1,\tn}Q_j f\|_{L^{2}_e L^{\infty}_{e^\perp,t}}^2 \aleq \brac{2^{ k\frac{(n-1)}{2}}\, 2^{-(k-k_1)\frac{n-2}{2}
 }\, 2^{\frac{j}{2}}}^2 \sum_{\tilde{\tn} \in 2^{k_1}\Z: |\tn-\tilde{\tn}| \aleq 2^k} \sum_{\tilde{j} \in \Z: 2^{\tilde{j}} \aeq 2^j}\|P_{k_1,\tilde{\tn}}Q_{\tilde{j}} f\|_{L^2(\R^{n+1})}^2.
\]
and thus 
\[
\brac{\sum_{\tn \in 2^{k_1} \Z} \|P_{k_1,\tn}Q_j f\|_{L^{2}_e L^{\infty}_{e^\perp,t}}^2}^{\frac{1}{2}} \aleq 2^{ k\frac{(n-1)}{2}}\, 2^{-(k-k_1)\frac{n-2}{2}
 }\, 2^{\frac{j}{2}}\sum_{\tilde{j} \in \Z: 2^{\tilde{j}} \aeq 2^j}\|Q_{\tilde{j}} f\|_{L^2(\R^{n+1})}.
\]
In particular we conclude that \eqref{eq:locmaxfctXkgoal} implies \eqref{eq:max:Xkest:loc}.

So let us prove \eqref{eq:locmaxfctXkgoal}. Observe that since $2^{k_1} \aleq 2^k$ we have
\[
 P_{k_1,\tn} \Delta_k \equiv 0 \quad \text{unless $|\tn| \aleq 2^k$}
\]
We write for $g := Q_{j} f$
\[
\begin{split}
 P_{k_1,\tn}g(x,t) =& \int_{\R^{n+1}} e^{\i \brac{\langle \xi,x\rangle + \tau t}}\, \chi_{k_1,\tn}(\xi)\, \hat{g}(\xi,\tau) d\xi d\tau\\
 =& \int_{\R^{n+1}} e^{\i \brac{\langle \xi,x\rangle + |\xi|^{2s} t + \tau t}} \chi_{k_1,\tn}(\xi)\, \hat{g}(\xi,\tau-|\xi|^{2s}) d\xi d\tau\\
 =& \int_{\R^{n}} e^{\i \brac{\langle \xi,x\rangle + |\xi|^{2s} t}} \chi_{k_1,\tn}(\xi)\, \int_{\tau \aleq {2^j}} e^{\i \tau t} \hat{g}(\xi,\tau-|\xi|^{2s}) d\tau\, d\xi \\
 \end{split}
\]
Set $h(\xi) := \int_{\tau \aleq {2^j}} e^{\i \tau t} \hat{g}(\xi,\tau-|\xi|^{2s}) d\tau$, then 
\[
 |h(\xi)|^2 \aleq {2^j} \int_{\tau} |\hat{g}(\xi,\tau-\|\xi|^{2s})|^2 ={2^j} \int_{\tau} |\hat{g}(\xi,\tau)|^2
\]
and thus 
\[
\|h\|_{L^2(\R^n)} \aleq 2^{j\frac{1}{2}} \|g\|_{L^2(\R^{n+1})}.
\]
Thus, we need to show  for any $h \in C_c^\infty(B(0,2^k))$
% \ahmed{isn't $h$ supported in $ | \xi | \aeq2^{k}? $ this doesn't change anything in the argument for $s\in(1/2,1)$.},
\[
 \|\int_{\R^n} e^{\i \brac{\langle \xi,x\rangle + |\xi|^{2s} t}} \chi_{k_1,\tn}(\xi) h(\xi) d\xi \|_{L^2_e L^\infty_{e^\perp,t}} \aleq 2^{k\frac{n-1}{2}} 2^{-(k-k_1)\frac{n-2}{2}} \|h\|_{L^2(\R^n)}.
\]
Substituting $\zeta := \xi-\tn$ this is equivalent to showing 
\begin{equation}\label{oss}
\begin{split}
& \|\int_{\R^n} e^{\i \brac{\langle \zeta,x\rangle + |\zeta+\tn|^{2s} t}}  h(\zeta) d\zeta \|_{L^2_e L^\infty_{e^\perp,t}} 
\\
& \aleq 2^{k\frac{n-1}{2}} 2^{-(k-k_1)\frac{n-2}{2}} \|h\|_{L^2(\R^n)}.
\end{split}
\end{equation}
for any $h \in C_c^\infty(B(0,2^{k_1}))$ and $\tn \in 2^{k_1} \Z^n$ with $|\tn| \aleq 2^{k}$ amd $2^{k_1} \aleq 2^k$.

This follows from \Cref{th:L2eLinftyeperpvsL2est}. Thus \eqref{eq:locmaxfctXkgoal} is proven.
\end{proof}

\begin{proof}[Proof of \eqref{eq:max:Ykest:loc}]
Notice that we can assume $ f = Q_{ \leq 2sk -c} f $ where $c \gg 1$ is a fixed constant. Indeed, since by \eqref{eq:DIE:311} we have 
$$
\| Q_{ \geq 2sk-c} f \|_{X_{k}}\aleq  \| f \|_{Y^{e}_{k}}
$$
Therefore, applying \eqref{eq:max:Xkest:loc}, it suffices to prove (for a suitably large $c$)
\begin{equation*}
\begin{split}
& \left( \sum_{\tn \in 2^{k_{1}}\mathbb{Z}^{n}} \|  P_{k_{1} \tn} . Q_{\leq 2sk-c} f \|^{2}_{L^{2}_{\tilde{e}}L^{\infty}_{\tilde{e}^{\perp},t}} \right)^{1/2} \\
&\aleq  2^{k\frac{n-1}{2}} 2^{-(n-2)\frac{k-k_{1}}{2}} ( 1 + | k - k_{1} |)  \| f \|_{Y^{e}_{k}}
\end{split}
\end{equation*}

In particular we may assume w.l.o.g. that we can freely apply \Cref{la:Nproperties} for any $(\xi,\tau) \in \supp \hat{f}$.

We would like to replace $Q_{ \leq 2sk-c}$ with $ \mathcal{F}^{-1}_{\mathbb{R}^{n+1}}( \eta( \frac{ \xi_{e,1} - N(\xi_{e}',\tau)}{2^{k-c}}))$, and we can do this because the error term will be in the space $X_{k}$. Indeed, notice that by \eqref{eq:Xi1Nrel} we have
\begin{equation}\label{eq:ahmedsweirdQpsplit}
\alpha(\xi,\tau) \frac{ \tau + | \xi |^{2s}}{2^{2sk-c}} = \frac{ \xi_{e,1} - N(\xi_{e}',\tau)}{2^{k-c}}
\end{equation}
for some $ \alpha$ such that $ | \alpha | \aeq1$. Therefore, 
\begin{equation}\label{eq:ahmedsweirdQp}
\begin{split}
% & | \mathcal{F}_{\mathbb{R}^{n+1}} \left(  Q_{2sk-c}(x,t) - \mathcal{F}_{\mathbb{R}^{n+1}}^{-1} ( \eta ( \frac{ \xi_{e,1} -N(\xi_{e}',\tau)}{2^{k-c}} )) \right) |
% \\
% & = 
&|\eta( \frac{ \tau + | \xi |^{2s}}{2^{2sk-c}}) - \eta( \frac{ \xi_{e,1} - N(\xi_{e}',\tau)}{2^{k-c}}) |
\\
& \aleq 2^{-2sk} \vert \tau + \vert \xi \vert^{2s} \vec{}
\end{split}
\end{equation}
Hence, if we define $Q^{'}_{\leq k-c} f$ as 
$$
\mathcal{F}_{\mathbb{R}^{n+1}} (Q'_{\leq k-c} f)(\xi,\tau) := \eta ( \frac{ \xi_{e,1} -N(\xi_{e}',\tau)}{2^{k-c}} ) \hat{f}(\xi,\tau)
$$
then we write
$$
Q_{\leq 2sk-c} f(x,t)  =  Q'_{\leq k-c} f   + \underbrace{\brac{Q_{\leq 2sk-c} f-Q'_{\leq k-c} f}}_{Ef}.
$$
% Where $ E(x,t)$ satisfies 
%  $| \mathcal{F}_{\mathbb{R}^{n+1}} (E) |\aleq  2^{-k}(\xi_{e,1} -N(\xi_{e}',\tau))$, we claim that we have
Then by Lemma \ref{la:simplessss} we have
\begin{equation}\label{Err}
\| Ef\|_{X_{k}}\aleq  \| f \|_{Y^{e}_{k}}
\end{equation}

Combining \eqref{eq:max:Xkest:loc} with \eqref{Err} it remains to prove
$$
\left( \sum_{\tn \in 2^{k_{1}}\mathbb{Z}^{n}} \|  P_{k_{1} \tn} . Q'_{\leq k-c} f \|^{2}_{L^{2}_{\tilde{e}}L^{\infty}_{\tilde{e}^{\perp},t}} \right)^{1/2}\aleq  2^{k\frac{n-1}{2}} 2^{-(n-2)\frac{k-k_{1}}{2}} ( 1 + | k - k_{1} |)  \| f \|_{Y^{e}_{k}}
$$

Next, we split 
\begin{equation}\label{v1}
\begin{split}
Q'_{\leq k -c} f
= Q'_{\leq k_{1}-c} f +  Q'_{[k_{1}-c,k-c]} \Tilde{f}
\end{split}
\end{equation}
where 
$$
\mathcal{F}_{\mathbb{R}^{n+1}} ( Q'_{[k_{1}-c,k-c]} f )(\xi,\tau) = \hat{f}(\xi,\tau) \sum_{m = k_{1}-c}^{k-c} \varphi\brac{ \frac{ \xi_{e,1}-N(\xi_{e}',\tau)}{2^{m}}}
$$
The second term of right hand side in \eqref{v1} above satisfies 
\begin{equation}\label{eq:yetanotherguy}
\| Q'_{[k_{1}-c,k-c]} f \|_{X_{k}}\aleq  (1+ |  k - k_{1} |) \| f \|_{Y^{e}_{k}}
\end{equation}
Indeed, if we have  $(\xi,\tau) \in \supp \hat{f}$ then whenever
$ | \xi_{e,1} - N(\xi_{e}',\tau) | \geq C 2^{k_{1}-c}$ we have, by \Cref{la:Nproperties},
\begin{equation*}
\begin{split}
&2^{k_{1}}\aleq  | \xi_{e,1} - N(\xi_{e}',\tau) |
\\
& \aeq2^{k(1-2s)} | \tau + | \xi |^{2s} |
\\
&\aleq  2^{k_{1}(1-2s)} | \tau + | \xi |^{2s} |,
\end{split}
\end{equation*}
i.e. $ | \tau + | \xi |^{2s} | \geq c 2^{2sk_{1}}$. Therefore, 
\begin{equation*}
\begin{split}
& \hat{f}(\xi,\tau) \sum_{m=k_{1}-c}^{k} \varphi (\frac{\xi_{e,1} -N(\xi_{e}',t)}{2^{m}})
\\
& = \eta_{[2sk_{1}-c,2sk+c']}(\tau+ | \xi |^{2s})\hat{f} (\xi,\tau)   \sum_{m=k_{1}-c}^{k} \varphi (\frac{\xi_{e,1} -N(\xi_{e}',\tau)}{2^{m}})
\end{split}
\end{equation*}
Thus, by \Cref{la:IKDIE:3.1},
\[
\begin{split}
&\|Q'_{[k_{1}-c,k-c]} f \|_{X_k}\\
\aleq&\sum_{j \in \Z_{+}: 2sk_{1}-c \leq j \leq 2sk+c} \|Q_jf\|_{L^2}\\
\aleq&\sum_{j \in \Z_{+}: 2sk_{1}-c \leq j \leq 2sk+c} \min\{2^{k s} 2^{-\frac{1}{2}j}, 1 \} \|f\|_{Y_{k}^e}\\
\aleq&\brac{1+|k-k_1|}\, \|f\|_{Y_{k}^e}.
\end{split}
\]
This establishes \eqref{eq:yetanotherguy}.

Therefore, we reduced matters into proving 
\begin{equation}\label{nn}
\begin{split}
\left( \sum_{\tn \in 2^{k_{1}}\mathbb{Z}^{n}} \| P_{k_{1},\tn} Q'_{\leq k_{1} -c} f \|^{2}_{L^{2}_{\tilde{e}}L^{\infty}_{\tilde{e}^{\perp},t}} \right)^{1/2} 
\aleq  2^{k\frac{n-1}{2}} 2^{-(n-2)\frac{k-k_{1}}{2}} ( 1 + | k - k_{1} |)  \| f \|_{Y^{e}_{k}}
\end{split}
\end{equation}
We would like to use the representation formula, \Cref{la:cmpla2.4}; however the term $P_{k_{1},\tn}(x)$ might cause trouble in the $L^1$-integral in the direction of $e$. To overcome this, we need to examine the support of the left hand side integral of \eqref{nn} in the frequency space. 

In what follows we assume implicitly that \[(\xi,\tau) \in \supp  \mathcal{F} P_{k_{1},\tn} Q'_{\leq k_{1} -c} f. \]
that is we have 
\begin{itemize}
 \item $\abs{\tau + |\xi|^{2s}} \leq 2^{2sk-c}$ (recall the assumption $f= Q_{ \leq 2sk -c} f$)
 \item $\xi_{e,1} \geq \bc |\xi|$ and $|\xi| \aeq 2^k$ (since $f \in Y_k^e$)
 \item $|\xi_{e,1} - N(\xi_{e}',\tau)| \ll 2^{k_1} \aleq 2^{k}$
 \item $|\xi_{e,1}| \aeq |N(\xi_{e}',\tau)| \aeq 2^k$
 \item $|\xi^i-\tn^i| \aeq 2^{k_1}$, for each component $i=1,\ldots,n$ 
\end{itemize}
We split $\xi = \xi_{e,1} e + \xi_{e}'$ and $ \tn = \tn_{e,1} e + \tn_{e}' $ (notice that $\tn_{e,1}$ might not be an integer) then we certainly have
\begin{itemize}
    \item $ | \xi_{e}' - \tn_{e}' |\aleq  2^{k_{1}}$ and
    \item $ | \xi_{e,1} -\tn_{e,1} |\aleq  2^{k_{1}}$
\end{itemize}
Consequently,
$$
| N(\xi_{e}',\tau)- \tn_{e,1} |\leq | N(\xi_{e}',\tau)- \xi_{e,1} | + | \xi_{e,1}- \tn_{e,1} | \aleq  2^{k_{1}}
$$
Therefore, we may write
\[
\begin{split}
 \mathcal{F}\brac{P_{k_{1},\tn} Q'_{\leq k_{1} -c} f}(\xi,\tau) = \pi_{k_{1},\tn}(\xi_{e}',N(\xi_{e}',\tau)) \mathcal{F}\brac{ P_{k_{1},\tn} Q'_{\leq k_{1} -c} f }(\xi,\tau)
 \end{split}
\]
where for $ ( \xi_{e}',R) \in e^{\perp} \times \R$. 
$$
\pi_{k_{1},\tn}( \xi_{e}',R) = \chi( \frac{R-\tn_{e,1}}{2^{k_{1}+C}}) \, \eta( \frac{ ( \xi_{e}' -\tn_{e}')}{2^{k_{1}+C}}),
$$
and $ \chi : \mathbb{R} \to \mathbb{R}$ is defined in \eqref{IntegFunc}.

We denote for $\tilde{f}$ with suitable support
\[
 A_{k_{1},\tn} \tilde{f}  := \mathcal{F}^{-1} \brac{ \pi_{k_{1},\tn}(\xi_{e}',N(\xi_{e}',\tau)) \mathcal{F} \tilde{f}}
\]
and arrive at 
\[
\begin{split}
 P_{k_{1},\tn} Q'_{\leq k_{1} -c} f= A_{k_{1},\tn} {P_{k_{1},\tn} Q'_{\leq k_{1} -c} f }.
 \end{split}
\]
Observe that by \Cref{simplestuff} we have 
\begin{equation}\label{eq:onlyfinite}
\begin{split}
& | \sum_{\tn \in 2^{k_{1}} \Z^{n}} \pi_{k_{1},\tn}(\xi_{e}',N(\xi_{e}',\tau)) |\aleq  1
\end{split}
\end{equation}

Then we reduced proving \eqref{nn} to showing
\begin{equation}\label{pa1}
\begin{split}
&\left( \sum_{\tn \in 2^{k_{1}}\mathbb{Z}^{n}} \|  A_{k_{1},\tn}  P_{k_{1},\tn} Q_{\leq k_{1}-c}f\|^{2}_{L^{2}_{\tilde{e}}L^{\infty}_{\tilde{e}^{\perp},t}} \right)^{1/2} 
\\
&\aleq  2^{k\frac{n-1}{2}} 2^{-(n-2)\frac{k-k_{1}}{2}}  \| f \|_{Y^{e}_{k}}
\end{split}
\end{equation}
that is we want to prove
\begin{equation}\label{paaa}
\begin{split}
&\left( \sum_{\tn \in 2^{k_{1}}\mathbb{Z}^{n}} \| \int_{\mathbb{R}^{n+1}} e^{\i (x\cdot \xi+t\tau)}  \pi_{k_{1},\tn}(\xi_{e}',N(\xi_{e}',\tau)) \mathcal{F}_{\mathbb{R}^{n+1}} \left( P_{k_{1},\tn} Q_{\leq k_{1}-c}f \right)d \xi d\tau \|^{2}_{L^{2}_{\tilde{e}}L^{\infty}_{\tilde{e}^{\perp},t}} \right)^{1/2}
\\
&\aleq  2^{k\frac{n-1}{2}} 2^{-(n-2)\frac{k-k_{1}}{2}}  \| f \|_{Y^{e}_{k}}.
\end{split}
\end{equation}

We now observe it is enough to prove 
\begin{equation}\label{pa2}
\begin{split}
& \| \int_{\mathbb{R}^{n+1}} e^{\i (x\cdot \xi+t\tau)}  \pi_{k_{1},\tn}(\xi_{e}',N(\xi_{e}',\tau)) \mathcal{F}_{\mathbb{R}^{n+1}} \left( P_{k_{1},\tn} Q_{\leq k_{1}-c}f \right)d \xi d\tau \|_{L^{2}_{\tilde{e}}L^{\infty}_{\tilde{e}^{\perp},t}}
\\
&\aleq  2^{k\frac{n-1}{2}} 2^{-(n-2)\frac{k-k_{1}}{2}}  \| f \|_{Y^{e}_{k}}
\end{split}
\end{equation}
or equivalently
\[
\begin{split}
& \| \int_{\mathbb{R}^{n+1}} e^{\i (x\cdot \xi+t\tau)}  \pi_{k_{1},\tn}(\xi_{e}',N(\xi_{e}',\tau)) \mathcal{F}_{\mathbb{R}^{n+1}} \left( P_{k_{1},\tn} Q_{\leq k_{1}-c}f \right)d \xi d\tau \|_{L^{2}_{\tilde{e}}L^{\infty}_{\tilde{e}^{\perp},t}}
\\
&\aleq  2^{k\frac{n-1}{2}} 2^{-(n-2)\frac{k-k_{1}}{2}} \| \int_{\mathbb{R}}e^{\i \xi_{1}x_{1}} \pi_{k_{1},\tn} (\xi_{e}',N(\xi_{e}',\tau)) (\tau + | \xi |^{2s} +\i) \hat{f}(\xi,\tau) d\xi_{e,1} \|_{L^{1}_{x_{e,1}}L^{2}_{\xi_{e}',\tau}}.
\end{split}
\]
Indeed, having \eqref{pa2} summing over $ \tn \in 2^{k_{1}} \mathbb{Z}^{n}$ we obtain
\begin{equation}\label{md1}
\begin{split}
& \left( \sum_{\tn \in 2^{k_{1}}\mathbb{Z}^{n}} \| \int_{\mathbb{R}^{n+1}} e^{\i (x\cdot \xi+t\tau)}  \pi_{k_{1},\tn}(\xi_{e}',N(\xi_{e}',\tau)) \mathcal{F}_{\mathbb{R}^{n+1}} \left( P_{k_{1},\tn} Q_{\leq k_{1}-c}f \right)d \xi d\tau \|^{2}_{L^{2}_{\tilde{e}}L^{\infty}_{\tilde{e}^{\perp},t}} \right)^{1/2}
\\
&\aleq  2^{k\frac{n-1}{2}} 2^{-(n-2)\frac{k-k_{1}}{2}}
\\
& \times \left( \sum_{\tn \in 2^{k_{1}} \mathbb{Z}^{n}} \| \int_{\mathbb{R}}e^{\i \xi_{e,1}x_{1}} \pi_{k_{1},\tn} (\xi_{e}',N(\xi_{e}',\tau)) (\tau + | \xi |^{2s} +\i) \hat{f}(\xi,\tau) d\xi_{e,1} \|^{2}_{L^{1}_{x_{1}}L^{2}_{\xi',\tau}}\right)^{1/2}
\end{split}
\end{equation}
Using Minkowski we obtain 
\begin{equation}\label{md2}
\begin{split}
&  \left( \sum_{\tn \in 2^{k_{1}} \mathbb{Z}^{n}} \| \int_{\mathbb{R}}e^{\i \xi_{e,1}x_{1}} \pi_{k_{1},\tn} (\xi_{e}',N(\xi_{e}',\tau)) (\tau + | \xi |^{2s} +\i) \hat{f}(\xi,\tau) d\xi_{e,1} \|^{2}_{L^{1}_{x_{e,1}}L^{2}_{\xi_{e}',\tau}}\right)^{1/2}
\\
& \leq \int_{x_{e,1}} \left( \int_{\xi_{e}',\tau} \sum_{ \tn \in 2^{k_{1}} \mathbb{Z}^{n}}  |\pi_{k_{1},\tn}(\xi_{e}',N(\xi_{e}',\tau))|^{2} \abs{\int_{\mathbb{R}} e^{\i \xi_{e,1}.x_{e,1}} ( \tau + | \xi |^{2s}+\i) \hat{f}(\xi,\tau) d\xi_{e,1} }^{2}  \right)^{1/2} dx_{e,1}
\\
&\overset{\eqref{eq:onlyfinite}}{\aleq}   \int_{\mathbb{R}} \left( \norm{\int_{\mathbb{R}} e^{\i \xi_{e,1}.x_{e,1}} ( \tau + | \xi |^{2s}+\i) \hat{f}(\xi,\tau) d\xi_{e,1} }^{2}_{L^{2}_{\xi_{e}',\tau}} \right)^{1/2}
\\
\end{split}
\end{equation}
Plugging \eqref{md2} into \eqref{md1} gives us \eqref{paaa}. 

Therefore, we only need to prove \eqref{pa2}. 

We notice that by \Cref{multiplier} we have 
$$
\|  P_{k_{1},\tn}f \|_{Y^{e}_{k}}\aleq  \| f \|_{Y^{e}_{k}}
$$
With constant independent of $k_{1},\tn$. 

Therefore, we reduced matters into proving 
\begin{equation}\label{pa3}
\begin{split}
&\| \int_{\mathbb{R}^{n+1}} e^{\i \xi.x + \i t\tau}\pi_{k_{1},\tn}(\xi_{e}',N(\xi_{e}',\tau)) \hat{f}(\xi,\tau)\, \eta_{\leq k_{1} -c}( \xi_{e,1} -N(\xi_{e}',\tau))) \|_{L^{2}_{\tilde{e}}L^{\infty}_{\tilde{e}^{\perp},t}} 
\\
&\aleq  2^{k\frac{n-1}{2}} 2^{-(n-2)\frac{k-k_{1}}{2}}  \| f \|_{Y^{e}_{k}} 
\end{split}
\end{equation}
for any $\tn \in 2^{k_{1}} \mathbb{Z}^{n}$. 

Now we can use the representation formula, \Cref{la:cmpla2.4} to reduce matters into proving

\begin{equation}\label{pa4}
\begin{split}
& 
2^{-(2s-1)k/2} \Big \| \int_{e^{\perp} \times \mathbb{R}} \int_{\mathbb{R}} e^{\i x\cdot \xi+\i t\tau} \pi_{k_{1},\tn} (\xi_{e}',N(\xi_{e}',\tau))  \ind_{N \aeq2^{k}} 
\\
& \times \frac{\eta_{ \leq k_{1}-c}(\xi_{e,1}-N(\xi_{e}',\tau))}{ (\xi_{e,1}-N(\xi_{e}',\tau)+\i/2^{k(2s-1)})} h(\xi_{e}',\tau) d\xi_{e,1} d \xi_{e}' d\tau \Big \|_{L^{2}_{\tilde{e}}L^{\infty}_{\tilde{e}^{\perp},t}} 
\\
&\aleq   2^{k\frac{n-1}{2}} 2^{-(n-2)\frac{k-k_{1}}{2}} \| h \|_{L^{2}_{\xi_{e}',\tau}}
\end{split}
\end{equation}
For any $h \in L^{2}(e^{\perp} \times \R)$ and supported in $ | \xi_{e}' |\aleq  2^{k}$.

Indeed, assume we have \eqref{pa4} then by \Cref{la:cmpla2.4} and  \eqref{eq:max:Xkest} we have
\begin{equation*}
\begin{split}
& \| \int_{\mathbb{R}^{n+1}} e^{\i \xi.x + \i t\tau}\pi_{k_{1},\tn}(\xi_{e}',N(\xi_{e}',\tau)) \hat{f}(\xi,\tau). \eta_{\leq k_{1} -c}( \xi_{e,1} -N)) \|_{L^{2}_{\tilde{e}}L^{\infty}_{\tilde{e}^{\perp},t}} 
\\
&=  \| \int_{e^{\perp} \times \mathbb{R}} \int_{\mathbb{R}} e^{\i x\cdot \xi+\i t\tau} \pi_{k_{1},\tn} (\xi_{e}',N(\xi_{e}',\tau))  \chi_{N \aeq2^{k}}
\\
& \times \frac{\eta_{ \leq k_{1}-c}(\xi_{e,1}-N(\xi_{e}',\tau))}{ (\xi_{e,1}-N(\xi_{e}',\tau)+\i/2^{k(2s-1)})}\int_{\R} e^{i\xi_{e,1}y} h(y,\xi_{e}',\tau) d\xi_{e,1} d \xi_{e}' d\tau \|_{L^{2}_{\tilde{e}}L^{\infty}_{\tilde{e}^{\perp},t}} 
\\
&+\|f\|_{Y_k^e}\\
& \leq  \int_{\R} \| \int_{e^{\perp} \times \mathbb{R}} \int_{\mathbb{R}} e^{\i x\cdot \xi+\i t\tau} \pi_{k_{1},\tn} (\xi_{e}',N(\xi_{e}',\tau))  \chi_{N \aeq2^{k}} 
\\
& \times \frac{\eta_{ \leq k_{1}-c}(\xi_{e,1}-N(\xi_{e}',\tau))}{ (\xi_{e,1}-N(\xi_{e}',\tau)+\i/2^{k(2s-1)})} h(y,\xi_{e}',\tau) d\xi_{e,1} d \xi_{e}' d\tau \|_{L^{2}_{\tilde{e}}L^{\infty}_{\tilde{e}^{\perp},t}} dy\\
&+\|f\|_{Y_k^e}\\
\end{split}
\end{equation*}
Here we used Minkowski inequality in the last line. Therefore, using \eqref{pa4} and the fact that $ 2^{k\frac{2s-1}{2}} \| h \|_{L^{1}_{y}L^{2}_{\xi_{e}',\tau}}\aleq  \| f \|_{Y^{e}_{k}}$ we see that \eqref{pa3} follows.

Thus, it suffices to only prove \eqref{pa4}.

Carrying out the integral in $ \xi_{e,1}$ first, then using boundedness of the Fourier transform of the kernel $ \frac{1}{\xi_{e,1}}$ we are left to prove 
\begin{equation}\label{pa5}
\begin{split}
& 
2^{-(2s-1)k/2} \| \int_{e^{\perp}} \int_{\mathbb{R}} e^{x_{e,1} N(\xi_{e}',\tau)\i} e^{\i \xi_{e}'.x'_{e}} e^{\i t\tau} \pi_{k_{1},\tn} (\xi_{e}',N(\xi_{e}',\tau))
\\
& \times \ind_{N \aeq2^{k}}(N) h(\xi_{e}',\tau)  d \xi_{e}' d\tau \|_{L^{2}_{\tilde{e}}L^{\infty}_{\tilde{e}^{\perp},t}} 
\\
&\aleq   2^{k\frac{n-1}{2}} 2^{-(n-2)\frac{k-k_{1}}{2}} \| h \|_{L^{2}_{\xi_{e}',\tau}}
\end{split}
\end{equation}
Introduce the change of variable $\tau = - ( \mu^{2} + | \xi_{e}' |^{2} )^{s}$ then this means $\mu =N(\xi_{e}',\tau)$  and this is valid change of variable because $ \mu \aeq2^{k}$. Then what we want to prove becomes 
\begin{equation}\label{pa6}
\begin{split}
& 2^{-(2s-1)k/2} \| \int_{e^{\perp}} \int_{\mathbb{R}} e^{x_{e,1} \mu \i} e^{\i \xi_{e}'.x_{e}'} e^{-\i t( \mu^{2} + | \xi_{e}' |^{2})^{s}} \pi_{k_{1},\tn} (\mu,\xi_{e}')  \chi_{\mu \aeq2^{k}}
\\
& \times h(\xi_{e}',-(\mu^{2}+ | \xi_{e}' |^{2})^{s}) \alpha(\xi_{e}',\mu)  d \xi' d\tau \|_{L^{2}_{\tilde{e}}L^{\infty}_{\tilde{e}^{\perp},t}} 
\\
&\aleq   2^{k\frac{n-1}{2}} 2^{-(n-2)\frac{k-k_{1}}{2}} \| h \|_{L^{2}_{\xi_{e}',\tau}}
\end{split}
\end{equation}
Where $ \alpha(\xi_{e}',\mu) = -2s \mu ( \mu^{2} + | \xi_{e}' |^{2})^{s-1}$.
If we define 
$$
h'(\xi_{e}',\mu) = 2^{-(2s-1)k/2} h(\xi_{e}', - ( \mu^{2}+| \xi_{e}' |^{2})^{s}) \alpha( \xi_{e}',\mu)
$$ then  notice that we have
$$
\| h' \|_{L^{2}_{\xi_{e}',\mu}} \aeq\| h \|_{L^{2}_{\xi_{e}',\tau}}
$$
Therefore, \eqref{pa6} follows if we prove 
\begin{equation*}
\begin{split}
&  \| \int_{e^{\perp}} \int_{\mathbb{R}} e^{x_{e,1} \mu \i} e^{\i \xi_{e}'.x'_{e}} e^{-\i t( \mu^{2} + | \xi' |^{2})^{s}} \pi_{k_{1},n} (\mu,\xi_{e}') h'(\xi_{e}',\mu)  d \xi_{e}' d\mu \|_{L^{2}_{\tilde{e}}L^{\infty}_{\tilde{e}^{\perp},t}}
\\
&\aleq   2^{k\frac{n-1}{2}} 2^{-(n-2)\frac{k-k_{1}}{2}} \| h'\|_{L^{2}_{\xi_{e}',\mu}}
\end{split}
\end{equation*}
For any $h'$ supported in $ | ( \xi_{e}',\mu) |\aleq  2^{k}$.
Introduce the change of variable $(\mu,\xi_{e}') \to (\mu+\tn_{e,1},\xi_{e}'+\tn_{e}')$ to reduce matters to proving 
\begin{equation}
\begin{split}
&  \| \int_{e^{\perp}} \int_{\mathbb{R}} e^{x_{e,1} \mu \i} e^{\i \xi_{e}'.x_{e}'} e^{-\i t( (\mu+\tn_{e,1})^{2} + | \xi_{e}'+\tn_{e}' |^{2})^{s}} h'(\xi_{e}',\mu)  d \xi' d\mu \|_{L^{2}_{\tilde{e}}L^{\infty}_{\tilde{e}^{\perp},t}}
\\
&\aleq   2^{k\frac{n-1}{2}} 2^{-(n-2)\frac{k-k_{1}}{2}} \| h'\|_{L^{2}_{\xi_{e}',\mu}}
\end{split}
\end{equation}
For any $h'$ supported in $ | (  \xi_{e}',\mu) |\aleq  2^{k_{1}}$.
We identify the inner integral above with the an integral on $\mathbb{R}^{n}$. More precisely, write any vector $v \in \mathbb{R}^{n}$ as $ v = v_{e,1} e + v'_{e}$. Then define $g$ as
$$
g(v) = h'( v_{e}',v_{e,1}) 
$$
Then clearly $g$ is supported in $ | v |\aleq  2^{k_{1}}$ and we have by Fubini
\begin{equation*}
\begin{split}
& \int_{\mathbb{R}^{n}} e^{\i v .x}  e^{-it | v + \tn |^{2s}} g(v) dv
\\
& \int_{e^{\perp}} \int_{\mathbb{R}} e^{x_{e,1} v_{e,1} \i} e^{\i v_{e}'.x_{e}'} e^{-\i t( (v_{e,1}+\tn_{e,1})^{2} + | v_{e}'+\tn_{e}' |^{2})^{s}} h'(v_{e}',v_{e,1})  d v_{e}' dv_{e,1}
\\
& = \int_{e^{\perp}} \int_{\mathbb{R}} e^{x_{e,1} \mu \i} e^{\i \xi_{e}'.x_{e}'} e^{-\i t( (\mu+\tn_{e,1})^{2} + | \xi_{e}'+\tn_{e}' |^{2})^{s}} h'(\xi_{e}',\mu)  d \xi_{e}' d\mu
\end{split}
\end{equation*}
Therefore, we only need to prove 
\begin{equation}
\begin{split}
&  \| \int_{\mathbb{R}^{n}} e^{\i v .x}  e^{-\i t | v + \tn |^{2s}} g(v) dv \|_{L^{2}_{\tilde{e}}L^{\infty}_{\tilde{e}^{\perp},t}}
\\
&\aleq  2^{k\frac{n-1}{2}} 2^{-(n-2)\frac{k-k_{1}}{2}} \| g \|_{L^{2}(\R^{n})}
\end{split}
\end{equation}

For any $ g$ supported in $ | v |\aleq  2^{k_{1}}$. But this is precisely \Cref{th:L2eLinftyeperpvsL2est}.

\end{proof}

\section{Linear estimates for the Schr\"odinger equation}\label{s:linearest}
\begin{lemma}[homogeneous estimates]\label{la:homogeneous}
We have for any $\sigma  \in \R$
\[
 \|e^{\i \Dels{s}t} u_0\|_{F^{\sigma}} \aleq_{\psi} \|u_0\|_{\dot{H}^{\sigma}}
\]
\end{lemma}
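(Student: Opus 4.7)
The statement requires reading $e^{\i \Dels{s}t}u_0$ through a Schwartz time cutoff $\psi$ (the subscript in $\lesssim_\psi$ suggests this, and without such a cutoff the spacetime Fourier transform of a pure propagator is a delta measure on the paraboloid $\tau=-|\xi|^{2s}$, giving infinite $X_k$ norm). So the real claim is $\|\psi(t) e^{\i \Dels{s}t}u_0\|_{F^\sigma}\lesssim_\psi \|u_0\|_{\dot H^\sigma}$. My plan is to reduce to a per-frequency-block estimate and then compute directly in the spacetime Fourier side.

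The $F^\sigma$ norm is $\ell^2$ over $k$ with weight $2^{k\sigma}$, and the propagator commutes with the frequency projection $\Delta_k$. Writing $f_k := \Delta_k u_0$, which has Fourier support in $\{|\xi|\aeq 2^k\}$, it therefore suffices to prove the single-block inequality
\[
\|\psi(t) e^{\i \Dels{s}t} f_k\|_{Z_k} \aleq \|f_k\|_{L^2_x},
\]
as squaring, multiplying by $2^{2k\sigma}$, and summing in $k$ recovers $\|u_0\|_{\dot H^\sigma}^2$ by Plancherel. Since $Z_k = X_k^{1/2}+\sum_e Y_k^e$ is an infimum sum, it is enough to put the whole block into $X_k$.

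Now compute the spacetime Fourier transform:
\[
\mathcal{F}_{\R^{n+1}}\bigl(\psi(t) e^{\i \Dels{s}t} f_k\bigr)(\xi,\tau) \;=\; \widehat{\psi}\bigl(\tau+|\xi|^{2s}\bigr)\,\widehat{f_k}(\xi).
\]
For each $j\ge 1$, using Plancherel and the change of variables $\mu = \tau+|\xi|^{2s}$ (for fixed $\xi$) decouples the integrals:
\[
\|Q_j(\psi e^{\i \Dels{s}t} f_k)\|_{L^2_{x,t}}^2 \;=\; \Bigl(\int |\varphi(\mu/2^j)|^2\,|\widehat{\psi}(\mu)|^2\,d\mu\Bigr)\,\|f_k\|_{L^2_x}^2 \;\aleq_N\; 2^{j(1-2N)}\|f_k\|_{L^2_x}^2,
\]
where $N$ is arbitrarily large because $\widehat\psi$ is Schwartz; the $j=0$ piece is handled identically with $\eta$ in place of $\varphi$ and yields a bound of $\|f_k\|_{L^2_x}$. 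Summing then gives
\[
\|\psi(t) e^{\i\Dels{s}t} f_k\|_{X_k} \;=\; \sum_{j\ge 0} 2^{j/2}\|Q_j(\psi e^{\i\Dels{s}t} f_k)\|_{L^2} \;\aleq\; \sum_{j\ge 0} 2^{j(1-N)}\,\|f_k\|_{L^2_x} \;\aleq\; \|f_k\|_{L^2_x}
\]
as soon as $N>1$. This closes the block estimate, and the global estimate follows from the $\ell^2$ assembly in $k$.

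There is no real analytic obstacle here: the calculation is a direct application of Plancherel together with the decay of $\widehat\psi$, and it is essentially independent of $s$ since the phase $|\xi|^{2s}$ enters only through the innocuous change of variables $\mu=\tau+|\xi|^{2s}$. The only point that requires a little care is fixing a convention for the time cutoff, so that ``$e^{\i\Dels{s}t}u_0$'' denotes a function (not a distribution supported on the paraboloid) and hence has a meaningful $Z_k$ norm; with that convention in place, the proof is just the computation above.
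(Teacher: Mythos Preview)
Your proof is correct and follows the same strategy as the paper: bound the $Z_k$ norm by the $X_k$ norm and exploit that the spacetime Fourier transform is concentrated on the paraboloid $\tau=-|\xi|^{2s}$. The paper's version works formally with the bare propagator, arguing that $Q_j$ vanishes for $j\ge 1$ on the delta measure and that $\|Q_0 e^{\i\Dels{s}t}\Delta_k u_0\|_{L^2}=\|\Delta_k u_0\|_{L^2}$; your explicit inclusion of the time cutoff $\psi$ and the Schwartz decay of $\widehat{\psi}$ is the rigorous way to carry out exactly that computation.
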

\begin{proof}
Observe that 
\[
 \mathcal{F} \brac{e^{\i \Dels{s}t} \Delta_{k}u_0} = \delta_{|\xi|^{2s} = -\tau} \Delta_{k} u_0.
\]
In particular,
\[
 Q_j e^{\i \Dels{s}t} \Delta_{k}u_0 = 0 \quad \forall j \geq 1.
\]
Thus,
\[
\begin{split}
 \|\Delta_{k} e^{\i \Dels{s}t}u_0\|_{Z_k}\leq&\|\Delta_{k} e^{\i \Dels{s}t}u_0\|_{X_k}\\
 =& \|Q_0 e^{\i \Dels{s}t}\Delta_k u_0\|_{L^2}\\
 \aleq& \brac{\int_{\R^{n+1}} \abs{\delta_{|\xi|^{2s}=-\tau} \mathcal{F}\brac{\Delta_{k} u_0}(\xi)| \chi_{\abs{|\xi|^{2s}+\tau} \aleq 1}}^2}^{\frac{1}{2}}\\
 =&\brac{\int_{\R^{n}} \abs{\mathcal{F}\brac{\Delta_{k} u_0}(\xi)| }^2}^{\frac{1}{2}}\\
 =&\|\Delta_{k} u_0\|_{L^2(\R^n)}\\
 \end{split}
\]
In  particular, by the definition of $F^\sigma$, \eqref{eq:Fsigma}, we have
\[
 \|e^{\i \Dels{s}t}u_0\|_{F^\sigma} \aleq \|u_0\|_{\dot{H}^\sigma(\R^d)}.
\]

We can conclude.
\end{proof}

For the inhomogeneous estimate we need a cutoff function.
\begin{lemma}[Inhomogeneous estimate]\label{la:inhomogeneous}
Let $\psi \in C_c^\infty(\R)$. Set
\[
 u(x,t) := -\psi(t) \i \int_0^t e^{\i (t-\tilde{t}) \Dels{s} } F(x,\tilde{t})\, d\tilde{t}
\]

Then for any $\sigma \in \R$
\[
\|u\|_{F^\sigma} \aleq_{\psi} \|F\|_{N^{\sigma}}.
\]
\end{lemma}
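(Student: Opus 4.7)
The plan is to reduce the statement to a per-frequency-shell estimate and then use a Duhamel-type identity to split the integral into three manageable pieces. By the definitions of $F^\sigma$ and $N^\sigma$ in \eqref{eq:Fsigma}--\eqref{eq:Nsigma}, since the Littlewood--Paley projector $\Delta_k$ commutes with $\i\partial_t + \Dels{s}$, it suffices to show that for each $k \in \Z$,
\[
\|\Delta_k u\|_{Z_k} \aleq \|(\i\partial_t + \Dels{s}+\i)^{-1}\Delta_k F\|_{Z_k}.
\]
Setting $g := (\i\partial_t + \Dels{s}+\i)^{-1}\Delta_k F$, so that $\Delta_k F = (\i\partial_t + \Dels{s}+\i)g$, we need to bound the frequency-localized Duhamel expression by $\|g\|_{Z_k}$.

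Next, I would use the chain-rule identity $e^{\i(t-s)\Dels{s}}(\i\partial_s + \Dels{s})g(s) = \i\partial_s[e^{\i(t-s)\Dels{s}}g(s)]$, and integrate by parts in $s$. This yields the decomposition
\[
-\i\psi(t)\int_0^t e^{\i(t-s)\Dels{s}}\Delta_k F(s)\,ds = \psi(t)g(t) \;-\; \psi(t)e^{\i t\Dels{s}}g(0) \;+\; \psi(t)\int_0^t e^{\i(t-s)\Dels{s}}g(s)\,ds.
\]
The three terms (call them A, B, C) can now be bounded separately. For (A), multiplication by $\psi(t)$ is bounded on $Z_k$ via a temporal analog of Lemma \ref{multiplier}: since $\widehat{\psi}$ is Schwartz, its convolution in $\tau$ spreads modulations only at Schwartz scales and thus preserves the $X_k$ and $Y_k^e$ weights. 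For (B), Lemma \ref{la:IKDIE:3.33} gives $\|g(0)\|_{L^2}\aleq \|g\|_{Z_k}$, and a direct calculation using $\mathcal{F}(\psi(t)e^{\i t\Dels{s}}h)(\xi,\tau) = \hat{\psi}(\tau+|\xi|^{2s})\,\mathcal{F}_x h(\xi)$ together with Schwartz decay of $\hat\psi$ shows $\|\psi(t)e^{\i t\Dels{s}}g(0)\|_{X_k}\aleq \|g(0)\|_{L^2}$, which bounds this piece in $Z_k$ through the embedding $X_k^{1/2}\hookrightarrow Z_k$.

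For (C), the key observation is that the integrand has no operator $(\i\partial_t + \Dels{s}+\i)$ on the right-hand side, so no derivative loss is incurred. I would note that $V(t):=\int_0^t e^{\i(t-s)\Dels{s}}g(s)\,ds$ satisfies $(\i\partial_t+\Dels{s})V=\i g$ with $V(0)=0$; since $g\in L^\infty_t L^2_x$ (by Lemma \ref{la:IKDIE:3.33}), Minkowski gives $\|V\|_{L^\infty_t L^2_x}\aleq\|g\|_{Z_k}$. The $\psi(t)$ cutoff then produces a function whose space-time Fourier transform factorises as $\hat\psi\ast_\tau[\cdots]$, and the argument parallel to (B) (summing modulations against Schwartz decay of $\hat\psi$) controls $\|\psi V\|_{X_k}\aleq\|g\|_{Z_k}$, placing (C) into the $X_k^{1/2}$-component of $Z_k$.

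The main obstacle will be the temporal-multiplier statement needed for (A) on the $Y_k^e$-summand: Lemma \ref{multiplier} as stated handles only multipliers in $\xi$, so I would give an independent argument for $\|\psi f\|_{Y_k^e}\aleq\|f\|_{Y_k^e}$, e.g.\ via the product rule $(\i\partial_t + \Dels{s}+\i)(\psi f) = \psi(\i\partial_t+\Dels{s}+\i)f + \i\psi' f$, estimating the first summand by the $Y_k^e$ definition and the second by the $L^\infty_t L^2_x$ embedding together with the trivial bound $\|\cdot\|_{L^1_eL^2_{e^\perp,t}(\text{supp}\,\psi)}\aleq\|\cdot\|_{L^\infty_tL^2_x}$. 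Once these multiplier estimates are in hand, summing (A)--(C) closes the argument for each $k$, and squaring and summing $2^{2k\sigma}$ yields the stated $F^\sigma$--$N^\sigma$ bound.
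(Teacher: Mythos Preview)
Your decomposition $A+B+C$ via integration by parts is legitimate, and term (B) is handled correctly. However, there is a genuine gap in your treatment of (A) on the $Y_k^e$ summand, and (C) is not justified by the argument you sketch.

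For (A): your product-rule step gives
\[
(\i\partial_t+\Dels{s}+\i)(\psi f)=\psi\,(\i\partial_t+\Dels{s}+\i)f+\i\psi' f,
\]
and you propose to bound $\|\psi' f\|_{L^1_eL^2_{e^\perp,t}}$ by $\|f\|_{L^\infty_tL^2_x}$ using ``the trivial bound $\|\cdot\|_{L^1_eL^2_{e^\perp,t}(\mathrm{supp}\,\psi)}\aleq\|\cdot\|_{L^\infty_tL^2_x}$''. This bound is false: the $L^1_e$ integration is over an \emph{unbounded spatial} direction, while $\mathrm{supp}\,\psi$ lies in the \emph{time} variable, so compact time support buys nothing. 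A function of the form $\chi(t)(1+x_e^2)^{-1/2}\varphi(x_e')$ with $\varphi\in L^2$ is in $L^\infty_tL^2_x$ but not in $L^1_eL^2_{e^\perp,t}$. There is no reason $f\in Y_k^e$ implies $f\in L^1_eL^2_{e^\perp,t}$. The paper handles $\psi f$ (this is its $T_1h_1$ term) differently: the first product-rule piece does go into $Y_k^e$ exactly as you say, but the $\psi'$-type error is placed in $X_k$ via a Fourier-side identity,
\[
\frac{\hat\psi(\tau-\tilde\tau)}{\tau+|\xi|^{2s}+\i}=\frac{\hat\psi(\tau-\tilde\tau)}{\tilde\tau+|\xi|^{2s}+\i}+\frac{(\tau-\tilde\tau)\,\hat\psi(\tau-\tilde\tau)}{(\tau+|\xi|^{2s}+\i)(\tilde\tau+|\xi|^{2s}+\i)},
\]
and the second term is summed in $X_k$ using the extra modulation decay. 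You cannot keep the whole of $\psi f$ in $Y_k^e$.

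For (C): the claim that the argument is ``parallel to (B)'' does not hold. Term (B) is a free evolution of fixed $L^2$ data, so its space-time transform is $\hat\psi(\tau+|\xi|^{2s})\widehat{g(0)}(\xi)$ and Schwartz decay in $\tau+|\xi|^{2s}$ immediately sums the $X_k$ modulations. Term (C) is a Duhamel integral; $V$ need not even be tempered in $t$, so $\widehat{\psi V}=\hat\psi\ast_\tau\hat V$ is formal, and once computed correctly one gets an expression of the form $\int \hat g(\xi,\tilde\tau)\,(\tilde\tau+|\xi|^{2s})^{-1}[\hat\psi(\tau-\tilde\tau)-\hat\psi(\tau+|\xi|^{2s})]\,d\tilde\tau$, whose modulation structure for $g\in Y_k^e$ is not controlled by Schwartz decay of $\hat\psi$ alone. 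This piece corresponds to the paper's $h_2$/$Th_2$ analysis and requires the same $j$-by-$j$ bookkeeping (via Lemma~\ref{la:IKDIE:3.1}) that the paper carries out; it is not a one-line consequence of (B).
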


\begin{proof}
We are interested in computing $\|\Delta_{k} u\|_{Z_k}$ in terms of $\|(\i \partial_t +\Dels{s} +\i)^{{-1}} \Delta_{k} F\|_{Z_k}$, so we write
\[
 \mathcal{F}_{x,t} \Delta_{k}u(\xi,\tau) = -\i\int_{\R} \frac{\mathcal{F}_{\R^{n+1}}\brac{(\i \partial_t +\Dels{s} +\i)^{{-1}} \Delta_{k}F}(\xi,\tilde{\tau})\,  (\tilde{\tau}+|\xi|^{2s}+\i)}{\tilde{\tau}+|\xi|^{2s}} \brac{\hat{\psi}(\tau-\tilde{\tau})-\hat{\psi}(\tau+|\xi|^{2s}) }d\tilde{\tau}
\]
Thus, setting 
\[
 \begin{split}
 T h (\xi,\tau):=& \mathcal{F}^{-1} \brac{\int_{\R} \frac{\hat{h}(\xi,\tilde{\tau})\,  (\tilde{\tau}+|\xi|^{2s}+\i)}{\tilde{\tau}+|\xi|^{2s}} \brac{\hat{\psi}(\tau-\tilde{\tau})-\hat{\psi}(\tau+|\xi|^{2s}) }d\tilde{\tau}}.
%  =&\mathcal{F}^{-1} \brac{\int_{\R} \frac{\hat{h}(\xi,\tilde{\tau}-|\xi|^{2s})  (\tilde{\tau})}{\tilde{\tau}} \brac{\hat{\psi}(\tau-|\xi|^{2s}-\tilde{\tau})-\hat{\psi}(\tau{-}|\xi|^{2s}) }d\tilde{\tau}}\\
 \end{split}
\]
it suffices to prove
\begin{equation}\label{eq:ThZkest:goal}
  \left \|Th \right \|_{Z_k} \aleq  \|h\|_{Z_k},
\end{equation}
and the claim follows from taking 
\[
 h= (\i \partial_t +\Dels{s} +\i)^{{-1}} \Delta_{k}F.
\]
So let us prove \eqref{eq:ThZkest:goal}.

First we observe that \underline{if $h \in X_k$}, then $\supp \mathcal{F}(Th) \subset \{(\xi,\tau): |\xi| \aeq 2^k\}$, so we can try to estimate the $X_k$-norm.

We can write $h = \sum_{j=0}^\infty Q_j h$, and thus we consider
\[
 \sum_{\ell =0}^\infty 2^{\ell \frac{1}{2}} \|Q_{\ell} T Q_j h\|_{L^2}
\]
We estimate
\[
\begin{split}
&|\mathcal{F}_{n+1}(T Q_j h)(\xi,\tau)|\\
= &\abs{\int_{\tilde{\tau} \in \R:\, \abs{\tilde{\tau}+|\xi|^{2s}} \aeq 2^j} \frac{\widehat{Q_{j} h}(\xi,\tilde{\tau})  (\tilde{\tau}+|\xi|^{2s}+\i)}{\tilde{\tau}+|\xi|^{2s}} \brac{\hat{\psi}(\tau-\tilde{\tau})-\hat{\psi}(\tau+|\xi|^{2s}) }d\tilde{\tau}}\\
 \leq&\int_{\tilde{\tau} \in \R:\, \abs{\tilde{\tau}+|\xi|^{2s}} \aeq 2^j} \abs{\widehat{Q_{j} h}(\xi,\tilde{\tau}) } \brac{\abs{\hat{\psi}(\tau-\tilde{\tau})}+\abs{\hat{\psi}(\tau+|\xi|^{2s})} }d\tilde{\tau}\\
&+\int_{\tilde{\tau} \in \R:\, \abs{\tilde{\tau}+|\xi|^{2s}} \aeq 2^j} \abs{\widehat{Q_{j} h}(\xi,\tilde{\tau}) }\frac{\abs{\hat{\psi}(\tau-\tilde{\tau})-\hat{\psi}(\tau+|\xi|^{2s}) }}{\abs{\tilde{\tau}+|\xi|^{2s}}} d\tilde{\tau}\\
 \aleq_{\psi,N}&\int_{\tilde{\tau} \in \R:\, \abs{\tilde{\tau}+|\xi|^{2s}} \aeq 2^j} \abs{\widehat{Q_{j} h}(\xi,\tilde{\tau}) } \brac{\frac{1}{1+|\tau-\tilde{\tau}|^N}+\frac{1}{1+|\tau+|\xi|^{2s}|^N} }d\tilde{\tau}\\
 \end{split}
\]
In the last step we used that $\hat{\psi}$ is a Schwartz function and thus for any $N \in \N$ we have 
\[
 \frac{\abs{\hat{\psi}(b)-\hat{\psi}(a)}}{|b-a|} \aleq_{\psi,N} \frac{1}{1+|a|^N} + \frac{1}{1+|b|^N}.
\]
Next, if $2^{\ell } \aeq 2^j$ we observe that (since $|\tilde{\tau} + |\xi|^{2s}| \aeq 2^j \aeq 2^{\ell } \aeq |\tau + |\xi|^{2s}|$,
\[
 |\tau - \tilde{\tau}| \leq |\tau + |\xi|^{2s}|+ |\tilde{\tau} + |\xi|^{2s}| \aeq 2^{\ell } \aeq |\tau+|\xi|^{2s}|
\]
and thus 
\[
 \frac{1}{1+|\tau+|\xi|^{2s}|^N}  \aleq \frac{1}{1+|\tau-\tilde{\tau}|^N} 
\]
So in this case we estimate 
\[
 |\mathcal{F}_{n+1}(T Q_j h)(\xi,\tau)| \aleq_{\psi,N}\int_{\R} \abs{\widehat{Q_{j} h}(\xi,\tilde{\tau}) } \frac{1}{1+|\tau-\tilde{\tau}|^N} d\tilde{\tau}.
\]
Then convolution rules yield
\[
 \|\mathcal{F}_{n+1}(T Q_j h)(\xi,\tau)\|_{L^2(d\tau)} \aleq \|\widehat{Q_{j} h}\|_{L^2(d\tau)}.
\]
Consequently we have shown
\[
 \|Q_{\ell} T Q_j h\|_{L^2(\R^{n+1})} \aleq \|Q_{j} h\|_{L^2(\R^{n+1})} \quad \text{whenever $2^{\ell } \aeq 2^j$}.
\]

If on the other hand $2^{\ell } \not \aeq 2^j$ we have \[|\tau-\tilde{\tau}| \geq \max\{2^j,2^{\ell }\} - \min\{2^j,2^{\ell }\} \aeq  \max\{2^j,2^{\ell }\} \geq 2^{\ell }.\]
In this case we have
\[
\begin{split}
 \abs{\mathcal{F}_{n+1}(T Q_j h)(\xi,\tau)}
 \aleq_{\psi,N}&\brac{\frac{1}{1+2^{\ell  N}} } \int_{\tilde{\tau} \in \R:\, \abs{\tilde{\tau}+|\xi|^{2s}} \aeq 2^j} \abs{\widehat{Q_{j} h}(\xi,\tilde{\tau}) } d\tilde{\tau}\\
 \aleq&2^{j\frac{1}{2}} \brac{\frac{1}{1+2^{\ell  N}} } \brac{\int \abs{\widehat{Q_{j} h}(\xi,\tilde{\tau}) }^2 d\tilde{\tau}}^{\frac{1}{2}}
 \end{split}
\]
so that 
\[
 \sum_{2^{\ell } \not \aeq 2^j}  \|Q_{\ell} T Q_j h\|_{L^2(\R^{n+1})} \aleq 2^{j\frac{1}{2}}\|Q_j h\|_{L^2(\R^{n+1})}
\]
Summing over the $2^j$ we see that 
\begin{equation}\label{eq:inhomo:ThXlambda}
 \|Th\|_{X_k} \aleq \|h\|_{X_k}
\end{equation}
which is the first contribution towards \eqref{eq:ThZkest:goal}.

\underline{Assume now $h \in Y_k^e$}. We need to show
\begin{equation}\label{eq:inhomo:ThYlambdagoal}
 \|Th\|_{Z_k} \aleq \|h\|_{Y_k^e}.
\end{equation}

We write 
\[
 \hat{h} = \underbrace{\frac{\partial_t + \Dels{s}}{\partial_t + \Dels{s}+\i} \hat{h}}_{=:\hat{h}_1} + \underbrace{\frac{\i}{\partial_t + \Dels{s}+\i} \hat{h}}_{=:\hat{h}_2}
\]
From \Cref{la:IKDIE:3.1} we have 
\[
 \|Q_j h_2 \|_{X_k} \aleq \frac{1}{1+2^j} \|Q_j h_2 \|_{X_k} \aleq \frac{1}{1+2^j} \|Q_j h \|_{X_k} \aleq \frac{1}{1+2^j} \|h \|_{Y_k^e}
\]

so that 
\[
 \|h_2\|_{X_k} \aleq \sum_{j=0}^\infty \|Q_j h_2\|_{X_k} \aleq  \|h\|_{Y_k^e}.
\]
With the help of \eqref{eq:inhomo:ThXlambda} we find
\[
 \|Th_2\|_{X_k} \aleq \|h_2 \|_{X_k} \aleq  \|h\|_{Y_k^e}.
\]
So in order to establish \eqref{eq:inhomo:ThYlambdagoal}, we need to show 
\[
 \|Th_{{1}}\|_{Z_k} \aleq \|h\|_{Y_k^e}.
\]
Set 
\[
 \begin{split}
 T_1 h :=& \mathcal{F}^{-1} \brac{\int_{\R} \frac{\hat{h}(\xi,\tilde{\tau})  (\tilde{\tau}+|\xi|^{2s}+\i)}{\tilde{\tau}+|\xi|^{2s}} \brac{\hat{\psi}(\tau-\tilde{\tau})}d\tilde{\tau}}\\
 \end{split}
\]
and
\[
 \begin{split}
 T_2 h :=& \mathcal{F}^{-1} \brac{\hat{\psi}(\tau+|\xi|^{2s})\, \int_{\R} \frac{\hat{h}(\xi,\tilde{\tau})  (\tilde{\tau}+|\xi|^{2s}+\i)}{\tilde{\tau}+|\xi|^{2s}} d\tilde{\tau} }\\
 \end{split}
\]
We have $Th_1 = T_1h_1 + T_2h_1$, and by the definition of $h_1$ we find
\[
 \begin{split}
 T_1 h_1 :=& \mathcal{F}^{-1} \brac{\int_{\R} \hat{h}(\xi,\tilde{\tau})  \hat{\psi}(\tau-\tilde{\tau})d\tilde{\tau}}\\
 \end{split}
\]
and
\[
 \begin{split}
 T_2 h_1 :=& \mathcal{F}^{-1} \brac{\hat{\psi}(\tau+|\xi|^{2s})\, \int_{\R} \hat{h}(\xi,\tilde{\tau}) d\tilde{\tau} }\\
 \end{split}
\]

We claim 
\begin{equation}\label{eq:T1h1Ylambda}
 \|T_1 h_1 \|_{Z_k} + \|T_2 h_1 \|_{X_k} \aleq \|h\|_{Y_k^e}
\end{equation}
For the second term we see from the Schwartz property of $\hat{\psi}$
\[
\begin{split}
 &\|Q_j \brac{T_2  h_1}\|_{L^2(\R^{n+1})}^2\\
 \aleq &\int_{\R^{n+1}} \chi_{\abs{|\xi|^{2s}-\tau} \aeq 2^j} \frac{1}{1+\abs{|\xi|^{2s}-\tau}^N} \brac{\int_{\R} \hat{h}(\xi,\tilde{\tau}) d\tilde{\tau} }^2 d\xi d\tau\\
 \aleq &\int_{\R^n} \brac{\int_{\R} \hat{h}(\xi,\tilde{\tau}) d\tilde{\tau} }^2 d\xi\ \int_{\tau} \chi_{\abs{|\xi|^{2s}-\tau} \aeq 2^j} \frac{1}{1+\abs{|\xi|^{2s}-\tau}^N} d\tau\\
  \aleq &\int_{\R^n} \brac{\int_{\R} \hat{h}(\xi,\tilde{\tau}) d\tilde{\tau} }^2 d\xi\ \frac{2^j}{1+2^{jN}}\\
  =&\int_{\R^n} \brac{\int_{\R} h(x,\hat{\tilde{\tau}}) d\tilde{\tau} }^2 dx\ \frac{2^j}{1+2^{jN}}\\
  =&\int_{\R^n} \brac{h(x,0)}^2 dx\ \frac{2^j}{1+2^{jN}}\\
  \leq& \|h\|_{L^\infty_t L^2_x}\ \frac{2^j}{1+2^{jN}}\\
  \aleq& \|h\|_{Y_k^e} \frac{2^j}{1+2^{jN}}.
 \end{split}
\]
In the last step we used \Cref{la:IKDIE:3.33}.

In particular we have 
\[
 \|T_2 h_1\|_{X_k} = \sum_{j=0}^\infty 2^{j\frac{1}{2}} \|Q_j \brac{T_2  h_1}\|_{L^2(\R^{n+1})} \aleq \|h\|_{Y_k^e}.
\]
This takes care of the second part of \eqref{eq:T1h1Ylambda}, and it remains to show
\begin{equation}\label{eq:T1h1Ylambdav2}
 \|T_1 h_1 \|_{Z_k} \aleq \|h\|_{Y_k^e}.
\end{equation}
We write 
\[
 \mathcal{F}(h_1)(\xi,\tau) = \hat{h}_1(\xi,\tau)\, \frac{\tilde{\tau}+|\xi|^{2s}+\i}{\tau + |\xi|^{2s} + \i} + \hat{h}_1(\xi,\tau)\, \frac{\tau-\tilde{\tau}}{\tau + |\xi|^{2s}+\i}
\]
and thus we can begin the estimate \eqref{eq:T1h1Ylambdav2} by writing
\[
\begin{split}
 \|T_1 h_1 \|_{Z_k}
 \aleq&\norm{\mathcal{F}^{-1} \brac{\frac{1}{\tau + |\xi|^{2s}+\i} \int_{\R} (\tilde{\tau}+|\xi|^{2s}+\i) \hat{h}(\xi,\tilde{\tau})  \hat{\psi}(\tau-\tilde{\tau})d\tilde{\tau}}}_{Y_k^e}\\
 &+\norm{\mathcal{F}^{-1} \brac{\frac{1}{\tau + |\xi|^{2s}+\i} \int_{\R} \hat{h}(\xi,\tilde{\tau})  \hat{\psi}(\tau-\tilde{\tau}) (\tau-\tilde{\tau}) d\tilde{\tau}}}_{X_k}.
 \end{split}
\]
For the first term
\[
\begin{split}
 &\norm{\mathcal{F}^{-1} \brac{\frac{1}{\tau + |\xi|^{2s}+\i} \int_{\R} (\tilde{\tau}+|\xi|^{2s}+\i) \hat{h}(\xi,\tilde{\tau})  \hat{\psi}(\tau-\tilde{\tau})d\tilde{\tau}}}_{Y_k^e}\\
  =&2^{-k\frac{2s-1}{2}}\norm{\mathcal{F}^{-1} \brac{\int_{\R} \mathcal{F}\brac{(\i \partial_t +\Dels{s} + \i)h}(\xi,\tilde{\tau})  \hat{\psi}(\tau-\tilde{\tau})d\tilde{\tau}}}_{L^1_e L^2_{t,e^\perp}}\\
  =&2^{-k\frac{2s-1}{2}}\norm{ \psi \brac{(\i \partial_t +\Dels{s} + \i)h}}_{L^1_e L^2_{t,e^\perp}}\\
  \aleq&2^{-k\frac{2s-1}{2}}\|(\i \partial_t +\Dels{s} + \i)h\|_{L^1_e L^2_{t,e^\perp}}\\
  =&\|h\|_{Y^e_k}.
 \end{split}
\]
For the second term (and last) term
\[
\begin{split}
 &\norm{\mathcal{F}^{-1} \brac{\frac{1}{\tau + |\xi|^{2s}+\i} \int_{\R} \hat{h}(\xi,\tilde{\tau})  \hat{\psi}(\tau-\tilde{\tau}) (\tau-\tilde{\tau}) d\tilde{\tau}}}_{X_k}\\
 =&\sum_{{\ell }=0}^\infty \sum_{j=0}^\infty 2^{\ell \frac{1}{2}} \norm{\brac{\frac{1}{\tau + |\xi|^{2s}+\i} q_{2^{\ell }}(|\xi|-\tau) \int_{\R} \widehat{Q_j h}(\xi,\tilde{\tau})  \hat{\psi}(\tau-\tilde{\tau}) (\tau-\tilde{\tau}) d\tilde{\tau}}}_{L^2(\R^{n+1})}\\
 \aleq&\sum_{2^{\ell } \aeq 2^j} 2^{\ell {-\frac{1}{2}}}\, \norm{\int_{\R} \hat{h}(\xi,\tilde{\tau})  \hat{\psi}(\tau-\tilde{\tau}) (\tau-\tilde{\tau}) d\tilde{\tau}}_{L^2(\R^{n+1})} \\ 
 &+\sum_{2^{\ell } \not \aeq 2^j} 2^{\ell {-\frac{1}{2}}} \norm{q_{2^{\ell }}(|\xi|-\tau) \int_{\R} \widehat{Q_j h}(\xi,\tilde{\tau})  \hat{\psi}(\tau-\tilde{\tau}) (\tau-\tilde{\tau}) d\tilde{\tau}}_{L^2(\R^{n+1})}.
 \end{split}
\]
Observe again that if $|\tau + |\xi|^{2s}| \aeq 2^{\ell }$ and $|\tilde{\tau} + |\xi|^{2s}| \aeq 2^j$ and $2^j \not \aeq 2^{\ell }$ then
\[
 |\tau - \tilde{\tau}| \aeq \max\{2^j,2^{\ell }\}
\]
Using that $\hat{\psi}$ is a Schwartz function, we arrive at 
\[
\begin{split}
 &\norm{\mathcal{F}^{-1} \brac{\frac{1}{\tau + |\xi|^{2s}+\i} \int_{\R} \widehat{Q_j h}(\xi,\tilde{\tau})  \hat{\psi}(\tau-\tilde{\tau}) (\tau-\tilde{\tau}) d\tilde{\tau}}}_{X_k}\\
 \aleq&\sum_{2^{\ell } \aeq 2^j} 2^{\ell {-\frac{1}{2}}}\, \norm{\int_{\R} \widehat{Q_j h}(\xi,\tilde{\tau})  \hat{\psi}(\tau-\tilde{\tau}) (\tau-\tilde{\tau}) d\tilde{\tau}}_{L^2(\R^{n+1})} \\ 
 &+\sum_{2^{\ell } \not \aeq 2^j} 2^{\ell {-\frac{1}{2}}} \frac{1}{1+\max\{2^j,2^{\ell }\}^N}\norm{q_{2^{\ell }}(|\xi|-\tau) \int_{\R} \abs{\widehat{Q_j h}} \chi_{|\tilde{\tau}-|\xi|^{2s} \aeq 2^j} d\tilde{\tau}}_{L^2(\R^{n+1})}\\
 \aleq&\sum_{2^{\ell } \aeq 2^j} 2^{\ell {-\frac{1}{2}}}\, \norm{\widehat{Q_j h}}_{L^2(\R^{n+1})} \\ 
 &+\sum_{{\ell }=0}^\infty \sum_{j=0}^\infty 2^{\ell {-\frac{1}{2}}} \frac{2^{j\frac{1}{2}} 2^{\ell \frac{1}{2}}} {1+\max\{2^j,2^{\ell }\}^N}\norm{\widehat{Q_j h}}_{L^2(\R^{n+1})}\\
 \overset{\text{\Cref{la:IKDIE:3.1}}}{\aleq}
 &\sum_{2^{\ell } \aeq 2^j} 2^{\ell {-\frac{1}{2}}}\, \|h\|_{Y_k^e} \\
 &+\sum_{{\ell }=0}^\infty \sum_{j=0}^\infty 2^{\ell {-\frac{1}{2}}} \frac{2^{j\frac{1}{2}} 2^{\ell \frac{1}{2}}} {1+\max\{2^j,2^{\ell }\}^N}\|h\|_{Y_k^e}\\
 \aleq&\|h\|_{Y_k^e}
 \end{split}
\]
This establishes \eqref{eq:inhomo:ThYlambdagoal}, and combining this with  \eqref{eq:inhomo:ThXlambda}
  we have proven \eqref{eq:ThZkest:goal}. We can conclude.
\end{proof}

\section{Trilinear estimates}\label{s:trilinearest}
The main goal of this section is the following trilinear estimate 
\begin{theorem}\label{th:trilinear}
Let $s \in (1/2,1)$ and $n \geq 4$. For any $-\frac{2s-1}{2} \leq \beta \leq 2s-1$ and $\sigma \geq \frac{n-2s}{2}$ we have
Then
\begin{equation}\label{eq:BIK2007:Nsigmaest1}
\begin{split}
 \|\brac{\Ds{-\beta} \brac{\tilde{f}_1\, \tilde{f}_2}} |\, \Ds{\beta} \tilde{f}_3\|_{N^{\sigma}}  \aleq& \|f_1\|_{F^{\sigma}}\, \|f_2\|_{F^{\frac{n-2s}{2}}}\, \|f_{3}\|_{F^{\frac{n-2s}{2}}}\\
 &+ \|f_1\|_{F^{\frac{n-2s}{2}}}\, \|f_2\|_{F^{\sigma}}\, \|f_{3}\|_{F^{\frac{n-2s}{2}}}\\
 &+ \|f_1\|_{F^{\frac{n-2s}{2}}}\, \|f_2\|_{F^{\frac{n-2s}{2}}}\, \|f_{3}\|_{F^{\sigma}}
 \end{split}
\end{equation}
where $\tilde{f}_i$ is either $f_i$ or the complex conjugated $\overline{f_i}$, $i=1,2,3$.
\end{theorem}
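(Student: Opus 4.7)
The plan is to unfold the $N^\sigma$-norm dyadically. For each output scale $k$, I place the entire contribution into the $Y_k^e$ branches of the $Z_k$-infimum via the partition of unity $\sum_{e\in\mathscr{E}}\vartheta_e(\xi/|\xi|)=1$ from \Cref{la:geometricobserv}, obtaining
\begin{equation*}
\|(\i\partial_t+\Dels{s}+\i)^{-1}\Delta_k N\|_{Z_k}\aleq \sum_{e\in\mathscr{E}} 2^{-k\frac{2s-1}{2}}\,\|\mathcal{F}^{-1}(\vartheta_e(\xi/|\xi|)\mathcal{F}\Delta_k N)\|_{L^1_e L^2_{e^\perp,t}},
\end{equation*}
where $N$ denotes the trilinear form on the left of \eqref{eq:BIK2007:Nsigmaest1}. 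This eliminates the resolvent and reduces the task to a finite family of cone-localized $L^1_e L^2_{e^\perp,t}$-estimates, one per $e\in\mathscr{E}$.

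\textbf{Dyadic decomposition, derivatives and trilinear Holder.} I decompose $f_i=\sum_{k_i}\Delta_{k_i}f_i$ and further split the inner product $\tilde{\Delta_{k_1}f_1}\,\tilde{\Delta_{k_2}f_2}$ into a Littlewood-Paley piece at scale $k_{12}$, invoking \Cref{la:dsbetaest} and \Cref{multiplier} to extract the clean factors $2^{-\beta k_{12}}$ from $\Ds{-\beta}$ and $2^{\beta k_3}$ from $\Ds{\beta}$. For each dyadic tuple, the pointwise product is estimated by Holder,
\begin{equation*}
\|g_1 g_2 g_3\|_{L^1_e L^2_{e^\perp,t}}\leq \|g_{i_0}\|_{L^\infty_e L^2_{e^\perp,t}}\prod_{i\neq i_0}\|g_i\|_{L^2_e L^\infty_{e^\perp,t}},
\end{equation*}
where the distinguished index $i_0$ is chosen to be the factor at the largest scale $\max(k_{12},k_3)$. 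Smoothing (\Cref{la:locsmooth}) applied to $g_{i_0}$ contributes the gain $2^{-k_{i_0}(2s-1)/2}$; the localized maximal estimate (\Cref{la:locmaxest}) applied to the other two contributes the losses $2^{k_i(n-1)/2}$. Complex conjugation of any $\tilde f_i$ is harmless since both lemmas are stated for $f$ and $\bar f$.

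\textbf{Scaling check in the diagonal case.} In the balanced case $k\aeq k_1\aeq k_2\aeq k_3\aeq k_{12}$ the accumulated power of $2$ is
\begin{equation*}
2^{-k\frac{2s-1}{2}}\cdot 2^{-\beta k}\cdot 2^{\beta k}\cdot 2^{k\frac{n-1}{2}}\cdot 2^{k\frac{n-1}{2}}\cdot 2^{-k\frac{2s-1}{2}}\ =\ 2^{k(n-2s)},
\end{equation*}
so after attaching the $N^\sigma$-weight $2^{k\sigma}$ I obtain, for the chosen index $i_0$,
\begin{equation*}
2^{k\sigma}\|(\i\partial_t+\Dels{s}+\i)^{-1}\Delta_k N\|_{Z_k}\aleq 2^{k\sigma}\|\Delta_k f_{i_0}\|_{Z_k}\prod_{i\neq i_0} 2^{k(n-2s)/2}\|\Delta_k f_i\|_{Z_k},
\end{equation*}
which is exactly the critical scaling. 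Taking $\ell^2$ in $k$ of the left-hand side and using $\ell^2\subset\ell^\infty$ for the two $F^{(n-2s)/2}$-factors yields the $i_0$-th summand on the right of \eqref{eq:BIK2007:Nsigmaest1}; varying $i_0\in\{1,2,3\}$ produces the three displayed summands.

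\textbf{Off-diagonal contributions and the main obstacle.} The constraints $\beta\in[-(2s-1)/2,\,2s-1]$, $\sigma\geq(n-2s)/2$, and $n\geq 4$ are calibrated precisely so that the off-diagonal triples (high-low, low-high, and high-high-low) give summable geometric series in the scale gaps $|k-k_{12}|$, $|k-k_3|$, $|k_{12}-k_3|$ after the scaling of the previous step is redone with unequal exponents. The genuinely delicate regime is the high-high-low interaction $k\ll k_1\aeq k_2$: here the global maximal bound \eqref{eq:la:globmax} costs $2^{k_1(n-1)}$ from the two maximal estimates with no matching gain, and the crude Holder does not close. The main obstacle, handled by the fine decomposition $P_{k,\tn}$ from \eqref{eq:Pktell}, is to replace \eqref{eq:la:globmax} with its spatially-localized refinement \eqref{eq:BIK:3.1:f}: summing over cubes $\tn\in 2^{k}\Z^n$ inside a ball of radius $\aeq 2^{k_1}$ and using spatial orthogonality on the low-frequency output recovers the compensating factor $2^{-(k_1-k)(n-2)/2}$, while the accompanying logarithmic loss $(1+|k-k_1|)$ is absorbed by strict inequality inside the $\beta$-range. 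After this refinement the remaining work is careful bookkeeping of the sums over $(k,k_1,k_2,k_3,k_{12})$ and over the finite set $\mathscr{E}$, closed by Cauchy-Schwarz.
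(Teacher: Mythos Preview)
There is a real gap in your H\"older scheme: the smoothing direction is forced to coincide with the \emph{output} cone direction $e$, but \Cref{la:locsmooth} only yields $L^\infty_e L^2_{e^\perp,t}$ control of a factor that is itself cone-localized along $e$. This transfer from output to factor is automatic only when the factor to be smoothed sits at the output scale $k$ (so that $\xi_{i_0}\approx\xi_{\mathrm{out}}$). In every regime where the highest factor lives at a scale $\gg k$---for instance $k_1\aeq k_2\gg k_{12}\aeq k$ inside $F$, or $k_{12}\aeq k_3\gg k$ for the outer product---the output cone at scale $k$ imposes no angular constraint on that factor, and your three-term H\"older $\|g_{i_0}\|_{L^\infty_e L^2}\prod_{i\neq i_0}\|g_i\|_{L^2_e L^\infty}$ cannot be fed by smoothing. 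Inserting a separate decomposition $\sum_{e'}\vartheta_{e'}$ on $g_{i_0}$ does not help: smoothing then returns $L^\infty_{e'}L^2_{(e')^\perp,t}$, not $L^\infty_eL^2_{e^\perp,t}$.

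The paper is organized precisely to sidestep this. It does \emph{not} use a symmetric three-factor H\"older; it treats $F=\Ds{-\beta}(\tilde f_1\tilde f_2)$ as one block and splits according to which of $F,f_3$ is high. When $F$ is low and $f_3$ sits at the output scale (case \eqref{eq:trilineargoal1}), the cone is placed on $\Delta_\ell\tilde f_3$ first, and only then is the output put into $Y_k^e$ for the \emph{same} $e$; the remaining bilinear piece $\|\Delta_{\aleq 2^k}F\|_{L^1_eL^\infty_{e^\perp,t}}$ is handled by \Cref{la:BIK5.4}, and it is there that the $P_{k_1,\tn}$-localized maximal estimate is deployed---applied to \emph{both} high factors inside $F$ (two maximal, no smoothing), with the gain coming from $\ell^2$-orthogonality over matched cubes rather than from one smoothing as in your sketch. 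When $F$ is high (case \eqref{eq:trilineargoal2}), the paper uses instead the split $\|\Delta_jF\|_{L^2_{x,t}}\,\|\Ds{\beta}\Delta_{\aleq j}\tilde f_3\|_{L^2_eL^\infty_{e^\perp,t}}$: the isotropic $L^2_{x,t}$ norm on $F$ is \emph{direction-free}, so the internal smoothing inside \Cref{la:FL2estv2} may choose its own cone direction on the high factor of $F$, decoupled from the output $e$. Your setup has no such direction-free slot, and without it the high-$F$ contribution does not close.
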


\begin{lemma}\label{la:BIK5.4}
Let $s \in (1/2,1]$, $\beta \leq  2s-1$ and $n \geq 4$. Then
\[
 \|\Delta_{\col{\aleq} 2^k} \Ds{-\beta} \brac{\tilde{f} \tilde{g}}\|_{L^1_e L^{\infty}_{e^\perp,t}} \aleq 2^{k(2s-1-\beta)} \|f\|_{F^{\frac{n-2s}{2}}}\, \|g\|_{F^{\frac{n-2s}{2}}}
 \]
 where $\tilde{f} \in \{f,\bar{f}\}$, $\tilde{g} \in \{g,\bar{g}\}$.
\end{lemma}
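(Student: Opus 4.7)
\textbf{Plan for the proof of \Cref{la:BIK5.4}.}
My plan is a standard Paley--Littlewood analysis of $\tilde f\tilde g$ combined with the maximal estimates from \Cref{la:locmaxest}. Decompose $f=\sum_{k_1}\Delta_{k_1}f$ and $g=\sum_{k_2}\Delta_{k_2}g$, and the output as $\Delta_{\aleq 2^k}\Ds{-\beta}=\sum_{m\le k}\Delta_m\Ds{-\beta}$; by \Cref{la:dsbetaest} and \Cref{multiplier} the operator $\Delta_m\Ds{-\beta}$ acts as a multiplier of norm $\aeq 2^{-\beta m}$ on functions with Fourier support in $\{|\xi|\aeq 2^m\}$, and similarly $\Delta_m$ is bounded on $L^1_eL^\infty_{e^\perp,t}$ (its convolution kernel is $L^1$). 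I will separate the interactions into high--low (HL, $|k_1-k_2|>5$) where $m\aeq \max(k_1,k_2)\le k+O(1)$, and high--high (HH, $k_1\aeq k_2=:\ell$) where $m$ ranges over $m\le \ell+O(1)$. Throughout, I will write $\|\Delta_{k}f\|_{Z_{k}}=2^{-k(n-2s)/2}c_{k}^{f}\|f\|_{F^{(n-2s)/2}}$ with $\|c^{f}\|_{\ell^2}\le 1$, and similarly for $g$, so that everything comes down to weighted sums $\sum\alpha_{k_1,k_2}\,c^f_{k_1}c^g_{k_2}$.

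For the HL interaction (WLOG $k_1>k_2+5$), I apply H\"older
\[
 \|\Delta_{k_1}\tilde f\Delta_{k_2}\tilde g\|_{L^1_eL^\infty_{e^\perp,t}}\le \|\Delta_{k_1}\tilde f\|_{L^2_eL^\infty_{e^\perp,t}}\|\Delta_{k_2}\tilde g\|_{L^2_eL^\infty_{e^\perp,t}}
\]
and use the global maximal bound \eqref{eq:la:globmax} on each factor, obtaining $2^{(k_1+k_2)(n-1)/2}\|\Delta_{k_1}f\|_{Z_{k_1}}\|\Delta_{k_2}g\|_{Z_{k_2}}$. Together with the $2^{-\beta k_1}$ weight and the $F$-replacement, HL reduces to
$\sum_{k_2<k_1-5\le k}2^{k_1((2s-1)/2-\beta)+k_2(2s-1)/2}c^f_{k_1}c^g_{k_2}$, which by a joint Cauchy--Schwarz ($\sum \alpha\,c^fc^g\le (\sum\alpha^2(c^f)^2)^{1/2}\|c^g\|_{\ell^2}$) and the geometric $k_2$--sum is bounded by $2^{k(2s-1-\beta)}$. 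For HH, the product Fourier support spreads over all $m\le\ell+O(1)$, so I use an orthogonality-style decomposition at the output scale: $\Delta_\ell\tilde f=\sum_\tn P_{m,\tn}\Delta_\ell\tilde f$, $\Delta_\ell\tilde g=\sum_{\tn'}P_{m,\tn'}\Delta_\ell\tilde g$, where $P_{m,\tn}$ is the $2^m$-cube projection from \eqref{eq:Pktell}; the product $P_{m,\tn}\Delta_\ell\tilde f\cdot P_{m,\tn'}\Delta_\ell\tilde g$ has Fourier support in $B(\tn+\tn',2^m)$, so $\Delta_m$ kills it unless $\tn'\aeq-\tn$. H\"older on each $\tn$-pair, Cauchy--Schwarz in $\tn$ (keeping $\tn'$ slaved to $-\tn$), and the localized maximal estimate \eqref{eq:BIK:3.1:f} applied with ambient scale $\ell$ and localization scale $m$, yield
\[
 \|\Delta_m(\Delta_\ell\tilde f\,\Delta_\ell\tilde g)\|_{L^1_eL^\infty_{e^\perp,t}}\aleq 2^{m(n-2)+\ell}(1+|\ell-m|)^2\,\|\Delta_\ell f\|_{Z_\ell}\|\Delta_\ell g\|_{Z_\ell}.
\]

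To finish, I sum HL and HH with the $2^{-\beta m}$ weight. In HH, swapping the $m$ and $\ell$ sums and performing the geometric sum in $m\le\min(k,\ell)$ (the exponent $n-2-\beta$ is strictly positive since $\beta\le 2s-1<n-2$ under $n\ge 4$, $s<1$) produces a factor $2^{\min(k,\ell)(n-2-\beta)}2^{\ell(1-n+2s)}$; splitting $\ell\le k$ vs $\ell>k$ and applying a weighted Cauchy--Schwarz in $\ell$ using the $\ell^2$-summability of $c_\ell^{f}c_\ell^{g}$ gives the uniform bound $2^{k(2s-1-\beta)}\|f\|_{F}\|g\|_{F}$. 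The main technical obstacle is the HH orthogonality decomposition and controlling the polynomial loss $(1+|\ell-m|)^2$; a secondary but sharp subtlety is the endpoint $\beta=2s-1$, where the exponents $2s-1-\beta$ in both the $k_1$-sum (for HL) and the $\ell$-sum (for HH) vanish and an iterated Cauchy--Schwarz would encounter $\sum_{k_1\le k}1=\infty$. I avoid this by always doing Cauchy--Schwarz jointly over the double index---pairing the weight with one of $(c^f)^2$ or $(c^g)^2$ rather than separating and using the pointwise bound $c\le 1$---so that $\|c\|_{\ell^2}\le 1$ is used at the very end and no logarithmic loss appears.
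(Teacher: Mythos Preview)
Your proposal is correct and follows essentially the same approach as the paper: paraproduct decomposition into HL and HH pieces, H\"older plus the global maximal estimate \eqref{eq:la:globmax} for HL, and the cube decomposition $P_{m,\tn}$ combined with Cauchy--Schwarz in $\tn$ and the localized maximal estimate \eqref{eq:BIK:3.1:f} for HH. The only organizational difference is that the paper first reduces to the single endpoint $\beta=2s-1$ (observing that $\|\Delta_{\aleq 2^k}\Ds{-\beta}(\cdot)\|\aleq 2^{k(2s-1-\beta)}\sum_{j\le k}2^{-j(2s-1)}\|\Delta_j(\cdot)\|$), so it only has to bound $\sum_j 2^{-j(2s-1)}\|\Delta_j(\tilde f\tilde g)\|_{L^1_eL^\infty_{e^\perp,t}}$; this makes the summation step slightly cleaner (the HL sum becomes $\sum_j\sum_{\ell\le j}2^{\gamma(\ell-j)}a_\ell b_j$ with $\gamma=(2s-1)/2$, handled by a single diagonal Cauchy--Schwarz), whereas you track the general $\beta$ throughout and must manage the endpoint by pairing the weight with $(c^f)^2$ inside Cauchy--Schwarz---which you do correctly.
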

\begin{proof}
Since $\beta \leq 2s-1$, it suffices to show 
\begin{equation}\label{eq:BIK5.4:goal}
 \sum_{j \in \Z} 2^{-j(2s-1)}\|\Delta_{j} \brac{\tilde{f} \tilde{g}}\|_{L^1_e L^{\infty}_{e^\perp,t}} \aleq \|f\|_{F^{\frac{n-2s}{2}}}\, \|g\|_{F^{\frac{n-2s}{2}}}
\end{equation}
Indeed, once we have \eqref{eq:BIK5.4:goal} we obtain the claim, since
\[
\begin{split}
&\|\Delta_{\aleq 2^k} \Ds{-\beta} \brac{\tilde{f}\tilde{g}}\|_{L^1_e L^{\infty}_{e^\perp,t}}\\
\aleq&\sum_{j: 2^j \aleq 2^k} 2^{-j\beta}\|\Delta_{j} \brac{\tilde{f}\tilde{g}}\|_{L^1_e L^{\infty}_{e^\perp,t}}\\
\overset{\beta \leq 2s-1}{\aleq}&2^{k(2s-1-\beta)} \sum_{j: 2^j \aleq 2^k} 2^{-j(2s-1)}\|\Delta_{j} \brac{\tilde{f}\tilde{g}}\|_{L^1_e L^{\infty}_{e^\perp,t}}\\
\end{split}
\]
So let us prove \eqref{eq:BIK5.4:goal}.

We have
\[
\begin{split}
\|\Delta_{j} \brac{\tilde{f}\tilde{g}}\|_{L^1_e L^{\infty}_{e^\perp,t}} \aleq& \sum_{2^\ell \aleq 2^j}\| \Delta_{\ell} \tilde{f}\, \Delta_{\leq j+C}\tilde{g}\|_{L^1_e L^{\infty}_{e^\perp,t}} +\sum_{2^\ell \aleq 2^j}\| \Delta_{\leq j+C}\tilde{f}\, \Delta_{\ell} \tilde{g}\|_{L^1_e L^{\infty}_{e^\perp,t}} \\
&+\sum_{2^{\ell_1} \aeq 2^{\ell_2} \ageq 2^j}\| \Delta_{j} \brac{\Delta_{\ell_2}\tilde{f}\, \Delta_{\ell_2} \tilde{g}}\|_{L^1_e L^{\infty}_{e^\perp,t}}.
\end{split}
\]
With H\"older's inequality (here the complex conjugation vanishes), and the local maximal estimate, \Cref{la:locmaxest} \eqref{eq:la:globmax},
 \[
\begin{split}
 \| \Delta_{\ell} \tilde{f}\, \Delta_{\leq j+C}\tilde{g}\|_{L^1_e L^{\infty}_{e^\perp,t}} \aleq&\| \Delta_{\ell} f\|_{L^{2}_e L^\infty_{e^\perp,t}} \|\Delta_{\leq j+C}g\|_{L^2_e L^{\infty}_{e^\perp,t}}\\
\aleq&2^{\ell \frac{n-1}{2}} \| f\|_{Z_\ell} 2^{j\frac{n-1}{2}} \|g\|_{Z_j}\\
\aleq&2^{\ell \frac{2s-1}{2}} \, 2^{j\frac{2s-1}{2}}  2^{\ell\frac{n-2s}{2}}\| f\|_{Z_\ell}  (2^j)^{\frac{n-2s}{2}} \|g\|_{Z_j}\\
 \end{split}
 \]
Set $\gamma := \frac{2s-1}{2}$ and 
\begin{equation}\label{eq:trilinear:alskdj1}
a_\ell := 2^{\ell\frac{n-2s}{2}}\| f\|_{Z_\ell} \text{ and } b_j := (2^j)^{\frac{n-2s}{2}} \|g\|_{Z_j}\end{equation}

Then, since $\gamma > 0$ we have 
\[
\begin{split}
 \sum_{j} 2^{-(2s-1)j} \sum_{\ell \leq j} 2^{\gamma \ell}\, a_\ell 2^{\gamma j} b_j =&\sum_{j} 2^{-2\gamma j} \sum_{\ell \leq j} 2^{\gamma \ell}\, a_\ell 2^{\gamma j} b_j\\
 =&\sum_{j} \sum_{\ell \leq j} 2^{\gamma (\ell-j)}\, a_\ell b_j\\
 =&\sum_{j} \sum_{\tilde{\ell} \leq 0} 2^{\gamma \tilde{\ell}}\, a_{\tilde{\ell}+j} b_j\\
 \aeq&\sum_{j} a_{\tilde{\ell}+j} b_j\\
 \leq& [a_j]_{\ell^2} [b_j]_{\ell^2}
 \end{split}
\]
Applying this to the situation above we find 
\[
 \sum_{2^\ell \aleq 2^j}\| \Delta_{\ell} \tilde{f}\, \Delta_{\leq j+C}\tilde{g}\|_{L^1_e L^{\infty}_{e^\perp,t}} \aleq \|f\|_{F^{\frac{n-2s}{2}}}\, \|g\|_{F^{\frac{n-2s}{2}}}.
\]
Interchanging the roles of $f$ and $g$, 
\[
\sum_{2^\ell \aleq 2^j}\| \Delta_{\leq j+C}\tilde{f}\, \Delta_{\ell} \tilde{g}\|_{L^1_e L^{\infty}_{e^\perp,t}} \aleq \|f\|_{F^{\frac{n-2s}{2}}}\, \|g\|_{F^{\frac{n-2s}{2}}}.
\]
It remains to estimate 
\[
\sum_{2^{\ell_1} \aeq 2^{\ell_2} \ageq 2^j}\| \Delta_{j}\brac{\Delta_{\ell_2}\tilde{f}\, \Delta_{\ell_2} \tilde{g}}\|_{L^1_e L^{\infty}_{e^\perp,t}},
\]
and the usual maximal estimate does not suffice.

Instead use the decomposition from \eqref{eq:weirdfinedecomp}, and have (for any $j \in \Z$)
\[
\begin{split}
 &\brac{\Delta_{\ell_2}\tilde{f}\, \Delta_{\ell_2} \tilde{g}}\\
 =&\sum_{\tell_1,\tell_2 \in 2^k \Z} \brac{\Delta_{\ell_2} P_{j,\tell_2}\tilde{f}\, \Delta_{\ell_2} P_{j,\tell_2}\tilde{g}}
 \end{split}
\]
Observing that 
\[
\begin{split}
 &|\xi| \aeq 2^j,\ |\xi-\eta - \tell_1| \aleq 2^j,\ |\eta - \tell_2| \aleq 2^j\\
 \Rightarrow & |\tell_1 + \tell_2| =|-\eta - \tell_1 + (\eta - \tell_2)| \aleq  |\xi| + 2^j \aleq 2^j.
 \end{split}
\]
Moreover, if $\tell_1, \tell_2 \in 2^j \Z$ and $|\tell_1 + \tell_2| \aleq 2^j$ then $\tell_1$ and $\tell_2$ are essentially the same in the sense that 
\[
 \#\{\tell_2 \in 2^j \Z: |\tell_2+\tell_1| \aleq 2^j\} \aleq 1 \quad \forall \tell_1 \in 2^j \Z.
\]
Thus,
\[
\begin{split}
 &\| \Delta_{j}\brac{\Delta_{\ell_1}\tilde{f}\, \Delta_{\ell_2} g}\|_{L^1_e L^{\infty}_{e^\perp,t}} \\
 \aleq& \sum_{\tell_1 \in 2^j \Z^n}\sum_{\tell_2 \in 2^j \Z^n, |\tell_2+\tell_1| \aeq 2^k} \|\Delta_{\ell_1}P_{j,\tell_1}\tilde{f}\|_{L^2_e L^\infty_{e^\perp,t}} \| \Delta_{\ell_2} P_{j,\tell_2}g \|_{L^2_e L^\infty_{e^\perp,t}}\\
 \aleq& \brac{\sum_{\tell \in 2^j \Z^n}\|\Delta_{\ell_1}P_{j,\tell}\tilde{f}\|_{L^2_e L^\infty_{e^\perp,t}}^2}^{\frac{1}{2}} \brac{\sum_{\tell \in 2^j \Z^n}\| \Delta_{\ell_2} P_{j,\tell}\tilde{g} \|_{L^2_e L^\infty_{e^\perp,t}}^2 }^{\frac{1}{2}}
 \end{split}
\]
In view of the localized maximal estimate \Cref{la:locmaxest}, \eqref{eq:BIK:3.1:f} (which also applicable to $\bar{f}$ and $\bar{g}$) since $j \aleq \ell_1$ for $\sigma = \frac{n-2}{2}$,
\[
 \brac{\sum_{\tell \in 2^j \Z}\|\Delta_{\ell_1}P_{j,\tell}\tilde{f}\|_{L^2_e L^\infty_{e^\perp,t}}^2}^{\frac{1}{2}} \aleq 2^{\ell_1\frac{(n-1)}{2}}\, 2^{-(\ell_1-j)\sigma
 }\, (1+|\ell_1-j|)\|f\|_{Z_{\ell_1}}.
\]
and since $2^{\ell_1} \aeq 2^{\ell_2}$ we also have
\[
\brac{\sum_{\tell \in 2^j \Z^n}\| \Delta_{\ell_2} P_{j,\tell}\tilde{g} \|_{L^2_e L^\infty_{e^\perp,t}}^2 }^{\frac{1}{2}}\aleq 2^{\ell_1\frac{(n-1)}{2}}\, 2^{-(\ell_1-j)\sigma
 }\, (1+|\ell_1-j|)\|g\|_{Z_{\ell_2}}.
\]
Thus, whenever $2\sigma -2s+1  > 0$ (which for our $\sigma$ works if $n \geq 4$)
\[
\begin{split}
 &\sum_{j \in \Z} 2^{-j(2s-1)} \sum_{2^{\ell_1} \aeq 2^{\ell_2} \ageq 2^j} \| \Delta_{j}\brac{\Delta_{\ell_2}\tilde{f}\, \Delta_{\ell_2} \tilde{g}}\|_{L^1_e L^{\infty}_{e^\perp,t}}\\
\aleq&  \sum_{j \in \Z} 2^{-j(2s-1)} \sum_{2^{\ell_1} \aeq 2^{\ell_2} \ageq 2^j} 2^{\ell_1(n-1)}\, 2^{-(\ell_1-j)2\sigma}
 (1+|\ell_1-j|)^2\,  \|f\|_{Z_{\ell_1}} \, \|g\|_{Z_{\ell_2}}\\
 \aeq&   \sum_{2^{\ell_1} \aeq 2^{\ell_2}} 2^{\ell_1(n-1)}\,  \|f\|_{Z_{\ell_1}} \, \|g\|_{Z_{\ell_2}} \sum_{j: 2^j \aleq 2^{\ell_1}} 2^{-j(2s-1)} 2^{-(\ell_1-j)2\sigma}
 (1+|\ell_1-j|)^2\\
 \aeq&   \sum_{\ell_1, \ell_2: 2^{\ell_1} \aeq 2^{\ell_2}} 2^{\ell_1(n-2s)} \|f\|_{Z_{\ell_1}} \, \|g\|_{Z_{\ell_2}}  \sum_{j: 2^j \aleq 2^{\ell_1}}  2^{-(\ell_1-j)(2\sigma-2s+1)}
 (1+|\ell_1-j|)^2  \\
 \overset{2\sigma -2s+1  > 0}{\aeq}&\sum_{2^{\ell_1} \aeq 2^{\ell_2}} 2^{\ell_1(n-2s)} \|f\|_{Z_{\ell_1}} \, \|g\|_{Z_{\ell_2}}  \\
 \aleq&\|f\|_{F^{\frac{n-2s}{2}}}\, \|g\|_{F^{\frac{n-2s}{2}}}
 \end{split}
\]
We can conclude.
\end{proof}

\begin{lemma}\label{la:FL2estv2}
Let $n \geq 3$ and $k \in \Z$. 
\[
\begin{split}
\|\Delta_{k} (\tilde{f} \tilde{g})\|_{L^2}  \aleq& \|f\|_{F^{\frac{n-2s}{2}}}\, \sum_{\ell: 2^\ell \aeq2^k}  \|\Delta_{\ell} g\|_{Z_\ell}\\
&+\sum_{\ell: 2^\ell \aeq2^k}  \|\Delta_{\ell} f\|_{Z_\ell} \|g\|_{F^{\frac{n-2s}{2}}}\\
&+\sum_{j_1,j_2: 2^{j_1} \aeq 2^{j_2} \ageq 2^k} 2^{j_1 \frac{n-2s}{2}} \|\Delta_{j_1} f\|_{Z_{j_1}}  \|\Delta_{j_2} g\|_{Z_{j_2}}
\end{split}
\]
where $\tilde{f} \in \{f,\bar{f}\}$, $\tilde{g} \in \{g,\bar{g}\}$.
\end{lemma}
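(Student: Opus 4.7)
The plan is to expand both factors into Littlewood–Paley pieces, $\tilde f = \sum_{\ell_1}\Delta_{\ell_1}\tilde f$ and $\tilde g = \sum_{\ell_2}\Delta_{\ell_2}\tilde g$, apply $\Delta_k$, and keep only the three paraproduct regimes that survive the frequency support of $\Delta_k$: high--low $(\ell_1 \aeq k,\ \ell_2 \aleq \ell_1)$, low--high $(\ell_2 \aeq k,\ \ell_1 \aleq \ell_2)$, and high--high $(\ell_1 \aeq \ell_2 \ageq k)$. These are exactly the three contributions appearing on the right-hand side of the claim.

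The basic workhorse is, for any $e \in \S^{n-1}$, the Hölder estimate
\[
 \|AB\|_{L^2_{t,x}} \le \|A\|_{L^2_e L^\infty_{e^\perp,t}}\,\|B\|_{L^\infty_e L^2_{e^\perp,t}},
\]
combined with the maximal estimate \eqref{eq:la:globmax} applied to the factor placed in $L^2_e L^\infty_{e^\perp,t}$ (losing $2^{\cdot(n-1)/2}$) and with the local smoothing estimate \Cref{la:locsmooth} applied to the factor placed in $L^\infty_e L^2_{e^\perp,t}$ (gaining $2^{-\cdot (2s-1)/2}$). Both estimates are insensitive to complex conjugation, so the $\tilde{\ }$ decorations are harmless. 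The smoothing estimate strictly speaking requires the frequency support to lie on a cone $\vartheta_{\xi\cdot e\ageq|\xi|}$; this is inserted by decomposing the relevant factor along the finite spherical partition $\{\vartheta_e\}_{e\in\mathscr{E}}$ of \Cref{la:geometricobserv}, which is compatible with $\Delta_\ell$ thanks to \Cref{multiplier}.

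For the high--low piece, applying smoothing to the high factor and maximal to the low factor gives, with $\ell_1 \aeq k$,
\[
 \|\Delta_{\ell_1}\tilde f\,\Delta_{\ell_2}\tilde g\|_{L^2} \aleq 2^{-\ell_1\frac{2s-1}{2}}\|\Delta_{\ell_1}f\|_{Z_{\ell_1}}\cdot 2^{\ell_2\frac{n-1}{2}}\|\Delta_{\ell_2}g\|_{Z_{\ell_2}}.
\]
Writing $2^{\ell_2(n-1)/2} = 2^{\ell_2(2s-1)/2}\cdot 2^{\ell_2(n-2s)/2}$ and using Cauchy--Schwarz in $\ell_2$ against the geometric series $\sum_{\ell_2\leq \ell_1} 2^{\ell_2(2s-1)} \aleq 2^{\ell_1(2s-1)}$ (convergent since $2s-1>0$), the prefactor $2^{-\ell_1(2s-1)/2}$ exactly cancels, leaving $\|\Delta_{\ell_1}f\|_{Z_{\ell_1}}\|g\|_{F^{(n-2s)/2}}$; summing over the $O(1)$ scales $\ell_1\aeq k$ produces the second term of the claim. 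The low--high case is completely symmetric and yields the first term. For the high--high regime $\ell_1\aeq\ell_2=j\ageq k$, the same Hölder/maximal/smoothing estimate applied to $\Delta_j\tilde f\,\Delta_j\tilde g$ gives
\[
 \|\Delta_k(\Delta_j\tilde f\,\Delta_j\tilde g)\|_{L^2} \aleq 2^{j\frac{n-1}{2}}\|\Delta_j f\|_{Z_j}\cdot 2^{-j\frac{2s-1}{2}}\|\Delta_j g\|_{Z_j} = 2^{j\frac{n-2s}{2}}\|\Delta_j f\|_{Z_j}\|\Delta_j g\|_{Z_j},
\]
matching the third term (no summation over $j$ is performed here, since the claim retains the sum on the right-hand side). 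The only delicate point is that the two pieces, a priori, live on different spherical cones; this is reconciled by choosing a single $e$ for each spherical cell of the partition and absorbing the resulting finite loss, which is the mildest step of the argument.
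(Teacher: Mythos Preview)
Your proposal is correct and follows the paper's proof essentially verbatim: paraproduct split into low--high, high--low, and high--high, then for each piece H\"older in the $(e,e^\perp\times t)$ splitting with the maximal estimate \eqref{eq:la:globmax} on one factor and the smoothing estimate \Cref{la:locsmooth} (after cone decomposition via \Cref{la:geometricobserv}) on the other, summing the low-frequency geometric series via Cauchy--Schwarz. Your final remark about ``two pieces on different spherical cones'' is a non-issue and can be dropped: only the factor receiving the smoothing estimate needs to be cone-localized, while the maximal estimate \eqref{eq:la:globmax} holds for \emph{any} direction $e$, so once $e$ is fixed by the cone cell of the smoothed factor, the other factor is estimated in $L^2_e L^\infty_{e^\perp,t}$ with no further restriction.
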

\begin{proof}
The usual paraproduct frequency decomposition gives estimate  
\[
\begin{split}
 &\|\Delta_{k} (\tilde{f} \tilde{g})\|_{L^2} \\
 \aleq& \sum_{\ell: 2^\ell \aeq 2^k}\|\Delta_{k} (\Delta_{\aleq 2^k}\tilde{f} \Delta_{\ell} g)\|_{L^2} + \sum_{\ell: 2^\ell \aeq 2^k}\|\Delta_{k} (\Delta_{\ell}\tilde{f} \Delta_{\aleq 2^k} \tilde{g})\|_{L^2} \\
 &+ \sum_{j_1,j_2: 2^{j_1} \aeq 2^{j_2} \ageq 2^k } \|\Delta_{j_1}\tilde{f} \Delta_{j_2} \tilde{g}\|_{L^2} \\
 \end{split}
\]
For the first term we estimate we use \Cref{la:geometricobserv}
\[
\begin{split}
 & \|\Delta_{k} (\Delta_{\aleq 2^k}\tilde{f} \Delta_{\ell} \tilde{g})\|_{L^2} \\
 \aleq& \max_{e \in \mathscr{E}} \|\Delta_{k} (\Delta_{\aleq 2^k}\tilde{f} \mathcal{F}^{-1} \brac{\vartheta_{\langle \xi,e\rangle \geq \bc{} |\xi|} \widehat{\Delta_{\ell}\tilde{g}}} )\|_{L^2} \\
 \end{split}
\]
For any fixed $e \in \mathscr{E}$ we estimate 
\[
 \begin{split}
&  \|\Delta_{k} (\Delta_{\aleq 2^k}\tilde{f} \mathcal{F}^{-1} \brac{\vartheta_{\langle \xi,e\rangle \geq \bc{} |\xi|} \widehat{\Delta_{\ell}\tilde{g}}} )\|_{L^2}\\
\aleq&\|\Delta_{\aleq 2^k}\tilde{f}\|_{L^2_e L^\infty_{e^\perp,t}} \| \mathcal{F}^{-1} \brac{\vartheta_{\langle \xi,e\rangle \geq \bc{} |\xi|} \widehat{\Delta_{\ell}\tilde{g}}} )\|_{L^{\infty}_e L^2_{e^\perp,t}}\\
\overset{\text{\eqref{eq:la:globmax}, \Cref{la:locsmooth}}}{\aleq}& \sum_{i: 2^i \aleq 2^k} 2^{i\frac{n-1}{2}} \|\Delta_{i} f\|_{Z_i}  2^{-\ell \frac{2s-1}{2}} \|\Delta_{\ell} g\|_{Z_\ell}.
 \end{split}
\]

Thus,
\[
 \begin{split}
&  \sum_{\ell: 2^\ell \aeq2^k} \|\Delta_{k} (\Delta_{\aleq 2^k}f \mathcal{F}^{-1} \brac{\vartheta_{\langle \xi,e\rangle \geq \bc{} |\xi|} \widehat{\Delta_{\ell}g}} )\|_{L^2}\\
\aleq& \sum_{\ell: 2^\ell \aeq2^k}\sum_{i: 2^i \aleq 2^\ell} 2^{-(\ell-i)\frac{2s-1}{2}} 
2^{i\frac{n-2s}{2}}  \|\Delta_{i} f\|_{Z_i}  \|\Delta_{\ell} g\|_{Z_\ell}\\
\aleq&\|f\|_{F^{\frac{n-2s}{2}}}\, \sum_{\ell: 2^\ell \aeq2^k}  \|\Delta_{\ell} g\|_{Z_\ell}.
 \end{split}
\]
Similarly we estimate 
\[
 \sum_{\ell: 2^\ell \aeq 2^k}\|\Delta_{k} (\Delta_{\ell}\tilde{f} \Delta_{\aleq 2^k} \tilde{g})\|_{L^2}
 \aleq \|g\|_{F^{\frac{n-2s}{2}}}\, \sum_{\ell: 2^\ell \aeq2^k}  \|\Delta_{\ell} f\|_{Z_\ell}.
\]
For the last term to estimate we observe again with  \Cref{la:geometricobserv}
\[
\begin{split}
 &\|\Delta_{j_1}\tilde{f} \Delta_{j_2} \tilde{g}\|_{L^2} \\
 \aleq&\max_{e \in \mathscr{E}} \|
  \Delta_{j_1}\tilde{f} \, \mathcal{F}^{-1} \brac{\vartheta_{\langle \xi,e\rangle \geq \bc{} |\xi|} \widehat{\Delta_{j_2}\tilde{g}}}\|_{L^2} \\
  \aleq&\max_{e \in \mathscr{E}} \|
  \Delta_{j_1}\tilde{f}\|_{L^{2}_e L^\infty_{e^\perp,t}} \|\mathcal{F}^{-1} \brac{\vartheta_{\langle \xi,e\rangle \geq \bc{} |\xi|} \widehat{\Delta_{j_2}\tilde{g}}}\|_{L^\infty_e L^2_{e^\perp,t}} \\
  \overset{\text{\eqref{eq:la:globmax}, \Cref{la:locsmooth}}}{\aleq}& 2^{j_1 \frac{n-1}{2}} \|\Delta_{j_1} f\|_{Z_{j_1}} 2^{-j_2\frac{2s-1}{2}} \|\Delta_{j_2} g\|_{Z_{j_2}}\\
 \end{split}
\]
Thus,
\[
\begin{split} 
&\sum_{j_1,j_2: 2^{j_1} \aeq 2^{j_2} \ageq 2^k} \|\Delta_{j_1}\tilde{f}\, \Delta_{j_2} \tilde{g}\|_{L^2}\\
\aleq& \sum_{j_1,j_2: 2^{j_1} \aeq 2^{j_2} \ageq 2^k} 2^{j_1 \frac{n-2s}{2}} \|\Delta_{j_1} f\|_{Z_{j_1}}  \|\Delta_{j_2} g\|_{Z_{j_2}}
\end{split}
\]
We can conclude.
\end{proof}

\begin{proof}[Proof of \Cref{th:trilinear}]
Fix $k \in \Z$. Setting $F = \Ds{-\beta} (\tilde{f}_1 \tilde{f}_2)$, we estimate
\begin{equation}\label{eq:tril:split}
\begin{split}
 &\|\brac{\i \partial_t+\Dels{s} + \i}^{-1} \Delta_k \brac{F\, \Ds{\beta} \tilde{f}_3}\|_{Z_k}\\
 \aleq&\|\brac{\i \partial_t+\Dels{s} + \i}^{-1} \Delta_k \brac{\Delta_{\aleq 2^k}F\, \Ds{\beta} \Delta_{\aeq k}\tilde{f}_3}\|_{Z_k}\\
 &+\sum_{j: 2^j\ageq 2^k}\|\brac{\i \partial_t+\Dels{s} + \i}^{-1} \Delta_k \brac{\Delta_{j}F\, \Ds{\beta} \Delta_{\aleq j}\tilde{f}_3}\|_{Z_k}\\
 \end{split}
 \end{equation}

We are going to prove 
\begin{equation}\label{eq:trilineargoal1}
 \|\brac{\i \partial_t+\Dels{s} + \i}^{-1} \Delta_k \brac{\Delta_{\aleq 2^k}F\, \Ds{\beta} \Delta_{\aeq k}\tilde{f}_3}\|_{Z_k} \aleq \|f_1\|_{F^{\frac{n-2s}{2}}}\, \|f_2\|_{F^{\frac{n-2s}{2}}}   \sum_{\ell: 2^\ell \aeq 2^k}  \|\Delta_{\ell}f_3\|_{Z_\ell}
\end{equation}
and 
\begin{equation}\label{eq:trilineargoal2}
\begin{split}
&\sum_{j: 2^j\ageq 2^k}\|\brac{\i \partial_t+\Dels{s} + \i}^{-1} \Delta_k \brac{\Delta_{j}F\, \Ds{\beta} \Delta_{\aleq j}\tilde{f}_3}\|_{Z_k}\\
\aleq& \|{f}_1\|_{F^{\frac{n-2s}{2}}} \sum_{\ell: 2^\ell\ageq 2^k}  2^{(\ell-k)\frac{2s-1}{2}} \|\Delta_{\ell} {f}_2\|_{Z_\ell}   \|{f}_3\|_{F^{\frac{n-2s}{2}}}\\
&+ \sum_{\ell: 2^\ell\ageq 2^k} 2^{(\ell-k)\frac{2s-1}{2}} \|\Delta_{\ell} {f}_1\|_{Z_\ell}\, \|{f}_2\|_{F^{\frac{n-2s}{2}}} \|{f}_3\|_{F^{\frac{n-2s}{2}}}\\
\end{split}
\end{equation}
Together, \eqref{eq:tril:split}, \eqref{eq:trilineargoal1}, and \eqref{eq:trilineargoal2} imply the claim. Indeed they give for any $\sigma > \frac{2s-1}{2}$

\[
\begin{split}
 &\sum_{k \in \Z}2^{k \sigma}\|\brac{\i \partial_t+\Dels{s} + \i}^{-1} \Delta_k \brac{F\, \Ds{\beta} \tilde{f}_3}\|_{Z_k}\\
 \aleq& \|{f}_1\|_{F^{\frac{n-2s}{2}}}\, \|{f}_2\|_{F^{\frac{n-2s}{2}}}   \sum_{k \in \Z}2^{k \sigma} \|\Delta_{k}{f}_3\|_{Z_k}\\
 &+\|{f}_1\|_{F^{\frac{n-2s}{2}}} \sum_{\ell} 2^{\ell\frac{2s-1}{2}} \|\Delta_{\ell} {f}_2\|_{Z_\ell} \sum_{k : 2^\ell\ageq 2^k} 2^{k \brac{\sigma-\frac{2s-1}{2}}}  \|{f}_3\|_{F^{\frac{n-2s}{2}}}\\
&+ \sum_{\ell \in \Z} 2^{\ell\frac{2s-1}{2}} \|\Delta_{\ell} {f}_1\|_{Z_\ell} \sum_{k : 2^\ell\ageq 2^k}   2^{k \brac{\sigma-\frac{2s-1}{2}}}  \|{f}_2\|_{F^{\frac{n-2s}{2}}} \|{f}_3\|_{F^{\frac{n-2s}{2}}}\\
\overset{\sigma > \frac{2s-1}{2}}{\aleq}& \|{f}_1\|_{F^{\frac{n-2s}{2}}}\, \|{f}_2\|_{F^{\frac{n-2s}{2}}}   \sum_{k \in \Z}2^{k \sigma} \|\Delta_{k}{f}_3\|_{Z_k}\\
 &+\|{f}_1\|_{F^{\frac{n-2s}{2}}} \sum_{\ell} 2^{\ell\sigma} \|\Delta_{\ell} {f}_2\|_{Z_\ell}  \|{f}_3\|_{F^{\frac{n-2s}{2}}}\\
&+ \sum_{\ell \in \Z} 2^{\ell \sigma} \|\Delta_{\ell} {f}_1\|_{Z_\ell}  \|{f}_2\|_{F^{\frac{n-2s}{2}}} \|{f}_3\|_{F^{\frac{n-2s}{2}}}\\
\aleq & \|{f}_1\|_{F^{\frac{n-2s}{2}}}  \|{f}_2\|_{F^{\frac{n-2s}{2}}}  \|{f}_3\|_{F^{\sigma}} \\
&+ \|{f}_1\|_{F^{\frac{n-2s}{2}}}  \|{f}_2\|_{F^{\sigma}}  \|{f}_3\|_{F^{\frac{n-2s}{2}}}  \\
&+ \|{f}_1\|_{F^{\sigma}} \|{f}_2\|_{F^{\frac{n-2s}{2}}}  \|{f}_3\|_{F^{\frac{n-2s}{2}}}   
  \end{split}
\]
We can conclude taking $\sigma \geq \frac{n-2s}{2}$ (and observe that $\sigma \geq \frac{n-2s}{2} > \frac{2s-1}{2}$ if and only if $n > 4s-1$, so in particular if $n \geq 4$).

It remains to prove \eqref{eq:trilineargoal1} and \eqref{eq:trilineargoal2}.

\underline{First we treat \eqref{eq:trilineargoal1}}: we use the geometric observation \Cref{la:geometricobserv}
\[
\begin{split}
 &\|\brac{\i \partial_t+\Dels{s} + \i}^{-1} \Delta_k \brac{\Delta_{\aleq 2^k}F\, \Ds{\beta} \Delta_{\aeq k}\tilde{f}_3}\|_{Z_k}\\
 \aleq&\sum_{\ell: 2^\ell \aeq 2^k}\max_{e \in \mathscr{E}} \|\brac{\i \partial_t+\Dels{s} + \i}^{-1} \Delta_k \brac{\Delta_{\aleq k}F\, \Ds{\beta} \mathcal{F}^{-1} \brac{\vartheta_{\langle \xi,e\rangle \geq \bc{} |\xi|}\widehat{\Delta_{\ell}\tilde{f}_3}}}\|_{Z_k}\\
 \aleq&\sum_{\ell: 2^\ell \aeq 2^k}\max_{e \in \mathscr{E}} \|\brac{\i \partial_t+\Dels{s} + \i}^{-1} \Delta_k \brac{\Delta_{\aleq k}F\, \Ds{\beta} \mathcal{F}^{-1} \brac{\vartheta_{\langle \xi,e\rangle \geq \bc{} |\xi|}\widehat{\Delta_{\ell}\tilde{f}_3}}}\|_{Y_k^e}\\
 =&\sum_{\ell: 2^\ell \aeq 2^k}\max_{e \in \mathscr{E}} 2^{-k\frac{2s-1}{2}}\|\Delta_k \brac{\Delta_{\aleq k}F\, \Ds{\beta} \mathcal{F}^{-1} \brac{\vartheta_{\langle \xi,e\rangle \geq \bc{} |\xi|}\widehat{\Delta_{\ell}\tilde{f}_3}}}\|_{L^1_e L^2_{e^\perp,t}}\\
 \aleq&\sum_{\ell: 2^\ell \aeq 2^k}\max_{e \in \mathscr{E}} 2^{-k\frac{2s-1}{2}}\|\Delta_{\aleq k}F \|_{L^1_e L^\infty_{e^\perp,t}} \|\Ds{\beta} \mathcal{F}^{-1} \brac{\vartheta_{\langle \xi,e\rangle \geq \bc{} |\xi|}\widehat{\Delta_{\ell}\tilde{f}_3}}\|_{L^\infty_e L^2_{e^\perp,t}}\\
 \overset{\text{\Cref{la:dsbetaest}}}{\aleq}&\sum_{\ell: 2^\ell \aeq 2^k}\max_{e \in \mathscr{E}} 2^{-k\frac{2s-1}{2}}\|\Delta_{\aleq k}F \|_{L^1_e L^\infty_{e^\perp,t}} 2^{\beta \ell} \|\mathcal{F}^{-1} \brac{\vartheta_{\langle \xi,e\rangle \geq \bc{} |\xi|}\widehat{\Delta_{\ell}\tilde{f}_3}}\|_{L^\infty_e L^2_{e^\perp,t}}\\
 \end{split}
\]
With local smoothening \Cref{la:locsmooth},then using that $F = \Ds{-\beta}(\tilde{f}_1\tilde{f}_2)$ and \Cref{la:BIK5.4}
\[
\begin{split}
 &\|\brac{\i \partial_t+\Dels{s} + \i}^{-1} \Delta_k \brac{\Delta_{\aleq k}F\, \Ds{\beta} \Delta_{\aeq k}\tilde{f}_3}\|_{Z_k}\\
\aleq&\sum_{\ell: 2^\ell \aeq 2^k}  2^{-k\frac{2s-1}{2}}  2^{k(2s-1-\beta)} \|{f}_1\|_{F^{\frac{n-2s}{2}}}\, \|{f}_2\|_{F^{\frac{n-2s}{2}}}  2^{\beta \ell} 2^{-\ell\frac{2s-1}{2}} \|{f}_3\|_{Z_\ell}\\
\aeq&\|{f}_1\|_{F^{\frac{n-2s}{2}}}\, \|{f}_2\|_{F^{\frac{n-2s}{2}}}   \sum_{\ell: 2^\ell \aeq 2^k}  \|{f}_3\|_{Z_\ell}\\
 \end{split}
\]
This proves \eqref{eq:trilineargoal1}.

We still need to \underline{prove \eqref{eq:trilineargoal2}}.

Fix any $j$ with and $2^j \ageq 2^k$ and any $e \in \mathscr{E}$ 
\[
\begin{split}
&\|\brac{\i \partial_t+\Dels{s} + \i}^{-1} \Delta_k \brac{\Delta_{j}F\, \Ds{\beta} \Delta_{\aleq j}\tilde{f}_3}\|_{Z_k}\\
\leq&\|\brac{\i \partial_t+\Dels{s} + \i}^{-1} \Delta_k \brac{\Delta_{j}F\, \Ds{\beta} \Delta_{\aleq j}\tilde{f}_3}\|_{Y_k^e}\\
 \aleq&2^{-k\frac{2s-1}{2}}\|\Delta_{j}F\, \Ds{\beta} \Delta_{\aleq j}\tilde{f}_3\|_{L^1_eL^2_{e^\perp, t}}\\
\aleq&2^{-k\frac{2s-1}{2}}\|\Delta_{j}F\|_{L^2} \|\Ds{\beta} \Delta_{\aleq j}\tilde{f}_3\|_{L^2_eL^\infty_{e^\perp, t}}\\ 
\end{split}
\]
With the maximal estimate, \eqref{eq:la:globmax},
\[
\begin{split}
\|\Ds{\beta} \Delta_{\aleq j}\tilde{f}_3\|_{L^2_eL^\infty_{e^\perp, t}}
\overset{\text{\Cref{la:dsbetaest}}}{\aleq}&\sum_{\ell: 2^\ell \aleq 2^j} 2^{j\beta} \| \Delta_{\ell}\tilde{f}_3\|_{L^2_eL^\infty_{e^\perp, t}}\\
\overset{\text{\eqref{eq:la:globmax}}}{\aleq}&\sum_{\ell: 2^\ell \aleq 2^j} 2^{\ell\beta} 2^{\ell\frac{n-1}{2}}\| \Delta_{\ell}{f}_3\|_{Z_\ell}\\
\overset{\beta \geq -\frac{2s-1}{2}}{\aleq}&2^{j(\beta + \frac{2s-1}{2})}\sum_{\ell: 2^\ell \aleq 2^j}  2^{\ell\frac{n-2s}{2}}\| \Delta_{\ell}{f}_3\|_{Z_\ell}\\
=&2^{j(\beta + \frac{2s-1}{2})} \|{f}_3\|_{F^{\frac{n-2s}{2}}}.
\end{split}
\]
Since $F = \Ds{-\beta} (\tilde{f}_1 \tilde{f}_2)$ we have 
\[
\begin{split}
 \|\Delta_{j}F\|_{L^2} \aleq& 2^{-j\beta} \sum_{{j}: 2^j \aeq 2^{{j}}}\|\Delta_{{j}} \brac{{f}_1 {f}_2}\|_{L^2}\\
  \overset{\text{\Cref{la:FL2estv2} }}{\aleq}&   2^{-j\beta} \|{f}_1\|_{F^{\frac{n-2s}{2}}}\, \sum_{\ell: 2^\ell \aeq2^j}  \|\Delta_{\ell} {f}_2\|_{Z_\ell}\\
&+2^{-j\beta} \sum_{\ell: 2^\ell \aeq2^j}  \|\Delta_{\ell} {f}_1\|_{Z_\ell} \|{f}_2\|_{F^{\frac{n-2s}{2}}}\\
&+2^{-j\beta} \sum_{j_1,j_2: 2^{j_1} \aeq 2^{j_2} \ageq 2^j} 2^{j_1 \frac{n-2s}{2}} \|\Delta_{j_1} {f}_1\|_{Z_{j_1}}  \|\Delta_{j_2} {f}_2\|_{Z_{j_2}}
\\
 \end{split}
\]
Thus,
\[
\begin{split}
&\|\brac{\i \partial_t+\Dels{s} + \i}^{-1} \Delta_k \brac{\Delta_{j}F\, \Ds{\beta} \Delta_{\aleq j}\tilde{f}_3}\|_{Z_k}\\
\aleq&
\|{f}_1\|_{F^{\frac{n-2s}{2}}}\, 2^{j\frac{2s-1}{2}} \sum_{\ell: 2^\ell \aeq2^j}  \|\Delta_{\ell} {f}_2\|_{Z_\ell} \|{f}_3\|_{F^{\frac{n-2s}{2}}}\\
&+ 2^{j( \frac{2s-1}{2})}\sum_{\ell: 2^\ell \aeq2^j}   \|\Delta_{\ell} {f}_1\|_{Z_\ell}\, \|{f}_2\|_{F^{\frac{n-2s}{2}}} \|{f}_3\|_{F^{\frac{n-2s}{2}}}\\
&+ 2^{j(\frac{2s-1}{2})} \sum_{j_1,j_2: 2^{j_1} \aeq 2^{j_2} \ageq 2^j} 2^{j_1 \frac{n-2s}{2}} \|\Delta_{j_1} {f}_1\|_{Z_{j_1}}  \|\Delta_{j_2} {f}_2\|_{Z_{j_2}} \|{f}_3\|_{F^{\frac{n-2s}{2}}}
\\ 
\end{split}
\]
And this implies 
\[
\begin{split}
&\sum_{j: 2^j\ageq 2^k}\|\brac{\i \partial_t+\Dels{s} + \i}^{-1} \Delta_k \brac{\Delta_{j}F\, \Ds{\beta} \Delta_{\aleq j}\tilde{f}_3}\|_{Z_k}\\
\aleq&
\sum_{j: 2^j\ageq 2^k}2^{(j-k)\frac{2s-1}{2}} \|{f}_1\|_{F^{\frac{n-2s}{2}}}\, \sum_{\ell: 2^\ell \aeq2^j}   \|\Delta_{\ell} {f}_2\|_{Z_\ell} \|{f}_3\|_{F^{\frac{n-2s}{2}}}\\
&+ \sum_{j: 2^j\ageq 2^k} 2^{(j-k)\frac{2s-1}{2}}\sum_{\ell: 2^\ell \aeq2^j}   \|\Delta_{\ell} {f}_1\|_{Z_\ell}\, \|{f}_2\|_{F^{\frac{n-2s}{2}}} \|{f}_3\|_{F^{\frac{n-2s}{2}}}\\
&+ \sum_{j: 2^j\ageq 2^k}\sum_{j_1,j_2: 2^{j_1} \aeq 2^{j_2} \ageq 2^j} 2^{(j-k)\frac{2s-1}{2}}2^{j_1 \frac{n-2s}{2}} \|\Delta_{j_1} {f}_1\|_{Z_{j_1}}  \|\Delta_{j_2} {f}_2\|_{Z_{j_2}}  \|{f}_3\|_{F^{\frac{n-2s}{2}}}
\\ 
=&\|{f}_1\|_{F^{\frac{n-2s}{2}}} \sum_{\ell: 2^\ell\ageq 2^k}  2^{(\ell-k)\frac{2s-1}{2}} \|\Delta_{\ell} {f}_2\|_{Z_\ell}   \|{f}_3\|_{F^{\frac{n-2s}{2}}}\\
&+ \sum_{\ell: 2^\ell\ageq 2^k} 2^{(\ell-k)\frac{2s-1}{2}} \|\Delta_{\ell} {f}_1\|_{Z_\ell}\, \|{f}_2\|_{F^{\frac{n-2s}{2}}} \|{f}_3\|_{F^{\frac{n-2s}{2}}}\\
&+ \sum_{j_1: 2^{j_1} \ageq 2^k} \sum_{j_2: 2^{j_1} \aeq 2^{j_2}} \sum_{j: 2^j\aleq 2^{j_1}} 2^{(j-k)\frac{2s-1}{2}}2^{j_1 \frac{n-2s}{2}} \|\Delta_{j_1} {f}_1\|_{Z_{j_1}}  \|\Delta_{j_2} {f}_2\|_{Z_{j_2}}  \|{f}_3\|_{F^{\frac{n-2s}{2}}}
\\ 
\overset{s >1/2}{\aeq}&\|{f}_1\|_{F^{\frac{n-2s}{2}}} \sum_{\ell: 2^\ell\ageq 2^k}  2^{(\ell-k)\frac{2s-1}{2}} \|\Delta_{\ell} {f}_2\|_{Z_\ell}   \|{f}_3\|_{F^{\frac{n-2s}{2}}}\\
&+ \sum_{\ell: 2^\ell\ageq 2^k} 2^{(\ell-k)\frac{2s-1}{2}} \|\Delta_{\ell} {f}_1\|_{Z_\ell}\, \|{f}_2\|_{F^{\frac{n-2s}{2}}} \|{f}_3\|_{F^{\frac{n-2s}{2}}}\\
&+ \sum_{j_1: 2^{j_1} \ageq 2^k} \sum_{j_2: 2^{j_1} \aeq 2^{j_2}}  2^{(j_1-k)\frac{2s-1}{2}}2^{j_1 \frac{n-2s}{2}} \|\Delta_{j_1} {f}_1\|_{Z_{j_1}}  \|\Delta_{j_2} {f}_2\|_{Z_{j_2}}  \|{f}_3\|_{F^{\frac{n-2s}{2}}}\\
\aleq&\|{f}_1\|_{F^{\frac{n-2s}{2}}} \sum_{\ell: 2^\ell\ageq 2^k}  2^{(\ell-k)\frac{2s-1}{2}} \|\Delta_{\ell} {f}_2\|_{Z_\ell}   \|{f}_3\|_{F^{\frac{n-2s}{2}}}\\
&+ \sum_{\ell: 2^\ell\ageq 2^k} 2^{(\ell-k)\frac{2s-1}{2}} \|\Delta_{\ell} {f}_1\|_{Z_\ell}\, \|{f}_2\|_{F^{\frac{n-2s}{2}}} \|{f}_3\|_{F^{\frac{n-2s}{2}}}\\
&+ \sum_{j_1: 2^{j_1} \ageq 2^k}  2^{(j_1-k)\frac{2s-1}{2}} \|\Delta_{j_1} {f}_1\|_{Z_{j_1}}  \|{f}_2\|_{F^{\frac{n-2s}{2}}}  \|{f}_3\|_{F^{\frac{n-2s}{2}}}\\
\end{split}
\]
This implies \eqref{eq:trilineargoal2}.

\end{proof}

\section{Short-time existence}\label{s:existence}
In this section we combine the estimates of the previous sections to prove the following

\begin{theorem}\label{th:mainshort}
Let $s \in (\frac{1}{2},1]$ and let $n \geq 4$. For any $N \in \N$ and $(\beta_i)_{i=1}^N \in [-\frac{2s-1}{2},2s-1]$ and sufficiently small $\eps > 0$ such that $u_0 \in \dot{H}^{\frac{n-2s}{2}}(\R^n)$ and
\[
\lVert u_0\rVert_{\dot{H}^{\frac{n-2s}{2}}(\R^n)} \leq \eps,
\]
there exists $u \in L^\infty\brac{(-1,1),\dot{H}^{\frac{n-2s}{2}}} \cap F^{\frac{n-2s}{2}}$ such that
\[
\begin{cases} \brac{ \i \partial_t + \Dels{s} } u = \sum_{i=1}^N \brac{\Ds{-\beta_i} u_{i,1} u_{i,2}}\ \Ds{\beta_i} u_{i,3} \quad &\text{in }\R^d \times (-1,1)\\
u(0) = u_0
\end{cases}.
\]
where $u_{i,k} \in \{u,\bar{u}\}$ for $i \in \{1,\ldots,N\}$, $k \in \{1,2,3\}$.

Here we mean solution in the sense that 
\[
u(x,t) =e^{it\Dels{s}} u_0 - \i \int_0^t e^{\i (t-\tilde{t}) \Dels{s} } F(u)(x,\tilde{t})\, d\tilde{t} \quad t \in (-1,1), x \in \R^n
\]
where $F(u)$ is the nonlinearity on the right-hand side of the PDE.

Moreover 
\begin{itemize}\item (A priori estimates) $u$ satisfies the estimate
\begin{equation}\label{eq:mainpdeshortest}
\|u\|_{L^\infty_t \dot{H}^{\frac{n-2s}{2}}}  + \|u\|_{F^{\frac{n-2s}{2}}} \aleq C \|u_0\|_{\dot{H}^{\frac{n-2s}{2}}(\R^n)}
\end{equation}

\item (conditional uniqueness) The solution is unique in the sense that any two solutions in the above sense satisfying the estimate \eqref{eq:mainpdeshortest} must coincide.

\item (continuous dependence on data) If we consider $u$ the solution to the data $u_0$ and $v$ the solution to the data $v_0$, both satisfying the smallness condition and \eqref{eq:mainpdeshortest}, then 
\[
 \|u-v\|_{L^\infty_t \dot{H}^{\frac{n-2s}{2}}}  + \|u-v\|_{F^{\frac{n-2s}{2}}} \aleq C \|u_0-v_0\|_{\dot{H}^{\frac{n-2s}{2}}(\R^n)}
\]

\item If moreover $u_0 \in \dot{H}^{\sigma}$ for $\sigma > \frac{n-2s}{2}$ we also can assume (adjusting $\eps$) 
\begin{equation}\label{eq:mainpdeshortestsigma}
\|u\|_{L^\infty_t \dot{H}^{\sigma}}  + \|u\|_{F^{\sigma}} \aleq C \|u_0\|_{\dot{H}^{\sigma}(\R^n)}
\end{equation}

\item If moreover $u_0 \in \dot{H}^{\sigma}$ for $\sigma > \frac{n}{2}$ we have in addition
\[
 u \in C^0\brac{(-1,1),\dot{H}^{\frac{n-2s}{2}}}
\]

\end{itemize}

\end{theorem}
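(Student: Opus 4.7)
The plan is a standard contraction-mapping argument in the space $F^{\frac{n-2s}{2}}$, using the homogeneous estimate (\Cref{la:homogeneous}), the inhomogeneous estimate (\Cref{la:inhomogeneous}), and the trilinear estimate (\Cref{th:trilinear}), which together reduce everything to an iteration on a sufficiently small ball. Fix a cutoff $\psi \in C_c^\infty(\R,[0,1])$ with $\psi \equiv 1$ on $(-1,1)$ and define
\[
 \Phi(u)(x,t) := e^{\i t \Dels{s}} u_0 - \i\psi(t) \int_0^t e^{\i (t-\tilde t)\Dels{s}} F(u)(x,\tilde t)\,d\tilde t,
\]
where $F(u) = \sum_{i=1}^N \bigl(\Ds{-\beta_i} u_{i,1}u_{i,2}\bigr) \Ds{\beta_i} u_{i,3}$ with $u_{i,j} \in \{u,\bar u\}$. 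By \Cref{la:homogeneous}, \Cref{la:inhomogeneous} and \Cref{th:trilinear} with $\sigma = \frac{n-2s}{2}$,
\[
 \|\Phi(u)\|_{F^{\frac{n-2s}{2}}} \leq C_0\|u_0\|_{\dot H^{\frac{n-2s}{2}}} + C_1\|u\|_{F^{\frac{n-2s}{2}}}^3.
\]

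Next I would choose $M := 2C_0\eps$ and check that for $\eps$ small enough (so that $4C_0^2 C_1 \eps^2 \leq \tfrac12$), $\Phi$ maps the closed ball $B_M \subset F^{\frac{n-2s}{2}}$ to itself. For the contraction property, writing $F(u) - F(v)$ as a telescoping sum of trilinear terms in which exactly one factor is $u-v$ and the other two are taken from $\{u,v,\bar u,\bar v\}$, \Cref{th:trilinear} yields
\[
 \|F(u)-F(v)\|_{N^{\frac{n-2s}{2}}} \leq C_2\bigl(\|u\|_{F^{\frac{n-2s}{2}}}^2 + \|v\|_{F^{\frac{n-2s}{2}}}^2\bigr)\|u-v\|_{F^{\frac{n-2s}{2}}},
\]
which, combined with \Cref{la:inhomogeneous}, gives a strict contraction on $B_M$ once $\eps$ is small. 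The fixed point $u$ provides the desired solution on $(-1,1)$ (since $\psi \equiv 1$ there), and \eqref{eq:mainpdeshortest} follows from membership in $B_M$ combined with the embedding \eqref{eq:LinftyL2F}. Conditional uniqueness is a direct application of the same trilinear difference estimate on any two solutions satisfying the bound \eqref{eq:mainpdeshortest}. Continuous dependence follows in exactly the same manner: subtracting the Duhamel formulas for $u,v$ and applying \Cref{la:homogeneous} plus the difference estimate produces
\[
 \|u-v\|_{F^{\frac{n-2s}{2}}} \leq C\|u_0-v_0\|_{\dot H^{\frac{n-2s}{2}}} + \tfrac12 \|u-v\|_{F^{\frac{n-2s}{2}}},
\]
from which the claim follows by absorption.

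For the persistence of higher regularity I would reuse the same iteration, but now in the intersection $F^{\frac{n-2s}{2}} \cap F^\sigma$. Indeed the higher-order trilinear estimate from \Cref{th:trilinear} reads
\[
 \|F(u)\|_{N^\sigma} \leq C\|u\|_{F^{\frac{n-2s}{2}}}^2\|u\|_{F^\sigma},
\]
so the map $\Phi$ preserves the $F^\sigma$ bound as long as the $F^{\frac{n-2s}{2}}$ norm stays small; the fixed point constructed above therefore automatically satisfies \eqref{eq:mainpdeshortestsigma}, with the constant depending on $\sigma$ but not requiring a further smallness hypothesis beyond the one already imposed at the base level.

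The main obstacle is the final claim, namely $C^0$-continuity in time at the level $\dot H^{\frac{n-2s}{2}}$ when $u_0 \in \dot H^{\sigma}$ for $\sigma > \frac{n}{2}$. The idea is to use the Duhamel representation
\[
 u(t) = e^{\i t \Dels{s}} u_0 - \i \int_0^t e^{\i (t-\tilde t)\Dels{s}} F(u)(\tilde t)\,d\tilde t,\qquad t \in (-1,1),
\]
and to verify that each piece is continuous into $\dot H^{\frac{n-2s}{2}}$. The linear flow $t \mapsto e^{\i t \Dels{s}} u_0$ is strongly continuous in $\dot H^{\frac{n-2s}{2}}$ by Fourier inversion and dominated convergence. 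For the Duhamel integral, since $\sigma > \tfrac{n}{2}$ the pointwise product $u_{i,1}u_{i,2}$ can be estimated in $L^\infty_t L^2_x$ together with enough Sobolev regularity for $\Ds{\beta_i}u_{i,3}$ to make $F(u)$ belong to $L^1_{\loc,t} \dot H^{\frac{n-2s}{2}}_x$; the continuity of the Duhamel map $L^1_t \dot H^{\frac{n-2s}{2}} \to C^0_t \dot H^{\frac{n-2s}{2}}$ then yields the claim. I would present this by an approximation argument: mollify $u_0$ to get smooth $u_0^{(\eps)} \to u_0$ in $\dot H^\sigma$, which by the previous step produces smooth solutions $u^{(\eps)}$ converging to $u$ in $F^{\frac{n-2s}{2}}$ and hence in $L^\infty_t \dot H^{\frac{n-2s}{2}}$; since each $u^{(\eps)}$ is classically $C^0_t \dot H^{\frac{n-2s}{2}}$, the uniform limit $u$ inherits this continuity.
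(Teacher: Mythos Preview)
Your contraction argument, the a~priori bound, conditional uniqueness, continuous dependence, and the higher-regularity persistence are essentially identical to the paper's proof. One small correction: the $F^\sigma$ bound does require adjusting $\eps$ depending on $\sigma$, because the constant $C_\sigma$ in the trilinear estimate \Cref{th:trilinear} depends on $\sigma$; the absorption $C_\sigma\gamma^2<\tfrac12$ forces a $\sigma$-dependent choice of $\gamma$ (and the theorem statement says as much).

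For the $C^0_t$ continuity when $\sigma>\frac{n}{2}$, your approximation argument is circular: it assumes that ``smooth'' data yield $C^0_t\dot H^{\frac{n-2s}{2}}$ solutions, which is exactly the claim to be proved, and no independent smooth-data theory is established in the paper. Your first idea---place $F(u)$ into $L^1_{\loc,t}\dot H^{\frac{n-2s}{2}}$ and use continuity of the Duhamel map---is the correct route and is essentially what the paper does, but the paper is explicit where you are vague. The nonlinearity carries $\Ds{\beta_i}$ on one factor and $\Ds{-\beta_i}$ on a product, so a single Sobolev embedding $u\in L^\infty_x$ is not enough; the paper applies the fractional Leibniz rule to $\Ds{\frac{n-2s}{2}+\theta}\bigl((\Ds{-\beta}|u|^2)\Ds{\beta}u\bigr)$, distributes the $\alpha_i$ among the three factors, and then chooses Sobolev exponents $\sigma_i\in[\frac{n-2s}{2},\sigma]$ with $\sum\sigma_i=n+\frac{n-2s}{2}+\theta$ so that each factor embeds into the right $L^{p_i}$. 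This is precisely where $\sigma>\frac{n}{2}$ is used. With $F(u)\in L^\infty_t\dot H^{\frac{n-2s}{2}+\theta}$ in hand, the paper computes $|\hat u(\xi,t)-\hat u(\xi,t_2)|$ directly from the Duhamel formula and obtains H\"older continuity in $t$ (not merely continuity).
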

\begin{proof}
Below we assume for simplicity of representation that we are interested in 
\begin{equation}\label{eq:mainpdeshort}
\begin{cases} \brac{ \i \partial_t + \Dels{s} } u = \brac{\Ds{-\beta} |u|^2}\ \Ds{\beta} u \quad &\text{in }\R^d \times (-1,1)\\
u(0) = u_0
\end{cases}.
\end{equation}
The more general result follows almost verbatim.

Existence follows via a Banach fixed-point argument: 
Fix $\psi \in C_c^\infty(-2,2)$ with $\psi \equiv 1$ in $(-1,1)$ and define $F(u) = \brac{\Ds{-\beta} |u|^2}\ \Ds{\beta} u$.

Fix $u_0 \in \dot{H}^{\frac{n-2s}{2}}(\R^n)$, and define for some $\gamma > 0$ yet to be chosen 
\[
 X_\gamma := \left \{v \in F^{\frac{n-2s}{2}}: \quad \|v\|_{F^{\frac{n-2s}{2}}} \leq \gamma \right \}.
\]
which by definition is a closed and complete metric space when equipped with the $F^{\frac{n-2s}{2}}$-norm.

For $v \in X_\gamma$ we set 
\begin{equation}\label{eq:fixedpointeq}
Tv(x,t) :=e^{it\Dels{s}} u_0 -\psi(t) \i \int_0^t e^{\i (t-\tilde{t}) \Dels{s} } F(v)(x,\tilde{t})\, d\tilde{t}
\end{equation}
From \Cref{la:homogeneous} and \Cref{la:inhomogeneous}, combined with \Cref{th:trilinear} we have for some constant $C > 0$ (depending in particular on the choice of $\psi$)
\[
 \|Tv\|_{F^{\frac{n-2s}{2}}} \leq C \|u_0\|_{\dot{H}^{\frac{n-2s}{2}}} + C \|v\|_{F^{\frac{n-2s}{2}}}^3
\]
and, using also the trilinearity of $F$,
\[
 \|Tv_1-Tv_2\|_{F^{\frac{n-2s}{2}}} \leq C \brac{\|v_1\|_{F^{\frac{n-2s}{2}}}^2+\|v_2\|_{F^{\frac{n-2s}{2}}}^2}\, \|v_1-v_2\|_{F^{\frac{n-2s}{2}}}
\]
In particular if we choose $\eps$ and $\gamma$ suitably small so that 
\[
 2C \gamma^2 \leq \frac{1}{2}, \quad \text{and} \quad C \eps \leq \frac{1}{2}\gamma
\]
then for all $v \in X_\gamma$
\begin{equation}\label{eq:fp:Tvest}
\begin{split}
 \|Tv\|_{F^{\frac{n-2s}{2}}} \leq& C \|u_0\|_{\dot{H}^{\frac{n-2s}{2}}} + C \gamma^2 \|v\|_{F^{\frac{n-2s}{2}}}\\
 \leq& C \|u_0\|_{\dot{H}^{\frac{n-2s}{2}}} + \frac{1}{2} \|v\|_{F^{\frac{n-2s}{2}}}\\
 \leq& C \eps + \frac{1}{2} \gamma\\
 \leq&\gamma.
 \end{split}
\end{equation}
and 
\[
 \|Tv_1-Tv_2\|_{F^{\frac{n-2s}{2}}} \leq 2C \gamma^2 \|v_1-v_2\|_{F^{\frac{n-2s}{2}}} \leq \frac{1}{2} \|v_1-v_2\|_{F^{\frac{n-2s}{2}}},
\]
i.e. $T: X_\gamma \to X_\gamma$ is a contracting self-map and thus has a fixed point $u \in X_\gamma$,
\[
 Tu = u.
\]
Since $\psi \equiv 1$ in $(-1,1)$ we see that $u$ satisfies \eqref{eq:mainpdeshort} by Duhamel's principle.

From the second line of \eqref{eq:fp:Tvest} we find the estimate
\[
 \|u\|_{F^{\frac{n-2s}{2}}} \leq C \|u_0\|_{\dot{H}^{\frac{n-2s}{2}}} + \frac{1}{2} \|u\|_{F^{\frac{n-2s}{2}}}
\]
which readily implies
\[
 \|u\|_{F^{\frac{n-2s}{2}}} \leq 2C \|u_0\|_{\dot{H}^{\frac{n-2s}{2}}}.
\]
This combined with \Cref{la:IKDIE:3.33} implies \eqref{eq:mainpdeshortest}.

The conditional uniqueness is immediate from the fixed point argument.

As for continuous dependence on data: Fix $u_0,v_0 : \R^n \to \C$ such that $\|u_0\|_{\dot{H}^{\frac{n-2s}{2}}}$, $\|v_0\|_{\dot{H}^{\frac{n-2s}{2}}} < \eps$. Denote their respective solutions to the PDE $u$ and $v$ (i.e. $u$ and $v$ are fixed points of the operator in \eqref{eq:fixedpointeq}. That is, we have 
\[
u(x,t)-v(x,t) =e^{it\Dels{s}} (u_0 -v_0)(x)-\psi(t) \i \int_0^t e^{\i (t-\tilde{t}) \Dels{s} } \brac{F(u)-F(v)}(x,\tilde{t})\, d\tilde{t}
\]

Arguing as before, using the trilinearity of $F$ we find
\[
 \begin{split}
\lVert u - v \rVert_{F^{\frac{n-2s}{2}}}\le C \lVert u_0 - v_0 \rVert_{\dot{H}^{\frac{n-2s}{2}}(\R^n)}+ 2C\gamma^2\lVert u - v \rVert_{F^{\frac{n-2s}{2}}}. 
 \end{split}
\]
Since $2C\gamma^2 \leq \frac{1}{2}$ we readily conclude
\[
\lVert u - v \rVert_{F^{\frac{n-2s}{2}}} \aleq \lVert u_0 - v_0 \rVert_{\dot{H}^{\frac{n-2s}{2}}(\R^n)},
\]
which gives us continuous dependence on data as claimed.

Assume now that $u_0 \in \dot{H}^{\sigma}$ for $\sigma > \frac{n-2s}{2}$. In that case, we just apply \Cref{th:trilinear} to the fix point equation and find 
\[
\begin{split}
 \|u\|_{F^\sigma} \leq& C_\sigma \|u_0\|_{\dot{H}^\sigma} + C_\sigma \|u\|_{F^{\frac{n-2s}{2}}}^2 \|u\|_{F^{\sigma}}^2\\
 \aleq& C_\sigma \|u_0\|_{\dot{H}^\sigma} + C_\sigma \gamma^2 \|u\|_{F^{\sigma}}^2\\
 \end{split}
\]
Choosing $\gamma$ (and thus $\eps$) even smaller, and in view of \Cref{la:IKDIE:3.33}\eqref{eq:LinftyL2F}, we find 
\begin{equation}\label{eq:Fsigmaestex}
 \|u\|_{L^\infty_t \dot{H}^\sigma_x}  \aleq \|u\|_{F^\sigma} \aleq \|u_0\|_{\dot{H}^\sigma}.
\end{equation}
This establishes in \eqref{eq:mainpdeshortestsigma}.

Let us now show continuity, i.e. $u \in C^0\brac{(-1,1), \dot{H}^{\frac{n-2s}{2}}(\R^n)}$ under the assumption $\sigma > \frac{n}{2}$. In this case we claim that \eqref{eq:Fsigmaestex} implies for some small $\theta \in (0,s)$ 

\begin{equation}\label{eq:FuHdestimate}
 \|F(u)\|_{L^\infty_t H_x^{\frac{n-2s}{2}}(\R^n)} + \|F(u)\|_{L^\infty_t H_x^{\frac{n-2s}{2}+\theta}(\R^n)} \aleq \brac{\|u_0\|_{\dot{H}^{\frac{n-2s}{2}}} + \|u_0\|_{\dot{H}^\sigma}}^3.
\end{equation}
We prove \eqref{eq:FuHdestimate}: By the fractional Leibniz rule we have 
\[
\begin{split}
 &\|\Ds{\frac{n-2s}{2}+\theta} \Ds{\beta} |u|^2 \Ds{-\beta} u\|_{L^2(\R^n)}\\
 \aleq& \max_{\alpha_1+\alpha_2+\alpha_3 = \frac{n-2s}{2}+\theta, -\beta \leq \alpha_i \leq \frac{n-2s}{2} + \beta+\theta}\| \Ds{\alpha_1} u \|_{L^{p_1}(\R^n)} \|\Ds{\alpha_2} u\|_{L^{p_2}(\R^n)} \|\Ds{\alpha_3} u\|_{L^{p_3}(\R^n)}\\
 \aleq& \max_{\alpha_1+\alpha_2+\alpha_3 = \frac{n-2s}{2}+\theta, -\frac{2s-1}{2} \leq \alpha_i \leq \frac{n-1}{2}+\theta}\| \Ds{\alpha_1} u \|_{L^{p_1}(\R^n)} \|\Ds{\alpha_2} u\|_{L^{p_2}(\R^n)} \|\Ds{\alpha_3} u\|_{L^{p_3}(\R^n)}\\
 \end{split}
\]
where we can choose $p_i \in (1,\infty)$ depending on the $\alpha_i$ such that \[\sum_{i=1}^3 \frac{1}{p_i} = \frac{1}{2}.\]

We want to find for each $i$ some $\sigma_i \in [\frac{n-2s}{2},\sigma]$ and $\sigma_i \in (\alpha_i,\alpha_i + \frac{n}{2})$ (to guarantee $p_i \in (2,\infty)$) such that
\begin{equation}\label{eq:ex:pisigmaialphai}
 \frac{1}{p_i} +\frac{\sigma_i -\alpha_i}{n} = \frac{1}{2},
\end{equation}
because then 
\[
 \| \Ds{\alpha_1} u \|_{L^{p_1}(\R^n)} \aleq \| \Ds{\sigma_i} u \|_{L^{2}(\R^n)} = \|u\|_{\dot{H}^{\sigma_i}}.
\]
and thus 
\[
\begin{split}
 &\| \Ds{\alpha_1} u \|_{L^{p_1}(\R^n)} \|\Ds{\alpha_2} u\|_{L^{p_2}(\R^n)} \|\Ds{\alpha_3} u\|_{L^{p_3}(\R^n)}\\
 \aleq &\|u\|_{\dot{H}^{\sigma_1}}\, \|u\|_{\dot{H}^{\sigma_2}} \|u\|_{\dot{H}^{\sigma_3}}\\
 \aleq &\brac{\|u\|_{\dot{H}^{\frac{n-2s}{2}}} +\|u\|_{\dot{H}^{\sigma}}}^3.
 \end{split}
\]
This, in turn, gives  in view of \eqref{eq:Fsigmaestex}, \eqref{eq:mainpdeshortest} 
\[
\begin{split}
\|\Ds{\beta} |u|^2 \Ds{-\beta} u\|_{L^\infty_t \dot{H}^{\frac{n-2s}{2}}}
 \aleq& \brac{\|u\|_{L^\infty_t \dot{H}^{\frac{n-2s}{2}}} +\|u\|_{L^\infty_t \dot{H}^{\sigma}}}^3\\
 \aleq& \brac{\|u\|_{F^{\frac{n-2s}{2}}} +\|u\|_{F^{\sigma}}}^3\\
 \aleq& \brac{\|u_0\|_{\dot{H}^{\frac{n-2s}{2}}} +\|u_0\|_{\dot{H}^{\sigma}}}^3\\
 \end{split}
\]
i.e. we have \eqref{eq:FuHdestimate} -- if we can find $\sigma_i$ as described above.

In view of \eqref{eq:ex:pisigmaialphai} the condition $\sum_{i=1}^3 \frac{1}{p_i} = \frac{1}{2}$ becomes 
\[
\begin{split}
 &\sum_{i=1}^3 \brac{\frac{1}{2}-\frac{\sigma_i -\alpha_i}{n} }= \frac{1}{2}\\
 \Leftrightarrow&\sum_{i=1}^3 \frac{\sigma_i -\alpha_i}{n} = 1\\
 \Leftrightarrow& \sum_{i=1}^3\sigma_i -\frac{n-2s}{2}-\theta = n.
 \end{split}
\]
The easiest way to solve the above is choosing $\sigma_1=\sigma_2 = \frac{n}{2}$ and $\sigma_3 = \frac{n-2s}{2}+\theta$, but this does not necessarily satisfy $\sigma_i \in (\alpha_i,\alpha_i + \frac{n}{2})$, we also need to ensure $\sigma_i \in [\frac{n-2s}{2},\sigma]$

Observe that since $-\frac{2s-1}{2}\leq \alpha_i \leq \frac{n-1}{2}$ we have $\frac{n-2s}{2} < \alpha_i + \frac{n}{2}$. If we take 
\[
 \underline{\sigma}_i := \max\{\alpha_i,\frac{n-2s}{2}\} < \overline{\sigma}_i := \min\{\sigma,\alpha_i+\frac{n}{2}\}
\]
Then $\underline{\sigma}_i \in [\alpha_i,\alpha_i+\frac{n}{2}) \cap [\frac{n-2s}{2},\sigma]$ and $\overline{\sigma}_i \in (\alpha_i,\alpha_i+\frac{n}{2}] \cap [\frac{n-2s}{2},\sigma]$. Moreover we have
\[
 \sum_{i=1}^3 \underline{\sigma}_i - \frac{n-2s}{2} < n < \sum_{i=1}^3 \overline{\sigma}_i - \frac{n-2s}{2}
\]
By the intermediate value theorem there must be some choice of $\sigma_i \in (\underline{\sigma}_i,\overline{\sigma}_i)$ such that 
\[
 \sum_{i=1}^3 \sigma_i - \frac{n-2s}{2} =n+\theta,
\]
assuming that $\theta$ is suitably small. These $\sigma_i$ satisfy the required conditions and we have established \eqref{eq:FuHdestimate}.

With this in mind we can establish $u \in C^0((-1,1),\dot{H}^{\frac{n-2s}{2}}(\R^n))$. The fixed point equation reads as
\[
u(x,t) =e^{it\Dels{s}} u_0(x) -\psi(t) \i \int_0^t e^{\i (t-\tilde{t}) \Dels{s} } F(u)(x,\tilde{t})\, d\tilde{t}
\]
Take the Fourier transform in $x$ then this becomes
\[
 u(\hat{\xi},t) =e^{it|\xi|^{2s}} \hat{u}_0(\xi) -\psi(t) \i \int_0^t e^{\i (t-\tilde{t}) |\xi|^{2s} } F(u)(\hat{\xi},\tilde{t})\, d\tilde{t}.
\]
Fix $t,t_2 \in (-1,1)$, in particular $\psi(t), \psi(t_2) = 1$. Then we have
\[
\begin{split}
 \abs{u(\hat{\xi},t)-u(\hat{\xi},t_2)} \leq& \abs{e^{it|\xi|^{2s}}-e^{it_2|\xi|^{2s}}} \abs{ \hat{u}_0(\xi)}\\
 &+ \abs{\int_t^{t_2} e^{\i (t_2-\tilde{t}) |\xi|^{2s} } F(u)(\hat{\xi},\tilde{t})\, d\tilde{t}}\\
 &+ \int_0^t \abs{e^{\i (t-\tilde{t}) |\xi|^{2s} } -e^{\i (t_2-\tilde{t}) |\xi|^{2s} }} \abs{F(u)(\hat{\xi},\tilde{t})}\, d\tilde{t}\\
 \aleq&\abs{t-t_2}^{\frac{\theta}{2s}}  |\xi|^{\theta} \abs{ \hat{u}_0(\xi)}+ \int_t^{t_2} \abs{F(u)(\hat{\xi},\tilde{t})}\, d\tilde{t}+ \abs{t-t_2}^{\frac{\theta}{2s}} \int_{-1}^1 |\xi|^{\theta} \abs{F(u)(\hat{\xi},\tilde{t})}\, d\tilde{t}\\
 \end{split}
\]
Consequently,
\[
\begin{split}
 &\|u(\cdot,t)-u(\cdot,t_2)\|_{\dot{H}^{\frac{n-2s}{2}}}^2\\
 \aeq&\int_{\R^n} |\xi|^{2s-1} \abs{u(\hat{\xi},t)-u(\hat{\xi},t_2)}^2 d\xi\\
 \aleq&\abs{t-t_2}^{\frac{\theta}{s}} \int_{\R^n} \abs{ |\xi|^{\frac{2s-1}{2}+\theta} \hat{u}_0(\xi)}^2 d\xi\\
 &+ |t-t_2|^2 \sup_{\tilde{t} \in (-1,1)} \int |\xi|^{2s-1} \abs{F(u)(\hat{\xi},\tilde{t})}^2 d\xi\\
 &+ \abs{t-t_2}^{\frac{\theta}{s}} \sup_{\tilde{t} \in (-1,1)}\int_{\R^n} \abs{|\xi|^{\frac{2s-1}{2}+\theta} F(u)(\hat{\xi},\tilde{t})}^2 d\xi\\
 \aleq&|t-t_2|^{\frac{\theta}{2}} \|u_0\|_{\dot{H}^{\frac{n-2s}{2}+\theta}}
+|t-t_2|^2 \|F(u)\|_{L^\infty_t \dot{H}_x^{\frac{n-2s}{2}+\theta}}
+|t-t_2|^{\frac{\theta}{s}} \|F(u)\|_{L^\infty_t \dot{H}_x^{\frac{n-2s}{2}+\theta}}\\
\aleq&|t-t_2|^{\frac{\theta}{2}} \brac{\|u_0\|_{\dot{H}^{\frac{n-2s}{2}}}+\|u_0\|_{\dot{H}^{\sigma}}}
+|t-t_2|^2 \|F(u)\|_{L^\infty_t \dot{H}_x^{\frac{n-2s}{2}+\theta}}
+|t-t_2|^{\frac{\theta}{s}} \|F(u)\|_{L^\infty_t \dot{H}_x^{\frac{n-2s}{2}+\theta}}\\
 \end{split}
\]
In view of \eqref{eq:FuHdestimate} we have proven (even H\"older-)continuity in time.
We can conclude.
 \end{proof}

\appendix

\section{Simple estimates}\label{s:simpleest}
In this section we collect elementary estimates that we use in various places of the article.

\begin{lemma}\label{la:setestimate}
Fix $e \in \S^{n-1}$, $s \in (0,1]$. For $\xi' \in e^\perp$ and $\tau \in \R$, $k \in \Z$, $j \in \Z$, consider the set 
\[
 \Sigma_{k,j}(\xi',\tau) := \left \{\xi_1 \in \R: \quad  \text{for $\xi := \xi_1 e + \xi'$ we have} \begin{array}{l}
|\xi_1| \aeq 2^k \text{ and}\\
\abs{\tau + |\xi|^{2s}}\aleq {2^j}\text{ and}\\
|\xi| \aeq 2^k
\end{array}
\right \} \subset \R
\]

Then for any $j, k \in \Z$ we have 
\begin{equation}\label{eq:claimasdkljhfs}
 \sup_{\xi' \in e^\perp, \tau \in \R} \mathcal{L}^1 \brac{\Sigma_{k,j}(\xi',\tau)} \aleq \min \{{2^k},{2^{-k(2s-1)}} {2^j}\}.
\end{equation}
\end{lemma}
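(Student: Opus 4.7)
The plan is to establish the two bounds of the minimum separately. The first bound $\mathcal{L}^1(\Sigma_{k,j}(\xi',\tau)) \aleq 2^k$ is essentially free: the defining condition $|\xi_1| \aeq 2^k$ forces $\Sigma_{k,j}(\xi',\tau) \subset [-C 2^k, -c 2^k] \cup [c 2^k, C 2^k]$ for absolute constants $0 < c < C$, whose total measure is $\aleq 2^k$. No further information about $\tau$, $\xi'$, or $j$ is used for this part.

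The second bound $\mathcal{L}^1(\Sigma_{k,j}(\xi',\tau)) \aleq 2^{-k(2s-1)} 2^j$ requires a one-dimensional change of variables. Consider the smooth map
\[
\phi: \xi_1 \longmapsto |\xi_1 e + \xi'|^{2s} = \bigl(\xi_1^2 + |\xi'|^2\bigr)^s,
\]
so that $\Sigma_{k,j}(\xi',\tau) = \{\xi_1: |\xi_1| \aeq 2^k,\ |\xi| \aeq 2^k,\ |\phi(\xi_1)+\tau| \aleq 2^j\}$. A direct differentiation gives
\[
\phi'(\xi_1) = 2s\, \xi_1\, (\xi_1^2 + |\xi'|^2)^{s-1},
\]
and on $\Sigma_{k,j}(\xi',\tau)$ the two factors $|\xi_1| \aeq 2^k$ and $(\xi_1^2+|\xi'|^2)^{s-1} = |\xi|^{2s-2} \aeq 2^{k(2s-2)}$ combine to give $|\phi'(\xi_1)| \aeq 2^{k(2s-1)}$.

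Since $\phi'$ has constant sign on each of the two components $(-C2^k,-c2^k)$ and $(c2^k, C2^k)$ of the annulus $|\xi_1| \aeq 2^k$, the restriction of $\phi$ to each of these intervals is a diffeomorphism onto its image. On each component, the set $\{|\phi(\xi_1)+\tau|\aleq 2^j\}$ is the $\phi$-preimage of an interval of length $\aleq 2^j$, so by the substitution $y = \phi(\xi_1)$,
\[
\mathcal{L}^1\bigl(\Sigma_{k,j}(\xi',\tau) \cap \text{component}\bigr) \leq \int_{|y+\tau|\aleq 2^j} \frac{dy}{|\phi'(\phi^{-1}(y))|} \aleq 2^{-k(2s-1)}\, 2^j.
\]
Summing over the (at most) two components gives the bound $\aleq 2^{-k(2s-1)}\,2^j$. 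Combining both estimates yields \eqref{eq:claimasdkljhfs}. There is no real obstacle; the only subtlety is to isolate the correct lower bound on $|\phi'|$ using both $|\xi_1| \aeq 2^k$ and $|\xi|\aeq 2^k$ simultaneously (the latter handles the regime $s < 1$, where $|\xi|^{2s-2}$ would otherwise blow up).
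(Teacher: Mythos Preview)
Your proof is correct and takes a more direct route than the paper. Both arguments ultimately rest on the same fact---that the map $\xi_1 \mapsto (\xi_1^2 + |\xi'|^2)^s$ has derivative $\aeq 2^{k(2s-1)}$ on the relevant set---but they package it differently. The paper first disposes of the case $j \geq 2sk - c$ (where the trivial bound $2^k$ already dominates $2^{-k(2s-1)}2^j$), and then for $j \leq 2sk - c$ invokes \Cref{la:Nproperties}\eqref{it:Nprops:3} to conclude $\Sigma_{k,j}(\xi',\tau) \subset \{\xi_1 : |\xi_1 - N(\xi',\tau)| \aleq 2^{-k(2s-1)} 2^j\}$, reading the measure bound off directly. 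Your change-of-variables argument avoids introducing $N$ altogether and works uniformly in $j$, which is cleaner and more self-contained for this isolated lemma; the paper's version has the advantage of tying into the $N$-machinery already in place for the smoothing estimates.

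One small point worth making precise: as written, your bound $\int_{|y+\tau|\aleq 2^j} |\phi'(\phi^{-1}(y))|^{-1}\, dy \aleq 2^{-k(2s-1)} 2^j$ presumes $|\phi'(\phi^{-1}(y))| \ageq 2^{k(2s-1)}$ on the full interval $|y+\tau| \aleq 2^j$, whereas your derivative bound used $|\xi| \aeq 2^k$, a condition on $\Sigma_{k,j}$ rather than on the whole component. This is easily repaired: if $\Sigma_{k,j}(\xi',\tau) \neq \emptyset$ then necessarily $|\xi'| \aleq 2^k$, and then $|\xi| \aeq 2^k$ holds automatically on the entire component $\{|\xi_1| \aeq 2^k\}$, so the derivative bound is indeed valid throughout. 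Alternatively, one can simply insert the indicator $\ind_{\Sigma_{k,j}}$ under the integral before changing variables.
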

\begin{proof}

Fix $\xi' \in e^\perp$ and $\tau \in \R$. Clearly
\[
 \Sigma_{k,j}(\xi',\tau)  \subset \{\xi_1 \in \R: |\xi_1| \aeq 2^k\}
\]
so the estimate 
\[
 \mathcal{L}^1 \brac{\Sigma_{k,j}(\xi',\tau)} \aleq  2^k 
\]
is obvious.

In particular, whenever $j \geq 2sk-c$ for some given $c$, we have 
\[
 \mathcal{L}^1 \brac{\Sigma_{k,j}(\xi',\tau)}\aleq \min \{{2^k},{2^{-k(2s-1)}} {2^j}\}.
\]

Thus, we only need to prove
\begin{equation}\label{eq:measureestgoal}
 \mathcal{L}^1 \brac{\Sigma_{k,j}(\xi',\tau)}\aleq 2^{-k(2s-1)} 2^j \quad \forall j \leq 2sk-c.
\end{equation}

In this case, taking $c$ suitably large, we can apply \Cref{la:Nproperties} and thus
\[
 \Sigma_{k,j}(\xi',\tau) \subset \{\xi_1: \quad |\xi_{1} - N(\xi',\tau)| \aleq 2^{-k(2s-1)} 2^{j}\}
\]
which implies \eqref{eq:measureestgoal}. We can conclude.

\end{proof}

\begin{lemma}\label{simplestuff}
Let $k_{1} \in \mathbb{Z}$. Take $e \in \S^{n-1}$.

As in \eqref{eq:xieande1decomp}, for $ \tn \in 2^{k_{1}} \mathbb{Z}^{n}$ we write 
\[
 \tn= \tn_{e,1} e+ \tn_{e}'
\]
where $\tn_{e}' \in e^\perp$.

Define $\pi: e^{\perp} \times \R  \to \R$ by
$$\pi_{k_{1},\tn}(\xi_{e}',N) := \chi((N -\tn_{e,1})/2^{k_{1}+C}) \eta( \frac{\xi_{e}'-\tn_{e}'}{2^{k_{1}+C}}), $$
where $\chi$ is from \eqref{IntegFunc}.

Then we have (with a constant independent of $N$, $\xi_{e}' \in e^\perp$)
\begin{equation}\label{pa}
\begin{split}
& \abs{\sum_{\tn \in 2^{k_{1}} \Z^{n}} \pi_{k_{1},\tn}(N,\xi_{e}') }\aleq 1
\end{split}
\end{equation}
\end{lemma}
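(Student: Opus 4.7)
The plan is a straightforward support and lattice-counting argument. First, I would identify the set of $\tn \in 2^{k_1}\Z^n$ that can contribute a nonzero term to the sum \eqref{pa}. Since $\chi \in C_c^\infty((-2,2),[0,1])$ is supported in $(-2,2)$ and the Littlewood--Paley bump $\eta$ is supported in (say) a ball of radius $2$, the product $\pi_{k_1,\tn}(\xi_e',N) = \chi((N-\tn_{e,1})/2^{k_1+C})\,\eta((\xi_e'-\tn_e')/2^{k_1+C})$ vanishes unless
\[
|N-\tn_{e,1}| \leq 2\cdot 2^{k_1+C} \quad \text{and} \quad |\xi_e'-\tn_e'| \leq 2\cdot 2^{k_1+C}.
\]

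Second, I would translate these two one-dimensional/transverse conditions into a single proximity condition in $\R^n$. Writing $\tn = \tn_{e,1} e + \tn_e'$ with $\tn_e' \perp e$, and setting $\zeta := Ne + \xi_e' \in \R^n$, the Pythagorean identity yields
\[
|\tn - \zeta|^2 = (\tn_{e,1}-N)^2 + |\tn_e'-\xi_e'|^2 \aleq (2^{k_1})^2,
\]
so every contributing $\tn$ lies in a Euclidean ball of radius $\aeq 2^{k_1}$ around $\zeta$.

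Third, I would invoke the standard lattice point count: the number of points of $2^{k_1}\Z^n$ contained in any ball of radius $O(2^{k_1})$ is bounded by an absolute constant independent of $k_1$ and of the center of the ball (compare the volume of the ball, $\aeq (2^{k_1})^n$, to the volume $(2^{k_1})^n$ of a fundamental domain). Combining this with the trivial pointwise bound $|\pi_{k_1,\tn}| \leq \|\chi\|_\infty^{n+1}\|\eta\|_\infty \leq 1$ gives \eqref{pa}.

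There is no real obstacle here; the only thing to double-check is that the rotation sending $\tn \mapsto (\tn_{e,1},\tn_e')$ is an isometry, so that the cylindrical constraints from the supports of $\chi$ and $\eta$ really do translate into a Euclidean proximity constraint in $\R^n$. This is exactly what ensures that the volume-based counting argument applies uniformly in $e \in \S^{n-1}$ and $(N,\xi_e')$.
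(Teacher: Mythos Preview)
Your proposal is correct and follows essentially the same approach as the paper: both arguments observe that a nonzero term forces $|\tn_{e,1}-N|\aleq 2^{k_1}$ and $|\tn_e'-\xi_e'|\aleq 2^{k_1}$, hence all contributing $\tn$ lie in a ball of radius $\aeq 2^{k_1}$ (the paper phrases this as a pairwise bound $|\tn-\tm|\aleq 2^{k_1}$, you phrase it as proximity to the fixed point $\zeta=Ne+\xi_e'$), and then use the standard lattice count together with $|\pi_{k_1,\tn}|\leq 1$. The only cosmetic slip is the exponent in $\|\chi\|_\infty^{n+1}$, which is irrelevant since $\chi,\eta$ take values in $[0,1]$.
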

\begin{proof}
There are only finitely many $\tn \in 2^{k_1} \Z^n$ such that 
\[
 \pi_{k_{1},\tn}(\xi_{e}',N) \neq 0.
\]
Once we have established this fact, we can conclude easily, since $|\pi_{k_1,\tn}| \aleq 1$.

Fix two $\tn,\tm \in 2^{k_1} \Z^n$ such that 
\[
 |N -\tn_{e,1}| \aleq 2^{k_1} \quad \text{ and } |\xi_{e}'-\tn_{e}'| \aleq 2^{k_1}.
\]
and 
\[
 |N -\tm_{e,1}| \aleq 2^{k_1} \quad \text{ and } |\xi_{e}'-\tm_{e}'| \aleq 2^{k_1}.
\]
Then 
\[
 |\tn-\tm| \aleq |\tn_{e,1}-\tm_{e,1}| +  |\tn_{e}'-\tm_{e}'| \aleq 2^{k_1}
\]
and clearly 
\[
 \#\{\tm \in 2^{k_1} \Z^n: \quad |\tn-\tm| \aleq 2^{k_1}\} \aleq 1.
\]
\end{proof}

\section{Stationary phase estimates}
The main point of this section is to prove
\begin{theorem}\label{th:L2eLinftyeperpvsL2est}
Let $n \geq 3$, $s \in (1/2,1)$.
Then we have for any $e \in \S^{n-1}$ and for any $\ell \in \Z$ and for any $h \in C_c^\infty(B(0,2^\ell))$

\begin{equation}\label{eq:L2eLinftyeperpvsL2estv1}
 \norm{\int_{\R^n} e^{\i \brac{\langle \xi,x\rangle + |\xi|^{2s} t}} h(\xi) d\xi }_{L^2_e L^\infty_{e^\perp,t}} \aleq 2^{\ell\frac{n-1}{2}} \|h\|_{L^2(\R^n)}
\end{equation}
and more generally for any $\tn \in \R^n$ such that $|\tn| \aleq 2^k$ for some $k \in \Z$ with $2^k \ageq 2^\ell$,
\begin{equation}\label{eq:L2eLinftyeperpvsL2estv2}
 \norm{\int_{\R^n} e^{\i \brac{\langle \xi,x\rangle + |\xi-\tn|^{2s} t}} h(\xi) d\xi }_{L^2_e L^\infty_{e^\perp,t}} \aleq 2^{k\frac{n-1}{2}} 2^{-(k-\ell)\frac{n-2}{2}} \|h\|_{L^2(\R^n)}.
\end{equation}

\end{theorem}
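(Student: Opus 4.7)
The plan is to derive \eqref{eq:L2eLinftyeperpvsL2estv1} as the special case $k=\ell$, $\tn=0$ of \eqref{eq:L2eLinftyeperpvsL2estv2}, and prove the latter by a $TT^*$ argument. Let $Th(x,t) := \int_{B(0,2^\ell)} e^{\i(\xi\cdot x + |\xi-\tn|^{2s}t)}\,h(\xi)\,d\xi$, viewed as an operator $L^2(\R^n)\to L^2_e L^\infty_{e^\perp,t}$. The standard Hilbert-space identity yields $\|T\|^2 = \|TT^*\|_{L^2_e L^1_{e^\perp,t}\to L^2_e L^\infty_{e^\perp,t}}$, and $TT^*$ is convolution in $(x,t)$ by
\[
K(z,\tau) = \int_{B(0,2^\ell)} e^{\i(\xi\cdot z + |\xi-\tn|^{2s}\tau)}\,d\xi.
\]
Splitting the convolution in the $e$-direction from the $(e^\perp,t)$-directions and applying the Young-type estimates $L^\infty \ast L^1\subset L^\infty$ and $L^1 \ast L^2\subset L^2$ reduces matters to proving
\[
\|\tilde K\|_{L^1(\R)} \aleq 2^{k+\ell(n-2)}, \qquad \tilde K(u) := \sup_{z'\in e^\perp,\,\tau\in\R} |K(ue+z',\tau)|.
\]

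Next I would analyze $K$ by scaling $\xi = 2^\ell\sigma$ with $|\sigma|\leq 1$ and Taylor expanding $|2^\ell\sigma-\tn|^{2s}$ about $\sigma=0$; using $|\tn|\aeq 2^k\geq 2^\ell$,
\[
|2^\ell\sigma - \tn|^{2s} - |\tn|^{2s} = -2s\,2^\ell |\tn|^{2s-2}\,\tn\cdot\sigma + 2^{2\ell}|\tn|^{2s-2} A(\sigma) + \brac{\text{cubic remainder}},
\]
where $A(\sigma) := s|\sigma|^2 + 2s(s-1)|\tn|^{-2}(\tn\cdot\sigma)^2$. The assumption $s>1/2$ ensures $A$ is positive-definite, with smallest eigenvalue $s(2s-1)>0$. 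The linear phase is absorbed by the change of variables $y := 2^\ell(z - 2s|\tn|^{2s-2}\tn\tau)$ and $\mu := 2^{2\ell}|\tn|^{2s-2}\tau$, yielding
\[
|K(z,\tau)| \aeq 2^{\ell n}\,\abs{\int_{B(0,1)} e^{\i\brac{\sigma\cdot y + A(\sigma)\mu + E(\sigma)}}d\sigma},
\]
with cubic remainder $\|E\|_{C^2(B(0,1))} = O(2^{\ell-k}|\mu|)$. Since $\ell\leq k$, the term $E$ is genuinely subordinate to $A(\sigma)\mu$, so classical stationary phase gives the estimate $\aleq\min\{1,|\mu|^{-n/2}\}$ when $|y|\aleq|\mu|$, and rapid decay in $|y|$ otherwise by integration by parts in $\sigma$.

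To compute $\tilde K(u)$ I would optimize over $(z',\tau)$: once $\mu$ is chosen (via $\tau$), the quantity $y_{e,1} = 2^\ell u - 2s\tn_{e,1}\mu/2^\ell$ is determined, while $y'_e$ is free (via $z'$). Setting $y'_e=0$ and $\mu_*:=2^{2\ell}u/(2s\tn_{e,1})$ places the critical point at $\sigma=0$ and gives, in the worst case $|\tn_{e,1}|\aeq|\tn|\aeq 2^k$,
\[
\tilde K(u) \aleq 2^{\ell n}\,\min\{1,\,(2^{2\ell-k}|u|)^{-n/2}\}.
\]
Both regimes $|u|\aleq 2^{k-2\ell}$ and $|u|\ageq 2^{k-2\ell}$ then contribute $\aeq 2^{k+\ell(n-2)}$ to $\|\tilde K\|_{L^1(\R)}$ (the tail integral converges precisely because $n\geq 3$), so $\|\tilde K\|_{L^1(\R)} \aeq 2^{k+\ell(n-2)}$ and $\|T\|\aleq 2^{k/2+\ell(n-2)/2} = 2^{k(n-1)/2}\,2^{-(k-\ell)(n-2)/2}$, which is \eqref{eq:L2eLinftyeperpvsL2estv2}.

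The main technical obstacle is the uniform stationary-phase bound for $M(y,\mu):=\int_{B(0,1)} e^{\i(\sigma\cdot y + A(\sigma)\mu + E(\sigma))}d\sigma$: one must verify that the cubic remainder $E$ does not spoil the non-degeneracy of the Hessian $2\mu B + \partial^2 E$ (this uses both $\ell\leq k$, so that $\|\partial^2 E\|\aleq 2^{\ell-k}|\mu|\ll|\mu|$, and $s>1/2$, so that $B$ is uniformly positive-definite on $B(0,1)$), and then to combine the near-critical $|\mu|^{-n/2}$ decay with the off-critical decay in $|y|$ via integration by parts into a single bound of the stated form.
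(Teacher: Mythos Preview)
Your $TT^\ast$ reduction matches the paper's \Cref{pr:TTastmethod}, so the task is indeed to bound $\|\tilde K\|_{L^1(\R)}$. But your kernel analysis has a genuine gap: the Taylor expansion of $|2^\ell\sigma-\tn|^{2s}$ about $\sigma=0$ requires the phase to be $C^2$ on $|\sigma|\le 1$, which forces $|\tn|\gtrsim 2^\ell$ (you in fact write ``using $|\tn|\aeq 2^k\ge 2^\ell$''). For $\tn=0$ the phase $|\xi|^{2s}$ with $s<1$ is not $C^2$ at the origin, so there is no quadratic form $A(\sigma)$ to extract and your change of variables $\mu=2^{2\ell}|\tn|^{2s-2}\tau$ degenerates. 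Yet $\tn=0$, $k=\ell$ is precisely the case you say will give \eqref{eq:L2eLinftyeperpvsL2estv1}. Even the borderline case $|\tn|\aeq 2^\ell$ (so $k=\ell$) is problematic: then $2^{\ell-k}\aeq 1$ and your cubic remainder $E$ is the same size as $\mu A(\sigma)$, so the Hessian perturbation argument you sketch no longer applies.

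The paper proceeds quite differently after the $TT^\ast$ step. Rather than Taylor expanding, it passes to polar coordinates $\xi=r\theta$ and uses the Bessel asymptotics $\int_{\S^{n-1}}e^{i\rho\theta_1}d\theta=e^{\pm i\rho}a_\pm(\rho)$ with $|a_\pm^{(l)}(\rho)|\aleq\rho^{-(n-1)/2-l}$ to reduce to a one-dimensional integral in $r$ with phase $\phi(r)=r^{2s}\pm r|x|/t$. Van der Corput then gives the $|t|^{-n/2}$ decay on each dyadic annulus $|\xi|\aeq 2^j$; summing over $j\le 0$ converges because $s<1$, and this is exactly where the singularity of $|\xi|^{2s}$ at the origin is absorbed (\Cref{pr:statphaseestschrod}). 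The $\tn\neq 0$ case is then handled by multiplying in a radial cutoff $\eta(|\xi|-|\tn|)$ and Fourier-inverting the remaining factor $\eta(\xi+\tn)$. This kernel estimate is combined with a separate integration-by-parts bound (\Cref{la:twointegrals}) for the regime $|x_1|\gg |t|\max\{|\tn|,1\}^{2s-1}$, and the two are patched together in \Cref{pr:eq:DIE:4.7}.

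Your approach is natural---it is essentially how one would reduce to the $s=1$ Schr\"odinger case by perturbation---and for $|\tn|\gg 2^\ell$ it can be made to work (though your ``optimize over $(z',\tau)$'' step needs to be turned into a genuine upper bound on the supremum, not just an evaluation at one choice of $\mu$). But you still need a separate argument for $\tn=0$, and that argument must confront the non-smoothness of $|\xi|^{2s}$; the paper's polar-coordinate route is one way to do this.
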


\Cref{th:L2eLinftyeperpvsL2est} can be proven with stationary phase estimate, cf. \cite[\textsection 2.3 Proposition 6]{Stein} and \cite[\textsection 2.1, Proposition 4]{Stein}. For the convenience of the reader we will prove this statements using one-dimensional arguments. 

In \Cref{th:L2eLinftyeperpvsL2est} we will focus on the case $s \in (1/2,1)$. The case $s=1$ is also true, but the argument is different (e.g. based on the explicit representation of the Fourier transform of $e^{\i t|\xi|^2}$). The argument in \Cref{pr:statphaseestschrod} requires $s < 1$.

We begin by what is essentially an application of the $TT^\ast$-method.
\begin{lemma}\label{pr:TTastmethod}
Fix some real function $\phi(\xi)$ and assume we have for some $\lambda, \Lambda > 0$ the estimate
\begin{equation}\label{eq:Ttestneeded}
 \left \|\sup_{t,x'} \abs{\int_{\R^n} e^{\i (t\phi(\xi)-\langle x,\xi\rangle)} \eta_{|\xi| \aleq \lambda} d\xi}\right \|_{L^1_{x_1}} \leq \Lambda^2.
\end{equation}
Set 
\[
 T_tf(x) := \int_{\R^n} e^{\i (t\phi(\xi)-\langle x,\xi\rangle)} \eta_{|\xi| \aleq \lambda} \hat{f}(\xi)\, d\xi.
\]
Then we have for any $f \in C_c^\infty(B(0,\lambda))$,
\[
 \|T_t f\|_{L^2_{x_1} L^\infty_{x',t}} \aleq \Lambda \|f\|_{L^2(\R^n)}.
\]
\end{lemma}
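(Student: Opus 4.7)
The plan is to proceed by the standard $TT^{\ast}$ method, viewing $T : L^{2}(\R^{n}) \to L^{2}_{x_{1}} L^{\infty}_{x',t}$ where I have split $\R^{n} = \R_{x_{1}} \times \R^{n-1}_{x'}$. By duality and $TT^{\ast}$, it suffices to bound
\[
 \|TT^{\ast}\|_{L^{2}_{x_{1}} L^{1}_{x',t} \,\to\, L^{2}_{x_{1}} L^{\infty}_{x',t}} \leq \Lambda^{2}.
\]
I compute the kernel of $TT^{\ast}$: a direct calculation using Plancherel gives $TT^{\ast}g(x,t) = \int K(x-y, t-s)\,g(y,s)\,dy\,ds$ with
\[
 K(z, \tau) = \int_{\R^{n}} e^{\i(\tau \phi(\xi) - \langle z,\xi\rangle)}\, \eta^{2}_{|\xi| \aleq \lambda}\, d\xi,
\]
whose $L^{\infty}_{x', t}$-norm in the $(x',t)$-variable (for $z = (z_1, z')$ fixed on $z_1$, sup over $z'$ and $\tau$) is controlled, after a minor modification absorbing $\eta^{2}$ into $\eta$, by the quantity appearing in the assumption~\eqref{eq:Ttestneeded}.

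Next I reduce the two-variable bound for $TT^{\ast}$ to a one-dimensional convolution estimate in $x_{1}$. For fixed $x_{1}, y_{1}$ write $K(x-y,t-s)$ as a convolution kernel in the remaining variables $(x',t)$ and apply the Young-type estimate $\|h \ast_{x',t} g\|_{L^{\infty}_{x',t}} \leq \|h\|_{L^{\infty}_{x',t}} \|g\|_{L^{1}_{x',t}}$. Setting
\[
 M(z_{1}) := \|K(z_{1}, \cdot, \cdot)\|_{L^{\infty}_{z',\tau}}, \qquad N(y_{1}) := \|g(y_{1}, \cdot, \cdot)\|_{L^{1}_{y',s}},
\]
this yields the pointwise bound $\|TT^{\ast}g(x_{1},\cdot,\cdot)\|_{L^{\infty}_{x',t}} \leq (M \ast N)(x_{1})$, so that by Young's inequality in one variable
\[
 \|TT^{\ast}g\|_{L^{2}_{x_{1}} L^{\infty}_{x',t}} \leq \|M\|_{L^{1}(\R)}\, \|N\|_{L^{2}(\R)} = \|K\|_{L^{1}_{x_{1}} L^{\infty}_{x',t}}\, \|g\|_{L^{2}_{x_{1}} L^{1}_{x',t}}.
\]
Assumption~\eqref{eq:Ttestneeded} bounds the kernel norm by $\Lambda^{2}$, which concludes the $TT^{\ast}$ estimate and thus yields the claim.

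The only mild technical issue is that the assumption is stated for $\eta_{|\xi| \aleq \lambda}$ while the $TT^{\ast}$-kernel naturally produces $\eta^{2}_{|\xi| \aleq \lambda}$. Since $\eta^{2}$ is again a smooth bump supported on the same set, the assumption applies verbatim to it (or one replaces $\eta$ by $\eta^{1/2}$ in the formulation of $T$ without loss of generality), so there is no real obstacle here; the substance of the argument is the $TT^{\ast}$ bookkeeping and the one-dimensional Young inequality.
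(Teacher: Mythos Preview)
Your proof is correct and follows essentially the same $TT^{\ast}$ argument as the paper: both compute the kernel of $TT^{\ast}$, take the supremum over $(x',t)$ to obtain a one-variable function of $x_1$, and close with Young's inequality $L^{1}\ast L^{2}\to L^{2}$ in the $x_1$-variable. The paper arrives there by first dualizing and then expanding the square (which is precisely $TT^{\ast}$), and it silently writes only one factor of $\eta$ in the kernel whereas you correctly flag the $\eta^{2}$ versus $\eta$ issue; this is harmless for exactly the reason you give.
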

\begin{proof}
By duality, for some $g : \R^{n+1} \to \R$, $\|g\|_{L^2_{x_1} L^1_{x',t}} \aleq 1$,
\[
\begin{split}
  &\|T_t f\|_{L^2_{x_1} L^\infty_{x',t}}\\
  \aleq&\int_{\R^{n+1}} T_t f(x_1,x',t) g(x_1,x',t)\, dx' dt dx_1\\
  =&\int_{\R^n} \hat{f}(\xi)\, \brac{\int_{\R^{n+1}} e^{\i (t\phi(\xi)-\langle x,\xi\rangle)} \eta_{|\xi| \aleq \lambda} g(x_1,x',t) dx'dt dx_1} d\xi \\
%   =&\int_{\R^n} \hat{f}(\xi)\, \brac{\int_{\R^{n-1}} e^{\i (t\phi(\xi)-\langle x',\xi'\rangle)} \eta_{|\xi| \aleq \lambda} g(\hat{\xi_1},x',t) dx'dt dx_1} d\xi \\
 \end{split}
\]
So the claim follows if we can show
\begin{equation}\label{eq:Ttastgoal}
 \int_{\R^n} \brac{\int_{\R^n\times \R} e^{\i (t\phi(\xi)-\langle x,\xi\rangle)} \eta_{|\xi| \aleq \lambda} g(x_1,x',t) dx'dt dx_1}^2 d\xi \aleq \Lambda\, \|g\|_{L^2_{x_1} L^\infty_{x',t}}^2.
\end{equation}
In order to obtain \eqref{eq:Ttastgoal} we write 
\[
\begin{split}
 &\int_{\R^n} \brac{\int_{\R^n\times \R} e^{\i (t\phi(\xi)-\langle x,\xi\rangle)} \eta_{|\xi| \aleq \lambda} g(x_1,x',t) dx'dt dx_1}^2 d\xi\\
 =&\int_{\R^n} \int_{\R^n\times \R} \int_{\R^n\times \R}  e^{\i (t\phi(\xi)-\langle x,\xi\rangle)} \eta_{|\xi| \aleq \lambda} g(x_1,x',t) e^{-\i (s\phi(\xi)-\langle y,\xi\rangle)} \eta_{|\xi| \aleq \lambda} \overline{g}(y_1,y',s) dx'dt dx_1 dy'ds dy_1  d\xi\\
 =&\int_{\R^n} \int_{\R^n\times \R} \int_{\R^n\times \R}  e^{\i ((t-s)\phi(\xi)-\langle x-y,\xi\rangle)} \eta_{|\xi| \aleq \lambda} g(x_1,x',t) \overline{g}(y_1,y',s) dx'dt dx_1 dy'ds dy_1  d\xi\\
 =& \int_{\R^n\times \R} \int_{\R^n\times \R}  \brac{\int_{\R^n} e^{\i ((t-s)\phi(\xi)-\langle x-y,\xi\rangle)} \eta_{|\xi| \aleq \lambda} d\xi} g(x_1,x',t) \overline{g}(y_1,y',s) dx'dt dx_1 dy'ds dy_1  \\
 =&\int_{\R^n\times \R} \int_{\R^n\times \R}  k(x_1-y_1,x'-y',s-t) g(x_1,x',t) \overline{g}(y_1,y',s) dx'dt dx_1 dy'ds dy_1  \\
 \end{split}
\]
where we have set
\[
 k(z_1,z',r) := \brac{\int_{\R^n} e^{\i (r\phi(\xi)-\langle z,\xi\rangle)} \eta_{|\xi| \aleq \lambda} d\xi}
\]
If we set 
\[
 \tilde{k}(z_1) := \sup_{z',r} |k(z_1,z',r)|
\]
then we have shown
\[
\begin{split}
 &\int_{\R^n} \brac{\int_{\R^n\times \R} e^{\i (t\phi(\xi)-\langle x,\xi\rangle)} \eta_{|\xi| \aleq \lambda} g(x_1,x',t) dx'dt dx_1}^2 d\xi\\
 \leq &\int_{\R^n\times \R} \int_{\R^n\times \R}  \tilde{k}(x_1-y_1) \|g(x_1,\cdot,\cdot)\|_{L^1_{x',t}} \|g(y_1,\cdot,\cdot)\|_{L^1_{x',t}} dx_1 dy_1  \\
 \aleq& \|\tilde{k} \ast \|g(x_1,\cdot,\cdot)\|_{L^1_{x',t}}\|_{L^2(\R)} \|g\|_{L^2_{x_1} L^1_{x',t}}\\
 \aleq &\|\tilde{k}\|_{L^1(\R)} \|g\|_{L^2_{x_1} L^1_{x',t}} \|g\|_{L^2_{x_1} L^1_{x',t}} \\
 \overset{\eqref{eq:Ttestneeded}}{\aleq}& \Lambda^2 \|g\|_{L^2_{x_1} L^1_{x',t}}^2.
 \end{split}
\]
The second to last inequality is simply Young's inequality,
\[
 \|\tilde{k} \ast \|g(x_1,\cdot,\cdot)\|_{L^1_{x',t}}\|_{L^2(\R)}\aleq \|\tilde{k}\|_{L^1(\R)} \|g\|_{L^2_{x_1} L^1_{x',t}}
\]
Taking the square root we obtain \eqref{eq:Ttastgoal}.
\end{proof}

\begin{lemma}\label{la:aest}
Let $n \geq 2$ and $\rho > 1$. Then there exists two smooth functions $ a_{1},a_{2}$ such that the following holds
$$
\int_{\S^{n-1}} e^{i\rho \theta_{1}} d\theta = e^{i\rho} a_{1}(\rho) + e^{-i\rho} a_{2} (\rho).
$$
Here $a_{1},a_{2}$ satisfy for any $l =0,1,....$ the following 
\begin{equation}\label{eq:aestclaim}
\abs{\frac{d^{l}}{d\rho^{l}} a_{i}(\rho) } \aleq \rho^{-\frac{n-1}{2} -l}
\end{equation}
for $i \in \{1,2\}$ and $\rho > 1$
\end{lemma}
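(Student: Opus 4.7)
\textbf{Proof plan for Lemma~\ref{la:aest}.}

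The plan is to reduce the sphere integral to a one-dimensional oscillatory integral and then apply the stationary phase method near the two antipodal critical points. Using spherical coordinates $\theta = (\cos\phi, \sin\phi\, \omega)$ with $\phi \in [0,\pi]$ and $\omega \in \S^{n-2}$, we have $d\theta = \sin^{n-2}\phi\, d\phi\, d\omega$, so that
\[
 \int_{\S^{n-1}} e^{\i \rho \theta_1}\, d\theta = \omega_{n-2}\, \int_0^\pi e^{\i \rho \cos\phi}\, \sin^{n-2}\phi\, d\phi,
\]
where $\omega_{n-2} = \mathcal{H}^{n-2}(\S^{n-2})$. The phase $\phi \mapsto \cos\phi$ has exactly two critical points, at $\phi = 0$ and $\phi = \pi$, and both are nondegenerate (the second derivative of $\cos\phi$ is $\mp 1$ there). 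We therefore expect the asymptotics $e^{\i\rho}$ near $\phi=0$ and $e^{-\i\rho}$ near $\phi=\pi$, each with a prefactor decaying like $\rho^{-(n-1)/2}$ (the $(n-1)/2$ coming from the $(n-1)$-dimensional integration and the nondegeneracy).

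Next I would introduce a smooth partition of unity $1 = \chi_0(\phi) + \chi_\pi(\phi) + \chi_{\rm mid}(\phi)$ with $\chi_0$ supported in $[0,\pi/3)$, $\chi_\pi$ in $(2\pi/3,\pi]$, and $\chi_{\rm mid}$ in a compact subset of $(0,\pi)$. Away from the critical points, $\sin\phi$ is bounded below, and writing
\[
 e^{\i\rho\cos\phi} = \frac{1}{-\i\rho\sin\phi}\frac{d}{d\phi}e^{\i\rho\cos\phi},
\]
repeated integration by parts shows that the $\chi_{\rm mid}$ contribution is $O_N(\rho^{-N})$ for every $N$, and that the same is true for all its $\rho$-derivatives (each differentiation brings down a bounded factor $\i\cos\phi$, which does not interfere with the IBP). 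This contribution is easily absorbed into either $e^{\i\rho} a_1(\rho)$ or $e^{-\i\rho} a_2(\rho)$ with decay better than $\rho^{-(n-1)/2 - l}$.

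For the contribution near $\phi = 0$, I would use the Morse lemma (or a direct Taylor expansion) to write $\cos\phi = 1 - \psi(\phi)^2/2$ for a smooth diffeomorphism $\psi$ with $\psi(0)=0$, $\psi'(0)=1$. Changing variables and factoring out $e^{\i\rho}$ gives
\[
 \omega_{n-2}\int_0^{\pi} \chi_0(\phi) e^{\i\rho\cos\phi}\sin^{n-2}\phi\, d\phi
 = e^{\i\rho}\int_{0}^{\infty} e^{-\i\rho u^2/2} \Phi(u)\, du,
\]
where $\Phi(u)$ is smooth, compactly supported, and vanishes to order $n-2$ at $u=0$. Rescaling $u = v/\sqrt{\rho}$ and expanding $\Phi(v/\sqrt\rho)$ asymptotically in powers of $1/\sqrt{\rho}$ yields the full stationary phase expansion; the leading term has size $\rho^{-(n-1)/2}$ and subsequent terms are smaller by factors of $\rho^{-1}$ (the odd-power terms in $v$ cancel against $\Phi$'s Taylor expansion against the Gaussian-like Fresnel integrals, giving the half-integer improvement). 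This yields an asymptotic series for $a_1(\rho)$ of the form $\rho^{-(n-1)/2}\sum_{j\ge 0} c_j \rho^{-j}$, whose truncations satisfy the bound~\eqref{eq:aestclaim} for $l=0$. The derivative bounds for $l\ge 1$ follow by differentiating under the integral sign: each $\rho$-derivative of the integral $\int e^{-\i\rho u^2/2}\Phi(u)\,du$ brings down a factor $-\i u^2/2$, which after the rescaling $u=v/\sqrt\rho$ yields an extra factor of $\rho^{-1}$, so that the stationary phase expansion of $a_1^{(l)}(\rho)$ starts at $\rho^{-(n-1)/2 - l}$. The contribution near $\phi=\pi$ is treated identically after the substitution $\phi \mapsto \pi - \phi$, producing the $e^{-\i\rho}a_2(\rho)$ term.

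The main obstacle will be the derivative estimates~\eqref{eq:aestclaim}. Bookkeeping the $l$-th derivative carefully in the stationary phase expansion is where one must be precise; alternatively, one can recognize the full integral as the Bessel function representation
\[
 \int_{\S^{n-1}} e^{\i\rho\theta_1}\,d\theta = (2\pi)^{n/2}\rho^{-(n-2)/2}J_{(n-2)/2}(\rho),
\]
and appeal directly to the classical asymptotic expansion of $J_\nu$, which takes the form $\sqrt{2/(\pi\rho)}\left[\cos(\rho -\nu\pi/2-\pi/4)P_\nu(\rho) - \sin(\rho - \nu\pi/2 - \pi/4)Q_\nu(\rho)\right]$ with $P_\nu,Q_\nu$ having asymptotic expansions in powers of $\rho^{-1}$ that can be differentiated term by term. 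Rewriting the trigonometric functions as sums of $e^{\pm \i \rho}$ then produces $a_1,a_2$ in closed form and makes the derivative bounds~\eqref{eq:aestclaim} transparent.
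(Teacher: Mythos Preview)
Your proposal is correct. The paper takes precisely the shortcut you mention at the end: it identifies the integral with $(2\pi)^{n/2}\rho^{-(n-2)/2}J_{(n-2)/2}(\rho)$ (citing Stein, p.~347) and then quotes the classical asymptotic expansion of $J_\nu$ (Stein, p.~338) in the form
\[
J_{(n-2)/2}(\rho)= e^{i\rho}\rho^{-1/2}\sum_{j=0}^{K}a_j\rho^{-j}+e^{-i\rho}\rho^{-1/2}\sum_{j=0}^{K}b_j\rho^{-j},
\]
from which $a_1,a_2$ are read off as explicit finite linear combinations of negative powers of $\rho$, and the derivative bounds~\eqref{eq:aestclaim} are immediate. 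Your primary route --- the direct stationary phase analysis with a partition of unity near $\phi=0,\pi$ and repeated integration by parts in the middle --- is the more self-contained argument that underlies the Bessel asymptotics in the first place; it is longer but avoids the external citation and makes the mechanism (two nondegenerate critical points, amplitude vanishing to order $n-2$) transparent. Either route is fine here; the paper simply opts for the black-box version.
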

\begin{proof}
The integral under consideration is related to the Bessel potential, see \cite[p. 347]{Stein},
\begin{equation}\label{Bess1}
\int_{\S^{n-1}} e^{i\theta_{1}\rho} d\theta= \rho^{-(n-2)/2} J_{(n-2)/2}(\rho)
\end{equation}
where $ J_{(n-2)/2}$ is the Bessel function. For $\rho >1$ it has the explicit expansion, \cite[Page 338]{Stein}
\begin{equation}\label{Bess2}
J_{(n-2)/2}(\rho)= e^{i\rho} \rho^{-1/2} \sum_{j=0}^{K} a_{j} \rho^{-j} + e^{-i\rho} \rho^{-1/2}\sum_{j=0}^{K} b_{j} \rho^{-j}. \end{equation}
Here $K$ is some integer greater than $(n-1)/2$.

Thus,
 \begin{equation}
\begin{split}
& \int_{\S^{n-1}} e^{i\rho \theta_{1}} d\theta 
\\
& = \rho^{-(n-2)/2} J_{(n-2)/2} (\rho)
\\
& = \rho^{-(n-2)/2} \left( e^{i\rho} \rho^{-1/2} \sum_{j=0}^{K} a_{j} \rho^{-j} + e^{-i\rho} \rho^{-1/2}\sum_{j=0}^{K} b_{j} \rho^{-j} \right)
\\
& = e^{i\rho} \big[ \rho^{-\frac{n-1}{2}} \sum_{j=0}^{K} a_{j} \rho^{-j}\big]  + e^{-i\rho} \big[\rho^{-1/2}\sum_{j=0}^{K} b_{j} \rho^{-j} \big]
\end{split}
\end{equation}
Set $ a_{1}(\rho) = \rho^{-\frac{n-1}{2}}\sum_{j=0}^{K} a_{j} \rho^{-j}$ and $ a_{2}(\rho) = \rho^{-\frac{n-1}{2}}\sum_{j=0}^{K} b_{j} \rho^{-j}$.

From the representation it is clear that we have \eqref{eq:aestclaim}. We can conclude.
\end{proof}

\begin{lemma}\label{la:eq:xi2sestgoal}
Let $\frac{1}{2} < s \leq 1$. Let $\eta \in C_c^\infty(B(0,1))$, $\eta \equiv 1$ in $B(0,1/2)$, and $\eta(\xi) = \eta(|\xi|)$. Denote $\psi(\xi) := \brac{\eta(\xi)-\eta(\xi/2)}$. 

Then we have for any $k \geq 0$
\begin{equation}\label{eq:xi2sestgoal}
 \sup_{x \in \R^n} \abs{\int_{\R^n} e^{\i \brac{t |\xi|^{2s} - \langle x,\xi\rangle)}} \psi(\xi/2^k) d\xi } \aleq_{\eta} 2^{kn(1-s)}  |t|^{-\frac{n}{2}}.
\end{equation}

Also, we have for $\Lambda \geq 10$
\begin{equation}\label{eq:xi2sestgoalwithn}
 \abs{\int_{\R^n} e^{\i \brac{ t |\xi|^{2s} - \langle x,\xi\rangle}} \eta (|\xi|-\Lambda) d\xi }\aleq_{\eta} \Lambda^{n(1-s)}\, |t|^{-\frac{n}{2}}.
\end{equation}
\end{lemma}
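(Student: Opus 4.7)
Both estimates in Lemma \ref{la:eq:xi2sestgoal} are classical stationary-phase bounds for the oscillatory integral whose phase is $|\xi|^{2s}$; the structure of $\psi$ versus $\eta(|\xi|-\Lambda)$ only changes the radial localization. My plan is to reduce to a one-dimensional problem by passing to polar coordinates, use Lemma \ref{la:aest} to write the spherical integral as two oscillating modes times amplitudes of known decay, and then apply one-dimensional (non-)stationary phase to the radial integral.

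For \eqref{eq:xi2sestgoal}, first perform the rescaling $\xi = 2^{k}\eta$, $T := t\, 2^{2sk}$, $Y:= 2^{k}x$, which turns the estimate into
\[
\sup_{Y \in \R^n}\left|\int_{\R^n} e^{\i(T|\eta|^{2s} - \langle Y,\eta\rangle)}\,\psi(\eta)\,d\eta\right| \aleq |T|^{-n/2},
\]
since $2^{kn} T^{-n/2} = 2^{kn(1-s)} t^{-n/2}$. WLOG $T \geq 1$ and $Y = |Y|e_1$, so in polar coordinates the integral becomes
\[
I = \int_0^\infty \rho^{n-1}\psi(\rho)\, e^{\i T\rho^{2s}}\, \mathcal{J}(\rho|Y|)\,d\rho, \quad \mathcal{J}(r):= \int_{\Sset^{n-1}} e^{-\i r\theta_1}\,d\theta.
\]
When $|Y| \leq 1$ (so $\rho|Y| \aleq 1$), $\mathcal{J}$ is smooth and $O(1)$, and on $\supp\psi$ the phase $T\rho^{2s}$ has $|\partial_\rho(T\rho^{2s})| \aeq T$; repeated integration by parts then yields $|I|\aleq_N T^{-N}$ for any $N$, and in particular $\aleq T^{-n/2}$. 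When $|Y| \geq 1$, apply Lemma \ref{la:aest}:
\[
\mathcal{J}(\rho|Y|) = e^{\i\rho|Y|} a_1(\rho|Y|) + e^{-\i\rho|Y|} a_2(\rho|Y|),
\]
splitting $I$ into two pieces with phases $\phi_{\pm}(\rho) := T\rho^{2s} \pm \rho|Y|$ and amplitudes $\rho^{n-1}\psi(\rho) a_{\pm}(\rho|Y|)$, whose derivatives are all $\aleq |Y|^{-(n-1)/2}$ on $\supp\psi$ by \eqref{eq:aestclaim}.

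For $\phi_{+}$, $|\phi_{+}'(\rho)| = 2sT\rho^{2s-1} + |Y| \ageq T + |Y|$, so non-stationary phase (integration by parts $N$ times) gives a bound $(T+|Y|)^{-N}|Y|^{-(n-1)/2}$. For $\phi_{-}$, $\phi_{-}'(\rho) = 2sT\rho^{2s-1} - |Y|$ vanishes only at $\rho_\ast = (|Y|/(2sT))^{1/(2s-1)}$, and $\phi_{-}''(\rho) = 2s(2s-1)T\rho^{2s-2} \aeq T$ on $\supp\psi$ (this is where the assumption $s > 1/2$ is used to ensure the Hessian is non-degenerate). When $\rho_\ast \in [\tfrac14, 4]$ we are forced into $|Y| \aeq T$, and one-dimensional stationary phase (or van der Corput of order $2$) gives
\[
\Bigl|\int \rho^{n-1}\psi(\rho)e^{\i\phi_{-}}a_{-}(\rho|Y|)\,d\rho\Bigr| \aleq |\phi_{-}''(\rho_\ast)|^{-1/2}\, |Y|^{-(n-1)/2} \aeq T^{-1/2}\cdot T^{-(n-1)/2} = T^{-n/2}.
\]
Otherwise $|\phi_{-}'|\ageq \max(T,|Y|)$ and non-stationary phase again yields the same $T^{-n/2}$ bound (using that $|Y|^{-(n-1)/2}\leq 1$ and choosing $N \geq n/2$).

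For \eqref{eq:xi2sestgoalwithn} the strategy is identical, except the radial variable is now localized in the thin shell $\rho \in [\Lambda-2,\Lambda+2]$, so all factors $\rho^{n-1}$, $\phi''$, $a_{\pm}$ take the scale $\rho \aeq \Lambda$ into account: $|\phi_{\pm}''(\rho)| \aeq |t|\Lambda^{2s-2}$, the stationary-phase factor is $(|t|\Lambda^{2s-2})^{-1/2} = |t|^{-1/2}\Lambda^{1-s}$, and the amplitude at the critical point is $\Lambda^{n-1}(\Lambda|x|)^{-(n-1)/2}$ with $|x|\aeq |t|\Lambda^{2s-1}$; multiplying gives exactly $\Lambda^{n(1-s)}|t|^{-n/2}$. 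The main obstacle is bookkeeping through the several sub-cases (small vs.\ large $|Y|$, presence vs.\ absence of a radial critical point) and verifying that in each regime either stationary phase or non-stationary phase produces the single clean bound $|T|^{-n/2}$; however, the van der Corput lemma handles both uniformly, and the assumption $s>\frac12$ ensures $\phi''\neq 0$, so the analysis is routine once organized this way.
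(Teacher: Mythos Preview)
For \eqref{eq:xi2sestgoal} your argument coincides with the paper's: rescale to $k=0$, pass to polar coordinates, invoke Lemma~\ref{la:aest} for the spherical integral when $|Y|\ge 1$, and handle the radial integral by van der Corput of order~$2$ in the stationary regime $|Y|\aeq T$ together with repeated integration by parts elsewhere. That part is correct.

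For \eqref{eq:xi2sestgoalwithn} you run the same scheme directly on the thin shell $\rho\in[\Lambda-1,\Lambda+1]$, whereas the paper inserts the harmless factor $\psi(\xi/\Lambda)$, expands $\eta(|\xi|-\Lambda)$ by Fourier inversion, and reduces to \eqref{eq:xi2sestgoal} at scale $2^k\sim\Lambda$. Your sketch, however, only verifies the stationary regime $|x|\aeq |t|\Lambda^{2s-1}$ and asserts that the remaining cases are ``routine'' via non-stationary phase or van der Corput. They are not: take $x=0$ and $t\aeq\Lambda^{1-2s}$. On the shell the radial phase $t\rho^{2s}$ then has $|\phi'|\aeq t\Lambda^{2s-1}\aeq 1$ and $|\phi''|\aeq t\Lambda^{2s-2}\aeq\Lambda^{-1}$, so neither integration by parts nor van der Corput yields any gain, and the integral is of size $\aeq\Lambda^{n-1}$ (the shell volume), while the claimed right-hand side is $\Lambda^{n(1-s)}|t|^{-n/2}\aeq\Lambda^{n/2}$. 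For $n\ge 3$ this contradicts \eqref{eq:xi2sestgoalwithn} as written, so the gap is in the statement, not just your argument. The paper's reduction is likewise suspect: the asserted bound $\|\mathcal{F}_{\R^n}(\eta(|\cdot|-\Lambda))\|_{L^1}\aleq_\eta 1$ fails, since the Fourier transform of a unit-thickness spherical shell at radius $\Lambda$ has $L^1$ norm $\aeq\Lambda^{(n-1)/2}$. The only downstream use of \eqref{eq:xi2sestgoalwithn} is in Proposition~\ref{pr:statphaseestschrod}, where the amplitude is supported in a unit ball rather than a full annulus and the $\min\{1,\dots\}$ in the conclusion absorbs precisely this regime; but \eqref{eq:xi2sestgoalwithn} itself needs either an additional hypothesis (for instance $|t|\Lambda^{2s-1}\gg 1$) or a weaker right-hand side.
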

\begin{proof}
We first consider \eqref{eq:xi2sestgoal} and by scaling we may assume $k=0$. We write the integrals in polar coordinates,
\[
\begin{split}
 &\int_{\R^n} e^{\i (t |\xi |^{2s} - \langle x,\xi\rangle)} \psi(\xi) d\xi \\
 =&\int_{r=\frac{1}{2}}^1 \int_{\S^{k-1}}e^{\i (t r^{2s} - r\langle x,\theta\rangle)} \psi(r) r^{k-1} dr d\theta \\
 =&\int_{r=\frac{1}{2}}^1 \int_{\S^{k-1}} e^{\i \brac{t r^{2s}  - r|x|\brac{\langle \frac{x}{|x|},\theta\rangle} }} \psi(r) r^{k-1} dr d\theta \\
  =&\int_{r=\frac{1}{2}}^1 e^{\i \brac{t r^{2s} }} \int_{\S^{k-1}}  e^{-\i r|x| \theta_1}  d\theta\, \psi(r) r^{k-1} dr \\
 \end{split}
\]

We consider two cases: If $|x| \aleq 1$ (and thus $r|x| \aleq 1$ for all $r \in (1/2,1)$) we write this as 
\[
  \int_{r=\frac{1}{2}}^1 e^{\i \brac{t r^{2s} +r|x|}} \underbrace{\int_{\S^{k-1}}  e^{-\i r|x| (\theta_1-1)}  d\theta}_{=:a_0(r|x|)}\, \psi(r) r^{k-1} dr \\
\]
and observe that we have 
\[
 \abs{\frac{d^l}{dr^l} a(r|x|)} \aleq |x|^l |a_0^{(l)}(r|x|)| \aleq |x|^l \aleq |x|^l (1+|rx|)^{\frac{n-1}{2}-l} \quad \forall r \in [\frac{1}{2},1],
\]
If on the other hand we have $|x| \geq 10$ (and thus $r|x| \geq 1$ for all $r \in [1/2,1]$, then we use \Cref{la:aest} and write 
\[
 \begin{split}
&\int_{r=\frac{1}{2}}^1 e^{\i \brac{t r^{2s} }} \int_{\S^{k-1}}  e^{-\i r|x| \theta_1}  d\theta\, \psi(r) r^{k-1} dr\\
=&\int_{r=\frac{1}{2}}^1 e^{\i \brac{t r^{2s} +r|x|}} a_1(r|x|)\, \psi(r) r^{k-1} dr\\  
&+\int_{r=\frac{1}{2}}^1 e^{\i \brac{t r^{2s} -r|x|}} a_2(r|x|)\, \psi(r) r^{k-1} dr\\  
 \end{split}
\]
where again we have for $i=1,2$
\[
 \abs{\frac{d^l}{dr^l} a_i(r|x|)} \aleq |x|^l (r|x|)^{-\frac{n-1}{2}} \aleq |x|^l (1+|rx|)^{\frac{n-1}{2}-l} \quad \forall r \in [\frac{1}{2},1],
\]
Thus, we need to estimate 
\[
 \int_{r=\frac{1}{2}}^1 e^{\i t \phi(r)} a(r|x|)  \psi(r) r^{k-1} dr.
\]
where \[\phi(r) :=\brac{r^{2s} \pm \frac{r}{t}|x|}.\]
and 
\begin{equation}\label{eq:applla:aest}
 \abs{\frac{d^l}{dr^l} a(r|x|)} \aleq |x|^l (1+|rx|)^{\frac{n-1}{2}-l} \quad \forall r \in [\frac{1}{2},1],
\end{equation}

We will carry out the proof for 
\[
\phi(r) :=\brac{r^{2s} - \frac{r}{t}|x|}
\]
the case of the flipped sign is analogous.

Observe that we have 
\[
\phi'(r) = 2s(r^2)^{s-1} r - \frac{|x|}{t}.
\]
and 
\[
|\phi^{(\ell)}(r)| \aleq_\ell 1 \quad \forall \ell \geq 2, \forall r \in [\frac{1}{2},1]
\]

\underline{Assume that $\frac{|x|}{t} \aeq 1$.}
% We make a case study in $p^2$: if \underline{$p^2 \gg \lambda^2$}  
% \[
%  \phi'(r) \aeq \frac{|x|}{t} \aeq \lambda^{2s-1} \quad \forall r \in [\frac{\lambda}{2},\lambda].
% \]
% So in this case we do the integration by parts argument $n$ times, (recall that $\psi(r) \in C_c^\infty((\lambda/2,\lambda))$)
% \[
% \begin{split}
%  &\abs{\int_{r=\frac{1}{2}}^1 e^{\i t \phi_t(r)} a(r|x|)  \psi(r) r^{k-1} dr}\\
%  =&\abs{\int_{r=\frac{1}{2}}^1 e^{\i t \phi_t(r)} \partial_r \brac{\frac{1}{t \phi'(r)} a(r|x|)  \psi(r) r^{k-1} dr}}\\
%  =&\ldots\\
%  \aleq& \frac{\lambda }{t^n} \brac{\|a(r|x|)\|_{L^\infty([1/2,1])}+\ldots+|x|^n\|a^{(n)}(r|x|)\|_{L^\infty([1/2,1])}}\\
%   \overset{\text{\Cref{la:aest}}}{\aleq}& \frac{n |x|^{-\frac{k-1}{2}} } {t^n}\\
%   \aeq&t^{-n-\frac{k-1}{2}}.
%  \end{split}
% \]
% 
We compute 
\[
\begin{split}
 \phi''(r) =& 2sr^{2s-2} +2s2(s-1)r^{2s-4}r^2\\
 =& 2sr^{2s-4} \brac{(r^2 )+ 2(s-1)r^2}\\
 =& 2sr^{2s-4} \brac{(2s-1)r^2 }\\
 \end{split}
\]
and thus, 
\[
 \phi''(r) \geq 2s r^{2s-4} (2s-1) r^2 = 2s(2s-1) r^{2s-2}\ageq 1 \quad \text{for $r \in (1/2,1)$}.
\]
By Van der Corput lemma, \cite[Corollary p.334]{Stein},
\[
\begin{split}
 &\int_{r=\frac{1}{2}}^1 e^{t \phi(r)} \col{a}(r|x|)  \psi(r) r^{k-1} dr \\
 \aleq&t^{-1/2} \brac{\|a(r|x|)\|_{L^\infty((1/2,1))} + |x| \|a'(r|x|)\|_{L^1((1/2,1))} }\\
 \aleq&t^{-1/2} \brac{|x|^{\frac{1-k}{2}} + |x| \brac{|x|}^{\frac{1-k}{2} -1}}\\
 \overset{|x|  \aeq t}{\aeq}&t^{-\frac{k}{2}}
 \end{split}
\]
where we have used \eqref{eq:applla:aest}.

\underline{Assume now $|x| \gg |t|$}.

Assuming that $|x| \gg t$ means suitably large, using that $s \leq 1$,
\[
 |\phi'(r)| \aeq \frac{|x|}{t} \quad \forall r \in [0,1].
\]
Then we can do infinitely many integration by parts as above, so that we get for any $n \geq 1$,
\[
\begin{split}
 &\abs{\int_{r=\frac{1}{2}}^1 e^{\i \brac{t r^{2s} -r|x|}} a(r|x|)\,  \psi(r)\, r^{k-1} dr}\\
 \aleq&\sup_{r \in [0,1]} \abs{\frac{\|a(r|x|)\|_{L^\infty}+\ldots+\|a^{(n)}(r|x|)\|_{L^\infty} |x|^n }{\abs{t \phi'(r)}^n}}\\
 \aeq&|x|^{-n} \aleq t^{-n}.
 \end{split}
\]
Again using \eqref{eq:applla:aest}.

Lastly, \underline{assume $|x| \ll t$}.
Then 
\[
 |\phi'(r)| \aeq 1 \quad\text{for all $r \in [\frac{1}{2},1]$}
\]
i.e.
\[
 |t\phi'(r)| \aeq t \quad\text{for all $r \in [\frac{1}{2},1]$}
\]
Then we have by the same integration by parts, using 
\[
\begin{split}
 &\abs{\int_{r=1/2}^1 e^{\i t\phi'(r)} a(r|x|)\,  \psi(r)\, r^{k-1} dr}\\
 \aleq&\sup_{r \in [1/2,1]} \abs{\frac{1}{\abs{t\phi'(r)}^n}}\\
 \aeq& t^{-n}.
 \end{split}
\]
This proves \eqref{eq:xi2sestgoal}.

As of \eqref{eq:xi2sestgoalwithn}, observe that 
\[
\begin{split}
 &\abs{\int_{\R^n} e^{\i \brac{ t |\xi|^{2s} - \langle x,\xi\rangle}} \eta (|\xi|-\Lambda) d\xi }\\
 =&\abs{\int_{\R^n} e^{\i \brac{ t |\xi|^{2s} - \langle x,\xi\rangle}} \psi(\xi/\Lambda) \eta (|\xi|-\Lambda) d\xi }\\
\end{split}
 \]
Then writing $\eta (|\xi|-\Lambda) = \int_{\R^n} e^{\i \langle y,\xi\rangle} \mathcal{F}(\eta(|\cdot|-\Lambda))(y) dy$ and observing that $\|\mathcal{F}(\eta(|\cdot|-\Lambda))(y)\|_{L^1} \aleq_{\eta} 1$ we conclude \eqref{eq:xi2sestgoalwithn}.

\end{proof}

The following is the part where we use $s < 1$ (although an alternative method can be used for the $s=1$ case).
\begin{proposition}\label{pr:statphaseestschrod}
Let $\frac{1}{2} < s < 1$. Let $\eta \in C_c^\infty(B(0,1))$ with $\eta \equiv 1$ in a neighborhood of $0$. Assume $N \geq 1$, Then we have for any $x \in \R^k$, $t \in \R$
\begin{equation}\label{ex:pr:statphaseestschrodv1}
 \abs{\int_{\R^N} e^{\i \brac{t |\xi-\tn|^{2s} - \langle x,\xi\rangle}} \eta(\xi) d\xi }\aleq_{\eta} \min \{1,\max\{|\tn|,1\}^{N(1-s)-1} |t|^{-\frac{N}{2}}\}.
\end{equation}
which holds any $\tn \in \R^N$.
\end{proposition}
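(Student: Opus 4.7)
The plan is to establish the bound in two steps: first the trivial bound $1$, which is immediate from $|\eta| \leq 1$ and the compact support of $\eta$ giving $|I| \leq \|\eta\|_{L^1(B(0,1))} \aleq 1$; and then the decay bound $\max\{|\tn|,1\}^{N(1-s)-1}|t|^{-N/2}$, which by virtue of the $\min$ in the statement only needs to be proven when it is itself $\leq 1$ (so implicitly $|t|$ is large enough relative to $|\tn|$). I would split the decay bound into two regimes based on $|\tn|$.

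For the regime $|\tn|\leq 10$, I would perform the change of variables $\zeta = \xi - \tn$ to rewrite the integral as
\[
e^{-\i\langle x,\tn\rangle}\int_{\R^N} e^{\i(t|\zeta|^{2s} - \langle x,\zeta\rangle)}\,\eta(\zeta+\tn)\,d\zeta,
\]
where $\eta(\zeta+\tn)$ is smooth and supported in $B(-\tn,1) \subset B(0,11)$. A Littlewood–Paley decomposition $\eta(\zeta+\tn) = \sum_{k\leq C}\psi(\zeta/2^k)\eta(\zeta+\tn)$ reduces each piece to an application of \Cref{la:eq:xi2sestgoal} (after absorbing bounded factors for the smooth remainder). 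Since $s < 1$, the factors $2^{kN(1-s)}$ are summable over $k \leq C$, yielding the bound $|t|^{-N/2}$, which matches the claim since $\max\{|\tn|,1\}^{N(1-s)-1}\aeq 1$ here.

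For the regime $|\tn|\geq 10$, the support of $\eta$ in the integrand sits at distance $\aeq |\tn|$ from the singular point $\tn$ of the phase. I would apply a stationary phase analysis: the critical point equation $\nabla \Phi(\xi_0) = 0$ gives $x = 2st|\xi_0-\tn|^{2s-2}(\xi_0-\tn)$, so a critical point inside $B(0,1)$ only occurs when $|x|\aeq |t||\tn|^{2s-1}$ and $x$ is in the direction of $-\tn$. The Hessian has $N-1$ eigenvalues of size $t|\tn|^{2s-2}$ in directions perpendicular to $\tn$ and one of size $(2s-1)t|\tn|^{2s-2}$ in the radial direction (here the assumption $s>1/2$ enters to keep the Hessian non-degenerate). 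I would parametrize $\xi = \tn + r\omega$ with $r\in[|\tn|-1,|\tn|+1]$ and $\omega\in\S^{N-1}$ restricted to the cap of angular radius $\aeq 1/|\tn|$ around $-\tn/|\tn|$. The angular integration, after the natural rescaling $\phi \mapsto \phi/|\tn|$ that brings the cap to unit size, contributes a Jacobian factor $|\tn|^{-(N-1)}$ together with a Fourier transform of a smooth compactly supported function, giving rapid decay in the component of $x$ perpendicular to $\tn$. The radial integration is then a 1D oscillatory integral with phase $tr^{2s}+r\langle x,\hat\tn\rangle$, to which Van der Corput's lemma applies with $|\Psi''|\aeq t|\tn|^{2s-2}$, giving decay $|t|^{-1/2}|\tn|^{1-s}$.

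The hard part will be combining these contributions to extract the full $|t|^{-N/2}$ decay together with the sharp $|\tn|^{N(1-s)-1}$ factor; naive stationary phase in all $N$ directions only gives $|t|^{-N/2}|\tn|^{N(1-s)}$, so the additional gain of $|\tn|^{-1}$ must come from further integration by parts exploiting the fact that the radial and angular integrations decouple in a favorable way, or from the cap size $\aeq 1/|\tn|$ interacting with the angular Fourier transform to produce an extra decay in one direction beyond what the Hessian alone provides. Carefully tracking these factors and matching them against the Jacobian factor from the polar change of variables should produce the claimed exponent. Throughout, I would use the $\min$ with $1$ to close off the regime $|t|\lesssim |\tn|^{2-2s}$ (where the claimed decay bound exceeds $1$ and the trivial bound suffices), so that in the nontrivial regime the Hessian is effectively large and stationary phase is genuinely applicable.
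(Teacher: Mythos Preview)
Your treatment of the small-$|\tn|$ case (change of variables, Littlewood--Paley decomposition, appeal to \eqref{eq:xi2sestgoal}, and summation using $s<1$) matches the paper's argument essentially line for line.

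For $|\tn|\ge 10$, however, your proposal has a genuine gap. You set up a direct stationary-phase analysis in polar coordinates about $\tn$, correctly compute the Hessian size $\aeq |t||\tn|^{2s-2}$, and then stop at the point where naive stationary phase yields only $|\tn|^{N(1-s)}|t|^{-N/2}$. You acknowledge that extracting the extra $|\tn|^{-1}$ is ``the hard part'' and gesture at a decoupling between radial and angular integrations, but give no actual mechanism. The cap-size heuristic you mention does not obviously produce an additional factor beyond what the Hessian already accounts for, so as written the argument is incomplete.

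The paper bypasses this difficulty with a much softer reduction. After the shift $\xi\mapsto\xi+\tn$ (making the phase $t|\xi|^{2s}$ and the amplitude $\eta(\xi+\tn)$), one observes that $\supp\eta(\cdot+\tn)$ sits in the thin radial annulus $\{\,||\xi|-|\tn||\le 1\,\}$, inserts a radial cutoff $\chi(|\xi|-|\tn|)$, and then writes the remaining non-radial factor $\eta(\xi+\tn)$ as a superposition of characters via its Fourier transform, whose $L^1$ norm is $O_\eta(1)$ uniformly in $\tn$. This reduces the large-$|\tn|$ case in one stroke to the \emph{radial} annular estimate \eqref{eq:xi2sestgoalwithn} of \Cref{la:eq:xi2sestgoal}, which was already proved there by the same Fourier-superposition trick layered on top of the dyadic bound \eqref{eq:xi2sestgoal}. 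No fresh stationary-phase analysis is carried out at this stage. Note also that \eqref{eq:xi2sestgoalwithn} as stated yields the exponent $\Lambda^{N(1-s)}$, so the paper's own route does not manufacture the additional $|\tn|^{-1}$ you are struggling to locate; it simply invokes that lemma and records the outcome.
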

\begin{proof}
First we consider the case where $\eta(\xi) = \eta(|\xi|)$ and $\tn = 0$.

We assume that $\eta$ is constant in a neighborhood of $B(1/2)$. If this were not the case we can slice $\supp \eta \cap \supp (\eta-1)$ into finitely many annuli, choose appropriate cutoff functions to reduce to this situation. 

Since we have \Cref{la:eq:xi2sestgoal}, \eqref{eq:xi2sestgoal}, by scaling we get 
\[
\begin{split}
 &\abs{\int_{\R^k} e^{\i (t |\xi|^{2s} - \langle x,\xi\rangle)} \psi(\xi/\lambda) d\xi }\\
 =&\abs{\int_{\R^k} e^{\i (t\lambda^{2s} |\xi/\lambda|^{2s} - \langle \lambda x,\xi/\lambda\rangle)} \psi(\xi/\lambda) d\xi }\\
 =&\lambda^{k} \abs{\int_{\R^k} e^{\i (t\lambda^{2s} |\xi|^{2s} - \langle \lambda x,\xi\rangle)} \psi(\xi) d\xi }\\
 \overset{\eqref{eq:xi2sestgoal}}{\aleq}&\lambda^{k} |\lambda^{2s} t|^{-\frac{k}{2}}\\
 =&\lambda^{k(1-s)} |t|^{-\frac{k}{2}}.
  \end{split}
\]
Thus, since $\eta(\xi) = \sum_{\ell =-K} \psi(\xi/2^\ell) + \eta(\xi/2^L)$ we then have 
\[
\begin{split}
 &\abs{\int_{\R^k} e^{\i (t |\xi|^{2s} - \langle x,\xi\rangle)} \eta(\xi) d\xi }\\
 \leq&\abs{\int_{\R^k} e^{\i (t |\xi|^{2s} - \langle x,\xi\rangle)} \eta(\xi/2^L) d\xi }\\
 &+\sum_{\ell \leq 0}\abs{\int_{\R^k} e^{\i (t |\xi|^{2s} - \langle x,\xi\rangle)} \psi(\xi/2^\ell) d\xi }\\
 \aleq&2^{-Lk} + \sum_{\ell \leq 0} 2^{k(1-s)\ell} |t|^{-\frac{k}{2}}\\
  \end{split}
\]
Since $s < 1$ we find that 
\[
\begin{split}
 &\abs{\int_{\R^k} e^{\i (t |\xi|^{2s} - \langle x,\xi\rangle)} \eta(\xi) d\xi }\\
 \aleq&2^{-Lk} + |t|^{-\frac{k}{2}}\\
 \xrightarrow{L \to \infty}&|t|^{-\frac{k}{2}}.
  \end{split}
\]
Since the estimate 
\[
 \abs{\int_{\R^k} e^{\i (t |\xi|^{2s} - \langle x,\xi\rangle)} \eta(\xi) d\xi }
 \aleq 1\\
\]
is trivial, we can conclude since we have \eqref{eq:xi2sestgoal}.

This establishes \eqref{ex:pr:statphaseestschrodv1} for $\tn = 0$ and $\eta$ radial.

If $\eta$ is not radial, we simply write for the standard radial cutoff function $\chi$ that is constantly on on the ball of radius $1$.
\[
 \eta(\xi) = \chi(|\xi|) \int_{\R^N} e^{\i \langle y,\xi\rangle} \hat{\eta}(y) dy
\]
Then
\[
\begin{split}
&\abs{\int_{\R^N} e^{\i \brac{t |\xi|^{2s} - \langle x,\xi\rangle}} \eta(\xi) d\xi }\\
\leq& \int_{\R^N} \abs{\int_{\R^N} e^{\i \brac{t |\xi|^{2s} - \langle x-y,\xi\rangle}} \chi(|\xi|) d\xi \abs{\hat{\eta}(y)} }dy\\
\aleq&\|\hat{\eta}\|_{L^1(\R^N)} |t|^{-\frac{N}{2}}.
\end{split}
\]
This establishes \eqref{ex:pr:statphaseestschrodv1} for $\tn = 0$ and $\eta$ possibly nonradial.

Now we consider general $\tn \in \R^N$. We have
\[
\begin{split}
 &\abs{\int_{\R^N} e^{\i \brac{t |\xi-\tn|^{2s} - \langle x,\xi\rangle}} \eta(\xi) d\xi}\\
 =&\abs{\int_{\R^N} e^{\i \brac{t |\xi|^{2s} - \langle x,\xi\rangle}} \eta(\xi+\tn) d\xi}\\
 \end{split}
\]
If $|\tn| \aleq 10$ we use a scaling argument and \eqref{ex:pr:statphaseestschrodv1} (for $\tn = 0$ and the nonradial $\xi \mapsto \eta(\xi+\tn)$).

If $|\tn| \geq 10$ we instead use that $|\xi+\tn| \leq 1$ implies $||\xi| - |\tn|| \leq 1$, so we have 
\[
\begin{split}
 &\abs{\int_{\R^N} e^{\i \brac{t |\xi-\tn|^{2s} - \langle x,\xi\rangle}} \eta(\xi) d\xi}\\
 =&\abs{\int_{\R^N} e^{\i \brac{t |\xi|^{2s} - \langle x,\xi\rangle}} \chi(|\xi|-|\tn|) \eta(\xi+\tn) d\xi}\\
 \end{split}
\]
As before we write 
\[
 \eta(\xi+\tn) = \int_{\R^N} e^{\i \langle y,\xi\rangle} e^{\i \langle y,\tn\rangle} \hat{\eta}(y) dy
\]
and use \Cref{la:eq:xi2sestgoal}, \eqref{eq:xi2sestgoalwithn}, to estimate 
\[
\begin{split}
 &\abs{\int_{\R^N} e^{\i \brac{t |\xi-\tn|^{2s} - \langle x,\xi\rangle}} \eta(\xi) d\xi}\\
 \aleq_{\eta}& |\tn|^{N(1-s)-1} |t|^{-\frac{N}{2}}\\
 \end{split}
\]

\end{proof}
% % 

\begin{lemma}\label{la:twointegrals}
Let $s \in (\frac{1}{2},1)$. There exists a constant $C_{s} >0$ such that the following holds.

Let $\eta \in C_c^\infty(B(0,1))$ be radial with $\eta \equiv 1$ in $B(0,1/2)$

Fix $x_1 \in \R$, $x = (x^1,x') \in \R^n$, $t > 0$ and assume that we have $|x_1| \geq C_{s} t$.

Then we have
\begin{equation}\label{eq:la:twointegralsv1}
 \abs{\int_{\R^n} e^{\i \brac{t |\xi|^{2s} - \langle x,\xi\rangle } } \eta(\xi) d\xi}\aleq_{\eta}  \min\{1,
  |x_1|^{-2}+t|x_1|^{-2}\}.
\end{equation}
Also, if $\tn \in \R^n$, $|\tn| \leq \Lambda$ and $\frac{|x_1|}{|t|} \geq C_{s} \Lambda^{2s-1}$ for $\Lambda \geq 1$
\begin{equation}\label{eq:la:twointegralsv1withn}
 \abs{\int_{\R^n} e^{\i \brac{t |\xi+\tn|^{2s} - \langle x,\xi\rangle } } \eta(\xi) d\xi}\aleq_{\eta}  \min\{1,
  |x_1|^{-2}+t|x_1|^{-2}\}.
\end{equation}

\end{lemma}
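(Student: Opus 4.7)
The plan is to use a non-stationary phase argument based on two integrations by parts in the $\xi_1$-direction. Set $\phi(\xi) := t|\xi|^{2s} - \langle x,\xi\rangle$ so that
\[
\partial_{\xi_1}\phi(\xi) = 2st|\xi|^{2s-2}\xi_1 - x_1, \qquad |\partial_{\xi_1}^2\phi(\xi)| \aleq t|\xi|^{2s-2}.
\]
First I would verify the non-stationarity: since $s>\tfrac12$, $|\xi|\leq 1$ on $\supp \eta$ implies $|2st|\xi|^{2s-2}\xi_1|\leq 2st|\xi|^{2s-1}\leq 2st$, so the hypothesis $|x_1|\geq C_s t$ with $C_s$ chosen large (say $C_s=4s$) gives $|\partial_{\xi_1}\phi|\geq |x_1|/2$ throughout $\supp\eta$. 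The trivial bound $\abs{\int \eta e^{i\phi}}\aleq 1$ handles the $\min\{1,\cdot\}$.

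Integrating by parts once yields
\[
\int \eta\, e^{i\phi}\, d\xi = -\int \frac{\partial_{\xi_1}\eta}{i\partial_{\xi_1}\phi}\,e^{i\phi}\,d\xi + \int \frac{\eta\,\partial_{\xi_1}^2\phi}{i(\partial_{\xi_1}\phi)^2}\,e^{i\phi}\,d\xi =: \mathrm{(A)}+\mathrm{(B)}.
\]
For $\mathrm{(B)}$ I would estimate directly, exploiting that since $\eta$ has a non-integrable factor $|\xi|^{2s-2}$ in the denominator bound, but $2s-2>-n$ for $s>\tfrac12$ (and $n\geq 2$), so
\[
|\mathrm{(B)}|\aleq \frac{t}{|x_1|^2}\int_{B(0,1)} |\xi|^{2s-2}\,d\xi \aleq \frac{t}{|x_1|^2}.
\]
For $\mathrm{(A)}$ the crucial observation is that $\partial_{\xi_1}\eta$ is supported in the annulus $\{1/2\leq |\xi|\leq 1\}$, bounded away from the origin. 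Hence I can integrate by parts once more without singularity issues, obtaining terms of the form $\partial_{\xi_1}^2\eta/(\partial_{\xi_1}\phi)^2$ and $(\partial_{\xi_1}\eta)(\partial_{\xi_1}^2\phi)/(\partial_{\xi_1}\phi)^3$. On this annulus $|\partial_{\xi_1}^2\phi|\aleq t$, so
\[
|\mathrm{(A)}|\aleq \frac{1}{|x_1|^2} + \frac{t}{|x_1|^3} \aleq \frac{1}{|x_1|^2} + \frac{t}{|x_1|^2},
\]
using $|x_1|\geq 1$ (otherwise the trivial bound $\aleq 1$ already suffices). Combining gives \eqref{eq:la:twointegralsv1}.

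For the shifted version \eqref{eq:la:twointegralsv1withn}, I would substitute $\zeta=\xi+\tn$ to rewrite the integral as $e^{i\langle x,\tn\rangle}\int e^{i(t|\zeta|^{2s}-\langle x,\zeta\rangle)}\eta(\zeta-\tn)\,d\zeta$, and run the same argument with the shifted cutoff. On $\supp \eta(\cdot-\tn)$ we have $|\zeta|\leq \Lambda+1\aleq \Lambda$, so $|2st|\zeta|^{2s-1}|\aleq t\Lambda^{2s-1}$, and the enlarged constant $|x_1|\geq C_s t\Lambda^{2s-1}$ restores $|\partial_{\zeta_1}\phi|\geq |x_1|/2$. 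The only delicate point is the estimate for $\mathrm{(B)}$: the support $\{|\zeta-\tn|\leq 1\}$ has measure $\aleq 1$, so $\int_{\supp\eta(\cdot-\tn)}|\zeta|^{2s-2}\,d\zeta \aleq 1$ uniformly in $\tn$ (one checks the two cases $|\tn|\leq 2$ and $|\tn|\geq 2$ separately), which again produces the bound $t/|x_1|^2$. The main obstacle is bookkeeping the $|\xi|^{2s-2}$ singularity through the integration by parts; the key saving throughout is that $s>\tfrac12$ simultaneously makes $|\xi|^{2s-1}$ bounded on the unit ball (used for non-stationarity) and $|\xi|^{2s-2}$ locally integrable (used for term $\mathrm{(B)}$).
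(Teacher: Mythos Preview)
Your proof is correct and follows essentially the same approach as the paper: two integrations by parts in $\xi_1$, exploiting $|\partial_{\xi_1}\phi|\gtrsim|x_1|$ for non-stationarity and the local integrability of the $|\xi|^{2s-2}$ singularity in $\partial_{\xi_1}^2\phi$ (the paper uses $2s-2>-1$ in one dimension, you use $2s-2>-n$). The only cosmetic difference is that the paper performs the one-dimensional IBP for each fixed $\xi'$ and integrates over $\xi'$ afterwards, while you work directly with the $n$-dimensional integral; the structure of the terms $\mathrm{(A)}$, $\mathrm{(B)}$ and the resulting bounds are identical.
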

\begin{proof}
Firstly, we may assume that $|x_1| \geq 100$, otherwise both estimates are trivial.

The case $|\tn| \aleq 1$ follows from the case $|\tn| = 0$ by conducting the argument below with $\eta(|\xi-\tn|)$, so in the following we assume that either $|\tn| = 0$ or $|\tn| \geq 100$.

Set 
\[
 \phi_{1;x_1}(\xi_1,\xi') := t \brac{|\xi_1+\tn_1|^{2}+|\xi'+\tn'|^2}^s - x_1 \xi_1,
\]
then 
\[
t |\xi+\tn|^{2s} - \langle x,\xi\rangle =  \phi_{1;x_1}(\xi_1,\xi') - \langle x',\xi'\rangle.
\]

Thus, we have to estimate for $j=1,2$
 \[
\begin{split}
 &\sup_{|\xi'| \aleq 1} \abs{\int_{-1}^1 e^{\i \phi_{j;x_1}(r,\xi')} \eta((r^2+|\xi'|^2)^{\frac{1}{2}}) dr} \aleq \frac{1}{|x_1|^2}
 \end{split}
\]

We have, since $|\frac{x_1}{t}| \gg \Lambda^{2s-1} \ageq |\xi+\tn|^{2s-1}$
\[
\begin{split}
 \partial_{\xi_1 } \phi_{1;x_1} (\xi_1,|\xi'|) =&  2st \brac{(\xi_1+\tn_1)^{2}+|\xi'+\tn'|^2 }^{s-1}  (\xi_1+\tn_1)  - x_1\\
 =&  t  \brac{2s\brac{(\xi_1+\tn_1)^{2}+|\xi'+\tn'|^2 }^{s-1}  (\xi_1+\tn_1)  - \frac{x_1}{t}}
\\
\aeq& -x_1 \quad \forall \xi_1\in [-1,1],|\xi'| \aleq 1 
 \end{split}
\]
Also we have (recall that either $|\tn| = 0$ or $|\tn| \gg |\xi|$
\[
\begin{split}
 |\partial_{\xi_1 \xi_1}\phi_{1;x_1}(\xi_1,|\xi'|)| =& 2s\abs{t} \abs{2(s-1)\brac{(\xi_1+\tn_1)^2 + |\xi'+\tn'|^2}^{s-2} (\xi_1+\tn_1)^2 +\brac{(\xi_1+\tn_1)^2 + |\xi'+\tn|^2}^{s-1}}\\
 \aleq& \abs{t} \brac{|\xi+\tn|^{2s-2}  +|\xi+\tn|^{2s-2}}\\
 \overset{s<1}{\aleq}&\begin{cases}
                       \abs{t} |\xi|^{2s-2} \quad &\tn = 0\\
                       \abs{t} \quad &|\tn|\gg 1.
                      \end{cases}
 \end{split}
\]

By an integration by parts argument we have (observe that $r \mapsto \eta((r^2+|\xi'|^2)^{\frac{1}{2}})$ is uniformly $C^\infty$ since $\eta(\rho) \equiv 1$ for $\rho \aeq 0$ and $\rho \ageq 1$)
\[
\begin{split}
 &\abs{\int_{-1}^1 e^{\i \phi_{x_1}(r,|\xi'|)} \eta((r^2+|\xi'|^2)^{\frac{1}{2}})dr} \\
 =&\abs{\int_{-1}^1 e^{\i \phi_{x_1}(r,|\xi'|)}  \partial_r \brac{\frac{1}{\partial_r \phi_{x_1}(r,|\xi'|)} \eta((r^2+|\xi'|^2)^{\frac{1}{2}})} dr} \\
 \leq&\abs{\int_{-1}^1 e^{\i \phi_{x_1}(r,|\xi'|)}  \frac{\partial_{rr} \phi_{x_1}(r,|\xi'|)}{(\partial_r \phi_{x_1}(r,|\xi'|))^2} \eta((r^2+|\xi'|^2)^{\frac{1}{2}}) dr} \\
 &+\abs{\int_{-1}^1 e^{\i \phi_{x_1}(r,|\xi'|)}  \frac{1}{\partial_r \phi_{x_1}(r,|\xi'|)} \partial_r \brac{\eta((r^2+|\xi'|^2)^{\frac{1}{2}})} dr} \\
 \aleq& \int_{-1}^1 \frac{ t |r|^{2s-2}}{|x_1|^2}  \\
 &+\abs{\int_{-1}^1 e^{\i \phi_{x_1}(r,|\xi'|)}  \partial_r \brac{\frac{1}{(\partial_r \phi_{x_1}(r,|\xi'|))^2} \partial_r \brac{\eta((r^2+|\xi'|^2)^{\frac{1}{2}})}} dr} \\
\aleq& \int_{-1}^1 \frac{ t |r|^{2s-2}}{|x_1|^2}  \\
 &+\abs{\int_{-1}^1 e^{\i \phi_{x_1}(r,|\xi'|)}  \brac{\frac{1}{\partial_r \phi_{x_1}(r,|\xi'|)} \frac{1}{\partial_r \phi_{x_1}(r,|\xi'|)} \partial_{rr} \brac{\eta((r^2+|\xi'|^2)^{\frac{1}{2}})}} dr} \\
&+\abs{\int_{-1}^1 e^{\i \phi_{x_1}(r,|\xi'|)}  \brac{\frac{\partial_{rr} \phi_{x_1}(r,|\xi'|))}{(\partial_r \phi_{x_1}(r,|\xi'|))^3} \partial_r \brac{\eta((r^2+|\xi'|^2)^{\frac{1}{2}})}} dr} \\%  
\aleq& \int_{-1}^1 \frac{ t |r|^{2s-2}}{|x_1|^2} dr +\frac{1}{|x_1|^2} + \int_{-1}^1 \frac{t |r|^{2s-2}}{|x_1|^3}dr
\end{split}
\]
Since $s > \frac{1}{2}$ we have $2s-2 > -1$ and thus the above integrals converge, and we have
\[
\begin{split}
 &\abs{\int_{-1}^1 e^{\i \phi_{x_1}(r,|\xi'|)} \eta((r^2+|\xi'|^2)^{\frac{1}{2}})dr} \\
\aleq& \frac{t}{|x_1|^2}  +\frac{1}{|x_1|^2} 
\end{split}
\]
\end{proof}

\begin{proposition}\label{pr:eq:DIE:4.7}
Let $n \geq 3$, $s \in (1/2,1)$ and $\ell \in \Z$, $k \in \Z$ such that $2^{k} \ageq 2^\ell$.

For any $\tn \in \R^n$ such that $|\tn| \aleq 2^k$,
\begin{equation}\label{eq:DIE:4.7v1gen}
 \int_{\R} \sup_{x',t}
  \abs{\int_{\R^{n}} e^{\i \langle x,\xi\rangle} e^{-\i t |\xi-\tn|^{2s}} \eta_{|\xi| \aleq 
  2^\ell} d\xi} dx_1 
  \aleq 2^{k(n-1)} 2^{-(n-2)(k-\ell)} = 2^{(n-1)\ell} 2^{k-\ell} 
\end{equation}
In particular (for $\tn=0$ and $\ell=k$)
\begin{equation}\label{eq:DIE:4.7v1}
 \int_{\R} \sup_{x',t}
  \abs{\int_{\R^{n}} e^{\i \langle x,\xi\rangle} e^{-\i t |\xi|^{2s}} \eta_{|\xi| \aleq 
  2^\ell} d\xi} dx_1 
  \aleq 2^{\ell(n-1)}  
\end{equation}
 \end{proposition}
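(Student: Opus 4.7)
}
The plan is to reduce to a scaled kernel estimate and then control the inner oscillatory integral using the pointwise stationary-phase bound from \Cref{pr:statphaseestschrod} together with the non-stationary integration-by-parts bound from \Cref{la:twointegrals}.

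First, I would scale. Substituting $\xi = 2^\ell\zeta$ in the inner integral and then $\tilde x = 2^\ell x$, $\tilde t = 2^{2s\ell}t$, $\tilde\tn = 2^{-\ell}\tn$ in the outer one, the claim is equivalent to
\[
 \int_{\R} \sup_{\tilde x',\tilde t}\abs{\tilde K(\tilde x_1,\tilde x',\tilde t)}\,d\tilde x_1 \aleq \Lambda, \qquad \Lambda := 2^{k-\ell} \geq 1,
\]
where $\tilde K(\tilde x,\tilde t) := \int_{\R^n} e^{\i\langle\tilde x,\zeta\rangle - \i\tilde t|\zeta-\tilde\tn|^{2s}}\eta(\zeta)\,d\zeta$, $|\tilde\tn|\aleq\Lambda$, and $\eta$ is supported on $B(0,1)$.

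Next, I would split the $\tilde x_1$-integration at $|\tilde x_1|=C_0\Lambda$. On the near region $|\tilde x_1|\leq C_0\Lambda$, the trivial bound $|\tilde K|\leq\|\eta\|_{L^1}\aleq 1$ yields the desired contribution $\aleq \Lambda$. On the far region $|\tilde x_1|\geq C_0\Lambda$, a critical-point analysis of the phase $\phi=\langle\tilde x,\zeta\rangle - \tilde t|\zeta-\tilde\tn|^{2s}$ shows that $\nabla_\zeta\phi$ vanishes on $\supp\eta$ only if $|\tilde x|\aleq|\tilde t|\Lambda^{2s-1}$, which identifies the threshold $T_0 := |\tilde x_1|/(C_s\Lambda^{2s-1})$. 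I would split on $|\tilde t|$ at $T_0$: for $|\tilde t|\geq T_0$ apply \Cref{pr:statphaseestschrod} to obtain $|\tilde K|\aleq\Lambda^{n(1-s)-1}|\tilde t|^{-n/2}$, whose supremum over this range is attained at $|\tilde t|\aeq T_0$ and equals $\aleq \Lambda^{n/2-1}|\tilde x_1|^{-n/2}$; for $|\tilde t|\leq T_0$ apply \Cref{la:twointegrals} to obtain $|\tilde K|\aleq|\tilde x_1|^{-2}(1+|\tilde t|)$. Using $n\geq 3$, the stationary-regime bound integrates over $|\tilde x_1|\geq C_0\Lambda$ to $\aleq \Lambda^{n/2-1}\cdot\Lambda^{1-n/2} = 1 \aleq \Lambda$.

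The main obstacle is the non-stationary regime: the supremum over $|\tilde t|\leq T_0$ of $|\tilde x_1|^{-2}(1+|\tilde t|)$ is $\aeq|\tilde x_1|^{-1}\Lambda^{1-2s}$, which is precisely at the edge of non-integrability in $\tilde x_1$ at infinity. To close the estimate, I would carry out additional integrations by parts in $\zeta_1$ beyond the two used to derive \Cref{la:twointegrals}. Each further IBP contributes a factor $1/|\tilde x_1|$ and introduces a singular factor $|\zeta|^{2s-j}$ on $\supp\eta$; since the $n$-dimensional integral $\int_{|\zeta|\aleq 1}|\zeta|^{2s-j}\,d\zeta$ converges as long as $j<n+2s$, the hypotheses $n\geq 3$ and $s>1/2$ leave room for enough additional IBPs to upgrade the decay strictly past $|\tilde x_1|^{-1}$, making the tail integrable with bound $\aleq\Lambda$. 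Combining the near and far (stationary and non-stationary) contributions then concludes the proof.
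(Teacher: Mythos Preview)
Your scaling reduction, your near/far split at $|\tilde x_1|\aeq\Lambda$, and your use of \Cref{pr:statphaseestschrod} in the stationary regime all match the paper. The divergence is in how you handle the borderline non-integrability of $|\tilde x_1|^{-1}\Lambda^{1-2s}$ on the far region.

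You propose pushing the integration-by-parts of \Cref{la:twointegrals} one step further and absorbing the resulting singularity $|\zeta|^{2s-3}$ using the $n$-dimensional volume. This does work, but your heuristic ``each further IBP contributes $1/|\tilde x_1|$ and a factor $|\zeta|^{2s-j}$'' is not quite accurate: a third IBP also produces a term of type $(\partial_{\zeta_1}^2\phi)^2/(\partial_{\zeta_1}\phi)^4$, carrying $|\tilde t|^2|\zeta|^{4s-4}/|\tilde x_1|^4$. You must check that all such cross-terms (with higher powers of $|\tilde t|$ and products of lower-order singularities) stay integrable and still yield decay strictly better than $|\tilde x_1|^{-1}$ after taking $\sup_{|\tilde t|\le T_0}$. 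They do---for $n\ge 3$ and $s>1/2$ both $|\zeta|^{2s-3}$ and $|\zeta|^{4s-4}$ are locally integrable in $\R^n$, and using $|\tilde t|\le T_0\aleq |\tilde x_1|\Lambda^{1-2s}$ every resulting term integrates to $O(1)$ over $|\tilde x_1|\ge C_0\Lambda$---but this bookkeeping should be written out, not asserted.

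The paper avoids this extra work entirely by a different device: it moves the threshold between the two regimes from $|t|\aeq|x_1|$ to $|t|\aeq|x_1|^\theta$ for some $\theta\in(0,1)$ close to $1$. In the non-stationary regime this yields $|x_1|^{-2}(1+|t|)\aleq|x_1|^{\theta-2}$, which is already integrable, so two IBPs (exactly \Cref{la:twointegrals} as stated) suffice; the price is that the stationary-regime decay becomes $|x_1/\max\{|\tn|,1\}|^{-\theta n/2}$, still integrable for $n\ge3$ and $\theta$ close enough to $1$. This $\theta$-trick is shorter and reuses the existing lemmas verbatim; your route is more hands-on but has the mild advantage of making the role of the dimension $n$ in controlling the phase singularity at $\zeta=0$ explicit.
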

\begin{proof}
We are going to prove
\begin{equation}\label{eq:DIE:4.7v1genv2}
 \int_{\R} \sup_{x',t}
  \abs{\int_{\R^{n}} e^{\i \langle x,\xi\rangle} e^{-\i t |\xi-\tn|^{2s}} \eta_{|\xi| \aleq 
  1} d\xi} dx_1 
  \aleq \max\{|\tn|,1\}
\end{equation}
for all $\tn \in \R^n$.

Once we have \eqref{eq:DIE:4.7v1genv2} we have \eqref{eq:DIE:4.7v1gen} by scaling for any $|\tn| \aleq 2^k$, $2^k \ageq 2^\ell$:
\[
 \int_{\R} \sup_{x',t}
  \abs{\int_{\R^{n}} e^{\i \langle x,\xi\rangle} e^{-\i t |\xi-\tn|^{2s}} \eta_{|\xi| \aleq 
  2^\ell} d\xi} dx_1 
  =2^{k(n-1)} 2^{-(k-\ell) (n-2)}.
\]
In order to prove \eqref{eq:DIE:4.7v1genv2} we show that for any $x_1 \in \R$, $\theta \in (0,1)$ suitably close to $1$, we have
\begin{equation}\label{eq:DIE:4.7:goalv1}
 \sup_{x',t}   \abs{\int_{\R^n} e^{\i \langle x,\xi\rangle} e^{-\i t |\xi-\tn|^{2s}} \eta_{|\xi| \aleq 
  1} d\xi} \aleq (1+|x_1|)^{\theta-2} + \frac{1}{1+|x_1/\max\{|\tn|,1\}|^{\theta \frac{n}{2}}}. 
\end{equation}
Once we have \eqref{eq:DIE:4.7:goalv1} we obtain  \eqref{eq:DIE:4.7v1genv2} by integrating in $x_1$, choosing $\theta$ suitably close to $1$ so that $\theta \frac{n}{2} > 1$ (recall that $n \geq 3$).

\eqref{eq:DIE:4.7:goalv1} is trivial for $|x_1| \aleq 1$, so we assume $|x_1| > 1$.

Let $\theta \in (0,1)$, $\gamma > 0$. Assume first \underline{$\frac{|x_1|^\theta}{|t|} \aleq \max\{|\tn|,1\}^{\gamma(2s-1)}$}. Apply \Cref{pr:statphaseestschrod} \eqref{ex:pr:statphaseestschrodv1} and we have  
\[
\begin{split}
 &\abs{\int_{\R^n} e^{\i (t |\xi+\tn|^{2s} - \langle x,\xi\rangle)} \eta_{|\xi| \aleq 1} d\xi }\\
 \aleq& \max\{|\tn|,1\}^{n(1-s)-1} |t|^{-\frac{n}{2}}\\
 \aleq &\max\{|\tn|,1\}^{n(1-s)-1} \max\{|\tn|,1\}^{\frac{n}{2}\gamma (2s-1)}         |x_1|^{-\theta\frac{n}{2}}\\
 \aleq &\max\{|\tn|,1\}^{\frac{n}{2}}      \frac{1}{|x_1|^{\theta \frac{n}{2}}}
 \end{split}
\]
If we choose (for $\theta$ suitably close to $1$ we have $\gamma$ is close to $1$)
\[
 n(1-s)-1+ \frac{n}{s} \gamma(2s-1) = \theta \frac{n}{2}
\]
we obtain
\[
 \abs{\int_{\R^n} e^{\i (t |\xi|^{2s} - \langle x,\xi\rangle)} \eta_{|\xi| \aleq 1} d\xi } \aleq \frac{1}{|x_1/\max\{|\tn|,1\}|^{\frac{n}{2}}}.  
\]
So \underline{assume now that $\frac{|x_1|^\theta}{|t|} \gg \max\{|\tn|,1\}^{\gamma(2s-1)}$}. 
In particular $|x_1|^\theta \gg |t|$, and since $\theta < 1$ this implies in particular $|x_1| \gg |t|$. Thus we can apply \Cref{la:twointegrals}, \eqref{eq:la:twointegralsv1withn}, and have 
\[
 \abs{\int_{\R^n} e^{\i (t |\xi-\tn|^{2s} - \langle x,\xi\rangle)} \eta_{|\xi| \aleq 1} d\xi }\aleq |x_1|^{-2}  +t|x_1|^{-2} \aleq |x_1|^{\theta-2}.
\]
This implies \eqref{eq:DIE:4.7:goalv1}. 
\end{proof}

\begin{proof}[Proof of \Cref{th:L2eLinftyeperpvsL2est}]
Follows from \Cref{pr:eq:DIE:4.7} and \eqref{eq:DIE:4.7v1}.
\end{proof}

\bibliographystyle{abbrv}%
\bibliography{bib}%

\end{document}